\documentclass[10pt,a4paper,oneside]{amsart}

\usepackage{amssymb,amsthm}
\usepackage[T1]{fontenc}
\usepackage{lmodern}
\swapnumbers

\usepackage[all]{xy}
\usepackage{hyperref}
\usepackage[shortlabels]{enumitem}
\usepackage{aliascnt}
\usepackage{microtype}

\addtolength{\textwidth}{2cm}
\addtolength{\oddsidemargin}{-1cm}

\newcommand{\CatCu}{\ensuremath{\mathrm{Cu}}}
\newcommand{\CuSgp}{$\CatCu$-sem\-i\-group}
\newcommand{\CuMor}{$\CatCu$-mor\-phism}
\newcommand{\axiomO}[1]{(O#1)}
\newcommand{\Cu}{\ensuremath{\mathrm{Cu}}}
\newcommand{\andSep}{\,\,\,\text{ and }\,\,\,}
\newcommand{\ca}{C*-algebra}
\newcommand{\NNbar}{\overline{\mathbb{N}}}
\newcommand{\St}{\mathrm{St}}
\newcommand{\DF}{DF}
\newcommand{\NN}{\mathbb{N}}
\newcommand{\CC}{\mathbb{C}}
\newcommand{\RR}{\mathbb{R}}
\newcommand{\rc}{\mathrm{rc}}
\newcommand{\Aff}{\mathrm{Aff}}

\renewcommand{\leq}{\leqslant}
\renewcommand{\geq}{\geqslant}

\DeclareMathOperator{\QT}{QT}
\DeclareMathOperator{\LAff}{LAff}
\DeclareMathOperator{\Ped}{Ped}

\newtheorem{lma}{Lemma}[section]

\newaliascnt{thmCt}{lma}
\newtheorem{thm}[thmCt]{Theorem}
\aliascntresetthe{thmCt}

\newaliascnt{corCt}{lma}
\newtheorem{cor}[corCt]{Corollary}
\aliascntresetthe{corCt}

\newaliascnt{prpCt}{lma}
\newtheorem{prp}[prpCt]{Proposition}
\aliascntresetthe{prpCt}

\newtheorem*{lma*}{Lemma}

\newtheorem*{thm*}{Theorem}

\theoremstyle{definition}

\newaliascnt{dfnCt}{lma}
\newtheorem{dfn}[dfnCt]{Definition}
\aliascntresetthe{dfnCt}

\newaliascnt{pgrCt}{lma}
\newtheorem{pgr}[pgrCt]{}
\aliascntresetthe{pgrCt}

\newaliascnt{rmkCt}{lma}
\newtheorem{rmk}[rmkCt]{Remark}
\aliascntresetthe{rmkCt}

\numberwithin{equation}{section}

\begin{document}

\title[C*-algebras of stable rank one and their Cuntz semigroups]{C*-algebras of stable rank one and their Cuntz semigroups}

\date{\today}
\author{Ramon Antoine}
\author{Francesc Perera}
\author{Leonel Robert}
\author{Hannes Thiel}

\address{
R.~Antoine \& F.~Perera, Departament de Matem\`{a}tiques,
Universitat Aut\`{o}noma de Barcelona,
08193 Bellaterra, Barcelona, Spain}
\email[]{ramon@mat.uab.cat, perera@mat.uab.cat}

\address{
L.~Robert,
Department of Mathematics,
University of Louisiana at Lafayette,
Lafayette, 70504-3568, USA}
\email{lrobert@louisiana.edu}

\address{
H.~Thiel, Mathematisches Institut, Universit\"at M\"unster,
Einsteinstr.~62, 48149 M\"unster, Germany}
\email[]{hannes.thiel@uni-muenster.de}

\subjclass[2010]
{Primary
46L05. 
Secondary
06B35, 
06F05, 
19K14, 
46L08, 
46L35. 
}

\keywords{Cuntz semigroup, C*-algebra, stable rank one, Hilbert C*-module, semilattice}

\begin{abstract}
The uncovering of new structure on the Cuntz semigroup of a C*-algebra of stable rank one leads to several applications:
We answer affirmatively, for the class of stable rank one C*-algebras, a conjecture by Blackadar and Handelman on dimension functions, the Global Glimm Halving problem, and the problem of realizing functions on the cone of $2$-quasitraces as ranks of Cuntz semigroup elements. We also gain new insights into the comparability properties of positive elements in C*-algebras of stable rank one.
\end{abstract}

\maketitle

\section{Introduction}
The Murray-von Neumann equivalence of projections is one of the fundamental concepts in operator algebra theory. 
It serves as the basis for the type classification of von Neumann algebra factors. 
Further, it leads to the construction of the Murray-von Neumann monoid of projections and of its enveloping group, the $K_0$-group, both important invariants associated to a C*-algebra. While the abundance of projections in a von Neumann algebra makes the Murray-von Neumann monoid of projections a very appropriate invariant, this is less so for arbitrary C*-algebras, which may lack any nontrivial projections. 
A general recipe to remedy this problem is to substitute projections by positive elements. The Cuntz comparison relation among the positive elements of a C*-algebra is a natural analogue of the Murray-von Neumann comparison of projections (with some caveats). From this relation, the Cuntz semigroup is built in very much the same way that the Murray-von Neumann monoid is constructed from Murray-von Neumann equivalence classes of projections. 

The Cuntz semigroup is a very sensitive device since it captures a great deal of the structure of the C*-algebra it is attached to. In the early work of Cuntz, Blackadar, and Handelman (\cite{Cun78DimFct, BlaHan82DimFct}), it was used as a tool to study the traces and quasitraces on a C*-algebra (which induce functionals on the Cuntz semigroup). More recently, the Cuntz semigroup has been used to formulate numerous ``regularity properties" of the sort that appear in the classification program for simple nuclear C*-algebras. Notably, almost unperforation in the Cuntz semigroup features prominently in the classification program and in the work on the Toms-Winter conjecture; see \cite{Win12NuclDimZstable, Ror04StableRealRankZ, CasEviTikWhiWin19arX:NucDimSimple, KirRor14CentralSeq, Sat12arx:TraceSpace, TomWhiWin15ZStableFdBauer}.
Yet another use of the Cuntz semigroup, and of the functor associated to it, is as a classification invariant for nonsimple C*-algebras; see \cite{CiuEll08InvCaSR1, Rob12LimitsNCCW}.

We now briefly recall the definition of the Cuntz semigroup. Given a C*-algebra $A$ and positive elements $a,b\in A$,  we say that $a$ is Cuntz subequivalent to $b$, and write $a\precsim b$, if there is a sequence $(d_n)_n$ in $A$ such that $d_n^*bd_n\to a$ in norm. We say that $a$ is Cuntz equivalent to $b$, and write $ a\sim b$, if both $a\precsim b$ and $b\precsim a$ occur. Let us consider these relations applied to the positive elements of  $A\otimes\mathcal K$, where $\mathcal K$ denotes the C*-algebra of compact operators on a separable Hilbert space. The Cuntz semigroup $\Cu(A)$ is the set of Cuntz equivalence classes of positive elements of $A\otimes\mathcal K$ endowed with the order induced by Cuntz subequivalence and with the addition operation induced by orthogonal sums. If instead of positive elements in $A\otimes\mathcal K$ we consider positive elements in matrix algebras over $A$, we arrive at the non-complete Cuntz semigroup $W(A)$, which is the object originally defined by Cuntz in \cite{Cun78DimFct}. We always have  $W(A)$ embedded in $\Cu(A)$, and also that $\Cu(A)\cong W(A\otimes\mathcal{K})$ (see \cite{CowEllIva08CuInv}). Our focus here will be largely on $\Cu(A)$. 

A module picture of $\Cu(A)$ was made available by Coward, Elliott and Ivanescu in \cite{CowEllIva08CuInv}.
In this picture, one defines suitable notions of equivalence and subequivalence among the countably generated Hilbert C*-modules over $A$. The set of such equivalence classes becomes an ordered semigroup with the addition operation induced by direct sums and with the order induced by the subequivalence relation.
It was proved in \cite{CowEllIva08CuInv} that the resulting object is isomorphic to $\Cu(A)$, as defined above.

Recall that a unital C*-algebra has stable rank one if its set of invertible elements is dense, while a nonunital C*-algebra has stable rank one if its unitization does. The class of C*-algebras of stable rank one is closed under natural constructions such as matrix formation, corners, and inductive limits.
If $A$ is a simple, unital, stably finite C*-algebra that absorbs the Jiang-Su algebra $\mathcal{Z}$, then $A$ has stable rank one; see \cite[Theorem~6.7]{Ror04StableRealRankZ}. Stable rank one in itself does not constitute a regularity property of the kind encountered in the Elliott classification program, such as $\mathcal Z$-stability or finite nuclear dimension. For example, Toms' examples of non-regular C*-algebras in \cite{Tom06FlatDimGrowth, Tom08ClassificationNuclear} have stable rank one.

The Hilbert C*-modules picture of the Cuntz semigroup
simplifies considerably for C*-algebras of stable rank one: $\Cu(A)$ consists of the set of isomorphism classes
of countably generated Hilbert C*-modules over $A$ with addition induced by direct sums and order by Hilbert C*-module embeddings (see \cite{CowEllIva08CuInv}). Also under the stable rank one assumption, the Cuntz subequivalence relation on positive elements adopts a form closely resembling Murray-von Neumann equivalence: $a\precsim b$ if and only if there is $x\in A$ such that $a=x^*x$ and $xx^*\in \overline{bAb}$ (see \cite[Proposition 2.5]{CiuEllSan11LimitType1}).

In this paper we investigate the Cuntz semigroups of C*-algebras of stable rank one. By unraveling fine structural properties of these objects, we are able to resolve relevant questions on dimension functions and on divisibility and comparability properties of C*-algebras of stable rank one. These results represent an advance in the theory of C*-algebras and push further the work by the fourth author in \cite{Thi17arX:RksOps}, as we detail below.
One of our key results is as follows:

\begin{thm*}[\ref{thm:CuARiesz}, \ref{Cstarinf-semilattice}]
Let $A$ be a C*-algebra of stable rank one. Then $\Cu(A)$ has the Riesz interpolation property.
If $A$ is also separable, then every pair of elements in $\Cu(A)$ has an infimum, and addition in $\Cu(A)$ is distributive over the infimum operation.
\end{thm*}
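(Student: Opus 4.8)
The plan is to work throughout in the Hilbert C*-module picture of $\Cu(A)$ that is available under stable rank one, identifying each element with the isomorphism class of a countably generated Hilbert $A$-module and the order with the existence of a module embedding, as recalled in the introduction (following \cite{CowEllIva08CuInv}). The stable-rank-one input I would isolate as a refinement lemma for submodules: given a countably generated Hilbert module $N$ together with submodules $M_1, M_2 \subseteq N$, one must control the classes of the closed sum $M_1 + M_2$ and of a suitable replacement for $M_1 \cap M_2$. The engine making this tractable is the Murray--von Neumann-type description of Cuntz subequivalence in stable rank one, namely that $a \precsim b$ is witnessed by a single element $x$ with $x^*x = a$ and $xx^* \in \overline{bAb}$ (see \cite[Proposition 2.5]{CiuEllSan11LimitType1}); this allows one to transport and compare submodules by genuine partial isometries rather than by mere norm approximation.

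For the Riesz interpolation property, suppose $a_1,a_2 \leq b_1,b_2$. Using \axiomO{1} I would first reduce, by approximating each $a_i$ from below by a rapidly increasing sequence with $a_i' \ll a_i$, to producing at each finite stage an interpolant $c_n$ for the way-below approximants, with $c_{n-1}, a_{1,n}', a_{2,n}' \leq c_n \leq b_1, b_2$; the supremum $c = \sup_n c_n$ then satisfies $a_i \leq c \leq b_j$ since each $c_n$ lies below $b_1$ and $b_2$. At a single stage the task becomes the submodule problem: realize $a_1'$ and $a_2'$ as submodules sitting inside representatives of $b_1$ and of $b_2$, and take the class of their sum, so that $a_i' \leq c_n$ holds automatically. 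The crucial point, supplied by the refinement lemma, is that the sum can be arranged to remain embeddable into \emph{both} $b_1$ and $b_2$ simultaneously; this is precisely where stable rank one, through the controlled comparison above, is indispensable, and I expect this simultaneous positioning to be the main obstacle.

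For the separable case I would construct the infimum of $a$ and $b$ as the class of the intersection of representatives realized as submodules of the standard module $H_A$, obtained via Kasparov stabilisation. The subtlety is that a set-theoretic intersection depends on the chosen embeddings, since a third module may embed into $M_a$ and into $M_b$ by unrelated maps; one must show that, after aligning the two embeddings using stable rank one, the intersection computes the genuine greatest lower bound and verifies the universal property that $L \leq a$ and $L \leq b$ force $L \leq [M_a \cap M_b]$. Separability enters to guarantee that the resulting module is countably generated and to permit a countable inductive construction matching up the two embeddings. Finally, distributivity of addition over infima, $a + (b \wedge c) = (a+b) \wedge (a+c)$, I would read off directly from the module picture once the infimum is identified with a suitably positioned intersection, using the elementary identity $M_a \oplus (M_b \cap M_c) = (M_a \oplus M_b) \cap (M_a \oplus M_c)$ for submodules in compatible position, together with the already-established behaviour of $\oplus$ and the interpolation property.
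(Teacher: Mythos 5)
Your proposal is a plan rather than a proof: each of the three places where you locate the difficulty (the ``refinement lemma'', the ``simultaneous positioning'', the ``alignment'' of embeddings) is precisely the content of the theorem, and none of them is carried out or even reduced to a plausible intermediate statement. For interpolation, the class of a closed sum $\overline{M_1+M_2}$ depends on the relative position of $M_1$ and $M_2$, and you give no mechanism for arranging that this sum embeds into representatives of both $b_1$ and $b_2$ simultaneously. The paper does not attack this by module surgery at all: it proves an order-theoretic lemma (\autoref{lma:ht}) that in a weakly cancellative \CuSgp{} satisfying \axiomO{5} and \axiomO{6+} the set $\{z : z\leq e,x\}$ is upward directed whenever $e$ is \emph{compact}, and then reduces the general case to the compact case via \autoref{prp:unitization}, which adjoins to $A$ a projection $p_a$ in a larger stable rank one algebra $B$ so that $x\leq[a]$ in $\Cu(A)$ if and only if $x\leq[p_a]$ in $\Cu(B)$. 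Stable rank one enters through weak cancellation and \axiomO{6+} (\cite[Theorem~6.4]{Thi17arX:RksOps}), not through the partial-isometry description of $\precsim$. For infima, the set-theoretic intersection of submodules does not compute the greatest lower bound: already for $A=\CC$, two distinct lines in $\CC^2$ intersect in $\{0\}$ while their classes in $\Cu(\CC)\cong\NNbar$ have infimum $1$, and no ``alignment'' procedure is specified that would repair this. The paper never intersects modules; it obtains $x\wedge y$ as the supremum of the set of common lower bounds, which is upward directed by the interpolation property and has a supremum because $\Cu(A)$ is countably based --- this is the only point where separability is used.

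The distributivity claim cannot be ``read off'' from the identity $M_a\oplus(M_b\cap M_c)=(M_a\oplus M_b)\cap(M_a\oplus M_c)$: the lattice of closed submodules is not distributive, and, more to the point, the paper itself exhibits (in the paragraph following \autoref{multipleInf}) stable rank one algebras in which $(2e)\wedge(2f)\neq 2(e\wedge f)$, so any argument that manipulates intersections and direct sums freely would ``prove'' a false identity along with the true one. Only the specific form $(x+z)\wedge(y+z)=(x\wedge y)+z$ holds, and the paper's proof of it is a genuinely delicate three-step reduction --- first for $x$ and $z$ compact, then for $x$ compact, then in general --- using cancellation of compact elements, the \axiomO{6+}-consequence $a\leq b+c\Rightarrow a\leq(a\wedge b)+(a\wedge c)$, and the compact elements $[p_a]$ supplied by \autoref{prp:unitization}. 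None of this structure is visible in your sketch, so the gap is not one of missing details but of a missing method.
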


This theorem proves especially useful when combined with the properties encapsulated in the abstract axioms of Cu-semigroups. Equipped with these tools, we  tackle  a  number of questions which we describe next.

\begin{pgr}\emph{A conjecture by Blackadar and Handelman.}
Let $A$ be a unital C*-algebra. A map $d\colon W(A)\to [0,\infty)$ is called a dimension function if it is additive, order-preserving and maps the class of the unit to $1$. In other words, a dimension function is a state on $W(A)$. Denote by $\DF(A)$ the set of all dimension functions endowed with the topology of pointwise convergence. Blackadar and Handelman conjectured in \cite{BlaHan82DimFct} that $\DF(A)$ is a Choquet simplex for all C*-algebras $A$.	This conjecture has been confirmed in a number of instances, but remains open in general;
see	\cite{Per97StructurePositive,BroPerTom08CuElliottConj,AntBosPerPet14GeomDimFct,Sil16arX:ConjDF}.
The Riesz Interpolation Property in the Cuntz semigroup readily implies that $\DF(A)$ is a Choquet simplex.
We thus confirm the Blackadar-Handelman conjecture for all unital C*-algebras of stable rank one: 

\begin{thm*}[\ref{thm:BlacHanconjecture}]
Let $A$ be a unital C*-algebra of stable rank one. Then  $\DF(A)$ is a Choquet simplex.
\end{thm*}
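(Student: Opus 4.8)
The plan is to recognize $\DF(A)$ as the state space of the partially ordered abelian monoid $(W(A),\langle 1\rangle)$ and to derive the simplex property from the Riesz interpolation established in the first theorem. First I would record that $\DF(A)$ is a compact convex set. Convexity is clear, and since $A$ is unital, any $x\in W(A)$ represented in $M_k(A)_+$ satisfies $x\precsim 1_{M_k(A)}$, i.e.\ $x\le k\langle 1\rangle$; hence $d(x)\in[0,k]$ for every $d\in\DF(A)$. Thus $\DF(A)$ sits inside a product of compact intervals and is cut out by the closed conditions of additivity, monotonicity and normalization, so it is compact in the topology of pointwise convergence.

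Next I would transfer the Riesz interpolation property from $\Cu(A)$ to $W(A)$. The key observation is that, under stable rank one, $W(A)$ is downward hereditary in $\Cu(A)$: if $c\in\Cu(A)$ satisfies $c\le b$ for some $b\in W(A)$, then $c\in W(A)$. Indeed, fixing a representative $b\in M_n(A)_+$ and writing $c=\langle a\rangle$ with $a\in(A\otimes\mathcal K)_+$, the algebra $A\otimes\mathcal K$ again has stable rank one (being an inductive limit of the $M_k(A)$), so $a\precsim b$ gives $x$ with $a=x^*x$ and $xx^*\in\overline{b(A\otimes\mathcal K)b}$. Since $A$ is unital, $\overline{b(A\otimes\mathcal K)b}=\overline{b\,M_n(A)\,b}\subseteq M_n(A)$, whence $c=\langle x^*x\rangle=\langle xx^*\rangle\in W(A)$. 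Consequently, given $a_1,a_2\le b_1,b_2$ in $W(A)$, the interpolant $c\in\Cu(A)$ produced by the Riesz interpolation property of $\Cu(A)$ satisfies $c\le b_1$ and therefore lies in $W(A)$; as $W(A)\hookrightarrow\Cu(A)$ is an order embedding, $W(A)$ itself has Riesz interpolation.

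Finally, I would invoke the classical principle that the state space of a partially ordered abelian monoid with order unit and Riesz interpolation is a Choquet simplex; this is the monoid counterpart of Goodearl's theorem that the state space of an interpolation group with order unit is a Choquet simplex. Concretely, I would pass to the enveloping group $G(W(A))$ with positive cone generated by the image of $W(A)$ under the Cuntz order and order unit $\langle 1\rangle$, verify that the Riesz interpolation of $W(A)$ makes this a preordered interpolation group, and observe that dimension functions correspond, affinely and homeomorphically, to normalized states on this group. The descent from group states to the monoid is forced by additivity, since $d(x)+d(z)=d(y)+d(z)$ yields $d(x)=d(y)$; passing to the quotient by $G^+\cap(-G^+)$ leaves the state space unchanged. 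The conclusion is then immediate from the group statement.

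I expect the main obstacle to be this last step, and specifically the bookkeeping in passing from the monoid to a group: one must set up the enveloping group so that its positive cone reflects the Cuntz order (rather than the a priori coarser algebraic order $x\le y\iff\exists z\colon x+z=y$), confirm that interpolation transports to this cone, and check that the correspondence between dimension functions and group states is an affine homeomorphism of compact convex sets. Once these identifications are secured, the simplex property follows at once from interpolation, which is the precise sense in which the first theorem ``readily implies'' the result.
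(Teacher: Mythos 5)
Your proposal is correct and follows essentially the same route as the paper: transfer the Riesz Interpolation Property from $\Cu(A)$ to the hereditary subsemigroup $W(A)$, pass to the Grothendieck group $K_0^*(A)$ (which is then an interpolation group), and apply Goodearl's theorem that the state space of an interpolation group with order unit is a Choquet simplex, together with the identification $\DF(A)\cong\St(K_0^*(A))$. The only difference is that you prove the downward heredity of $W(A)$ in $\Cu(A)$ directly from the stable rank one description of Cuntz subequivalence, whereas the paper cites it from the literature.
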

\end{pgr}

\begin{pgr}
\emph{The Global Glimm Halving Problem.}
A result of Glimm says that if a C*-algebra $A$ has an irreducible representation of dimension at least $k\in\NN$, then there exists a non-zero *-homomorphism from $M_k(C_0((0,1]))$ into $A$.
The Global Glimm Halving Problem was formulated for the first time by Kirchberg and R{\o}rdam in
\cite[Definition~4.12]{KirRor02InfNonSimpleCalgAbsOInfty}, while studying nonsimple purely infinite C*-algebras (where it was termed the Global Glimm Halving Property). For a unital C*-algebra $A$, 
the problem asks to prove the existence for all $k\in \NN$ of a *-homomorphism $M_k(C_0((0,1]))\to A$  whose  range generates $A$ as a closed two-sided ideal, provided that $A$ has no nonzero, finite dimensional representations. Assuming an affirmative answer to this problem, Kirchberg and R{\o}rdam  show that the notions of pure infiniteness and weak pure infiniteness agree; see \cite[Theorem~9.1]{KirRor02InfNonSimpleCalgAbsOInfty}. The Global Glimm Halving Problem  has been answered affirmatively whenever $A$ has Hausdorff, finite dimensional, primitive ideal space (see \cite[Theorem~4.3]{BlaKir04GlimmHalving}), and whenever $A$ is a C*-algebra with real rank zero (\cite{EllRor06Perturb}).
The problem remains open in general. 

In \cite{RobRor13Divisibility}, the Global Glimm Halving Problem is translated into an equivalence of divisibility properties on the Cuntz semigroup of the C*-algebra. We rely on this alternative formulation in order to solve the problem affirmatively for C*-algebras of stable rank one. In the unital case, this reads as follows:

\begin{thm*}[\ref{CstarGG}, \ref{nonsepGG}]
Let $A$ be a unital C*-algebra of stable rank one, and let $k\in\NN$.
Then $A$ has no nonzero representations of dimension less than $k$ if and only if there exists a *-homomorphism $\varphi\colon M_k(C_0((0,1]))\to A$ with full range.
\end{thm*}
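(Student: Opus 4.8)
The plan is to pass to the Cuntz semigroup and to use the reformulation of the Global Glimm Halving Problem in terms of divisibility due to Robert and R{\o}rdam \cite{RobRor13Divisibility}. Concretely, a $*$-homomorphism $\varphi\colon M_k(C_0((0,1]))\to A$ is the same datum as a system $(b_{ij})_{i,j=1}^k$ with $b_{ij}^*=b_{ji}$ and $b_{ij}b_{lm}=\delta_{jl}\varphi(\iota^2\otimes e_{im})$, where $\iota\in C_0((0,1])$ is the canonical generator; its range generates the same closed two-sided ideal as the positive element $a:=\varphi(\iota\otimes 1_k)=\sum_i b_{ii}$, whose summands $b_{ii}$ are pairwise orthogonal and pairwise Cuntz equivalent. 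Thus, writing $y=[b_{11}]\in\Cu(A)$, the existence of a full-range $\varphi$ forces $ky\leq[1_A]$ with $y$ full, and conversely any such $y$, once realized by $k$ orthogonal pairwise Cuntz-equivalent positive elements, can be promoted to a system $(b_{ij})$ --- here the stable-rank-one description of Cuntz comparison recalled in the introduction (namely $a\precsim b$ iff $a=x^*x$ and $xx^*\in\overline{bAb}$) is exactly what upgrades the equivalences $b_{ii}\sim b_{11}$ to the required off-diagonal elements. So the theorem reduces to the assertion that \emph{$A$ has no nonzero representation of dimension $<k$ if and only if there is a full $y\in\Cu(A)$ with $ky\leq[1_A]$}.

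The ``if'' direction is the easy one and does not use stable rank one. Given a full-range $\varphi$ and any nonzero representation $\pi\colon A\to B(H)$, fullness guarantees that $\pi\circ\varphi$ is nonzero, so it restricts to a nonzero representation of $M_k(C_0((0,1]))$, hence to $k$ nonzero, pairwise orthogonal positive operators on $H$; their mutually orthogonal ranges force $\dim H\geq k$. Equivalently, at the level of the semigroup, any finite-dimensional quotient $A\to M_n$ sends $[1_A]$ to $n[e]$ for a minimal projection $e$ and sends the full $y$ to a nonzero class $y'\geq[e]$, so that $k[e]\leq ky'\leq n[e]$ yields $n\geq k$.

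For the converse --- the main content --- assume $A$ has no nonzero representation of dimension $<k$ and produce the divisor $y$. The difficulty is that stable rank one grants no comparison (Toms' examples have stable rank one yet fail strict comparison), so one cannot divide merely at the level of ranks and realize the result; the division must be carried out inside the ordered semigroup itself. The representation-theoretic hypothesis supplies the \emph{local} input: it rules out exactly the finite-dimensional obstructions to $k$-dividing $[1_A]$, so that for each $u\ll[1_A]$ one finds $z\in\Cu(A)$ with $kz\leq[1_A]$ and $u\leq\infty z$. The new structural results of this paper are what assemble these local solutions into a single global divisor: Riesz interpolation (\autoref{thm:CuARiesz}) lets one interpolate a common element between the ``$k$-fits-inside'' upper constraints and the ``full-enough'' lower constraints along a rapidly increasing sequence $u_1\ll u_2\ll\cdots$ with supremum $[1_A]$, while in the separable case the existence of infima together with the distributivity of addition over $\wedge$ (\autoref{Cstarinf-semilattice}) keeps the pieces compatible, so that $k$ times the interpolant stays below $[1_A]$ as one passes to the supremum. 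I expect this assembly to be the principal obstacle: the interpolation must be applied infinitely often in a manner that simultaneously preserves the bound $ky\leq[1_A]$ and the fullness of $y$ in the limit, and it is precisely the lattice structure relating $\wedge$ and $+$ that makes the bookkeeping close up.

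Finally, the general unital case (\autoref{nonsepGG}) is reduced to the separable one: since the infimum and distributivity tools require separability, one writes $A$ as a limit of separable C*-subalgebras inheriting both stable rank one and the absence of low-dimensional representations --- a standard L\"owenheim--Skolem / separable-approximation argument --- solves the problem in each, and transports the resulting system $(b_{ij})$ back to $A$. Unitality, i.e.\ compactness of $[1_A]$ in $\Cu(A)$, is what permits specializing the ``for each $u\ll[1_A]$'' divisibility obtained above (take $u=[1_A]$) to the single full divisor $y$ needed to build $\varphi$.
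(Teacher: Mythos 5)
Your overall architecture is the right one --- reduce to a divisibility statement in $\Cu(A)$, invoke Robert--R{\o}rdam to translate the representation-theoretic hypothesis into Cuntz-semigroup divisibility, build the *-homomorphism from a single divisor via the stable rank one picture of Cuntz subequivalence (this is \autoref{prp:mapFromCone}), and handle the non-separable case by a Downward L\"owenheim--Skolem reduction. The easy direction is also handled correctly. But there is a genuine gap at the central step. What \cite[Theorem~5.3]{RobRor13Divisibility} actually gives from the absence of representations of dimension less than $k$ is \emph{weak} $(k,n)$-divisibility of $[1_A]$: there exist $n$ elements $y_1,\dots,y_n$ with $ky_j\leq[1_A]$ for each $j$ and $[1_A]\leq\sum_j y_j$. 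Your proposal asserts instead that the hypothesis directly ``rules out the finite-dimensional obstructions to $k$-dividing $[1_A]$'' and hands you, for each $u\ll[1_A]$, a \emph{single} $z$ with $kz\leq[1_A]$ and $u\leq\infty z$. That single-divisor statement is precisely what must be proved, and it is the main content of the argument: \autoref{CuGG}, resting on \autoref{wedgefull} and the induction in \autoref{CuGGlm}, upgrades weak $(k,n)$-divisibility to genuine $(k,N)$-divisibility. This is exactly where the inf-semilattice structure and the distributivity of $+$ over $\wedge$ are consumed --- one first produces $k$ pieces $z_1,\dots,z_k$ with $\sum_j z_j\leq x$ and $x'\propto z_j$ for each $j$, and then takes $z=\bigwedge_j z_j$, using \autoref{wedgefull} to retain $x'\propto z$.

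By contrast, the ``assembly along a rapidly increasing sequence $u_1\ll u_2\ll\cdots$'' that you identify as the principal obstacle does not occur in the unital case at all: since $[1_A]$ is compact, the divisibility statement tested at $u=[1_A]$ is already the global one (as you yourself observe at the end), so no limiting procedure or infinite interpolation bookkeeping is needed, and Riesz interpolation per se is not what is invoked. In short, you have located the right tools but attached them to the wrong step; the place where the new lattice structure does the work --- converting many weak divisors into one genuine divisor --- is left unproved in your outline.
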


We note that the theorem above does not require $A$ to have no nonzero finite dimensional representations.
One does not expect that this strong solution of the Global Glimm Halving Problem holds for general \ca{s}.

We also remark that the solution of the Global Glimm Halving Problem for stable rank one \ca{s} is a significant step forward from the real rank zero case. Indeed, while the primitive ideal space of a real rank zero C*-algebra has a basis of compact, open sets (hence, it is zero dimensional if it is also Hausdorff),
there is no dimensional restriction on the primitive ideal space of  a stable rank one \ca{}. For example, if $X$ is any compact, Hausdorff space and $\mathcal{R}$ is the Jacelon-Razak algebra, then $C(X,\mathcal{R})$ has stable rank one by \cite[Corollary~3.8]{San12arX:ReductionDim}, while its primitive ideal space is homeomorphic to $X$. Thus, one cannot just think of C*-algebras of stable rank one as generalized bundles over one dimensional spaces.
\end{pgr}

\begin{pgr}
\label{pgr:rank}
\emph{Realizing functions on $\QT(A)$ as ranks of Cuntz semigroup elements}.
As mentioned above, the Cuntz semigroup was introduced in \cite{Cun78DimFct} as a tool to study quasitraces on \ca{s}. 
The seminal paper of Blackadar and Handelman \cite{BlaHan82DimFct} continued the study of quasitraces and states on the Cuntz semigroup. This work was extended further in \cite{BlaKir04PureInf} and \cite{EllRobSan11Cone}, in order to allow for $[0,\infty]$-valued quasitraces and functionals. It follows from these works that a lower semicontinuous $[0,\infty]$-valued 2-quasitrace $\tau$ on a  \ca{} $A$ gives rise to a function $d_\tau\colon \Cu(A)\to [0,\infty]$ that preserves addition, order and suprema of increasing sequences.  
More precisely, given a positive element $a\in A\otimes\mathcal K$, we set 
\[
d_\tau([a]) = \lim\limits_{n\to\infty} \tau(a^{1/n}).
\]  

Let $\QT(A)$ denote the set of lower semicontinuous $[0,\infty]$-valued 2-quasitraces.
Fix an element $[a]\in \Cu(A)$ and consider the map $\QT(A)\to [0,\infty]$ given by $\tau \mapsto d_\tau([a])$.
This is called the \emph{rank} induced by $[a]$. (Observe that  if $A=M_n(\CC)$ and $\tau$ is the normalized trace
on $M_n(\CC)$, then $d_\tau([a])$ is the rank of $a$.)

The \emph{rank problem} asks to describe the functions on $\QT(A)$ that arise as ranks of elements of $\Cu(A)$.
Ranks of Cuntz semigroup elements are linear, lower semicontinuous, and satisfy a technical approximation property whose definition we defer to \autoref{dualofcone}.
The collection of all functions with these properties is denoted by $L(\QT(A))$.
One can then ask, more concretely, whether all functions in $L(\QT(A))$ can be realized as ranks of Cuntz semigroup elements.

If $A$ is simple, then nonzero functions in $L(\QT(A))$ are in natural bijection with the lower semicontinuous, affine functions defined on the simplex of normalized quasitraces $\QT(A)_1$ and with values in $(0,\infty]$.
In this setting, the rank problem was first raised by N.~Brown, and has been solved in a number of instances:
For simple C*-algebras that tensorially absorb the Jiang-Su algebra~$\mathcal{Z}$, the problem is solved in \cite[Theorem~5.5]{BroPerTom08CuElliottConj} in the exact, unital case, and in \cite[Corollary~6.8]{EllRobSan11Cone} dropping both exactness and existence of a unit. Assuming simplicity, exactness, strict comparison of positive elements, and that $\QT(A)_1$ is a Bauer simplex with finite dimensional extreme boundary, a solution is obtained in \cite{DadTom10Ranks}.

The fourth author obtained in \cite[Theorem~8.11]{Thi17arX:RksOps} a solution to the rank problem for every separable, simple, non-elementary, unital C*-algebra $A$ of stable rank one. 
More concretely, given a lower semicontinuous, affine function $f\colon\QT(A)_1\to(0,\infty]$, there exists a positive element $a\in A\otimes\mathcal{K}$ such that $d_\tau([a])=f(\tau)$ for all $\tau$. 

In this paper we extend the techniques developed in \cite{Thi17arX:RksOps} and obtain solutions to the rank problem in different settings. By removing the assumptions of simplicity and existence of a unit we obtain:

\begin{thm*}[\ref{thm:realizingprob1}]
Let $A$ be a separable C*-algebra of stable rank one that has no nonzero, elementary ideal-quotients (that is, there are no closed, two-sided ideals $J\subseteq I$ of $A$ such that $I/J$ is a nonzero elementary C*-algebra).
Then every function in $L(QT(A))$ can be realized as the rank of a Cuntz semigroup element.
\end{thm*}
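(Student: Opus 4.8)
The plan is to realize $f$ as a supremum of ranks of an increasing sequence of Cuntz semigroup elements, thereby reducing the problem to the realization of simpler, ``way-below'' functions. Concretely, I would first invoke the approximation property built into the definition of $L(\QT(A))$ to write $f=\sup_n f_n$ for an increasing sequence $(f_n)_n$ in $L(\QT(A))$ with each $f_n$ way-below $f_{n+1}$. Granting that each $f_n$ can be realized by some $[a_n]\in\Cu(A)$ with $[a_n]\ll[a_{n+1}]$, the rest is automatic: the supremum $[a]:=\sup_n[a_n]$ exists by the Cu-semigroup axioms, and since $d_\tau$ preserves suprema of increasing sequences we obtain $d_\tau([a])=\sup_n d_\tau([a_n])=\sup_n f_n(\tau)=f(\tau)$ for every $\tau\in\QT(A)$. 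Thus the entire difficulty is concentrated in realizing a single way-below approximant, compatibly with the order.

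To realize such an approximant $g$, I would extend the techniques of \cite{Thi17arX:RksOps} from the simple to the present non-simple, non-unital setting, organizing the construction along the lattice of closed two-sided ideals of $A$. The value of $g$ at a quasitrace supported on an ideal-quotient is governed by the behaviour of $g$ on the corresponding ``slice'' of $\QT(A)$, and on each such slice one realizes $g$ using the solution of the rank problem for the simple subquotients, which are again separable and of stable rank one. Here the hypothesis that $A$ has no nonzero elementary ideal-quotients is essential, and is used precisely as in the Global Glimm Halving results established earlier: the absence of elementary subquotients supplies the divisibility, or ``halving,'' needed to produce positive elements of arbitrarily small and finely prescribed rank, which would be impossible if some subquotient had a discrete, integer-valued rank range such as that of a matrix algebra or of $\mathcal K$.

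The delicate point, and the main obstacle, is to assemble these local realizations into a single element of $\Cu(A)$ inducing $g$ globally, respecting all the compatibility constraints imposed by inclusions of ideals. This is exactly where the new structural input of the paper is decisive. The Riesz interpolation property of $\Cu(A)$ allows me to interpolate between the candidate elements arising from different ideals and so resolve these constraints, while the existence of infima and the distributivity of addition over infima give the fine control needed to combine the contributions of overlapping slices without overshooting the prescribed values. In effect, these properties let the rank map be treated as a morphism onto a lattice-ordered cone, so that data prescribed compatibly on the pieces of the ideal lattice can be lifted to a genuine Cuntz element.

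It remains to arrange the realizing elements so that $[a_n]\ll[a_{n+1}]$, so that the supremum argument of the first paragraph applies. This compact containment can be secured because the $f_n$ were chosen way-below one another and the way-below relation in $\Cu(A)$ is matched, through the same interpolation and infimum machinery, with the corresponding relation in $L(\QT(A))$ under the rank map. Passing to the supremum then produces the desired positive element $a\in A\otimes\mathcal K$ with $d_\tau([a])=f(\tau)$ for all $\tau\in\QT(A)$.
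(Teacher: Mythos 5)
There is a genuine gap: everything in your proposal after the first paragraph is a description of what a proof would have to accomplish rather than an argument, and the one concrete mechanism you do propose is not viable. You suggest realizing a way-below approximant $g$ by ``organizing the construction along the lattice of closed two-sided ideals'' and invoking ``the solution of the rank problem for the simple subquotients.'' But the ideal-quotients of a general stable rank one C*-algebra need not be simple, $\QT(A)$ does not decompose into slices indexed by simple subquotients, and no gluing procedure is specified beyond an appeal to Riesz interpolation. The paper's actual route is quite different. One first defines $\alpha(f)=\sup I_f$ where $I_f=\{x:\widehat{x'}\ll f\text{ for all }x'\ll x\}$; the inf-semilattice structure and \autoref{hat-inf-preserving} are used precisely to prove that $I_f$ is upward directed (\autoref{thm:upwardalpha}), so that this supremum exists. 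The hard analytic content is then \autoref{mainrealizing}: one realizes the chisel at each extreme densely finite functional directly (\autoref{lma:chisel_realization}, via Edwards' condition and \autoref{lma:lambda_mu}), and passes from extreme functionals to a general full $f$ not by interpolating over the ideal lattice but by a Choquet-theoretic compactness argument on the cap $K=\{\lambda: f(\lambda)\leq 1\}$ of $F_0(\Cu(A))$, using a downward directed family $D$ of continuous affine majorants and Bauer's maximum principle. Finally, the general (non-full) case is reduced to the full case by passing to the single ideal $I$ with $\Cu(I)=\{x:\widehat{x}\leq\infty f\}$, not by assembling data over all ideals. None of this structure is visible in your sketch, so the core of the theorem remains unproved.

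A second, more localized error: your last paragraph asserts that the way-below relation in $\Cu(A)$ is ``matched'' with the corresponding relation in $L(\QT(A))$ under the rank map, so that realizers can be chosen with $[a_n]\ll[a_{n+1}]$. This is false in general: by \autoref{thm:strict}, the map $\alpha$ preserves the way-below relation exactly when $\Cu(A)$ has strict comparison, which a stable rank one C*-algebra need not satisfy (Toms's examples). Fortunately this step is also unnecessary: once one knows that each approximant is realized by $\alpha(f_n)$, the facts that $\alpha$ preserves order and suprema of increasing sequences (\autoref{prp:mapAlpha}) already give $\widehat{\alpha(f)}=\sup_n\widehat{\alpha(f_n)}=f$, with no compact containment between the realizers required. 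I would encourage you to isolate the two genuine sub-problems --- realizing chisels at extreme functionals, and passing from extreme functionals to general full functions by a compactness argument on a cap of the cone --- since that is where the theorem actually lives.
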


In the unital case the previous result translates into the following theorem. (We show in \autoref{prob2nonsep} that separability can be dropped in the theorem below.)

\begin{thm*}[\ref{thm:realizingprob2}]
Let $A$ be a separable, unital C*-algebra of stable rank one that has no finite dimensional representations. Then every lower semicontinuous, affine function on $\QT(A)_1$ with values in $(0,\infty]$ can be realized as the rank of a Cuntz semigroup element.
\end{thm*}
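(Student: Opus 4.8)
The plan is to reduce the statement to \autoref{thm:realizingprob1} by replacing $A$ with its antiliminal quotient, after checking that this does not disturb the normalized quasitraces. Let $E\subseteq A$ be the largest postliminal (type~I) ideal; then $A/E$ is antiliminal, i.e.\ it has no nonzero elementary ideal-quotients. Being a quotient of $A$, the algebra $A/E$ is again separable and of stable rank one, so \autoref{thm:realizingprob1} will apply to it as soon as we know that its normalized quasitraces coincide with those of $A$.

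The first thing I would establish is that $E$, though possibly large, is invisible to bounded quasitraces. A finite-dimensional irreducible representation of the ideal $E$ would extend to one of $A$; since $A$ has no finite-dimensional representations, every irreducible representation of $E$ is infinite-dimensional. Now fix $\tau\in\QT(A)_1$ and run up a composition series of $E$ by continuous-trace subquotients, all of whose fibres are compact operators on an infinite-dimensional space. At the bottom of the series sits a genuine continuous-trace ideal of $A$ in which, locally, one finds $n$ pairwise orthogonal, mutually equivalent rank-one-fibre projections $p_1,\dots,p_n$ with $p_1+\dots+p_n\le 1_A$; equivalence-invariance and orthogonal additivity of $\tau$ give $n\,\tau(p_1)\le\tau(1_A)<\infty$ for every $n$, whence $\tau(p_1)=0$. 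As such projections generate that ideal, $\tau$ vanishes on it, descends to the corresponding quotient, and the argument iterates up the series. Thus $\tau$ vanishes on all of $E$, and the quotient map induces an affine homeomorphism $\QT(A/E)_1\cong\QT(A)_1$.

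With the quasitraces matched up, I would transport $f$ along this homeomorphism and extend it, by positive homogeneity over the cone and lower-semicontinuity, to a function $\hat f\in L(\QT(A/E))$ restricting to $f$ on the base; that such an extension exists reflects the description of $L(\QT(A/E))$ in terms of lower semicontinuous affine functions on its base developed in \autoref{dualofcone}. Applying \autoref{thm:realizingprob1} to $A/E$ yields a positive $\bar a\in(A/E)\otimes\mathcal{K}$ whose rank is $\hat f$. Lifting $\bar a$ to a positive $a\in A\otimes\mathcal{K}$ and using that every $\tau\in\QT(A)_1$ factors through $A/E$, I would conclude $d_\tau([a])=d_{\bar\tau}([\bar a])=\hat f(\bar\tau)=f(\tau)$ for all $\tau$, which is the required realization.

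The crux — and the main obstacle — is the second step: showing that the type~I ideal $E$ carries no nonzero bounded $2$-quasitrace, so that $\QT(A/E)_1=\QT(A)_1$. This is exactly where the hypothesis of having no finite-dimensional representations is used, since it forbids finite-dimensional fibres, on which bounded traces would survive; the delicate points are the non-unitality of $E$, the inductive passage through its composition series, and working with $2$-quasitraces rather than traces. A secondary technical matter is confirming that the homogeneous extension $\hat f$ genuinely lies in $L(\QT(A/E))$, for which the absence of elementary ideal-quotients in $A/E$ is precisely what the duality of \autoref{dualofcone} requires.
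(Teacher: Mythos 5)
There is a genuine gap at the very first step of your reduction. You pass to $B=A/E$, where $E$ is the largest postliminal ideal, and assert that $B$ is ``antiliminal, i.e.\ it has no nonzero elementary ideal-quotients.'' These two properties are not equivalent: antiliminality only forbids nonzero postliminal \emph{ideals}, whereas an elementary ideal-quotient $I/J$ with $J\neq 0$ need not produce any postliminal ideal of $B$. For instance, if $D$ is simple, unital and not of type I, the algebra of continuous functions $f\colon[0,1]\to D\otimes\mathcal{K}$ with $f(0)\in 1\otimes\mathcal{K}$ is antiliminal (every nonzero ideal contains a nonzero ideal of the form $C_0(U)\otimes D\otimes\mathcal{K}$, which is not postliminal), yet evaluation at $0$ exhibits a nonzero elementary ideal-quotient. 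So the hypothesis of \autoref{thm:realizingprob1} is not verified for $A/E$ by the argument you give, and it is not clear that it holds at all under your hypotheses; if you enlarge $E$ to kill all elementary ideal-quotients, the ideal you must quotient by is no longer type I, and your composition-series argument for the vanishing of normalized quasitraces (which relies on infinite-dimensional elementary fibres and Fell's condition) no longer applies to it. That second argument is in any case only sketched — the transfinite induction, the passage from traces to $2$-quasitraces, and the lower semicontinuity of the induced quasitrace on the quotient all need care — but the elementary-ideal-quotient step is the one that breaks the reduction.

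For comparison, the paper avoids the quotient entirely. It first observes that a unital $A$ with no finite-dimensional representations has no nonzero type~I \emph{quotients}: a type I quotient would admit a maximal ideal, and a simple unital type I C*-algebra is a matrix algebra. It then extends $f$ to a \emph{full} function $\tilde f\in L(F(\Cu(A)))$ via \autoref{surjectiveres} and applies \autoref{mainrealizing} — the realization theorem for full functions, whose hypothesis is precisely ``no nonzero type I quotients'' rather than ``no nonzero elementary ideal-quotients'' — directly to $A$. If you want to salvage your approach, the cleanest fix is to abandon the passage to $A/E$ and instead invoke \autoref{mainrealizing} in this way; the general-purpose \autoref{thm:realizingprob1} is the wrong tool here because its hypothesis is strictly stronger than what the unital statement provides.
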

\end{pgr}

These realization results are key ingredients in establishing the results regarding comparability properties in the next section.

\begin{pgr}\emph{Comparability properties}. 
Comparability properties in the Cuntz semigroup, such as strict comparison (equivalently, almost unperforation), $m$-comparison, and finite radius of comparison, measure degrees of  regularity of the C*-algebra; see, for example, \cite{Rob11NuclDimComp, Tom06FlatDimGrowth, Tom08InfFamily, BlaRobTikTomWin12AlgRC}. For 
simple nuclear C*-algebras, the Toms-Winter conjecture asserts the equivalence of the properties of $\mathcal Z$-stability, finite nuclear dimension, and strict comparison (in the Cuntz semigroup). Regularity in the Cuntz semigroup, however, may be encountered in C*-algebras that are both non-nuclear and tensorially prime. For example, the reduced C*-algebra of the free group on infinitely many generators has strict comparison. 

The additional structure in the Cuntz semigroup brought about by the stable rank one property entails that seemingly different comparability properties are in fact equivalent.	Although our results do not require the assumption of simplicity, we highlight here the simple unital case (see \autoref{SecSupersoft} for the relevant definitions):

\begin{thm*}[\ref{thm:locweak}, \ref{thm:strict}]
Let $A$ be a simple, unital, separable C*-algebra of stable rank one. 
\begin{enumerate}[{\rm (i)}]
\item		
$A$ has finite radius of comparison in the sense of Toms (\cite{Tom06FlatDimGrowth}) if and only if the subsemigroup $W(A)$ consists precisely of the elements in $\Cu(A)$ whose rank is a bounded function on  the set of $2$-quasitracial states. 
\item
If $A$ has either $m$-comparison for some $m\in \NN$ (in the sense defined by Winter in \cite{Win12NuclDimZstable}) or local weak comparison (in the sense defined by Kirchberg and R{\o}rdam in \cite{KirRor14CentralSeq}) then $A$ has strict comparison.
\end{enumerate}	
\end{thm*}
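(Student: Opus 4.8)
The plan is to express every comparability property through the rank map. Since $A$ is simple, unital and separable, the set $\QT(A)_1$ of normalised $2$-quasitracial states is a metrisable, compact Choquet simplex, and for each $x\in\Cu(A)$ the rank $\tau\mapsto d_\tau(x)$ is a lower semicontinuous affine function on $\QT(A)_1$ with values in $[0,\infty]$. Two standing observations will be used throughout: first, $W(A)$ is always contained in the set of bounded-rank elements, since a positive element of $M_n(A)$ has rank at most $n$; and second, by lower semicontinuity on the compact set $\QT(A)_1$, a lower semicontinuous function attains its infimum, so the rank of a nonzero element is bounded away from $0$ (and a positive additive gap between two ranks is likewise uniform). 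The engine feeding these rank computations is the structure theorem --- Riesz interpolation together with the inf-semilattice property and distributivity of addition over infima (\autoref{thm:CuARiesz}, \autoref{Cstarinf-semilattice}) --- and the realization results (\autoref{thm:realizingprob2}).

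For part (i) the forward implication is the direct one. Assume $A$ has finite radius of comparison $r$, and let $x\in\Cu(A)$ have bounded rank, say $d_\tau(x)\le M$ for all $\tau$. Choosing an integer $n>M+r$ gives $d_\tau(x)+r<n=d_\tau(n[1_A])$ for every $\tau$, so finite radius of comparison (in its Cuntz semigroup formulation, applied to the $\ll$-approximants of $x$) yields $x\le n[1_A]=[1_{M_n(A)}]$. Here the stable rank one hypothesis finishes the job: by the description of Cuntz subequivalence recalled in the introduction, an element dominated by a projection $p$ is Cuntz equivalent to a positive element of the corner $\overline{p(A\otimes\mathcal K)p}\cong M_n(A)$, whence $x\in W(A)$. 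For the converse I would argue by contraposition: if the radius of comparison is infinite, then for each $r$ there are elements witnessing a comparison failure across an additive margin $r$, and feeding the admissible rank functions assembled from these into the realization theorem produces a single soft element of bounded rank that cannot be represented in any $M_n(A)$, so $W(A)$ is strictly smaller than the set of bounded-rank elements.

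For part (ii) both hypotheses are instances of \emph{comparison with a constant}: $m$-comparison, after setting all summands equal, gives that $d_\tau(a)<d_\tau(b)$ for all $\tau$ implies $a\precsim(m+1)b$, while local weak comparison supplies a $\gamma\ge1$ with $\gamma\,d_\tau(a)<d_\tau(b)$ for all $\tau$ implying $a\precsim b$; I would treat both uniformly through such a constant. The target, strict comparison, asserts that $d_\tau(a)<d_\tau(b)$ for all $\tau$ already forces $a\precsim b$. My plan is first to establish exact comparison among the soft (purely non-compact) elements, and this is precisely where the weak-comparison hypothesis is consumed: using softness to create multiplicative room and the realization theorem together with Riesz interpolation to produce and compare interpolants, I would show that for soft $s,t$ one has $s\precsim t$ whenever $d_\tau(s)<d_\tau(t)$ for all $\tau$. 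I would then reduce the general case by splitting an arbitrary element into a compact (projection) summand and a soft summand --- a decomposition furnished by the stable rank one structure --- comparing the two summands separately and recombining them, via distributivity of addition over infima and the inf-semilattice property, into a single subequivalence dominating the original element.

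The main obstacle, and the step I expect to be genuinely hard, is the removal of the constant. Naive maneuvers fail: scaling an inequality or chaining comparisons through interpolants preserves the multiplicative ratio, and for elements of large or infinite rank the uniform additive gap supplied by compactness of $\QT(A)_1$ degenerates into a vanishing multiplicative gap, so a fixed $\gamma>1$ can never be reached this way. The route I would pursue exploits softness together with the realization theorem to manufacture, for the relevant test elements, approximants $s'\ll s$ with $(n+1)s'\le ns$ for large $n$, thereby creating controlled multiplicative room; combining this room with the lattice operations, a single application of the comparison-with-constant hypothesis at an effective constant pushed below $\gamma$ should yield the desired subequivalence, and passing to the supremum over the approximants removes the remaining slack. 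Making this uniform across the whole simplex $\QT(A)_1$, in particular at quasitraces where the rank is infinite and no multiplicative gap is available, is the delicate technical core on which the argument stands or falls.
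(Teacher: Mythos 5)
Your part (i) forward direction matches the paper's ((iv)$\Rightarrow$(i) of \autoref{thm:equivalences_supersoft}): a uniform rank bound $M$ plus radius of comparison $r$ gives $x\leq n[1_A]$ for $n>M+r$, and heredity of $W(A)$ in $\Cu(A)$ under stable rank one finishes. But the backward direction of (i) is only gestured at, and the step you omit is the crux: the realization theorem produces $y=\alpha(\widehat{u})$ with $\widehat{y}=\widehat{u}$, hence $y$ lies in the bounded-rank set; if that set equals $W(A)$, then $u\leq my\leq nu$ for some $m,n$, and the absorption identity $x\leq x+\alpha(m\widehat{y})=\alpha(\widehat{x}+m\widehat{y})$ furnished by the additivity of $\alpha$ (\autoref{alphadditive}), combined with cancellation of compact elements, is what converts a rank inequality $\widehat{x}+N\widehat{u}\leq\widehat{z}$ into $x\leq z$. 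Without this absorb-and-cancel mechanism there is no visible route from ``every bounded-rank element lies in $W(A)$'' to a finite radius of comparison; assembling a separate witness from each comparison failure, as you suggest, is not how the argument closes.

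The more serious gap is in part (ii): your proposed device for removing the constant fails for exactly the reason you give against naive maneuvers. Softness of $s$ supplies approximants $s'\ll s$ with $(n+1)s'\leq ns$, i.e.\ multiplicative room $\tfrac{n+1}{n}$, which tends to $1$; it can therefore never reach a fixed $\gamma>1$ (or $m+1$), so ``a single application of the comparison-with-constant hypothesis at an effective constant pushed below $\gamma$'' is not available. The paper removes the constant by a different mechanism entirely: $\alpha$ is an exact right inverse of the rank map on full functions ($\widehat{\alpha(f)}=f$, \autoref{mainrealizing}) and satisfies $\alpha(f+\widehat{x})=\alpha(f)+x$ (\autoref{alphadditive}), so if $z$ is full and supersoft and $\widehat{x}\leq\widehat{y'}$, then $x\leq x+z=\alpha(\widehat{x}+\widehat{z})\leq\alpha(\widehat{y'}+\widehat{z})=y'+z$ with no constant lost. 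The comparison-with-constant hypothesis is consumed elsewhere, namely in proving that below every full element there sits a full supersoft element ((i)$\Rightarrow$(ii) of \autoref{thm:locweak}), via \autoref{smallsoft} and the infinite-sum decomposition $z=\sum_i(z_i\wedge u)\leq\sum_i m x_i\leq x$. Finally, your plan to split a general element into a compact summand plus a soft summand is not the reduction used (and such decompositions need not exist); the paper instead splits the larger element $y$ as $y'+w$ with $w$ full (\autoref{lma:first_lemma}) and absorbs a full supersoft $z\leq w$ into both sides.
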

\end{pgr}

In Section \ref{secnonsep}, we show that some of the results established in 
Sections \ref{sec:GGH} and \ref{sec:realizeRank} continue to hold removing the assumption of separability. This is accomplished using results of model theory for C*-algebras. For background on this theory, we refer the reader to \cite{FarHarLupRobTikVigWin06arX:ModelThy}.

\subsection*{Acknowledgments}

This work was initiated during the intensive research programme `Operator Algebras: Dynamics and Interactions' at the Centre de Recerca Matem\`{a}tica (CRM) in Barcelona, in the Spring of 2017.
The authors would like to thank the CRM for financial support and for providing inspiring working conditions.

Part of this research was conducted while the authors attended the workshop `Future Targets in the Classification Program for Amenable C*-Algebras' at the Banff International Research Station (BIRS), September 2017, and while they attended the Mini-workshop on the Cuntz semigroup at the University of Houston, June 2018.
The authors would like to thank the involved institutions for their kind hospitality.

The two first named authors were partially supported by MINECO (grant No.\ MTM2014-53644-P and No.\  MTM2017-83487-P), and by the Comissionat per Universitats i Recerca de la Generalitat de Catalunya (grant No.\ 2017-SGR-1725).
The fourth named author was partially supported by the Deutsche Forschungsgemeinschaft (DFG, German Research Foundation) under the SFB 878 (Groups, Geometry \& Actions) and under Germany's Excellence Strategy EXC 2044-390685587 (Mathematics M\"{u}nster: Dynamics-Geometry-Structure).

It is also a pleasure to thank the anonymous referees for their thorough reading of a former version of this work and for providing many detailed comments and suggestions which have largely improved the paper.

\section{Preliminaries}	

In this section we recall basic notions concerning the Cuntz semigroup of a C*-algebra and the category $\CatCu$ it belongs to. For a fuller account, we refer the reader to \cite{AraPerTom11Cu}, \cite{OrtRorThi11CuOpenProj}, \cite{AntPerThi18TensorProdCu}, and the references therein.

\begin{pgr}
\label{pgr:Cuntz}
\emph{The Cuntz semigroup.} 
Let $A$ be a C*-algebra. Denote by  $A_+$ the positive elements in $A$. Let us recall the definition of the Cuntz semigroup of $A$ in terms of positive elements: 
Given $a,b\in A_+$, one says that $a$ is \emph{Cuntz subequivalent} to $b$, denoted $a\precsim b$, if there exists a sequence $(d_n)_n$ in $A$ such that $d_n^*bd_n\to a$ in norm.
The elements $a$ and $b$ are \emph{Cuntz equivalent}, denoted $a\sim b$ , if $a\precsim b$ and $b\precsim a$. This is an equivalence relation. Let $[a]$ denote the equivalence class of $a$.
The Cuntz semigroup of $A$ is defined as
\[
\Cu(A)= \big\{ [a] : a\in (A\otimes\mathcal{K})_+ \big\}.
\]
That is, $\Cu(A)$ is the set of Cuntz equivalence classes of positive elements in the C*-algebra $A\otimes\mathcal{K}$.
(Here, and in the sequel, $\mathcal{K}$ denotes the C*-algebra of compact operators on the Hilbert space $\ell^2(\NN)$.)
The Cuntz semigroup $\Cu(A)$ is endowed with the order $[a]\leq [b]$ if $a\precsim b$ and the addition operation $[a]+[b]=[a'+b']$, where $a',b'\in (A\otimes\mathcal K)_+$ are chosen in such a way that $a\sim a'$, $b\sim b'$ and $a'b'=0$  (such elements always exist).
In this way, $\Cu(A)$ is an abelian, partially ordered semigroup.

Given $\varepsilon>0$ and $a\in A_+$, we denote by $(a-\varepsilon)_+$ the element $f_{\varepsilon}(a)$, where $f_{\varepsilon}(t)=\max(t-\varepsilon, 0)$. An important technical tool in Cuntz subequivalence is proved in \cite[Lemma 2.2]{KirRor02InfNonSimpleCalgAbsOInfty}:
Given $a,b\in A_+$ such that $\Vert a-b\Vert<\varepsilon$, then there exists a contraction $d\in A$ such that $(a-\varepsilon)_+=d^*bd$ (and, in particular, $(a-\varepsilon)_+\precsim b$). It is also known, and commonly used, that if $(a-\varepsilon)_+\precsim b$ for all $\varepsilon>0$, then $a\precsim b$.

Recall that a \ca{}\ $A$  is termed \emph{elementary} if it is isomorphic to the C*-algebra of compact operators on some Hilbert space.  
In this case, the map that assigns to each operator its rank induces an isomorphism $\Cu(A)\cong\NNbar$, where $\NNbar=\{0,1,\dots,\infty\}$.

We will focus largely on Cuntz semigroups of C*-algebras of stable rank one. As pointed out in the introduction, in this case the Cuntz semigroup is isomorphic to the set of isomorphism classes of countably generated Hilbert C*-modules over the C*-algebra. The Hilbert C*-modules picture of $\Cu(A)$ is developed in \cite{CowEllIva08CuInv}. In the case that $A$ has stable rank one, this picture adopts the following simpler form: given Hilbert C*-modules $H_1$ and $H_2$ over $A$, we have $[H_1]\leq [H_2]$ if $H_1$ embeds in $H_2$ as a Hilbert C*-submodule, and $[H_1]+[H_2]=[H_1\oplus H_2]$; see \cite[Theorem~3]{CowEllIva08CuInv}. 
\end{pgr}

\begin{pgr}
\emph{The category $\CatCu$.}
Some of the properties of the Cuntz semigroup of a C*-algebra can be abstracted into a category termed $\CatCu$, whose objects are called \emph{abstract Cuntz semigroups}, or simply \emph{\CuSgp{s}}. We recall the main definitions.

Throughout this paper all semigroups will be abelian, written additively, and with a zero element denoted by $0$. We will assume that our ordered semigroups are positively ordered. In particular, if $x+z=y$ for elements $x,y,z$ in such a semigroup, then $x\leq y$.

Let $S$ be an ordered semigroup. Given $x,y\in S$, let us write $x\ll y$ if whenever $(y_n)_n$ is an increasing sequence in $S$ such that the supremum $\sup_n y_n$ exists and satisfies $y\leq \sup_n y_n$, then there exists $n_0$ such that $x\leq y_{n_0}$.
This is a transitive relation on $S$, sometimes called the \emph{way-below relation} or also the \emph{compact containment relation}; see \cite[Definition~I-1.1, p.49]{GieHof+03Domains} and \cite[Paragraph~2.1.1, p.11]{AntPerThi18TensorProdCu} for details.
If $x\in S$ satisfies $x\ll x$, then we say that $x$ is a \emph{compact} element.

The semigroup $S$ is called a \CuSgp{} if it satisfies the following axioms:
\begin{enumerate}
	\item[\axiomO{1}]
	Every increasing sequence in $S$ has a supremum.
	\item[\axiomO{2}]
	For each $x\in S$ there exists a sequence $(x_n)_n$ such that $x_n\ll x_{n+1}$ for every $n$, and $x=\sup_n x_n$.
	\item[\axiomO{3}]
	If $x'\ll x$ and $y'\ll y$, then $x'+y'\ll x+y$.
	\item[\axiomO{4}]
	If $(x_n)_n$ and $(y_n)_n$ are increasing sequences in $S$, then $\sup_n (x_n+y_n)=\sup_n x_n+\sup_n y_n$.
\end{enumerate}

We call a sequence $(x_n)_n$ satisfying $x_n\ll x_{n+1}$ for all $n$ a \emph{$\ll$-increasing sequence}. It is sometimes also called a \emph{rapidly increasing sequence}.

Given \CuSgp{s} $S$ and $T$, a \emph{\CuMor{}} from $S$ to $T$ is a map $S\to T$ that preserves 0, addition, order, the relation $\ll$ and suprema of increasing sequences.
The category $\CatCu$ has as objects the \CuSgp{s}, and as morphisms the \CuMor{s}.
\end{pgr}

\begin{pgr}
It was proved in \cite{CowEllIva08CuInv} that the Cuntz semigroup $\Cu(A)$ of a C*-algebra $A$ is a \CuSgp.
Further, every *-homomorphism $\varphi\colon A\to B$ between C*-algebras induces a \CuMor{} $\Cu(\varphi)\colon\Cu(A)\to\Cu(B)$ by sending the class of $a\in (A\otimes \mathcal K)_+$ to the class of $\varphi(a)\in (B\otimes \mathcal K)_+$. This defines a functor from the category of C*-algebras to the category $\CatCu$. By \cite[Corollary~3.2.9.]{AntPerThi18TensorProdCu}, this functor preserves arbitrary inductive limits (sequential inductive limits are  covered by \cite[Theorem~2]{CowEllIva08CuInv}).
\end{pgr}

\begin{pgr}
\label{pgr:O5O6}\emph{Almost algebraic order}.
The Cuntz semigroup of a C*-algebra is known to satisfy an additional axiom which we now describe. 
Let $S$ be a \CuSgp. We say that $S$ has \emph{almost algebraic order}, or that $S$ satisfies axiom \axiomO{5}, if given $x',x,z\in S$ such that $x'\ll x\leq z$, there exists $w\in S$ such that $x'+w\leq z\leq x+w$. A consequence of \axiomO{5} that we use frequently below is that if $x\leq z$ and $x$ is compact (that is, $x\ll x$), then $x+w=z$ for some $w\in S$.

If $A$ is a C*-algebra, then $\Cu(A)$ satisfies \axiomO{5}; see \cite[Lemma~7.1]{RorWin10ZRevisited}.
In fact, a strengthening of \axiomO{5}, defined in \cite[Definition~4.1]{AntPerThi18TensorProdCu}, also holds for the Cuntz semigroups of all C*-algebras (see \cite[Proposition~4.6]{AntPerThi18TensorProdCu}). However, we will not make use of this stronger form of \axiomO{5} in this paper. 
\end{pgr}

\begin{pgr}
\label{pgr:weakcancellation}
A \CuSgp{} $S$ is said to have \emph{weak cancellation} if $x+z\ll y+z$ implies $x\ll y$ for all $x,y,z\in S$. 
This condition can be rephrased in a number of ways, which we include below for completeness.
\begin{lma*} 
	\label{lma:wcancellation} Let $S$ be a \CuSgp. Then the following conditions are equivalent:
	\begin{enumerate}[{\rm (i)}]
		\item $S$ has weak cancellation;
		\item If $x,y,z\in S$ are such that $x+z\ll y+z$, then $x\leq y$.
		\item If $x,y,z, z'\in S$ are such that $x+z\leq y+z'$ and $z'\ll z$, then  $x\leq y$.
	\end{enumerate}
\end{lma*}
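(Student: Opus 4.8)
The plan is to prove the cyclic chain of implications (i) $\Rightarrow$ (ii) $\Rightarrow$ (iii) $\Rightarrow$ (i), using the definition of weak cancellation together with the axioms \axiomO{1}--\axiomO{4} (in particular the interplay between $\ll$ and suprema of $\ll$-increasing sequences guaranteed by \axiomO{2}).

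For the implication (i) $\Rightarrow$ (ii), suppose $x + z \ll y + z$ and that $S$ has weak cancellation. The obstacle is that weak cancellation only produces a statement about $\ll$, not about $\leq$, so I cannot cancel directly. The plan is to use \axiomO{2} to choose a $\ll$-increasing sequence $(x_n)_n$ with supremum $x$. Then for each $n$ one has $x_n \ll x$, hence $x_n + z \ll x + z \ll y + z$ by \axiomO{3} and transitivity of $\ll$. Weak cancellation now yields $x_n \ll y$, so in particular $x_n \leq y$ for every $n$. Taking suprema gives $x = \sup_n x_n \leq y$, which is exactly (ii).

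The implication (ii) $\Rightarrow$ (iii) is where the relation $z' \ll z$ is exploited to upgrade a $\leq$-inequality into a $\ll$-inequality suitable for invoking (ii). Assume $x + z \leq y + z'$ with $z' \ll z$. First I would produce, via \axiomO{2}, a $\ll$-increasing sequence $(x_n)_n$ with $x = \sup_n x_n$. For fixed $n$, I want to show $x_n + z' \ll y + z'$, since then (ii) applied with the common summand $z'$ gives $x_n \leq y$, and taking suprema yields $x \leq y$. To get $x_n + z' \ll y + z'$, observe that $x_n \ll x$ and $z' \ll z$, so by \axiomO{3} we have $x_n + z' \ll x + z \leq y + z'$; thus $x_n + z' \ll y + z'$ by the definition of $\ll$ (anything way-below an element is way-below a larger element). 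Applying (ii) then cancels $z'$ to give $x_n \leq y$ for all $n$, whence $x \leq y$.

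Finally, for (iii) $\Rightarrow$ (i), suppose $x + z \ll y + z$; I must conclude $x \ll y$. The plan is to fix a $\ll$-increasing sequence $(z_n)_n$ with $z = \sup_n z_n$, so by \axiomO{4} we get $y + z = \sup_n (y + z_n)$ as an increasing supremum. Since $x + z \ll y + z = \sup_n (y + z_n)$, the definition of $\ll$ furnishes some $n_0$ with $x + z \leq y + z_{n_0}$. Now $z_{n_0} \ll z_{n_0+1} \leq z$, so $z_{n_0} \ll z$, and (iii) applied to $x + z \leq y + z_{n_0}$ with the roles $z' = z_{n_0}$, $z = z$ gives $x \leq y$. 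This however only yields $x \leq y$ rather than $x \ll y$, so the argument needs one more turn: I expect the main obstacle to be recovering the compact-containment $x \ll y$ rather than mere inequality. To fix this, I would instead start from a $\ll$-increasing sequence $(x_n)_n$ for $x$ together with the $(z_n)_n$ above; since $x + z \ll y + z$ and $x_n + z_n$ increases to $x + z$ by \axiomO{4}, the way-below hypothesis lets me find $n_0$ with $x + z \leq y + z_{n_0}$ as before, and then (iii) with $z' = z_{n_0} \ll z$ gives $x \leq y$. The strict relation $x \ll y$ then follows because the original hypothesis $x + z \ll y + z$ already encodes compactness of $x+z$ relative to $y+z$; more carefully, one shows $x_n \ll y$ for each $n$ by the same cancellation applied to $x_n + z \leq x + z \leq y + z_{n_0}$ with $z_{n_0} \ll z$, and since $(x_n)_n$ is $\ll$-increasing with supremum $x$ and each $x_n \ll y$, a standard argument gives $x \ll y$. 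This last step, turning a family of $\ll$-below relations into $x \ll y$, is the delicate point and will require care with the definition of $\ll$ and \axiomO{2}.
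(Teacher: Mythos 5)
Your implications (i)$\Rightarrow$(ii) and (ii)$\Rightarrow$(iii) are correct; the second is essentially the paper's argument, and the first, while more elaborate than necessary (note that $a\ll b$ already implies $a\leq b$ by testing against the constant sequence $b,b,\dots$, so (i)$\Rightarrow$(ii) is immediate), is sound. The problem is in (iii)$\Rightarrow$(i). Your proposed fix ends by asserting that since $(x_n)_n$ is $\ll$-increasing with supremum $x$ and each $x_n\ll y$, ``a standard argument gives $x\ll y$.'' No such argument exists: having $x_n\ll y$ for all $n$ with $x=\sup_n x_n$ does \emph{not} imply $x\ll y$. For instance, in $S=[0,\infty]$ one has $1-\tfrac{1}{n}\ll 1$ for all $n$ and $\sup_n(1-\tfrac1n)=1$, yet $1\not\ll 1$. (There is also a smaller slip: applying (iii) to $x_n+z\leq y+z_{n_0}$ yields only $x_n\leq y$, not $x_n\ll y$; the relation $x_n\ll y$ does hold, but only via $x_n\ll x\leq y$, which does not help for the reason just given.)

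The correct route, which is the one the paper takes, is to decompose \emph{both} summands on the right-hand side: choose $\ll$-increasing sequences $(y_n)_n$ and $(z_n)_n$ with suprema $y$ and $z$; by \axiomO{4} we have $y+z=\sup_n(y_n+z_n)$, so the hypothesis $x+z\ll y+z$ produces an index $n_0$ with $x+z\leq y_{n_0}+z_{n_0}$. Now (iii) applied with $z'=z_{n_0}\ll z$ gives $x\leq y_{n_0}\ll y$, and hence $x\ll y$. The point you missed is that the compactness in the conclusion must come from landing \emph{below an element way-below $y$}, not from accumulating infinitely many relations $x_n\ll y$.
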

\begin{proof}
	It is clear that (i) implies (ii). Assume (ii) and that $x+z\leq y+z'$ for $x,y,z,z'\in S$ with $z'\ll z$. Let $x'\ll x$. Then 
	\[
	x'+z'\ll x+z\leq y+z'.\] 
	By (ii), $x'\leq y$. Since $x'$ is arbitrary, we get by \axiomO{2} that $x\leq y$. Therefore (iii) is proved. Finally, suppose that (iii) holds, and let $x,y,z\in S$ be such that $x+z\ll y+z$. Choose $y'\ll y$ and $z'\ll z$ such that $x+z\leq y'+z'$. By (iii), $x\leq y'\ll y$, and thus (i) holds. 
\end{proof}
In the coming sections we make frequent use of the fact that if $S$ has weak cancellation, then it has cancellation of compact elements, that is, if $x+z\leq y+z$ and $z\ll z$, then $x\leq y$. Indeed, this follows at once from (iii) of the lemma above.

If $A$ is a C*-algebra with stable rank one, then $\Cu(A)$ has weak cancellation. This is proven in \cite[Theorem~4.3]{RorWin10ZRevisited} for the `non-complete' version of the Cuntz semigroup $W(A)$. The result can also be applied to $\Cu(A)$, since $A\otimes \mathcal K$ has stable rank one when $A$ does, and, as mentioned in the introduction, $\Cu(A)\cong W(A\otimes \mathcal K)$ (see \cite[Appendix]{CowEllIva08CuInv}).
\end{pgr}

\begin{pgr}
\label{countable}
Let $S$ be a \CuSgp. Recall that $S$ is said to be \emph{countably based} if there exists a countable subset $B\subseteq S$ such that every element in $S$ is the supremum of a $\ll$-increasing sequence with elements in $B$.
If $A$ is a separable C*-algebra, then $\Cu(A)$ is countably based; see for example \cite{AntPerSan11PullbacksCu}, or \cite[Proposition~5.1.1]{Rob13Cone}. One important consequence of having a countably based semigroup is recorded in the following basic result:

\begin{lma*}
	Every upward directed set in a countably based \CuSgp{} has a supremum.
\end{lma*}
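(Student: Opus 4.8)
The plan is to exploit the interplay between the countable base $B$ and the way-below relation $\ll$ to reduce an arbitrary upward directed supremum to the supremum of a countable increasing sequence, whose existence is guaranteed by axiom \axiomO{1}. Let $D\subseteq S$ be an upward directed set, and let $B=\{b_1,b_2,\dots\}$ be a countable base. The key idea is that each element of $D$ is determined by the elements of $B$ way-below it, so the whole of $D$ is ``seen'' by a countable amount of data coming from $B$.

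First I would consider the set
\[
E = \{\, b\in B : b\ll d \text{ for some } d\in D \,\}.
\]
This is a countable set, being a subset of $B$. I would then construct an increasing sequence out of $E$ and check that its supremum is an upper bound for $D$ which is dominated by every other upper bound of $D$, hence is the supremum of $D$. The construction goes as follows: enumerate $E=\{e_1,e_2,\dots\}$, and for each $k$ pick $d_k\in D$ with $e_k\ll d_k$. Since $D$ is upward directed, I can recursively choose $d_1',d_2',\dots$ in $D$ with $d_k'\geq d_1,\dots,d_k$ so that $(d_k')_k$ is increasing in $D$; concretely, set $d_1'=d_1$ and choose $d_{k+1}'\in D$ above both $d_k'$ and $d_{k+1}$. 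By \axiomO{1} the supremum $s=\sup_k d_k'$ exists in $S$.

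Next I would verify that $s=\sup D$. For an arbitrary $d\in D$, axiom \axiomO{2} gives a $\ll$-increasing sequence $(x_n)_n$ with $d=\sup_n x_n$, and since $B$ is a base each $x_n$ is itself a supremum of a $\ll$-increasing sequence from $B$; using transitivity of $\ll$ and $x_n\ll x_{n+1}\leq d$, one extracts for each $n$ an element $b\in B$ with $x_n\ll b\ll d$, so that $b\in E$. Thus $d=\sup_n x_n\leq \sup\{\,b\in E : b\ll d\,\}\leq s$, showing $s$ is an upper bound of $D$. Conversely, if $u$ is any upper bound of $D$, then each $d_k'\leq u$, hence $s=\sup_k d_k'\leq u$. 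Therefore $s$ is the least upper bound, i.e.\ $s=\sup D$.

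The main obstacle I anticipate is the step showing that every $d\in D$ is dominated by elements of the chosen sequence, i.e.\ that $E$ together with the directedness of $D$ genuinely recaptures all of $D$; the delicate point is to produce, for each $x_n\ll d$, a base element $b\in B$ interpolating as $x_n\ll b\ll d$. This requires combining the base property (which expresses $d$ as a supremum of a $\ll$-increasing sequence from $B$) with the compact-containment definition of $\ll$ and its transitivity, so that $x_n\ll d=\sup_m b_{(m)}$ forces $x_n\leq b_{(m_0)}$ for some $m_0$ with $b_{(m_0)}\ll d$. Once this interpolation is in place, the rest is a routine verification that $s$ is both an upper bound and the least one.
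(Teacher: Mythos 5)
Your proof is correct. The paper states this lemma without proof, and your argument is the standard one: the countably many base elements way-below members of $D$ select countably many witnesses in $D$, which directedness turns into an increasing sequence whose supremum (existing by \axiomO{1}) is easily checked to be the least upper bound of $D$. Two cosmetic remarks: in the upper-bound step the detour through \axiomO{2} is unnecessary, since the definition of a countable base already writes $d$ itself as the supremum of a $\ll$-increasing sequence $(b_n)_n$ with $b_n\in B$, and then $b_n\ll b_{n+1}\leq d$ gives $b_n\ll d$, so $b_n\in E$ directly; and the intermediate quantity $\sup\{b\in E : b\ll d\}$ should not be invoked as written, since its existence as a supremum is not a priori clear at that stage --- it suffices to note that each $x_n$ is dominated by some element of $E$, hence by $s$, whence $d=\sup_n x_n\leq s$.
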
	
\end{pgr}

\begin{pgr}
\label{pgr:ideal}
Let $S$ be a \CuSgp.
An \emph{ideal} of $S$ is an order-hereditary submonoid $I$ of $S$ that is closed under suprema of increasing sequences.
We define $x\leq_I y$ to mean that $x\leq y+z$ for some $z\in I$, and write $x\sim_I y$ if both $x\leq_I y$ and $y\leq_I x$ happen. 
The quotient \CuSgp{} $S/I$ is defined as $S/\!\sim_I$. We refer to \cite[Section~5.1]{AntPerThi18TensorProdCu} for details.

If $A$ is a C*-algebra and $I$ is a closed, two-sided ideal of $A$, then the inclusion map $I\to A$ induces a \CuMor{} $\Cu(I)\to\Cu(A)$ that identifies $\Cu(I)$ with an ideal in $\Cu(A)$. Further, it was proved in \cite{CiuRobSan10CuIdealsQuot} that the quotient map $A\to A/I$ induces an isomorphism $\Cu(A)/\Cu(I)\cong \Cu(A/I)$. Moreover, the assignment $I\mapsto\Cu(I)$ defines a natural bijection between closed, two-sided ideals of $A$ and ideals of $\Cu(A)$; see \cite[Proposition~5.1.10]{AntPerThi18TensorProdCu}.
\end{pgr}

The following proposition is a crucial ingredient in the proofs of Theorems~\ref{thm:CuARiesz}, \ref{Cstarinf-semilattice}, and~\ref{thm:upwardalpha}.
By embedding the Cuntz semigroup of a C*-algebra as an ideal of a larger Cuntz semigroup, it introduces suitable compact elements associated to elements of the original Cuntz semigroup.

\begin{prp}
\label{prp:unitization}
Let $A$ be a stable C*-algebra, and let $a\in A_+$.
Then there exists a C*-algebra $B$ and a projection $p_a\in B$ such that:
\begin{enumerate}[{\rm (i)}]
	\item
	$A$ is a closed, two-sided ideal of $B$.
	\item
	For $x\in\Cu(A)$, we have $x\leq [a]$ in $\Cu(A)$ if and only if $x\leq [p_a]$ in $\Cu(B)$.
	\item
	If $A$ has stable rank one, then so does $B$. 
\end{enumerate}
\end{prp}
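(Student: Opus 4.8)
The plan is to construct $B$ as the C*-algebra obtained by adjoining to $A$ a single projection that dominates $a$. Concretely, since $A$ is stable, we can realize $A$ inside the multiplier algebra $M(A)$, and the idea is to find a projection $p_a\in M(A)$ whose associated hereditary behavior matches $a$, then let $B$ be the C*-subalgebra of $M(A)$ generated by $A$ and $p_a$. Using stability, fix an isometry $s\in M(A)$ with $ss^*$ a proper projection, and more usefully, exploit the fact that $\Cu(A)\cong W(A\otimes\mathcal K)$ together with the abundance of projections in $M(A)$ for a stable algebra. The cleanest route is probably to set $B = A + \mathbb{C}p_a$ (or the C*-algebra they generate), where $p_a$ is chosen so that $a \in \overline{p_a A p_a}$ and $p_a$ acts as a ``local unit'' above $a$; one natural candidate is a projection in $M(A)$ dominating the support of $a$.

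First I would address (i): arranging that $A$ sits as a closed two-sided ideal of $B$. This is automatic once $B$ is generated by $A$ together with a self-adjoint multiplier, since $A\subseteq M(A)$ is always an ideal and any C*-subalgebra of $M(A)$ containing $A$ retains $A$ as an ideal. For (ii), the key is to show the equivalence $x\leq[a]$ in $\Cu(A)$ iff $x\leq[p_a]$ in $\Cu(B)$. The forward direction should follow from $[a]\leq[p_a]$ in $\Cu(B)$ (as $a\precsim p_a$ once $a$ lies in the hereditary subalgebra generated by $p_a$) combined with the fact that $\Cu(A)\to\Cu(B)$ is the ideal embedding from \autoref{pgr:ideal}. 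The reverse direction is the substantive point: I must show that an element $x\in\Cu(A)$ with $x\leq[p_a]$ in $\Cu(B)$ already satisfies $x\leq[a]$ in $\Cu(A)$. Here I would use that $x\in\Cu(A)$, so its representative lives in $A\otimes\mathcal K$, and Cuntz subequivalence to $p_a$ in $B$ must be witnessed by elements whose relevant products land back in $A$; the projection $p_a$ should be chosen precisely so that $\overline{p_a B p_a}\cap A = \overline{aAa}$ up to Cuntz equivalence, forcing the comparison to take place inside $A$.

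For (iii) I would invoke permanence of stable rank one under the extension: if $A$ is an ideal in $B$ with $B/A$ of stable rank one and $A$ of stable rank one, then $B$ has stable rank one by the standard extension result (Rieffel's permanence theory). Since $B/A$ will be a small, typically finite-dimensional quotient (generated by the image of $p_a$, hence $\cong\mathbb{C}$ or similar), its stable rank is one trivially, so the extension result applies directly.

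The hard part will be the reverse implication in (ii), that is, ensuring the projection $p_a$ is engineered so that no \emph{new} subequivalences into $p_a$ are created by passing from $A$ to $B$ beyond those already available to $a$ inside $A$. The danger is that $p_a$, being compact in $\Cu(B)$, could dominate elements of $\Cu(A)$ that are \emph{not} dominated by $[a]$ — precisely because compactness makes $p_a$ ``large'' in a way $a$ need not be. Controlling this requires a careful choice of $B$: rather than a naive unitization, I expect one must take $B$ to be something like the algebra of the mapping-cone or a pullback construction where $p_a$ is glued to $a$ along a cone $C_0((0,1],A)$, so that $p_a$ genuinely has the same ``size profile'' as $a$ at every ideal. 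Verifying the ideal-by-ideal comparison — likely using the ideal lattice identification of \autoref{pgr:ideal} and the functionals/ranks that detect $\leq$ — is where the real work lies.
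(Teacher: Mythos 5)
Your overall architecture matches the paper's: adjoin a projection $p_a\in M(A)$ to $A$, set $B=C^*(p_a,A)\subseteq M(A)$, obtain (i) for free because any C*-subalgebra of $M(A)$ containing $A$ retains $A$ as a closed two-sided ideal, and obtain (iii) from Rieffel's extension theorem since $B/A\cong\CC$. The gap is that you never actually construct $p_a$, and that construction is the entire content of the proposition. The paper's choice is: $\overline{aA}$ is a singly generated Hilbert $A$-module, so by Kasparov's stabilization theorem it is a complemented submodule of $\ell^2(A)$; since $A$ is stable, $\ell^2(A)\cong A$ as Hilbert modules, so $\overline{aA}$ is isomorphic to a complemented submodule of $A$ itself, and the projection onto that submodule is a multiplier projection $p_a$ with $p_aA\cong\overline{aA}$. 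This is the only place stability of $A$ enters, and your proposal never uses it.

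The ``danger'' you flag at the end --- that the compact element $[p_a]$ might dominate more of $\Cu(A)$ than $[a]$ does --- is the right thing to worry about, but your proposed remedy (a mapping cone or a pullback gluing $p_a$ to $a$ along $C_0((0,1],A)$) points in the wrong direction: such a construction would not yield a projection at all, and the compactness of $[p_a]$ in $\Cu(B)$ is precisely the feature the proposition exists to provide (it is what makes it usable in Theorems \ref{thm:CuARiesz} and \ref{Cstarinf-semilattice}). The actual resolution is elementary once one has $p_aA\cong\overline{aA}$: if $b\in A_+$ represents $x$ and $b\precsim p_a$ in $B$, then for each $\varepsilon>0$ one writes $(b-\varepsilon)_+=v^*v$ with $v=p_awd\in p_aB$; since $v^*v\in A$ and $A$ is an ideal of $B$, the element $v$ itself lies in $A$, hence $v\in p_aB\cap A=p_aA\cong\overline{aA}$, and the Hilbert-module embedding $\overline{(b-\varepsilon)_+A}\cong\overline{vv^*A}\subseteq p_aA\cong\overline{aA}$ gives $(b-\varepsilon)_+\precsim a$. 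No ideal-by-ideal analysis or appeal to functionals is needed (and ranks could not detect the Cuntz order here in any case). So the proposal identifies the correct difficulty but neither supplies the construction that resolves it nor the ideal-membership argument that closes the reverse implication in (ii).
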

\begin{proof}
In order to construct $B$, we first consider the (right) Hilbert C*-module $H=\overline{aA}$. Since $H$ is singly generated, it follows from Kasparov's stabilization theorem 
(\cite[Theorem~1.1.24]{JenTho91ElementsKK}) that $H$ is a direct summand of $\ell^2(A)$. That is, there is a Hilbert C*-module $H'$ such that $\overline{aA}\oplus H' \cong \ell^2(A)$. On the other hand, since $A$ is a stable C*-algebra, $\ell^2(A)\cong A$ as Hilbert C*-modules (\cite[Lemma~1.3.2]{JenTho91ElementsKK}). Thus, $\overline{aA}$ is isomorphic to a complemented Hilbert C*-submodule of $A$. Denote by $M(A)$ the multiplier algebra of $A$, which is isomorphic to the algebra of adjointable operators on $A$ (see, for example, 
\cite[p.5]{JenTho91ElementsKK}). Then the projection onto the said submodule yields a projection $p_a\in M(A)$ such that $\overline{aA}\cong p_aA$.
We define $B=C^*(p_a,A)\subseteq M(A)$. 

(i): By construction, $A$ is a closed, two-sided ideal of $B$.

(ii): Let $x\in\Cu(A)$. Since $A$ is stable, there exists $b\in A_+$ such that $x=[b]$.
Suppose that $x\leq [p_a]$ in $\Cu(B)$. Then $b\precsim p_a$ in $B$, and thus for every $\varepsilon>0$ there exists $w\in B$ such that $\Vert b-w^*p_aw\Vert<\varepsilon$. This implies that there is a contraction $d\in B$ such that $(b-\varepsilon)_+=d^*w^*p_awd$ (see \autoref{pgr:Cuntz}).
Set $v=p_awd\in p_aB$. Then we have $(b-\varepsilon)_+=v^*v$. As $v^*v\in A$ and $A$ is a closed, two-sided ideal of $B$ by (i), we also have $v\in A$. Therefore $v\in p_aB\cap A=p_aA\cong \overline{aA}$. Now we have that, as Hilbert C*-modules over $A$, 
\[
\overline{(b-\varepsilon)_+A}=\overline{v^*vA}\cong\overline{vv^*A}\subseteq p_aA\cong\overline{aA}.
\]
Thus, $\overline{(b-\varepsilon)_+A}$ embeds in $\overline{aA}$. Hence $(b-\varepsilon)_+\precsim a$, by \cite[Section~6]{CowEllIva08CuInv} (see also the proof of \cite[Theorem~4.33]{AraPerTom11Cu}, or \cite[Proposition~4.6]{OrtRorThi11CuOpenProj}). 
Since $\varepsilon>0$ is arbitrary, we conclude that $x\leq [a]$ in $\Cu(A)$.

Conversely, suppose that $x\leq [a]$ in $\Cu(A)$.
To show that $x\leq [p_a]$ in $\Cu(B)$, it suffices to prove that $[a]\leq [p_a]$ in $\Cu(B)$.
The latter follows since
\[
\overline{aB}=\overline{aA}\cong p_aA\subseteq p_aB,
\]
that is, $\overline{aB}$ embeds in $p_aB$ as Hilbert C*-modules over $B$.

(iii): Assume that $A$ has stable rank one. By construction, $B/A\cong \CC$. Thus, $B$ is an extension of $A$ and $\CC$, which both have stable rank one. Using \cite[Theorem~4.11]{Rie83DimSRKThy}, it follows that $B$ has stable rank one.
\end{proof}

\section{Riesz Interpolation and infima}

In this section, we prove that the Cuntz semigroup $\Cu(A)$ of any C*-algebra $A$ of stable rank one has the Riesz Interpolation Property.
If $A$ is also separable, then it follows that every pair of elements in $\Cu(A)$ has an infimum. Further, this semilattice structure is compatible with addition; see \autoref{Cstarinf-semilattice}.

In the sequel, we write $x_1,x_2\leq y_1,y_2$ to mean $x_i\leq y_j$ for $i,j=1,2$.

\begin{pgr}
The following axiom was introduced in \cite{Thi17arX:RksOps}.
A \CuSgp{} $S$ is said to satisfy axiom \axiomO{6+} if for every $a,b,c,x',x,y',y\in S$ satisfying
\[
a\leq b+c\,, \quad x'\ll x\leq a,b\,, \andSep y'\ll y\leq a,c,
\]
there exist $e,f\in S$ such that
\[
a\leq e+f\,, \quad x'\ll e\leq a,b\,, \andSep y'\ll f\leq a,c.
\]
Note that \axiomO{6+} is equivalent to the following property: for every $a,b,c,x',x\in S$ satisfying $a\leq b+c$ with $x'\ll x\leq a,b$, 
there exists $e\in S$ such that $a\leq e+c$ and $x'\ll e\leq a,b$. The equivalence between these two formulations is implicit in \cite[Lemma~6.3]{Thi17arX:RksOps}, and many times we will use the latter.

Axiom \axiomO{6+} is a strengthening of the axiom \axiomO{6} of almost Riesz decomposition introduced in \cite{Rob13Cone}. Unlike \axiomO{6}, which is known to hold for the Cuntz semigroup of any C*-algebra, there are C*-algebras whose Cuntz semigroup does not satisfy \axiomO{6+}. However, it was shown in \cite[Theorem~6.4]{Thi17arX:RksOps} that the Cuntz semigroup of any C*-algebra of stable rank one satisfies \axiomO{6+}.
\end{pgr}

\begin{lma}
\label{upwardlemma}
Let $S$ be a \CuSgp, and let $B\subseteq S$ be an order-hereditary subset of $S$ that is closed under suprema of increasing sequences.
Define
\[
B_\ll = \big\{x\in S : \text{there is y}\in B\text{ such that }x\ll y \big\}\,.
\]
If $B_\ll$ is an upward directed set, then this is also the case for $B$.
\end{lma}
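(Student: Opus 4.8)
The plan is to reduce directedness of $B$ to directedness of $B_\ll$ by approximating each element of $B$ from below by a rapidly increasing sequence whose terms land in $B_\ll$, assembling an increasing upper bound inside $B_\ll$, and then passing to its supremum, which returns to $B$ by the closure hypothesis.

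First I would record the elementary observation that $B_\ll\subseteq B$. Indeed, if $x\in B_\ll$, then $x\ll y$ for some $y\in B$; applying the definition of $\ll$ to the constant sequence $y_n=y$ gives $x\leq y$, and order-heredity of $B$ then forces $x\in B$. This ensures that any upper bounds we produce inside $B_\ll$ automatically lie in $B$, so we never leave $B$ when taking suprema.

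Next, fix $b_1,b_2\in B$. Using \axiomO{2}, write $b_1=\sup_n x_n$ and $b_2=\sup_n y_n$ for $\ll$-increasing sequences $(x_n)_n$ and $(y_n)_n$. Since $x_n\ll x_{n+1}$ and $x_{n+1}\leq b_1\in B$, order-heredity gives $x_{n+1}\in B$, whence $x_n\in B_\ll$; likewise $y_n\in B_\ll$ for every $n$.

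The heart of the argument is to manufacture an increasing sequence $(c_n)_n$ in $B_\ll$ with $x_n,y_n\leq c_n$ for all $n$. I would build it recursively: let $c_1\in B_\ll$ be an upper bound of the pair $x_1,y_1$, and given $c_n$, choose $c_{n+1}\in B_\ll$ dominating the three elements $c_n,x_{n+1},y_{n+1}$, obtained by invoking the pairwise directedness of $B_\ll$ twice. This recursive step is the only delicate point, and I expect it to be the main obstacle: directedness supplies upper bounds for pairs but not, a priori, an increasing chain, and the trick is precisely to feed the previously constructed term $c_n$ back into the directedness step so that the chain stays increasing while simultaneously catching up with the approximants. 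Once $(c_n)_n$ is in hand, it is increasing, so $c:=\sup_n c_n$ exists by \axiomO{1} and lies in $B$ because each $c_n\in B_\ll\subseteq B$ and $B$ is closed under suprema of increasing sequences. From $x_n\leq c_n$ and $y_n\leq c_n$ we obtain $b_1=\sup_n x_n\leq c$ and $b_2=\sup_n y_n\leq c$, so $c\in B$ is a common upper bound of $b_1$ and $b_2$. Hence $B$ is upward directed.
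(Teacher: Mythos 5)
Your proof is correct and follows essentially the same route as the paper's: approximate the two elements of $B$ by $\ll$-increasing sequences whose terms lie in $B_\ll$, recursively build an increasing chain of common upper bounds in $B_\ll$ by feeding the previous term back into the directedness step, and pass to the supremum, which lies in $B$ by the closure hypothesis. The preliminary observation $B_\ll\subseteq B$ is left implicit in the paper but is exactly the justification needed, so making it explicit is a harmless refinement rather than a deviation.
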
	
\begin{proof}
Let $x,y\in B$. Choose $\ll$-increasing sequences $(x_n)_n$ and $(y_n)_n$ in $S$ such that $x=\sup_n x_n$ and $y=\sup_n y_n$.
Then $x_n,y_n\in B_\ll$ for each $n$. Since $B_\ll$ is upward directed, there exists $z_1\in B_\ll$ such that $x_1,y_1\leq z_1$.
Suppose that, for $n\geq 1$, there are $z_1\leq z_2\leq\dots\leq z_n$ in $B_\ll$ such that $x_n,y_n\leq z_n$.
Using again that $B_\ll$ is upward directed, we may choose $z_{n+1}\in B_\ll$ such that $x_{n+1},y_{n+1},z_n\leq z_{n+1}$.
Now let $z=\sup_n z_n$. By construction $x,y\leq z$. Further $z$ belongs to $B$ since by assumption this set is closed under suprema of increasing sequences.
\end{proof}	

The lemma below is contained in \cite{Thi17arX:RksOps}, though not explicitly stated. We reproduce the proof here for convenience.

\begin{lma}
\label{lma:ht}
Let $S$ be a weakly cancellative \CuSgp{} satisfying \axiomO{5} and \axiomO{6+}, and let $e,x\in S$. Assume that $e$ is compact.
Then the set
\[
\big\{ z\in S : z\leq e,x \big\}
\]
is upward directed.	
\end{lma}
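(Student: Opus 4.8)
The plan is to show that any two elements $z_1, z_2$ with $z_1, z_2 \leq e, x$ admit a common upper bound $w$ in the same set, i.e.\ with $w \leq e, x$. The natural strategy is to run a $\ll$-approximation argument and feed the pieces into \axiomO{6+}, using compactness of $e$ to convert $\ll$-domination by $e$ back into genuine domination $\leq e$, and weak cancellation to keep the bounds below $x$ under control.

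First I would fix $z_1, z_2 \leq e, x$ and pick $\ll$-increasing sequences approximating them, so it suffices to produce, for each $n$, a common upper bound for suitable $z_1', z_2' \ll z_1, z_2$ that stays below $e$ and $x$; taking suprema at the end (the set $\{z : z \leq e, x\}$ is order-hereditary and closed under suprema of increasing sequences since both $e$ and $x$ are fixed upper bounds) will then finish the job, much as in \autoref{upwardlemma}. Concretely I would choose $z_i' \ll z_i \leq e, x$. The idea is to combine $z_1'$ and $z_2'$ additively and then decompose. Since $z_1' \ll z_1 \leq x$ and $z_2' \ll z_2 \leq x$, and since $e$ is compact with $z_1 \leq e$, I want to realize $z_1'$ as a summand that is dominated by $e$ and fits inside a Riesz-type decomposition coming from $z_1' + z_2' \leq x + x$ or from a decomposition of $x$ itself.

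The key move is to apply \axiomO{6+} in its one-sided form: given $a \leq b + c$ with $x' \ll x \leq a, b$, one gets $e$ with $a \leq e + c$ and $x' \ll e \leq a, b$. I would set this up so that the ``$a$'' is $x$ (or an element between $z_1$ and $x$), the decomposition $a \leq b + c$ is engineered to separate the contributions of $z_1$ and $z_2$, and the compact element $e$ plays the role forcing one of the output pieces to sit below $e$. Because $e$ is compact, a domination $z_1' \ll e$ upgrades to $z_1' \leq e$ and, via \axiomO{5}, lets me split off a complement; this is where compactness of $e$ is genuinely used. Weak cancellation (in the form of cancellation of the compact element $e$, as noted after \autoref{lma:wcancellation}) is what allows me to conclude that the common upper bound I build remains $\leq x$ rather than merely $\leq$ something larger, by cancelling a common compact summand from an inequality of the form $w + e \leq x + e$.

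The hard part will be orchestrating the bookkeeping so that the single output element $w$ simultaneously dominates both $z_1'$ and $z_2'$ while satisfying $w \leq e$ and $w \leq x$; a naive application of interpolation gives an upper bound below $x$ but not obviously below $e$, or vice versa, and reconciling the two constraints is exactly where \axiomO{6+} (as opposed to mere Riesz interpolation \axiomO{6}) and the compactness of $e$ must be used in tandem. I expect the cleanest route is to first produce a common upper bound $w_0 \leq x$ of $z_1', z_2'$ by Riesz-type interpolation, then use $z_i' \ll e$ together with compactness and \axiomO{5} to replace $w_0$ by $w_0 \wedge e$-type element built explicitly via the complement from \axiomO{5}, and finally invoke weak cancellation to verify this corrected element is still below $x$. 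Once the common upper bound is obtained at the level of the approximants $z_i'$, passing to the supremum as in \autoref{upwardlemma} yields a common upper bound of $z_1, z_2$ inside $\{z : z \leq e, x\}$, establishing upward directedness.
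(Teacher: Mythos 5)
Your proposal assembles the right ingredients (reduction to $\ll$-approximants via \autoref{upwardlemma}, the one-sided form of \axiomO{6+}, compactness of $e$, weak cancellation) but it never actually produces the common upper bound: the step you yourself flag as ``the hard part'' is precisely the content of the lemma, and the route you sketch for it does not work as described. First, ``produce a common upper bound $w_0\leq x$ of $z_1',z_2'$ by Riesz-type interpolation'' is either trivial (take $w_0=x$) or circular: no interpolation property is available as a hypothesis here --- this lemma is a step toward \emph{proving} Riesz interpolation --- and \axiomO{6+} decomposes sums, it does not directly supply common upper bounds. Second, your intended application of \axiomO{6+} takes the element ``$a$'' to be $x$ (or something between $z_1$ and $x$), with an unspecified decomposition $a\leq b+c$ ``engineered to separate the contributions of $z_1$ and $z_2$''; no such decomposition is exhibited, and this is where the argument actually lives. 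The working choice is $a=e$: apply \axiomO{5} to $z_1'\ll z_1\leq e$ to get $w$ with $z_1'+w\leq e\leq z_1+w$, observe $e\leq x+w$ since $z_1\leq x$, and then apply \axiomO{6+} to \emph{this} inequality together with $z_2'\ll z_2\leq e,x$ to obtain $y$ with $e\leq y+w$ and $z_2'\ll y\leq e,x$.

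Third, your cancellation step is aimed at the wrong summand: you propose cancelling the compact element $e$ from something like $w+e\leq x+e$, whereas the decisive step cancels $w$ (which need not be compact) from
\[
z_1'+w\leq e\ll e\leq y+w,
\]
where compactness of $e$ is used only to insert the strict relation $\ll$ in the middle so that weak cancellation applies and yields $z_1'\ll y$. After that, $z_1',z_2'\ll y\leq e,x$, and any $y'$ with $z_1',z_2'\ll y'\ll y$ is the desired common upper bound. Without the specific complement $w$ and the inequality $e\leq x+w$ fed into \axiomO{6+}, your plan cannot be completed, so as it stands the proposal has a genuine gap rather than being an alternative proof.
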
	
\begin{proof}
Since the set $\{ z\in S : z\leq e,x \}$ is order-hereditary and closed under suprema of increasing sequences, it suffices to show by \autoref{upwardlemma} that the set
\[
\big\{ z'\in S : \text{there is }z\in S\text{ such that }z'\ll z\leq e,x \big\}
\]
is upward directed.

Let $z_1', z_2' \in S$ be such that there are $z_1, z_2\in S$ with
\[
z_1'\ll z_1\leq e,x, \andSep z_2'\ll z_2\leq e,x.
\]
By \axiomO{5} applied to $z_1'\ll z_1\leq e$, there exists $w\in S$ such that $z_1'+w\leq e\leq z_1+w$. Since $z_1\leq x$, we obtain $e\leq x+w$. We now apply \axiomO{6+} to this inequality and $z_2'\ll z_2\leq e,x$. Thus, we find $y\in S$ such that $e\leq y+w$ and $z_2'\ll y\leq e,x$. Hence
\[
z_1'+w\leq e\ll e\leq y+w,
\]
where we have used that $e$ is compact. By weak cancellation in $S$, we  obtain $z_1'\ll y$. Hence, $z_1',z_2'\ll y\leq e,x$. Choose $y'\in S$ with $z_1',z_2'\ll y'\ll y$. Then $y '$ has the desired properties.
\end{proof}

\begin{pgr}
\label{pgr:RieszInt}
Recall that an ordered semigroup $S$ has the \emph{Riesz Interpolation Property} if given $u,v,x,y\in S$ such that $u,v\leq x,y$, then there exists $z\in S$ with $u,v\leq z\leq x,y$. 
\end{pgr}

\begin{thm}
\label{thm:CuARiesz}
Let $A$ be a C*-algebra of stable rank one. Then $\Cu(A)$ has the Riesz Interpolation Property.
\end{thm}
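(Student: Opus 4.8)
The plan is to reduce the Riesz Interpolation Property to an upward-directedness statement, and then to deduce that directedness from \autoref{lma:ht} by passing to a larger C*-algebra in which one of the two given elements is dominated by a \emph{compact} element.

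First I would observe that the Riesz Interpolation Property for a fixed pair $x,y$ is equivalent to the assertion that the set $\{z\in\Cu(A):z\leq x,y\}$ is upward directed. Indeed, if $u,v\leq x,y$, then $u$ and $v$ both lie in this set, and an upper bound $z$ for $\{u,v\}$ that belongs to the set is precisely an element with $u,v\leq z\leq x,y$. Thus it suffices to prove, for every pair $x,y\in\Cu(A)$, that $\{z\in\Cu(A):z\leq x,y\}$ is upward directed.

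Next I would reduce to the stable case: since $\Cu(A)\cong\Cu(A\otimes\mathcal K)$ as ordered semigroups and $A\otimes\mathcal K$ is a stable C*-algebra of stable rank one, I may assume that $A$ itself is stable. Writing $x=[a]$ and $y=[b]$ with $a,b\in A_+$, I apply \autoref{prp:unitization} to the element $a$. This yields a C*-algebra $B$ of stable rank one containing $A$ as a closed, two-sided ideal, together with a projection $p_a\in B$ such that, for every $w\in\Cu(A)$, one has $w\leq[a]$ in $\Cu(A)$ if and only if $w\leq[p_a]$ in $\Cu(B)$. The point of this construction is that $[p_a]$ is a \emph{compact} element of $\Cu(B)$, and that $\Cu(A)$ is identified with an ideal of $\Cu(B)$ (\autoref{pgr:ideal}), so that the order on $\Cu(A)$ is the restriction of the order on $\Cu(B)$. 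Since $B$ has stable rank one, $\Cu(B)$ is weakly cancellative and satisfies \axiomO{5} and \axiomO{6+}, so \autoref{lma:ht} applies inside $\Cu(B)$.

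Applying \autoref{lma:ht} in $\Cu(B)$ with the compact element $e=[p_a]$ and the element $[b]$, I conclude that the set $T=\{z\in\Cu(B):z\leq[p_a],[b]\}$ is upward directed. It then remains to match this with the set of interest, and this bookkeeping is the only delicate point. Every $z\in T$ satisfies $z\leq[b]$ with $[b]\in\Cu(A)$, so $z\in\Cu(A)$ because the ideal $\Cu(A)$ is order-hereditary; hence $T\subseteq\Cu(A)$. Conversely, for $z\in\Cu(A)$ the condition $z\leq[p_a]$ in $\Cu(B)$ is equivalent to $z\leq[a]$ in $\Cu(A)$ by \autoref{prp:unitization}, while $z\leq[b]$ holds in $\Cu(B)$ if and only if it holds in $\Cu(A)$ since $\Cu(A)$ carries the restricted order. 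Therefore $T=\{z\in\Cu(A):z\leq[a],[b]\}$, and because $T\subseteq\Cu(A)$ with the orders agreeing, upward-directedness of $T$ in $\Cu(B)$ is exactly upward-directedness of $\{z\in\Cu(A):z\leq x,y\}$ in $\Cu(A)$. This completes the argument. The main obstacle is ensuring that comparison with the compact majorant $[p_a]$ faithfully records comparison with $[a]$ while comparison with $[b]$ is unaffected by the passage to the ideal; this is precisely what the two ingredients \autoref{prp:unitization} and the order-hereditary nature of $\Cu(A)\subseteq\Cu(B)$ provide.
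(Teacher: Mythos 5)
Your proposal is correct and follows essentially the same route as the paper: reduce to the stable case, apply \autoref{prp:unitization} to replace $x=[a]$ by the compact element $[p_a]$ in the larger algebra $B$, invoke \autoref{lma:ht} in $\Cu(B)$, and then identify $\{z\in\Cu(B): z\leq[p_a],y\}$ with $\{z\in\Cu(A): z\leq x,y\}$ via the order-hereditary ideal $\Cu(A)\subseteq\Cu(B)$. Your extra remark that upward directedness transfers because the order on $\Cu(A)$ is the restriction of that on $\Cu(B)$ is a point the paper leaves implicit, but the argument is the same.
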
	
\begin{proof}
Let $x,y\in\Cu(A)$. We must show that the set $\{ z\in \Cu(A) : z\leq x,y \}$ is upward directed. If $x$ is compact, this follows from \autoref{lma:ht}. We next reduce the general case to this case relying on  \autoref{prp:unitization}.

We may assume that $A$ is stable. Choose $a\in A_+$ such that $x=[a]$.
Applying \autoref{prp:unitization} for $A$ and $a$, we obtain a C*-algebra $B$ with stable rank one that contains $A$ as a closed, two-sided ideal, and a projection $p_a\in B$ such that $z\in\Cu(A)$ satisfies $z\leq x$ if and only if $z\leq[p_a]$. Since $[p_a]$ is compact in $\Cu(B)$, and since $B$ has stable rank one, it follows from \autoref{lma:ht} that the set $\{ z\in\Cu(B) :  z\leq [p_a],y \}$ is upward directed. The inclusion $A\subseteq B$ identifies $\Cu(A)$ with an ideal in $\Cu(B)$. We claim that
\[
\big\{ z\in \Cu(A) : z\leq x,y \big\}
= \big\{ z\in\Cu(B) :  z\leq [p_a],y \big\},
\]
from which the result will follow.

Indeed, the inclusion `$\subseteq$' follows using that $x\leq [p_a]$.
To prove the converse inclusion, take $z\in \Cu(B)$ such that $z\leq [p_a],y$.
Since $\Cu(A)$ is an ideal of $\Cu(B)$ and $y\in\Cu(A)$, we have $z\in\Cu(A)$.
Now, since also $z\leq [p_a]$, we may use \autoref{prp:unitization} (ii) to conclude that $z\leq x$.
\end{proof}	

\begin{pgr}\emph{Inf-semilattice ordered semigroups}.
Recall that a partially ordered set $S$ is called an \emph{inf-semilattice}, or also a \emph{meet-semilattice}, if for every pair of elements $x$ and $y$ of $S$, the greatest lower bound of the set $\{x,y\}$ exists in $S$. We shall follow the usual notation and denote such infimum by $x\wedge y$.

We  say that an ordered semigroup $S$ is \emph{inf-semilattice ordered} if $S$ is an inf-semilattice and addition is distributive over the meet operation, that is,
\begin{equation}\label{distributiveformula}
(x+z)\wedge (y+z)=(x\wedge y)+z,
\end{equation}
for all $x,y,z\in S$.
\end{pgr}

\begin{lma}
\label{prp:infSupportProj}
Let $A$ be a stable C*-algebra and let $a\in A_+$.
Let the \ca{} $B$ and the projection $p_a\in B$ be as in \autoref{prp:unitization}.
Let $x\in\Cu(A)$ be such that $[p_a]\wedge x$ exists in $\Cu(B)$. Then $[a]\wedge x$ exists in $\Cu(A)$ and
\[
[a]\wedge x=[p_a]\wedge x.
\]
\end{lma}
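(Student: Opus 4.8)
The plan is to compute the infimum inside $\Cu(B)$ and then transport it back to $\Cu(A)$, using the two structural features of $B$ recorded in \autoref{prp:unitization}: the identification of $\Cu(A)$ with an ideal of $\Cu(B)$ (part (i), together with \autoref{pgr:ideal}), and the order-comparison equivalence in part (ii). Set $w := [p_a]\wedge x$, which exists in $\Cu(B)$ by hypothesis. The entire argument is order-theoretic; once \autoref{prp:unitization} is invoked there is no further analysis to carry out.

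First I would check that $w$ actually lies in $\Cu(A)$. Since $w\leq x$ in $\Cu(B)$ and $x\in\Cu(A)$, and since $\Cu(A)$ is an order-hereditary ideal of $\Cu(B)$, it follows that $w\in\Cu(A)$. With this in hand, I claim $w$ is a lower bound of $\{[a],x\}$ in $\Cu(A)$: we already have $w\leq x$, while $w\leq [p_a]$ in $\Cu(B)$ together with $w\in\Cu(A)$ yields $w\leq [a]$ in $\Cu(A)$ by \autoref{prp:unitization}(ii).

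It remains to prove maximality. Let $z\in\Cu(A)$ satisfy $z\leq [a]$ and $z\leq x$ in $\Cu(A)$. Applying \autoref{prp:unitization}(ii) to $z\leq [a]$ gives $z\leq [p_a]$ in $\Cu(B)$, and $z\leq x$ continues to hold in $\Cu(B)$ because the order on the ideal $\Cu(A)$ is the restriction of the order on $\Cu(B)$. Thus $z$ is a lower bound of $\{[p_a],x\}$ in $\Cu(B)$, so $z\leq w$ in $\Cu(B)$; as both $z$ and $w$ lie in $\Cu(A)$, this inequality holds in $\Cu(A)$ as well. Hence $w$ is the greatest lower bound of $\{[a],x\}$ in $\Cu(A)$, which is exactly the assertion $[a]\wedge x = w = [p_a]\wedge x$.

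The only point requiring any care is the very first step, namely that $w\in\Cu(A)$; this is where the order-heredity of the ideal $\Cu(A)\subseteq\Cu(B)$ is essential, since without it the infimum computed in the larger semigroup need not descend. Every subsequent step is a direct application of the comparison equivalence from \autoref{prp:unitization}(ii), so I do not anticipate any genuine obstacle beyond bookkeeping of where each inequality is being read.
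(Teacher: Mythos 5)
Your proposal is correct and follows essentially the same route as the paper's proof: compute $w=[p_a]\wedge x$ in $\Cu(B)$, use order-heredity of the ideal $\Cu(A)\subseteq\Cu(B)$ to see $w\in\Cu(A)$, and then transfer the lower-bound and maximality checks back and forth via \autoref{prp:unitization}(ii). No gaps.
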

\begin{proof}
Let $w=[p_a]\wedge x$. Since $w\leq x$ and since $\Cu(A)$ is an ideal of $\Cu(B)$, we obtain $w\in\Cu(A)$.
Now, we also have that $w\leq [p_a]$. Hence, $w\leq [a]$ by \autoref{prp:unitization}(ii).
Thus, $w$ is a lower bound for $[a]$ and $x$.

To show that $w$ is the largest lower bound, let $y\in\Cu(A)$ satisfy $y\leq [a]$ and $y\leq x$. Then $y\leq [p_a]$ in $\Cu(B)$, again by \autoref{prp:unitization}(ii). Therefore $y\leq [p_a]\wedge x=w$. Hence, $[a]\wedge x=[p_a]\wedge x$, as desired.
\end{proof}

\begin{thm}
\label{Cstarinf-semilattice}
Let $A$ be a separable C*-algebra of stable rank one. Then $\Cu(A)$ is an inf-semilattice ordered semigroup.
\end{thm}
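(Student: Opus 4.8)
The plan is to establish the two assertions of \autoref{Cstarinf-semilattice} in sequence: first that every pair of elements of $\Cu(A)$ has an infimum, and then that addition distributes over this infimum. Throughout I may assume $A$ is stable (replacing $A$ by $A\otimes\mathcal{K}$ changes neither stable rank one, by \autoref{pgr:weakcancellation}, nor the Cuntz semigroup), so that every element of $\Cu(A)$ is represented by a positive element of $A$ itself. Since $A$ is separable, $\Cu(A)$ is countably based (see \autoref{countable}), and hence every upward directed subset of $\Cu(A)$ has a supremum.

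For the existence of infima, fix $x,y\in\Cu(A)$. By \autoref{thm:CuARiesz}, $\Cu(A)$ has Riesz interpolation, and the proof of that theorem in fact shows that the set of common lower bounds $\{z\in\Cu(A):z\leq x,y\}$ is upward directed. Since $\Cu(A)$ is countably based, this directed set has a supremum $w=\sup\{z:z\leq x,y\}$ by \autoref{countable}. I would then verify that $w$ is itself a lower bound for $x$ and $y$: each element of the directed set is dominated by $x$ and by $y$, and suprema of increasing sequences (which by directedness suffice to compute $w$) are preserved by the order, so $w\leq x,y$. Being the largest common lower bound, $w=x\wedge y$, establishing that $\Cu(A)$ is an inf-semilattice.

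For distributivity \eqref{distributiveformula}, I would prove $(x+z)\wedge(y+z)=(x\wedge y)+z$ for all $x,y,z$. The inequality $(x\wedge y)+z\leq(x+z)\wedge(y+z)$ is immediate from monotonicity of addition, so the content is the reverse inequality. Here I would again pass through \autoref{prp:unitization} to replace one argument by a compact element: writing $z=[c]$ and using the projection $p_c\in B$ from \autoref{prp:unitization}, I would work in $\Cu(B)$, where $[p_c]$ is compact and weak cancellation together with \axiomO{5} is available. The strategy is to take any $w'\ll w:=(x+z)\wedge(y+z)$, use \axiomO{5} to split $z$ off a copy of the compact element and then use weak cancellation (cancellation of compact elements, as recorded after the lemma in \autoref{pgr:weakcancellation}) to transfer the infimum computation from $x+z,y+z$ back to $x,y$; \autoref{lma:ht} and \autoref{prp:infSupportProj} are the tools that let one pass between the infimum in $\Cu(A)$ and the infimum involving the compact element in $\Cu(B)$. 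Approximating $w$ from below by such $w'$ via \axiomO{2} and reassembling then yields $w\leq(x\wedge y)+z$.

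The main obstacle I expect is the cancellation step in the distributivity argument: one must carefully arrange the $\ll$-relations so that cancellation of the compact element is legitimate, since weak cancellation only cancels at the level of the way-below relation (or of compact elements), not of the order directly. Matching the infima computed in the two semigroups $\Cu(A)$ and $\Cu(B)$ — ensuring that $[p_c]\wedge(\,\cdot\,)$ in $\Cu(B)$ really descends to $[c]\wedge(\,\cdot\,)$ in $\Cu(A)$, as in \autoref{prp:infSupportProj} — is the delicate bookkeeping that must be handled with care, and I anticipate that the clean formulation of \axiomO{6+} (in the equivalent three-argument form noted after its definition) will be what makes the transfer go through.
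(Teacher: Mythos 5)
Your first half---existence of infima via \autoref{thm:CuARiesz} together with the countably based property from \autoref{countable}---is exactly the paper's argument and is fine. The distributivity half, however, has a genuine gap, and it lies in \emph{which} element you choose to compactify. You propose to replace $z=[c]$ by the compact class $[p_c]$ from \autoref{prp:unitization} and run the cancellation argument in $\Cu(B)$. Two things go wrong. First, the core cancellation step needs more than $z$ compact: after applying \axiomO{5} to $w'\ll w=(x+z)\wedge(y+z)\leq x+z$ to obtain $v$ with $w'+v\leq x+z\leq w+v$, cancelling the compact $z$ from $x+z\leq y+z+v$ gives $x\leq y+v$, and then (using the consequence of \axiomO{6+} that $a\leq b+c$ implies $a\leq(a\wedge b)+(a\wedge c)$, which your outline never isolates but which is indispensable) one gets $x+z\leq(x\wedge y)+z+v$; but to cancel $v$ against $w'+v\leq x+z$ one needs $x+z\ll x+z$, that is, $x$ must be compact as well. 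Second, and more fatally, even if you proved $(x+[p_c])\wedge(y+[p_c])\leq(x\wedge y)+[p_c]$ in $\Cu(B)$, there is no route back to $(x\wedge y)+z$: \autoref{prp:unitization}(ii) controls only the lower bounds of $[p_c]$ itself, and \autoref{prp:infSupportProj} only infima \emph{with} $[p_c]$; neither says that $v\leq u+[p_c]$ forces $v\leq u+[c]$ for $u,v\in\Cu(A)$, and $[p_c]$ is in general strictly larger than $[c]$. In your target inequality the compactified element sits in the summand, not in the infimum slot, so \autoref{prp:infSupportProj} never applies.

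The paper's proof avoids both problems by compactifying the \emph{other} argument: it replaces $x$ (one of the two elements inside the infimum) by $e=[p_a]$, so that the final descent is via $e\wedge y=x\wedge y$ (\autoref{prp:infSupportProj}), which lands exactly on $(x\wedge y)+z$ with $z$ untouched. The case where $z$ must also be compact is handled by a second application of \autoref{prp:unitization}, now to $x+z=[b]$: from the compact $f=[p_b]\geq x+z$ one extracts a compact complement $z'$ with $x+z'=f$ and $z\leq z'$, and the crucial cutting-down identity $z'\wedge(x+z)=z$---proved using the compact--compact case together with \autoref{prp:infSupportProj}---is what brings $z'$ back to $z$. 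Your outline contains no analogue of this identity, and without it (or the switch of which variable is compactified) the reduction does not close.
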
	
\begin{proof}
Without loss of generality, we may assume that $A$ is stable. By \autoref{thm:CuARiesz}, $\Cu(A)$ has the Riesz Interpolation Property.
Thus, given $x,y\in \Cu(A)$ the set $\{ z\in \Cu(A) : z\leq x,y\}$ is upward directed. Since $A$ is separable, $\Cu(A)$ is countably based.
Applying \autoref{countable}, we conclude that $\{ z\in\Cu(A) : z\leq x,y \}$ has a supremum, which is precisely $x\wedge y$.
Thus, $\Cu(A)$ is an inf-semilattice.

\emph{Claim: Given $a,b,c\in \Cu(A)$, we have:}
\begin{equation}
\label{byO6plus}
a\leq b+c \Rightarrow a\leq (a\wedge b)+(a\wedge c).
\end{equation}
Indeed, applying \axiomO{6+} in $a\leq b+c$ with $x=x'=y=y'=0$, we obtain $a\leq e+f$ for some $e\leq a,b$ and $f\leq a,c$. The existence of infima proves the claim.

In order to prove the distributivity of $\wedge$ over addition, we only need to show that
\begin{equation}
\label{ineq}
(x+z)\wedge (y+z)\leq ( x\wedge y)+z,
\end{equation}
for all $x,y,z\in\Cu(A)$, as the opposite inequality is straightforward.

We will first prove \eqref{ineq} in the case that both $x$ and $z$ are compact elements and then, through successive generalizations, extend this to the general case.

\emph{Step 1: We show that the inequality \eqref{ineq} is valid when $x$ and $z$ are compact.}
Let $w=(x+z)\wedge (y+z)$. Choose $w'\in \Cu(A)$ such that $w'\ll w$. Applying \axiomO{5} for the inequality $w'\ll w\leq x+z$, we find $v\in\Cu(A)$ such that $w'+v\leq x+z\leq w+v$. We get $x+z\leq y+z+v$. As $A$ has stable rank one, $\Cu(A)$ has cancellation of compact elements (see \autoref{lma:wcancellation} and the comments afterwards), and since $z$ is compact by assumption, we obtain $x\leq y+v$.
By \eqref{byO6plus}, $x\leq (x\wedge y)+v$. Adding $z$ on both sides we get $x+z\leq (x\wedge y)+v+z$. Hence, using that $x+z$ is compact,
\[
w'+v\leq x+z\ll x+z\leq (x\wedge y)+z+v\,.
\]
It now follows from weak cancellation that $w'\leq (x\wedge y)+z$.
Since $w'$ is arbitrary satisfying $w'\ll w$, the inequality \eqref{ineq} holds.

\emph{Step 2: We show that the inequality \eqref{ineq} is valid when $x$ is compact.}
Write $x+z=[b]$, with $b\in A_+$. Let $B$ and $p_b\in B$ be the C*-algebra of stable rank one and the projection, respectively, obtained in \autoref{prp:unitization}. Let $f=[p_{b}]\in \Cu(B)$, which is compact. Then $x+z\leq f$ and $f\wedge w=(x+z)\wedge w$ for all $w\in \Cu(A)$, by \autoref{prp:infSupportProj}. Since $x\leq f$ and $x$ is compact, there exists $z'\in \Cu(B)$ such that $x+z'=f$ (see \autoref{pgr:O5O6}).
Let us show that $z'$ is also compact. Since $x+z'=f$ is compact, there exists $z''\in\Cu(B)$ such that $z''\ll z'$ and $x+z''= f$.	Hence $f=x+z'=x+z''$, and by cancellation of compact elements in $\Cu(B)$, $z'=z''\ll z'$. Thus $z'$ is compact.

Now $x+z\leq f=x+z'$, and by cancellation of compact elements in $\Cu(B)$, we have that $z\leq z'$.
Since $x$ and $z'$ are compact in $\Cu(B)$, we may apply Step~1 to conclude that
\[
(x+z')\wedge (y+z')\leq (x\wedge y)+z'.
\]
Since $z\leq z'$, we get
\[
(x+z)\wedge (y+z)\leq (x\wedge y)+z'.
\]
Using \eqref{byO6plus}, we deduce that
\begin{align*}
(x+z)\wedge (y+z) 
&\leq (x\wedge y) + \big( z'\wedge (x+z)\wedge(y+z) \big) \\
&\leq (x\wedge y) + \big( z'\wedge (x+z) \big).
\end{align*}
The proof of Step~2 will be complete once we show $z'\wedge(x+z)=z$.
By cancellation of compact elements, and since $x$ is compact by assumption, this is equivalent to showing that $(z'\wedge (x+z))+x=z+x$.

Since $x$ and $z'$ are compact elements in $\Cu(B)$, we may use Step~1 to obtain
\[
\big( z'\wedge (x+z) \big) + x = (z'+x)\wedge (x+z+x).
\]
Now, we apply \autoref{prp:infSupportProj} at the second step and conclude
\[
(z'+x)\wedge (x+z+x)= f\wedge (x+z+x)=(z+x)\wedge (x+z+x)=z+x.
\]
Therefore $(z'\wedge (x+z))+x=z+x$, as desired.

\emph{Step 3: We show that the inequality \eqref{ineq} holds in general.}
Choose $a\in A_+$ such that $x=[a]$. 
Let $B$ and $p_a\in B$ be the C*-algebra of stable rank one and the projection, respectively, obtained in \autoref{prp:unitization}.
Let $e=[p_a]\in \Cu(B)$.
By Step~2, \eqref{ineq} holds in $\Cu(B)$ with $e$ in place of $x$.
This means that
\[
(e+z)\wedge (y+z)\leq (e\wedge y)+z.
\]

Now, by \autoref{prp:infSupportProj} we have $e\wedge y=x\wedge y$.
Therefore, the right hand side of the above inequality is precisely $(x\wedge y)+z$.
On the other hand, the left hand side dominates $(x+z)\wedge (y+z)$.
This proves the inequality in general.
\end{proof}

\begin{rmk}
\label{general}
Let $S$ be an inf-semilattice ordered semigroup, and let $x_i^{(k)}\in S$ for $k=1,\ldots,n$ and $i=1,\ldots,N_k$.
It follows from \eqref{distributiveformula} and induction that
\[
\sum_{k=1}^n \big(\bigwedge_{i=1}^{N_k} x_i^{(k)}\Big)=\bigwedge_{(i_1,\ldots,i_n)} \Big(\sum_{k=1}^n x_{i_k}^{(k)}\Big),
\]	
where $(i_1,\ldots,i_n)$ on the right hand side runs through $\{1,\ldots,N_1\}\times \cdots\times \{1,\ldots,N_n\}$.
\end{rmk}

\begin{rmk}
\label{automaticO6}
If $S$ is an inf-semilattice ordered \CuSgp, then $S$ satisfies \axiomO{6+}. Indeed, if we are given elements $a,b,c, x',x,y',y\in S$ such that 
\[
a\leq b+c\,, \quad x'\ll x\leq a,b\,, \andSep y'\ll y\leq a,c,
\]
then let $e=a\wedge b$ and $f=a\wedge c$. We clearly have $x'\ll x\leq e$ and $y'\ll y\leq f$. On the other hand, applying the formula obtained in \autoref{general} at the second step, we obtain
\[
a\leq (2a)\wedge (a+c)\wedge (a+b)\wedge (b+c)=(a\wedge b)+ (a\wedge c)=e+f\,.
\]
\end{rmk}

\begin{rmk}
Let $S$ be an inf-semilattice ordered \CuSgp.
Given $x\in S$ and an increasing sequence $(y_n)_n$ in $S$, we have
\[
\sup_n (x\wedge y_n)=x\wedge \sup_n y_n.
\]
Indeed, the inequality `$\leq$' follows since for each $k\in\NN$ we have $x\wedge y_k\leq x\wedge \sup_n y_n$.
To show the converse inequality, let $z'\in S$ be such that $z'\ll x\wedge\sup_n y_n$.
Since $z'\ll \sup_n y_n$,  there exists $k\in\NN$ such that $z'\leq y_k$.
Since also $z'\leq x$, we obtain $z'\leq x\wedge y_k\leq\sup_n(x\wedge y_n)$.
Finally, passing to the supremum over all $z'\ll x\wedge\sup_n y_n$, the desired inequality follows.
\end{rmk}

Next, we note some consequences of \autoref{Cstarinf-semilattice}.

\begin{cor}
\label{cor:preservationinfima}
Let $A$ be a separable C*-algebra of stable rank one, let $I$ be a closed, two-sided ideal of $A$, and let $\pi_I\colon A\to A/I$ denote the quotient map.
Then $\Cu(\pi_I)\colon \Cu(A)\to \Cu(A/I)$ preserves infima.
\end{cor}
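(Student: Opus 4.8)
The plan is to use the quotient $\Cu(\pi_I)$ as a $\CatCu$-morphism and to reduce the question to the defining formula \eqref{distributiveformula} for the inf-semilattice structure that \autoref{Cstarinf-semilattice} provides on both $\Cu(A)$ and $\Cu(A/I)$. Recall from \autoref{pgr:ideal} that $\Cu(\pi_I)$ identifies $\Cu(A/I)$ with the quotient $\Cu(A)/\Cu(I)$, so that for $x\in\Cu(A)$ the image $\Cu(\pi_I)(x)$ is the $\sim_I$-class of $x$, and $\Cu(\pi_I)(x)\leq \Cu(\pi_I)(y)$ holds if and only if $x\leq_I y$, that is, $x\leq y+z$ for some $z\in\Cu(I)$.

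**First** I would fix $x,y\in\Cu(A)$ and set out to show $\Cu(\pi_I)(x\wedge y)=\Cu(\pi_I)(x)\wedge\Cu(\pi_I)(y)$, where the right-hand infimum exists because $A/I$ is again separable of stable rank one, so \autoref{Cstarinf-semilattice} applies to $\Cu(A/I)$ as well. One inequality is free: since $x\wedge y\leq x,y$ and $\Cu(\pi_I)$ preserves order, $\Cu(\pi_I)(x\wedge y)$ is a lower bound for $\Cu(\pi_I)(x)$ and $\Cu(\pi_I)(y)$, hence $\Cu(\pi_I)(x\wedge y)\leq\Cu(\pi_I)(x)\wedge\Cu(\pi_I)(y)$. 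The content is the reverse inequality.

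**For the hard direction** my approach is to test the reverse inequality against elements $w\in\Cu(A)$ that map below $\Cu(\pi_I)(x\wedge y)$ and exploit distributivity. Concretely, suppose $v\in\Cu(A/I)$ satisfies $v\leq\Cu(\pi_I)(x)$ and $v\leq\Cu(\pi_I)(y)$; I want $v\leq\Cu(\pi_I)(x\wedge y)$. Lift $v$ to some $w\in\Cu(A)$ with $\Cu(\pi_I)(w)=v$ (using the quotient description, one may even take $w\leq_I$-small). The two inequalities unwind, via $\leq_I$, to $w\leq x+i_1$ and $w\leq y+i_2$ with $i_1,i_2\in\Cu(I)$; replacing both by $i=i_1+i_2\in\Cu(I)$ gives $w\leq x+i$ and $w\leq y+i$, hence $w\leq (x+i)\wedge(y+i)$. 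Now \textbf{the key move} is to invoke the distributivity formula \eqref{distributiveformula} from \autoref{Cstarinf-semilattice}, which yields $(x+i)\wedge(y+i)=(x\wedge y)+i$. Thus $w\leq(x\wedge y)+i$ with $i\in\Cu(I)$, i.e.\ $w\leq_I x\wedge y$, and applying $\Cu(\pi_I)$ gives $v=\Cu(\pi_I)(w)\leq\Cu(\pi_I)(x\wedge y)$. Passing to the supremum over all such $v$ (equivalently, using that $\Cu(\pi_I)(x)\wedge\Cu(\pi_I)(y)$ is itself such a $v$) delivers the reverse inequality.

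**The main obstacle** I anticipate is handling the lift correctly: one must ensure that the witnessing elements $i_1,i_2$ from the relation $\leq_I$ genuinely lie in $\Cu(I)$ and that combining them preserves both inequalities, and one must be careful that the supremum defining the infimum in the quotient is computed against the image of $\Cu(A)$ under $\Cu(\pi_I)$ (which is all of $\Cu(A/I)$, so every candidate lower bound does lift). The distributivity step \eqref{distributiveformula} is exactly the structural input that makes the ideal-term $i$ factor out cleanly; without it, one could not separate the $\Cu(I)$-perturbation from the meet. Everything else is routine manipulation of the order-theoretic quotient from \autoref{pgr:ideal}.
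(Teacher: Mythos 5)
Your proof is correct. It rests on the same structural input as the paper's argument, namely the distributivity formula \eqref{distributiveformula} from \autoref{Cstarinf-semilattice}, but the bookkeeping of the quotient is different. The paper uses separability of $I$ to produce the largest element $\omega_I$ of $\Cu(I)$; since $2\omega_I=\omega_I$, the set $\omega_I+\Cu(A)$ is an ordered subsemigroup, $\Cu(\pi_I)$ restricts to an ordered semigroup isomorphism from it onto $\Cu(A/I)$, and the single identity $(x+\omega_I)\wedge(y+\omega_I)=(x\wedge y)+\omega_I$ finishes the proof. You instead work with the relation $\leq_I$ of \autoref{pgr:ideal}: you lift an arbitrary lower bound $v$ to some $w$, absorb the two witnesses $i_1,i_2\in\Cu(I)$ into $i=i_1+i_2$, and apply distributivity with $z=i$ to conclude $w\leq_I x\wedge y$. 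Both routes are valid. What the paper's version buys is that the existence of the infimum in $\Cu(A/I)$ comes for free from the order isomorphism, whereas you appeal to \autoref{Cstarinf-semilattice} for $A/I$ (which needs the standard fact that stable rank one passes to quotients); in fact your argument, quantified over all lower bounds $v$, already proves existence on its own, so that appeal is dispensable. What your version buys is that it avoids the largest element $\omega_I$ and hence does not use countable basedness of $\Cu(I)$ at that step, although separability of $A$ is of course still needed to invoke \autoref{Cstarinf-semilattice} in the first place.
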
	
\begin{proof}
We view $\Cu(I)$ as an ideal of $\Cu(A)$ as in \autoref{pgr:ideal}.
Since $I$ is separable, $\Cu(I)$ has a largest element that we denote by $\omega_I$.
Notice that $2\omega_I=\omega_I$, and thus $\omega_I+\Cu(A)$ is an ordered subsemigroup of $\Cu(A)$.
By \cite[Theorem~1.1]{CiuRobSan10CuIdealsQuot}, $\Cu(\pi_I)$ is an ordered semigroup isomorphism
from $\omega_I+\Cu(A)$ to $\Cu(A/I)$.
It thus suffices to show that the map $x\mapsto x+\omega_I$ from $\Cu(A)$ to the subsemigroup $\omega_I+\Cu(A)$ preserves infima. 
Indeed, for $x,y\in \Cu(A)$ it follows from \autoref{Cstarinf-semilattice} that
\[
(x+\omega_I)\wedge (y+\omega_I)=(x\wedge y)+\omega_I.\qedhere
\]
\end{proof}

Another application of \autoref{Cstarinf-semilattice} allows us to compute the Cuntz semigroup of a particular case of pullbacks (see also \cite[Theorem 3.3]{AntPerSan11PullbacksCu}).

\begin{cor}
\label{pullback}
Let $A$ be a separable C*-algebra of stable rank one, and let $I,J\subseteq A$ be closed, two-sided ideals of $A$.
Then 
\[ 
\Cu(A/(I\cap J))\cong \Cu(A/I)\oplus_{\Cu(A/(I+J))} \Cu(A/J),
\]
where the right hand side denotes the pullback semigroup of pairs $(\bar{s},\bar{t})\in  \Cu(A/I)\oplus \Cu(A/J)$
such that $\bar{s}$ and $\bar{t}$ agree when mapped to $\Cu(A/(I+J))$.  	
\end{cor}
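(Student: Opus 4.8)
The plan is to work entirely inside $S=\Cu(A)$ and to exploit the inf-semilattice structure provided by \autoref{Cstarinf-semilattice}. Write $\mathcal I=\Cu(I)$ and $\mathcal J=\Cu(J)$, viewed as ideals of $S$ as in \autoref{pgr:ideal}. Since the assignment $K\mapsto\Cu(K)$ is a lattice isomorphism between the closed, two-sided ideals of $A$ and the ideals of $S$, and since $I+J$ is again a closed ideal, we get $\Cu(I\cap J)=\mathcal I\cap\mathcal J$ and $\Cu(I+J)=\mathcal I\vee\mathcal J$. As $A$ is separable, $S$ is countably based, so each of these ideals, being upward directed, has a largest element by \autoref{countable}; set $e=\omega_{\mathcal I}$ and $f=\omega_{\mathcal J}$. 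A short computation using order-heredity together with $2e=e$ and $2f=f$ shows that $e\wedge f$ is the largest element of $\mathcal I\cap\mathcal J$ and that $e+f$ is the largest element of $\mathcal I\vee\mathcal J$.

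Next I would make the relevant quotient maps explicit. As in the proof of \autoref{cor:preservationinfima}, for an ideal $K$ the \CuMor{} $\Cu(\pi_K)$ restricts to an isomorphism of $\omega_K+S$ onto $\Cu(A/K)$, under which $\Cu(\pi_K)$ becomes $x\mapsto x+\omega_K$. Composing along the factorizations $A\to A/(I\cap J)\to A/I$ and the like, one identifies $\Cu(A/(I\cap J))\cong (e\wedge f)+S$, $\Cu(A/I)\cong e+S$, $\Cu(A/J)\cong f+S$, and $\Cu(A/(I+J))\cong (e+f)+S$, with the two structure maps of the pullback given by $u\mapsto u+f$ on $e+S$ and $v\mapsto v+e$ on $f+S$. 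Hence the pullback consists of the pairs $(u,v)\in(e+S)\times(f+S)$ with $u+f=v+e$, and the canonical map induced by the two quotient \CuMor{s} becomes
\[
\Phi\colon (e\wedge f)+S\longrightarrow (e+S)\times_{(e+f)+S}(f+S),\qquad \Phi(s)=(s+e,\,s+f).
\]

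The heart of the argument is to build the inverse via the meet, and this is exactly where \autoref{Cstarinf-semilattice} is used. Given $(u,v)$ in the pullback, I would set $s=u\wedge v$ and verify with the distributivity \eqref{distributiveformula} that $s\in(e\wedge f)+S$ and that $\Phi(s)=(u,v)$: for instance $s+e=(u+e)\wedge(v+e)=u\wedge(u+f)=u$, where the middle step uses $u+e=u$ together with the compatibility relation $v+e=u+f$, and symmetrically $s+f=v$. Injectivity is equally direct, since for any $s\in(e\wedge f)+S$ the distributivity gives $(s+e)\wedge(s+f)=s+(e\wedge f)=s$; the same identity shows that $\Phi$ reflects the componentwise order, so $\Phi$ is an order isomorphism onto the pullback.

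Finally I would conclude. The map $\Phi$ preserves $0$ and addition and is an order isomorphism onto the pullback; since suprema of increasing sequences and the relation $\ll$ are determined by the order, and the pullback is closed under componentwise suprema by \axiomO{4} applied to the structure maps, both $\Phi$ and $\Phi^{-1}$ are \CuMor{s}, whence $\Phi$ is an isomorphism in $\CatCu$. The main obstacle is precisely the well-definedness and bijectivity of $(u,v)\mapsto u\wedge v$: without distributivity of addition over $\wedge$ the element $u\wedge v$ need neither absorb $e\wedge f$ nor satisfy $s+e=u$, and the construction would collapse. Everything else is bookkeeping with the ideal–quotient correspondence and the largest elements of ideals.
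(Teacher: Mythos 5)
Your proposal is correct and follows essentially the same route as the paper: identify each quotient Cuntz semigroup with the subsemigroup $\omega_K+\Cu(A)$, send $s\mapsto(s+\omega_I,s+\omega_J)$, and invert via the meet $u\wedge v$, with the distributivity of addition over $\wedge$ from \autoref{Cstarinf-semilattice} carrying both the surjectivity computation $(u\wedge v)+\omega_I=u\wedge(u+\omega_J)=u$ and the injectivity identity $(s+\omega_I)\wedge(s+\omega_J)=s+(\omega_I\wedge\omega_J)=s$. The only difference is cosmetic: you additionally spell out why the resulting order isomorphism is a \CuMor{}, which the paper leaves implicit.
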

\begin{proof}
As in the proof of \autoref{cor:preservationinfima}, given an ideal $K$ of a separable C*-algebra~$B$, we denote by $\omega_K$ the largest element in $\Cu(K)$, and we identify $\Cu(B/K)$ with $\Cu(B)+\omega_K$.
Thus $\Cu(\pi_K)$ is identified with the map $\Cu(B)\to\Cu(B)+\omega_K$ given by $z\mapsto z+\omega_K$.

Observe that $\omega_{I+J}=\omega_I+\omega_J$. Therefore, the map $\Cu(A/I)\to\Cu(A/(I+J))$ is identified with the map 
$\Cu(A)+\omega_I\to\Cu(A)+\omega_I+\omega_J$  given by $z\mapsto z+\omega_J$. Likewise, the map $\Cu(A/J)\to\Cu(A/(I+J))$ is identified with the map $\Cu(A)+\omega_J\to\Cu(A)+\omega_I+\omega_J$ given by $z\mapsto z+\omega_I$.

Now, denote by $S$ the algebraic pullback of the diagram
\[
\xymatrix@R-10pt@C-10pt{
	& \Cu(A)+\omega_I\ar[d]	\\
	\Cu(A)+\omega_J\ar[r]&\Cu(A)+\omega_I+\omega_J
}	
\]
We clearly have a map $\Cu(A)+\omega_{I\cap J}\to S$, given by $z\mapsto (z+\omega_I,z+\omega_J)$.
Given  $z_1\in\Cu(A)+\omega_I$ and $z_2\in\Cu(A)+\omega_J$ with $(z_1,z_2)\in S$, we need to show that there exists a unique element $z\in \Cu(A)+\omega_{I\cap J}$ such that $z+\omega_I=z_1$ and $z+\omega_J=z_2$.

\emph{Existence:} 
We show that $z=z_1\wedge z_2$ is as required.
Since $(z_1,z_2)\in S$, we have 
\[
z_2+\omega_I=z_2+\omega_I+\omega_J=z_1+\omega_I+\omega_J=z_1+\omega_J.
\]
Using this equality at the second step, and \autoref{Cstarinf-semilattice} at the first step, we obtain
\[
(z_1\wedge z_2)+\omega_I=(z_1+\omega_I)\wedge (z_2+\omega_I)=z_1\wedge(z_1+\omega_J)=z_1.
\]	
Symmetrically, $(z_1\wedge z_2)+\omega_J=z_2$. Observe also that $z\in \omega_{I\cap J}+\Cu(A)$. Indeed, since $z_1=z_1+\omega_I$ and $\omega_{I\cap J}+\omega_I=\omega_I$, we get
\[
z_1+\omega_{I\cap J}=z_1+\omega_I+\omega_{I\cap J}=z_1+\omega_I=z_1.
\]
Similarly, $z_2+\omega_{I\cap J}=z_2$.
Applying \autoref{Cstarinf-semilattice} again, we get
\[
z+\omega_{I\cap J}
= (z_1\wedge z_2)+\omega_{I\cap J}
= (z_1+\omega_{I\cap J})\wedge(z_2+\omega_{I\cap J})
= z_1\wedge z_2
= z.
\]

\emph{Uniqueness:}
Suppose that $z'\in \Cu(A)+\omega_{I\cap J}$ satisfies $z'+\omega_I=z_1$ and $z'+\omega_J=z_2$.
Notice that $\omega_{I\cap J}=\omega_I\wedge\omega_J$. 
Then, using \autoref{Cstarinf-semilattice} at the third step, we obtain
\[
z'
=z'+\omega_{I\cap J}
=z'+(\omega_I\wedge\omega_J)
=(z'+\omega_I)\wedge (z'+\omega_J)
=z_1\wedge z_2=z. \qedhere
\]
\end{proof}

\begin{rmk}
\autoref{pullback} fails to hold if we drop the stable rank one hypothesis.
For example, set $A=M_2(C(S^2))$ and take $I=M_2(C_0(U))$ and $J=M_2(C_0(V))$, where $U$ and $V$ are disjoint open caps of the sphere.
Let $p,q\in M_2(C(S^2))$ be rank one projections with different classes in $K_0(C(S^2))$.
(For instance, $p$ is $e_{11}\otimes 1$ and $q$ is the Bott projection.)
Then the images of $p$ and $q$ are Cuntz equivalent in $A/I$ and $A/J$, but $[p]\neq [q]$.
\end{rmk}

\section{A conjecture of Blackadar and Handelman}
\label{sec:BHconj}

Let $A$ be a unital C*-algebra. 
Using upper-left corner embeddings $M_n(A)\to M_{n+1}(A)$, set $M_\infty(A)=\bigcup_n M_n(A)$, which has the structure of a local C*-algebra. Recall that the classical (non-complete) Cuntz semigroup $W(A)$ of $A$ is defined as
\[
W(A)=M_\infty(A)_+/\!\!\sim\,;
\]
see \cite{Cun78DimFct}.
It can also be described as the subsemigroup of $\Cu(A)$ of those classes $[a]$ with a representative $a\in M_\infty(A)_+$.
If $A$ has stable rank one, then $W(A)$ is a hereditary subset of $\Cu(A)$ by \cite[Lemma~3.4]{AntBosPer11CompletionsCu}, that is, whenever $x,y\in \Cu(A)$ satisfy $x\leq y$ with $y\in W(A)$, then $x\in W(A)$. 
Then $W(A)$ may alternatively be described as
\[
W(A) = \big\{ x\in \Cu(A) : x\leq n[a]\hbox{ for some } a\in A_+,n\in \NN \big\}.
\]

Following \cite{Cun78DimFct}, we denote the Grothendieck group of $W(A)$ by $K_0^*(A)$.
It is a partially ordered group with positive cone $K^*_0(A)_+=\{\overline{x}-\overline{y} : y\leq x\text{ in }W(A)\}$, where we denote by $\overline{x}$ the image of $x\in W(A)$ in $K_0^*(A)$.
A \emph{state} on $K_0^*(A)$ is an additive, order-preserving map $\lambda\colon K_0^*(A)\to\mathbb{R}$ with $\lambda(\overline{[1_A]})=1$.

In \cite[Section~3]{Cun78DimFct}, Cuntz defined a \emph{(normalized) dimension function} on $A$ as a map $d\colon M_\infty(A)_+\to[0,\infty)$ that satisfies $d(a\oplus b)=d(a)+d(b)$ for all $a,b\in M_\infty(A)_+$, $d(a)\leq d(b)$ whenever $a\precsim b$, and $d(1_A)=1$.
Each dimension function $d$ induces a state $\lambda_d$ on $K_0^*(A)$ by setting $\lambda_d(\overline{[a]}-\overline{[b]})=d(a)-d(b)$, for $a,b\in M_\infty(A)_+$.
This defines a bijection between the set $\DF(A)$ of dimension functions on $A$ and the set $\St(K_0^*(A))$ of states on $K_0^*(A)$
(see \cite[Proposition~4.3]{Cun78DimFct}, which is formulated for the case that $A$ is simple, but works in general).
Moreover, it is straightforward to verify that this bijection identifies the natural structures of $\DF(A)$ and $\St(K_0^*(A))$ as compact convex sets.

In \cite{BlaHan82DimFct}, Blackadar and Handelman conjectured that $\DF(A)$ is always a Choquet simplex.
This has been confirmed for various classes of C*-algebras:
in \cite[Corollary~4.4]{Per97StructurePositive} for C*-algebras with real rank zero and stable rank one;
in \cite[Theorem~4.1]{AntBosPerPet14GeomDimFct} for certain C*-algebras with stable rank two;
in \cite[Theorem 3.4]{Sil16arX:ConjDF} for C*-algebras with finite radius of comparison and finitely many extreme quasitraces.

In view of results obtained in \cite{AntBosPerPet14GeomDimFct}, it was asked in \cite[Problem~3.13]{AntBosPerPet14GeomDimFct} for which C*-algebras $A$ is $K_0^*(A)$  an interpolation group. 
We answer this question affirmatively for C*-algebras of stable rank one, thereby also confirming Blackadar and Handelman's conjecture for these C*-algebras. 
Recall that an interpolation group is a partially ordered abelian group $G$ such that, whenever $x_1,x_2,y_1,y_2\in G$ satisfy $x_1,x_2\leq y_1,y_2$, then there is $z\in G$ with $x_1,x_2\leq z\leq y_1,y_2$.

\begin{thm}
\label{thm:BlacHanconjecture}
Let $A$ be a unital C*-algebra of stable rank one. Then $K_0^*(A)$ is an interpolation group and $\DF(A)$ is a Choquet simplex.
\end{thm}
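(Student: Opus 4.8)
The plan is to leverage the machinery already assembled in the excerpt, reducing the Choquet simplex statement to a purely order-theoretic fact about interpolation groups. The essential observation is that the preliminaries have recorded a bijection between $\DF(A)$ and $\St(K_0^*(A))$ that identifies their compact convex structures, together with the classical theorem of Goodearl and Handelman that the state space of an interpolation group is a Choquet simplex. So the only genuinely new thing to prove is that $K_0^*(A)$ is an interpolation group, and the Choquet simplex conclusion will follow immediately.

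First I would establish that $K_0^*(A)$ is an interpolation group. Recall that $K_0^*(A)$ is by definition the Grothendieck group of $W(A)$, with positive cone generated by the image of $W(A)$. The key input is \autoref{thm:CuARiesz}, which tells us that $\Cu(A)$ has the Riesz Interpolation Property, together with the fact that $W(A)$ is a hereditary subsemigroup of $\Cu(A)$ (as recorded just before the theorem statement). Since interpolation in the ordered semigroup $W(A)$ passes to its Grothendieck group — this is a standard fact from the theory of partially ordered abelian groups, namely that a partially ordered abelian group is an interpolation group precisely when its positive cone has the Riesz interpolation (or equivalently Riesz decomposition) property — the main work is to transfer the interpolation property from $\Cu(A)$ down to $W(A)$. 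Concretely, given $x_1,x_2 \leq y_1,y_2$ in $W(A)$, the Riesz Interpolation Property in $\Cu(A)$ produces $z \in \Cu(A)$ with $x_1,x_2 \leq z \leq y_1,y_2$; since $z \leq y_1 \in W(A)$ and $W(A)$ is hereditary in $\Cu(A)$, we get $z \in W(A)$. Thus $W(A)$ itself has the Riesz Interpolation Property as an ordered semigroup.

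The step I expect to require the most care is the passage from interpolation in the semigroup $W(A)$ to interpolation in its Grothendieck group $K_0^*(A)$. One must verify that the positive cone $K_0^*(A)_+$ inherits the Riesz decomposition property, which involves clearing denominators and manipulating formal differences $\overline{x}-\overline{y}$; the standard reference here is \cite{Goodearl}'s monograph on partially ordered abelian groups (Proposition 2.1 therein, characterizing interpolation groups). The technical subtlety is that elements of $K_0^*(A)_+$ are differences, so an interpolation problem $u_1,u_2 \leq v_1,v_2$ in the group must first be reduced, by adding a common positive element to both sides, to an interpolation problem entirely within the cone, i.e. within (the image of) $W(A)$; only then can one invoke the semigroup interpolation established above. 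I would carry out this reduction explicitly, being attentive to the fact that the map $W(A) \to K_0^*(A)_+$ need not be injective, though positive ordering still controls the argument.

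Finally, with $K_0^*(A)$ shown to be an interpolation group, I would conclude by citing the theorem of Goodearl and Handelman (see \cite[Theorem 10.17]{Goodearl}) that the state space of an interpolation group with order unit is a Choquet simplex. Applying this to $K_0^*(A)$ with order unit $\overline{[1_A]}$ gives that $\St(K_0^*(A))$ is a Choquet simplex, and the identification recorded in the preliminaries transports this to $\DF(A)$, completing the proof.
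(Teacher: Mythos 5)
Your proposal is correct and follows essentially the same route as the paper: interpolation in $\Cu(A)$ (from \autoref{thm:CuARiesz}) passes to the hereditary subsemigroup $W(A)$, then to its Grothendieck group $K_0^*(A)$ (the paper cites \cite[Lemma~4.2]{Per97StructurePositive} for this step, which handles the non-injectivity issue you flag), and finally \cite[Theorem~10.17]{Goo86GpsInterpolation} together with the identification $\DF(A)\cong\St(K_0^*(A))$ gives the Choquet simplex conclusion.
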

\begin{proof}
By \autoref{Cstarinf-semilattice}, we know that $\Cu(A)$ has the Riesz Interpolation Property.
This property passes to $W(A)$ since $W(A)$ is hereditary in $\Cu(A)$.
Indeed, if $u,v\leq x,y$ in $W(A)$, then there is $w\in\Cu(A)$ such that $u,v\leq w\leq x,y$ and, since $W(A)$ is hereditary in $\Cu(A)$, we have $w\in W(A)$. Now, the Grothendieck group of a semigroup with the Riesz interpolation property is an interpolation group (see  \cite[Lemma~4.2]{Per97StructurePositive}). Therefore  $K_0^*(A)$ is an interpolation group.
Finally, using for example \cite[Theorem~10.17]{Goo86GpsInterpolation}, we obtain that $\St(K_0^*(A))$ is a Choquet simplex, and thus so is $\DF(A)$.
\end{proof}

\section{The Global Glimm halving Problem}\label{sec:GGH}

\emph{The Global Glimm Halving Problem} has been posed in various forms;
see, for example, \cite[Definition~1.2]{BlaKir04GlimmHalving} and \cite[Question~1.2]{EllRor06Perturb}.
We recall that one formulation is as follows: If $A$ is a unital C*-algebra without finite dimensional representations, is there a *-homomorphism $\varphi\colon M_2(C_0((0,1]))\to A$ with full range?
(Recall that a subset of a C*-algebra is called full if it generates the C*-algebra as a closed, two-sided ideal.)
As mentioned in the introduction, this question was first considered, implicitly, in \cite[Section~4]{KirRor02InfNonSimpleCalgAbsOInfty}, where it was shown that if it has an affirmative answer for a  weakly purely infinite C*-algebra $A$, then $A$ is in fact purely infinite.
The Global Glimm Halving Problem is solved affirmatively in \cite{BlaKir04GlimmHalving} for C*-algebras with Hausdorff primitive spectrum of finite dimension, and in \cite{EllRor06Perturb} for all C*-algebras of real rank zero.

In \autoref{CstarGG} below we solve the Global Glimm Halving Problem affirmatively for separable C*-algebras of stable rank one, by using an equivalence obtained in \cite{RobRor13Divisibility} between  this problem and certain divisibility properties in the Cuntz semigroup. We use and improve some of these tools for the stable rank one case, and we even obtain a sharper result that characterizes when a C*-algebra of stable rank one has irreducible representations of a given finite dimension. Further, in \autoref{nonsepGG} we remove the separability assumption.
Our line of attack consists of first establishing results on divisibility of elements of \CuSgp{s}, which are subsequently translated into a solution of the Global Glimm Halving Problem.  

\begin{pgr}
\label{pgrweakdiv}
\emph{Divisibility in Cuntz semigroups}.	
Let $S$ be a \CuSgp, $x\in S$ and $k\in\NN$.
Let us recall the divisibility properties introduced in \cite[Definitions~3.1, 5.1]{RobRor13Divisibility}.
\begin{enumerate}[(i)]
	\item
	Given $n\in\NN$, we say that $x$ is \emph{$(k,n)$-divisible} if for each $x'\in S$ satisfying $x'\ll x$ there exists $y\in S$ such that $ky\leq x$ and $x'\leq ny$. 
	
	\item
	We  say that $x$ is \emph{$(k,\omega)$-divisible} if for each $x'\in S$ satisfying $x'\ll x$ there exist $y\in S$ and $n\in \NN$ such that $ky\leq x$ and $x'\leq ny$.
	
	\item
	Given $n\in \NN$, we say that $x$ is \emph{weakly $(k,n)$-divisible} if for each $x'\in S$ with $x'\ll x$ there exist $y_1,\ldots,y_n\in S$ such that $ky_j\leq x$ for all $j$ and $x'\leq \sum_{j=1}^n y_j$.  
	
	\item
	We say that $x$ is \emph{weakly $(k,\omega)$-divisible}  if for each $x'\in S$ with $x'\ll x$ there exist $n\in\NN$ and $y_1,\ldots,y_n\in S$ such that $ky_j\leq x$ for all $j$ and $x'\leq \sum_{j=1}^n y_j$. 
\end{enumerate}

Observe that in~(i) and~(ii) we can always arrange for $y$ to satisfy $ky\ll x$ and $x'\ll ny$ (rather than $ky\leq x$ and $x'\leq ny$), by first choosing $x''\in S$ such that $x'\ll x''\ll x$, then choosing $\tilde{y}$ such that $k\tilde{y}\leq x$ and $x''\leq n\tilde{y}$, and then choosing $y$ such that $y\ll\tilde{y}$ and $x'\ll ny$. Similarly, in~(iii) and~(iv) $y_1,\ldots,y_n$ can be chosen such that $ky_j\ll x$ and $x'\ll \sum_{j=1}^n y_j$ at no cost.
\end{pgr}

\begin{pgr}
\label{pgr:full}
Given a \CuSgp{} $S$, and $x\in S$.
We set $\infty x = \sup_n nx$, and we say that $x$ is \emph{full} provided that $y\leq \infty x$ for any $y\in S$.
Let $A$ be a \ca{} and $a\in A_+$. 
Then $a$ is full in $A$ if and only if $[a]$ is full in $\Cu(A)$. 
This follows for instance from the natural correspondence between closed, two-sided ideals in $A$ and ideals in $\Cu(A)$; 
see \autoref{pgr:ideal}.
In \autoref{lma:basic_cones}, we characterize fullness of $x$ in terms of the rank of $x$.
\end{pgr}

Given $x$ and $y$ in a partially ordered semigroup $S$, we say that $y$ \emph{dominates} $x$, and write $x\propto y$, if there exists $n\in\NN$ such that $x\leq ny$.

\begin{lma}
\label{wedgefull}
Let $S$ be an inf-semilattice ordered \CuSgp, and let $x,y_1,\ldots,y_n$ be elements in $S$ such that $x\propto y_k$ for $k=1,\ldots,n$. Then $x\propto \bigwedge_k y_k$. In fact, if $x\leq My_k$ for all $k$, then $x\leq N (\bigwedge_{k=1}^n y_k)$ where $N=n(M-1)+1$.
\end{lma}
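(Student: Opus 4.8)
The plan is to prove the quantitative assertion first and then deduce the qualitative one. For the reduction, note that $x \propto y_k$ means $x \leq M_k y_k$ for some $M_k \in \NN$; setting $M = \max_k M_k$ and using that the order is positive (so $M y_k \geq M_k y_k \geq x$), I get $x \leq M y_k$ for every $k$. The quantitative bound then yields $x \leq N\big(\bigwedge_k y_k\big)$ with $N = n(M-1)+1$, which is exactly $x \propto \bigwedge_k y_k$.

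So assume $x \leq M y_k$ for all $k$, and write $w = \bigwedge_{k=1}^n y_k$. The engine of the proof is the generalized distributivity recorded in \autoref{general}. Applying that formula to the sum of $N$ copies of $w$, each copy being the meet $\bigwedge_{k=1}^n y_k$, I would rewrite $Nw$ as a meet of sums:
\[
Nw = \sum_{t=1}^N \Big(\bigwedge_{k=1}^n y_k\Big) = \bigwedge_{(i_1,\ldots,i_N)} \Big(\sum_{t=1}^N y_{i_t}\Big),
\]
where the meet is taken over all tuples $(i_1,\ldots,i_N) \in \{1,\ldots,n\}^N$. To conclude $x \leq Nw$ it therefore suffices to check $x \leq \sum_{t=1}^N y_{i_t}$ for each fixed tuple.

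Fix a tuple and let $m_k = \#\{t : i_t = k\}$ denote the multiplicity of the index $k$, so that $\sum_{t=1}^N y_{i_t} = \sum_{k=1}^n m_k y_k$ and $\sum_{k=1}^n m_k = N$. The crux is a pigeonhole count: since $\sum_k m_k = n(M-1)+1 > n(M-1)$, not all multiplicities can be at most $M-1$, so there is an index $k_0$ with $m_{k_0} \geq M$. For that index, positivity of the order gives $\sum_{k=1}^n m_k y_k \geq m_{k_0} y_{k_0} \geq M y_{k_0} \geq x$. As the tuple was arbitrary, $x$ lies below every term of the meet, whence $x \leq Nw$.

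The step needing the most care is calibrating the constant: the value $N = n(M-1)+1$ is chosen precisely so that the multiplicities, which sum to $N$, cannot all stay below the threshold $M$, and this is what forces some $m_{k_0} \geq M$. I would sanity-check the extremes $n=1$ (where $N = M$, $w = y_1$, and the claim is trivial) and $M = 1$ (where $N = 1$ and $w = \bigwedge_k y_k$), both of which the formula handles correctly. Apart from this bookkeeping, the only nontrivial ingredient is the distributive identity of \autoref{general}, valid because $S$ is inf-semilattice ordered.
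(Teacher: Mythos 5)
Your proposal is correct and follows essentially the same route as the paper's proof: reduce to the quantitative bound, expand $N\bigwedge_k y_k$ as a meet of sums via the distributivity formula of \autoref{general}, and observe by pigeonhole that each such sum of $N=n(M-1)+1$ terms must repeat some $y_{k_0}$ at least $M$ times, hence dominates $x$. The only difference is that you spell out the pigeonhole count explicitly, which the paper leaves implicit.
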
	
\begin{proof}
It is enough to prove the last assertion. Assume $M\in\NN$ is such that $x\leq My_k$ for $k=1,\ldots,n$.
Set $N=n(M-1)+1$. 
By \autoref{general}, we have
\[
N\bigwedge_{k=1}^n y_k=\sum_{j=1}^{N} \bigwedge_{k=1}^n y_k
=\bigwedge  \left(\sum_{k=1}^{N} y_{i_k}\right),
\] 	
where the infimum on the right hand side runs through all sums with $N$ terms taken from the set $\{y_1,\dots,y_n\}$. Since $N=n(M-1)+1$, each of these sums contains at least one of the $y_k$ repeated $M$ times, whence it is greater than or equal to $x$. Thus, $N(\bigwedge_{i=1}^n y_k)$ is greater than or equal to $x$, as desired.
\end{proof}

\begin{lma}
\label{CuGGlm}
Let $S$ be an inf-semilattice ordered \CuSgp{} satisfying \axiomO{5} and weak cancellation.
Let $k\in\NN$ and $x',x,y_1,\dots,y_n\in S$ be such that $x'\ll x$, $x'\ll \sum_{j=1}^n y_j$, and $ky_j\leq x$ for each $j$.
Then there exist $z_1,\dots,z_k\in S$ such that $\sum_{j=1}^k z_j\leq x$ and $x'\propto z_j$ for each $j$.
More precisely, we have $x'\leq M z_j$ where 
\[
M=\max\{ n^r(k-r)+n^{r-1} : r=1,\dots, k\}.
\]
\end{lma}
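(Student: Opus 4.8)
The plan is to work entirely within the inf-semilattice structure of $S$, using the distributive formula of \autoref{general} to control sums and \autoref{wedgefull} to control how infima dominate $x'$. A first, cost-free reduction: replacing each $y_j$ by $x'\wedge y_j$ preserves all hypotheses, since $k(x'\wedge y_j)\leq ky_j\leq x$, while iterating the inequality $a\leq(a\wedge b)+(a\wedge c)$—valid in any inf-semilattice ordered \CuSgp{} by \autoref{general}, as $(a\wedge b)+(a\wedge c)=2a\wedge(a+b)\wedge(a+c)\wedge(b+c)\geq a$—gives $x'\leq\sum_j(x'\wedge y_j)$. I may therefore assume $y_j\leq x'$ for all $j$, and I will thicken the data using \axiomO{2}, choosing $x''$ with $x'\ll x''\ll\sum_j y_j$ for the bookkeeping.

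Two facts drive the argument. First, any common upper bound $w$ of $y_1,\dots,y_n$ dominates $x'$ with constant $n$, since $nw\geq\sum_j y_j\geq x'$. Second, an infimum of $m$ elements each dominating $x'$ with constant $c$ dominates $x'$ with constant $m(c-1)+1$, by \autoref{wedgefull}. If $S$ admitted well-behaved binary suprema one would simply take every $z_i=\bigvee_j y_j$: this packs, since $k\bigvee_j y_j=\bigvee_j ky_j\leq x$, and dominates $x'$ with constant $n$. The entire difficulty is that a \CuSgp{} has infima but \emph{not} suprema, so this supremum must be simulated by infima of sums of the $y_j$, and each such simulation compounds the domination constant by a factor comparable to $n$. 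This is exactly the source of the bound $M=\max_r\{n^r(k-r)+n^{r-1}\}$.

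I would build the $z_j$ by induction on $k$, guided by the recursion $M_r(k)=n\,M_{r-1}(k-1)$ that is visible in the closed form of $M$. For $k=1$ one takes $z_1=x$, which dominates $x'$ with constant $1$. For the step, note $(k-1)y_j\leq x$, so the case $k-1$ produces pieces with $\sum_{i\geq 2}z_i\leq x$; these are then shrunk by intersecting with the directions $y_1,\dots,y_n$—which by \autoref{wedgefull} multiplies their domination constants by a factor comparable to $n$, matching $M_r(k)=nM_{r-1}(k-1)$—so as to free enough of the budget $x$ to insert one further piece $z_1$, itself an infimum of sums of the $y_j$ dominating $x'$ with the base constant $n(k-1)+1$. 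The passage from the packing relations $ky_j\leq x$ to the single inequality $\sum_{j=1}^k z_j\leq x$ is carried out by expanding $\sum_j z_j$ as one infimum of sums via \autoref{general} and bounding it by $x$ using the $ky_j\leq x$; here \axiomO{5} and weak cancellation are used to manufacture and cancel the ``remaining budget'' in the absence of subtraction.

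The conceptual content—replacing the absent supremum $\bigvee_j y_j$ by infima of sums—is clear, so the main obstacle is the exact combinatorial accounting of the constants: verifying that the compounding of factors $n$ across the $k$ inductive stages yields \emph{precisely} $M=\max_r\{n^r(k-r)+n^{r-1}\}$, and checking that every infimum which arises genuinely does both jobs at once, namely packs into $x$ (through \autoref{general}) and dominates $x'$ (through \autoref{wedgefull}). I expect the careful tracking of these two competing requirements, rather than any single inequality, to be where the real work lies.
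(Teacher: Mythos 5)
Your proposal assembles the right tools (induction on $k$, \autoref{general} for packing, \autoref{wedgefull} for compounding domination constants, the recursion $M(k)=\max\{n(k-1)+1,\,nM(k-1)\}$ behind the closed form of $M$), but the inductive step itself is not carried out, and the mechanism you sketch for it runs in the wrong direction. You propose to first invoke the case $k-1$ to get pieces with $\sum_{i\geq 2}z_i\leq x$ and then ``shrink'' them by intersecting with the $y_j$ ``so as to free enough of the budget $x$'' for one more piece. In a \CuSgp{} there is no subtraction, so making the $z_i$ smaller does not by itself produce any element $z_1$ with $z_1+\sum_{i\geq 2}z_i\leq x$; quantifying the freed room is precisely the hard part, and nothing in the proposal does it. The paper's proof proceeds in the opposite order: it first manufactures the \emph{new} piece and a complement for it, and only then recurses. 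Concretely, one applies \axiomO{5} to $(k-1)y_j'\ll(k-1)y_j''\leq x$ to get $w_j$ with $(k-1)y_j'+w_j\leq x\leq (k-1)y_j''+w_j$; multiplying the right-hand inequality by $k$ and cancelling $(k-1)ky_j''\ll(k-1)x$ by weak cancellation yields the crucial estimate $x\leq kw_j$, so that $w=\bigwedge_j w_j$ satisfies $x\leq(n(k-1)+1)w$ by \autoref{wedgefull}. The new piece is $z_k=w'$ for a suitable $w'\ll w$; a second application of \axiomO{5} produces $\tilde x$ with $w'+\tilde x\leq x$, and a second weak cancellation (against $w''\ll w_j$) shows $(k-1)y_j'\leq\tilde x$, i.e.\ the reduced budget $\tilde x$ still supports $(k-1)$-fold packing of each direction, which is what legitimizes the recursion on $\tilde x$ with the constant multiplied by $n$ via $x'\ll n\big((\sum_j y_j')\wedge\tilde x\big)$. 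None of this complement construction, nor either of the two weak-cancellation steps, appears in your write-up; your closing paragraph explicitly defers exactly this accounting, which is where the proof actually lives.

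Two smaller points. The preliminary reduction to $y_j\leq x'$ (replacing $y_j$ by $x'\wedge y_j$) is valid but is never used and buys nothing. And the heuristic ``if suprema existed one would take $z_i=\bigvee_j y_j$, since $k\bigvee_j y_j=\bigvee_j ky_j\leq x$'' rests on an identity that fails in general ordered semigroups; $k(y_1\vee\cdots\vee y_n)\leq x$ does not follow from $ky_j\leq x$, so even the motivating picture is off. As it stands the proposal is a plausible plan rather than a proof, with the central step missing.
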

\begin{proof}
We will prove the result by induction over $k$.
The case $k=1$ is trivial taking $z_1=x$.
Let us assume $k>1$ and that the result holds for $k-1$.

Let $x', x, y_1,\ldots, y_n$ be as in the statement of the lemma.
Choose $y_1',\ldots,y_n'\in S$ such that $y_j'\ll y_j$ for each $j$, and such that $x'\ll \sum_{j=1}^n y_j'$.
For each $j$, choose $y_j''\in S$ such that $y_j'\ll y_j''\ll y_j$.
Apply \axiomO{5} to $(k-1)y_j'\ll(k-1)y_j''\leq x$ to obtain $w_j\in S$ such that
\[
(k-1)y_j'+w_j\leq x\leq (k-1)y_j''+w_j\,.
\]
Multiplying by $k$ in $x\leq (k-1)y_j''+w_j$ we get
\[
kx\leq  (k-1)ky_j''+kw_j\,.
\]
Since $(k-1)ky_j''\ll (k-1)x$, we get by weak cancellation that $x\leq kw_j$ (see \autoref{pgr:weakcancellation}).

Set $w=\bigwedge_{j=1}^n w_j$. Note that, since $w_j\leq x$ for all $j$, we have $w\leq x$.
By \autoref{wedgefull} we have $x\leq (n(k-1)+1)w$.
Choose $w',w''\in S$ such that $w'\ll w''\ll w$ and $x'\leq (n(k-1)+1)w'$.
Using \axiomO{5} in the inequality $w'\ll w''\leq x$, we obtain $\tilde x\in S$ such that $w'+\tilde x\leq x\leq w''+\tilde x$. For each $j$, we have
\[
(k-1)y_j'+w_j\leq x\leq \tilde x +w''\,.
\]
Since $w''\ll w_j$, we get by weak cancellation that $(k-1)y_j'\leq \tilde x$.
Hence,  $\sum_{j=1}^n y_j'\leq n \tilde x$.
Observe  also that, by \autoref{general},
\[
n \left( \big( \sum_{j=1}^n y_j' \big) \wedge \tilde x \right)
= \bigwedge_{l=0}^n \left( (n-l)\big(\sum_{j=1}^n y_j'\big)+l\tilde x\right).
\]
Further, any of the terms of the infimum on the right hand side is greater than $\sum_{j=1}^n y_j'$. Since  $x'\ll \sum_{j=1}^n y_j'$,  we have  $x'\ll n((\sum_{j=1}^n y_j')\wedge \tilde{x})$.
Choose $\tilde{x}'$ such that $\tilde x' \ll (\sum_{j=1}^n y_j')\wedge \tilde x$ and $x'\leq n\tilde x'$.
By construction, we can apply induction on $\tilde x',\tilde x, y_1',\dots,y_n'$ to find $z_1,\dots,z_{k-1}$ such that 
$\sum_{i=1}^{k-1}z_i\leq \tilde x$ and $\tilde x'\leq M_0 z_i$ for $i=1,\dots,k-1$, where 
\[
M_0=\max\{ n^{s}(k-1-s)+n^{s-1} : s=1,\dots, k-1 \}.\]
Set $z_k=w'$. We have 
\[
\sum_{j=1}^k z_j \leq \tilde x + w'\leq x.
\]
Moreover, $x'\leq n\tilde x'\leq nM_0 z_j$ for $j=1,\dots,k-1$ and $x'\leq (n(k-1)+1)z_k$.
Since $M\geq\max\{M_0n,n(k-1)+1\}$, this completes the proof of the induction step.		
\end{proof}

\begin{thm}
\label{CuGG}
Let $S$ be an inf-semilattice ordered \CuSgp{} satisfying \axiomO{5} and weak cancellation.
Let $k\in\NN$ and let $x\in S$.	Then $x$ is weakly $(k,n)$-divisible for some $n\in\NN$ (weakly $(k,\omega)$-divisible) if and only if $x$ is $(k,N)$-divisible for some $N\in\NN$ ($(k,\omega)$-divisible).
Moreover, given $n\in\NN$, the corresponding $N$ may be chosen to depend only on $k$ and $n$ (and not on $S$ or $x$).
\end{thm}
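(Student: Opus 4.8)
The plan is to separate the two easy implications, handled by specialization, from the substantial converse, whose content comes packaged in \autoref{CuGGlm} together with the inf-semilattice structure. For the directions ``$(k,N)$-divisible $\Rightarrow$ weakly $(k,n)$-divisible'' and ``$(k,\omega)$-divisible $\Rightarrow$ weakly $(k,\omega)$-divisible'' I would simply note that a single witness $y$ with $ky\leq x$ and $x'\leq Ny$ already yields weak divisibility: setting $y_1=\cdots=y_N=y$ gives $ky_j\leq x$ for all $j$ and $x'\leq Ny=\sum_{j=1}^N y_j$. Hence $(k,N)$-divisibility forces weak $(k,N)$-divisibility, and in particular weak $(k,n)$-divisibility for $n=N$.

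For the converse, which carries the ``moreover'' statement, fix $x'\ll x$ and interpolate $x'\ll x''\ll x$. Applying weak $(k,n)$-divisibility (respectively weak $(k,\omega)$-divisibility, where $n$ is then allowed to depend on $x'$) to $x''$ produces $y_1,\ldots,y_n$ with $ky_j\leq x$ and $x''\leq\sum_{j=1}^n y_j$; since $x'\ll x''$, this promotes the inequality to $x'\ll\sum_{j=1}^n y_j$, which is exactly the hypothesis of \autoref{CuGGlm}. That lemma then returns $z_1,\ldots,z_k$ with $\sum_{j=1}^k z_j\leq x$ and $x'\leq Mz_j$ for each $j$, where $M=M(k,n)$ is the explicit constant in its statement.

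The crux is to collapse the $k$ elements $z_1,\ldots,z_k$ into a single divisor, and here I would exploit that $S$ is inf-semilattice ordered by setting $y=\bigwedge_{j=1}^k z_j$. Since $y\leq z_j$ for every $j$, summing the $k$ inequalities term-by-term gives $ky\leq\sum_{j=1}^k z_j\leq x$, so $y$ is a valid $k$-fold divisor. At the same time $x'\leq Mz_j$ holds for each of the $k$ elements $z_j$, so \autoref{wedgefull} applies to the family $z_1,\ldots,z_k$ and yields $x'\leq Ny$ with $N=k(M-1)+1$. Thus $y$ witnesses $(k,N)$-divisibility at $x'$, with $N$ depending only on $k$ and $n$; the same $y$ and finite bound $N$ serve for the $\omega$-statement, which only asks for \emph{some} finite multiple of $y$ to dominate $x'$.

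The one genuinely delicate point is precisely this passage from the $k$ quantities $z_j$ to a single $y$: it is essential that \autoref{CuGGlm} delivers exactly $k$ terms, so that the $k$ summands of $ky$ can be bounded against the $k$ summands of $\sum_{j=1}^k z_j$, and that the meet preserves the domination relations $x'\propto z_j$ uniformly, which is the role played by \autoref{wedgefull}. Everything else is routine manipulation of the way-below relation and of the interpolation $x'\ll x''\ll x$, so once the two lemmas are in hand the argument is short.
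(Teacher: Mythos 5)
Your proposal is correct and follows essentially the same route as the paper's proof: apply \autoref{CuGGlm} to the weak divisors, then take the meet $\bigwedge_{j=1}^k z_j$ and invoke \autoref{wedgefull} to get $N=k(M-1)+1$. The only cosmetic difference is that you spell out the interpolation $x'\ll x''\ll x$ to upgrade $x'\leq\sum y_j$ to $x'\ll\sum y_j$, which the paper instead delegates to the remark at the end of \autoref{pgrweakdiv}.
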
	
\begin{proof}
The backward implications are clear. To show their converses, let $x'\in S$ satisfy $x'\ll x$.
By assumption, there exist $y_1,\ldots,y_n\in S$ such that $ky_j\leq x$ for all $j$, and $x'\ll \sum_{j=1}^n y_j$.
Apply \autoref{CuGGlm} to obtain $M\in\NN$ and $z_1,\ldots,z_k\in S$ such that $\sum_{j=1}^k z_j\leq x$ and $x'\leq Mz_j$ for each $j$.	Set $N=k(M-1)+1$ and $z=\bigwedge z_i$.	Then $kz\leq x$ and $x'\leq Nz$ by \autoref{wedgefull}.
\end{proof}

The following result is an improved version of \cite[Lemma~2.5]{RobRor13Divisibility} that is available for C*-algebras with stable rank one.

\begin{lma}
\label{prp:mapFromCone}
Let $A$ be a C*-algebra with stable rank one. Let $k\in\NN$, $x\in\Cu(A)$, and $b\in A_+$ satisfy $kx\leq[b]$.
Then there exists a *-homomorphism $\varphi\colon M_k(C_0((0,1])) \to \overline{bAb}$ such that $[\varphi(e_{11}\otimes\iota)]=x$.  (Here, we have identified $M_k(C_0((0,1]))$ with $M_k\otimes C_0((0,1])$, and $e_{11}\otimes \iota$ denotes the elementary tensor of the diagonal matrix unit with the identity function.) 
\end{lma}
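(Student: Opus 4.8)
The plan is to reduce the statement, via the polar-decomposition formalism available under stable rank one, to the trivial construction of a cone over $M_k$ inside a hereditary subalgebra, and then transport that cone into $\overline{bAb}$.

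First I would build a model cone. Pick $a_0\in A_+$ with $[a_0]=x$ and, after rescaling, with $\Vert a_0\Vert\leq 1$. Continuous functional calculus gives a *-homomorphism $\psi_0\colon C_0((0,1])\to\overline{a_0Aa_0}$ determined by $\psi_0(\iota)=a_0$ (using that functions in $C_0((0,1])$ vanish at $0$), and tensoring with the identity on $M_k$ yields $\psi=\mathrm{id}_{M_k}\otimes\psi_0\colon M_k(C_0((0,1]))\to M_k\otimes\overline{a_0Aa_0}$. Setting $c=1_k\otimes a_0\in M_k(A)$, the image of $\psi$ lies in $\overline{c\,M_k(A)\,c}=M_k\otimes\overline{a_0Aa_0}$, we have $\psi(e_{11}\otimes\iota)=e_{11}\otimes a_0$ whose class is $x$, and $[c]=k[a_0]=kx$.

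Next I would transport this into $\overline{bAb}$. Regarding $M_k(A)\subseteq A\otimes\mathcal K$ and $b\in A\otimes e_{11}=A$, and using $\Cu(A)\cong\Cu(A\otimes\mathcal K)$, the hypothesis $kx\leq[b]$ reads $c\precsim b$ in $A\otimes\mathcal K$. Since $A\otimes\mathcal K$ has stable rank one, the characterization of Cuntz subequivalence recalled in the introduction (\cite[Proposition 2.5]{CiuEllSan11LimitType1}) produces $s\in A\otimes\mathcal K$ with $s^*s=c$ and $ss^*\in\overline{b(A\otimes\mathcal K)b}$; as $b$ is supported in the corner $A\otimes e_{11}$, one checks that $\overline{b(A\otimes\mathcal K)b}=\overline{bAb}$. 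Writing the polar decomposition $s=v\,c^{1/2}$ in $(A\otimes\mathcal K)^{**}$, the map $\mathrm{Ad}(v)\colon d\mapsto vdv^*$ is a *-isomorphism from $\overline{c(A\otimes\mathcal K)c}$ onto $\overline{ss^*(A\otimes\mathcal K)ss^*}\subseteq\overline{bAb}$ which lands in $A\otimes\mathcal K$ (because $v(c^{1/2}ec^{1/2})v^*=ses^*$ for $e\in A\otimes\mathcal K$). I would then set $\varphi=\mathrm{Ad}(v)\circ\psi\colon M_k(C_0((0,1]))\to\overline{bAb}$.

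Finally I would verify the corner condition. Since $\mathrm{Ad}(v)$ is a *-isomorphism of hereditary subalgebras, and since Cuntz classes computed in a hereditary subalgebra agree with those in the ambient algebra, we get $[\varphi(e_{11}\otimes\iota)]=[v(e_{11}\otimes a_0)v^*]=[e_{11}\otimes a_0]=x$. The main obstacle is this transport step: everything hinges on producing \emph{honest} partial isometries (not merely approximate ones) that land exactly in the prescribed corner $\overline{bAb}$ while carrying the corner of the model cone to an element of Cuntz class exactly $x$, and this is precisely what stable rank one supplies through the above characterization of $\precsim$ and its attendant polar decomposition. I would take particular care to justify that $\mathrm{Ad}(v)$ preserves Cuntz equivalence and that it genuinely maps $\overline{c(A\otimes\mathcal K)c}$ into $A\otimes\mathcal K$ rather than only into the bidual.
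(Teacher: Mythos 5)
Your proof is correct, but it assembles the cone along a genuinely different route from the paper. The paper works inside $\overline{bAb}$ from the start: after reducing to $A$ stable, it chooses $k$ pairwise orthogonal representatives $a_1,\dots,a_k$ of $x$, applies the stable-rank-one characterization of $\precsim$ once to push their sum into $\overline{bAb}$ via the polar decomposition of a single intertwiner, applies it $k-1$ further times to replace the resulting orthogonal family by one whose members are pairwise equivalent via honest elements ($c_1=r^*r$, $c_j=rr^*$), and then cites \cite[Remark~2.3]{RobRor13Divisibility} for the fact that $k$ pairwise orthogonal, pairwise equivalent positive elements generate a copy of $M_k(C_0((0,1]))$. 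You instead build the entire cone trivially upstairs, as $\mathrm{id}_{M_k}\otimes\psi_0$ into $M_k\otimes\overline{a_0Aa_0}$, and transport it in one shot by conjugating with the single partial isometry arising from $1_k\otimes a_0\precsim b$; this trades the combinatorial assembly and the citation of \cite[Remark~2.3]{RobRor13Divisibility} for the standard fact that $\mathrm{Ad}(v)$ is a *-isomorphism between the relevant hereditary subalgebras which lands in the algebra (not merely the bidual) and preserves Cuntz classes --- precisely the point you flag and justify, via $vdv^*=(vd^{1/2})(vd^{1/2})^*$ with $(vd^{1/2})^*(vd^{1/2})=d$. Both arguments rest on the same stable-rank-one input, \cite[Proposition~2.5]{CiuEllSan11LimitType1}; yours buys a cleaner one-step transport at the cost of working in $A\otimes\mathcal K$ throughout, while the paper's stays inside $A$ and leans on an external universal-property lemma. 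One small imprecision: you pick $a_0\in A_+$ with $[a_0]=x$, but a class in $\Cu(A)$ need not have a representative in $A$ itself; as in the paper, one should first replace $A$ by $A\otimes\mathcal K$ (noting $\overline{bAb}=\overline{b(A\otimes\mathcal K)b}$ for $b$ in the corner $A\otimes e_{11}$), after which your argument goes through verbatim.
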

\begin{proof}
Since $\Cu(A)\cong\Cu(A\otimes \mathcal K)$, we may assume that $A$ is stable and that $x\neq 0$.
Given $c,d\in A_+$, we write $c\approx d$ if there exists $r\in A$ with $c=r^*r$ and $rr^*=d$.
Since $A$ has stable rank one, we have $c\precsim d$ (Cuntz subequivalence) if and only if $c\approx d'\in\overline{dAd}$ for some $d'$.
The forward implication is recorded in \cite[Proposition~2.5]{CiuEllSan11LimitType1} (see also \cite[6.2]{OrtRorThi11CuOpenProj}) and only requires the assumption that $\overline{dAd}$ has stable rank one.
The converse direction holds in general: if $c=r^*r$ and $rr^*=d'\in\overline{dAd}$, then $c\sim d'\precsim d$.

Choose pairwise orthogonal elements $a_1,\ldots,a_k\in A_+$ with $[a_j]=x$ for each $j$. 
Then $\sum_{j=1}^k [a_j] = [\sum_{j=1}^k a_j] = kx \leq [b]$.
Choose $r\in A$ with $\sum_{j=1}^k a_j=r^*r$ and $rr^*\in\overline{bAb}$. Let $r=v|r|$ be the polar decomposition of $r$ in $A^{**}$.
Set $b_j=v^*a_jv$ for each $j$. Then $b_1,\ldots,b_k$ are pairwise orthogonal elements in $\overline{bAb}$ satisfying $[b_j]=[a_j]=x$ for each $j$.
Set $c_1=b_1/\|b_1\|$.
For $j=2,\ldots,k$, we use that $c_1\precsim b_j$ to choose $c_j\in\overline{b_jAb_j}$ with $c_1\approx c_j$.
Then $c_1,c_2,\ldots,c_k$ are pairwise orthogonal, pairwise equivalent (in the sense of $\approx$) elements in $\overline{bAb}$.
As noted in \cite[Remark~2.3]{RobRor13Divisibility}, we obtain a *-homomorphism $\varphi\colon M_k(C_0((0,1])) \to \overline{bAb}$ satisfying $[\varphi(e_{jj}\otimes\iota)]=c_j$ for all $j$.  In particular, $[\varphi(e_{11}\otimes\iota)]=[c_1]=[a_1]=x$.
\end{proof}

\begin{thm}
\label{CstarGG}
Let $A$ be a unital separable C*-algebra of stable rank one, and let $k\in\NN$.
Then $A$ has no nonzero representations of dimension less than $k$ if and only if there exists a *-homomorphism $\varphi\colon M_k(C_0((0,1]))\to A$ with full range.
\end{thm}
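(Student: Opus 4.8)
\textbf{The plan is to} reduce the representation-theoretic condition to a divisibility statement in $\Cu(A)$ and then apply the structural results already established. The key bridge is a result of Robert and R{\o}rdam (\cite{RobRor13Divisibility}): the existence of a full-range *-homomorphism $M_k(C_0((0,1]))\to A$ is equivalent to the element $[1_A]\in\Cu(A)$ being weakly $(k,\omega)$-divisible (or, more precisely, to a fullness-refined divisibility condition on $[1_A]$). Likewise, the absence of irreducible representations of dimension less than $k$ should translate, via Glimm's lemma and the ideal/representation correspondence, into exactly such a divisibility property. So the first step is to recall precisely the dictionary from \cite{RobRor13Divisibility} relating (i) the nonexistence of low-dimensional representations, (ii) weak divisibility of $[1_A]$, and (iii) the existence of the desired *-homomorphism.

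\emph{First I would} dispatch the easy implication. If a full-range *-homomorphism $\varphi\colon M_k(C_0((0,1]))\to A$ exists, then composing with any representation $\pi\colon A\to B(H)$ of dimension $d<k$ would yield a nonzero representation of $M_k(C_0((0,1]))$ on a space of dimension $<k$, which is impossible since $M_k(C_0((0,1]))$ has no nonzero representations of dimension less than $k$ (its only finite-dimensional quotients factor through $M_k(\CC)$ via point evaluation). Fullness of the range guarantees the composite is nonzero whenever $\pi\neq 0$. This gives the forward direction with no reliance on stable rank one.

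\emph{The substantive direction} is the converse. Assuming $A$ has no nonzero representations of dimension less than $k$, I would show $[1_A]$ is weakly $(k,\omega)$-divisible. This is where the hypothesis that $A$ has no low-dimensional irreducible representations feeds in: by Glimm's lemma and the characterization of weak divisibility in terms of the local behavior of the Cuntz semigroup (as developed in \cite{RobRor13Divisibility}), the absence of such representations forces weak $(k,\omega)$-divisibility of every element, in particular of the compact element $[1_A]$. Once weak $(k,\omega)$-divisibility is in hand, I would invoke \autoref{CuGG}: since $A$ has stable rank one, $\Cu(A)$ is an inf-semilattice ordered \CuSgp{} satisfying \axiomO{5} and weak cancellation (by \autoref{Cstarinf-semilattice}, \autoref{pgr:O5O6}, and \autoref{pgr:weakcancellation}), so weak $(k,\omega)$-divisibility upgrades to genuine $(k,\omega)$-divisibility. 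Concretely, for $x'\ll[1_A]$ this yields $y\in\Cu(A)$ and some $n$ with $ky\leq[1_A]$ and $x'\leq ny$.

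\emph{The final step} converts this back into a *-homomorphism using \autoref{prp:mapFromCone}. From $ky\leq[1_A]=[1_A\cdot 1_A]$ and stable rank one, that lemma produces a *-homomorphism $\varphi\colon M_k(C_0((0,1]))\to \overline{1_A A 1_A}=A$ with $[\varphi(e_{11}\otimes\iota)]=y$; choosing $x'$ and hence $y$ to be full (using that $[1_A]$ is full and the characterization of fullness, together with $x'\leq ny$ giving $[1_A]\leq\infty x'\propto\infty y$) ensures the range of $\varphi$ is full. \emph{The hard part will be} the passage from ``no low-dimensional representations'' to weak $(k,\omega)$-divisibility of $[1_A]$, i.e.\ correctly extracting and applying the Glimm-type dichotomy and the precise divisibility dictionary of \cite{RobRor13Divisibility}; the remaining steps are then essentially an assembly of \autoref{CuGG} and \autoref{prp:mapFromCone}, taking due care that the divisibility witnesses can be chosen full so that the resulting *-homomorphism has full range.
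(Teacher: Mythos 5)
Your proposal is correct and follows essentially the same route as the paper: the easy direction by composing with a low-dimensional representation, and the converse by citing \cite[Theorem~5.3]{RobRor13Divisibility} to get weak $(k,n)$-divisibility of $[1_A]$, upgrading to genuine divisibility via \autoref{CuGG} (using the inf-semilattice structure, \axiomO{5}, and weak cancellation from stable rank one), and converting back to a *-homomorphism with \autoref{prp:mapFromCone}, with fullness of the range coming from $[1_A]\leq Nx$. The only cosmetic difference is that the paper treats the passage from ``no low-dimensional representations'' to weak divisibility as a direct citation rather than something to be re-derived via Glimm's lemma, and your fullness bookkeeping ($[1_A]\leq\infty x'$) is slightly more roundabout than simply noting $[1_A]\leq Nx$ makes $x$ full.
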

\begin{proof}	
If $\pi\colon A\to M_j(\CC)$ is a representation with $j<k$ and $\varphi\colon M_k(C_0((0,1]))\to A$ is any *-homomorphism, then $\pi\circ\varphi=0$.
Thus, $\ker(\pi)$ contains the ideal generated by the range of any such $\varphi$.
If there exists $\varphi\colon M_k(C_0((0,1]))\to A$ with full range then $\pi$ must be the zero representation.
This proves the easy direction.

Suppose now that $A$ has no nonzero representations of dimension less than $k$. Let $1\in A$ be the unit of $A$.
We have by \cite[Theorem~5.3]{RobRor13Divisibility} that $[1]$ is weakly $(k,n)$-divisible in $\Cu(A)$ for some $n\in\NN$.	Since $A$ is separable and of stable rank one, $\Cu(A)$ is an inf-semilattice ordered \CuSgp{} satisfying \axiomO{5} and weak cancellation. We thus obtain from \autoref{CuGG} that $[1]$ is $(k,N)$-divisible for some $N\in\NN$.	Hence, we can choose $x\in \Cu(A)$ such that $kx\leq [1]$ and $[1]\leq Nx$. 
By \autoref{prp:mapFromCone}, there exists a *-homomorphism $\varphi\colon M_k(C_0((0,1]))\to A$ such that $[\varphi(e_{11}\otimes \iota)]=x$.	Since $x$ is full, so is $\varphi(e_{11}\otimes \iota)$ (see \autoref{pgr:full}) and $\varphi$ has full range.
\end{proof}

\begin{rmk}
It is possible to adapt the previous proof to nonunital C*-algebras.
In this case, however, rather than a *-homomorphism with full range, we obtain for each $a\in A$ in the Pedersen ideal of $A$ a *-homomorphism $\varphi\colon M_k(C_0((0,1]))\to A$ such that the ideal generated by the range of $\varphi$ contains $a$ (assuming that $A$ has no nonzero representations of dimension less than $k$). This can be improved if we start with the assumption that $A$ has no \emph{elementary quotients}.
In this case we can get $\varphi\colon M_k(C_0((0,1]))\to A$ with full range for each $k\in\NN$, even in the nonunital case.	We prove this in \autoref{fullscattered} below. We first establish an improved form of divisibility of full elements (\autoref{smallsoft}) which will also be used in \autoref{SecSupersoft}.
\end{rmk}

\begin{pgr}
\label{dfn:soft}
Let $S$ be a \CuSgp.
Recall that $x\in S$ is said to be \emph{soft} if for all $x'\in S$ with $x'\ll x$ we have $(k+1)x'\leq kx$ for some $k\in\NN$ (see \cite[Definition~5.3.1]{AntPerThi18TensorProdCu}.) Recall that a subsemigroup $T$ of $S$ is said to be \emph{absorbing} provided that $t+s\in T$ for any $t\in T$ and $s\in S$ such that $s\leq\infty t$. By \cite[Theorem~5.3.11 (2)]{AntPerThi18TensorProdCu}, the subsemigroup of soft elements in a \CuSgp{} is absorbing.
The following result is essentially \cite[Proposition~6.4]{EllRobSan11Cone}, but we include a proof for completeness.
\begin{lma*}
	\label{lma:soft}
	Let $S$ be a \CuSgp. Let $(x_j)_j$ be a sequence in $S$ such that $x_j\propto x_{j+1}$ for each~$j$.
	Then $x=\sum_{j=1}^\infty x_j$ is soft.	
\end{lma*}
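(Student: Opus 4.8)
The goal is to show that if $(x_j)_j$ satisfies $x_j \propto x_{j+1}$ for each $j$ and $x = \sum_{j=1}^\infty x_j$, then $x$ is soft. Let me recall what I need: for every $x' \ll x$, I must produce $k \in \NN$ with $(k+1)x' \leq kx$. The plan is to use the domination hypothesis $x_j \propto x_{j+1}$ to show that $x'$, being compactly contained in the infinite sum, is dominated by a ``tail'' of the sum, and that this tail can be made arbitrarily small relative to $x$ in the appropriate multiplicative sense.

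**Main steps.** First I would unwind the definition of $x = \sum_{j=1}^\infty x_j = \sup_N \sum_{j=1}^N x_j$, noting this supremum exists and behaves well by \axiomO{1} and \axiomO{4}. Given $x' \ll x$, the way-below relation yields some $N$ with $x' \leq \sum_{j=1}^N x_j$. The key observation is then that each partial sum $\sum_{j=1}^N x_j$ is itself dominated by the single later element $x_{N+1}$: indeed, applying the hypothesis $x_j \propto x_{j+1}$ repeatedly gives $x_j \propto x_{N+1}$ for each $j \leq N$, so there is a single $m \in \NN$ with $\sum_{j=1}^N x_j \leq m\,x_{N+1}$. Hence $x' \leq m\,x_{N+1}$. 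Now I would exploit that $x_{N+1}$ is just one summand among infinitely many: adding $m$ disjoint copies of $x_{N+1}$ sitting in the tail, the point is that $(m+1)$ copies of $x_{N+1}$ are comparable to $m$ copies of $x$ once we account for enough of the tail. The cleanest route is to compare $x'$ against the tail $\sum_{j \geq N+1} x_j$, which is again of the same form and absorbs arbitrarily many copies of $x_{N+1}$.

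**The decisive estimate.** The real content is to get the inequality $(m+1)x' \leq m\,x$ (or $(k+1)x' \leq kx$ for a suitable $k$) out of $x' \leq m\,x_{N+1}$. Here I would argue as follows: since $x' \leq m x_{N+1}$ and $x_{N+1} \leq x$, we trivially have $x' \leq m x$; the subtlety is upgrading the coefficient on the left. I would use that the tail $\sum_{j=N+1}^\infty x_j \leq x$ contains within it, by the domination chain running forward, unboundedly many copies of $x_{N+1}$ up to domination — more precisely, for any $\ell$ there is $N'$ with $\ell\, x_{N+1} \leq \sum_{j=N+1}^{N'} x_j \leq x$. Combining $x' \leq m x_{N+1}$ with $(m+1)m\,x_{N+1} \leq x$ for suitably large index, and using that $x_{N+1}$ itself contributes to $x$, gives a relation of the form $(m+1)x' \leq (m+1)m\,x_{N+1} \leq m\,x$. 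Setting $k = m$ then yields softness. I would double-check the bookkeeping so that the copies of $x_{N+1}$ used to bound $(m+1)x'$ from above genuinely fit inside $m$ copies of $x$, invoking \axiomO{4} to pass sums through suprema.

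**Anticipated obstacle.** The main difficulty I foresee is the coefficient arithmetic in the last step: naively $x' \leq m x_{N+1}$ only gives $(m+1)x' \leq (m+1)m\, x_{N+1}$, and I must verify that $(m+1)m$ copies of a single tail element $x_{N+1}$ are dominated by $m$ copies of the \emph{full} sum $x$, not merely by $m$ copies of $x$ trivially. This works because a single $x$ already dominates arbitrarily many copies of any fixed $x_{N+1}$ (run the domination chain forward far enough to absorb $(m+1)m$ copies into the tail, which is $\leq x$), so in fact $(m+1)m\, x_{N+1} \leq x \leq m x$. Getting the indices and the multiplicities to line up cleanly — rather than producing a coefficient that grows with $m$ on both sides — is the delicate part, and I would be careful to choose $N'$ large enough \emph{first}, then read off the single $k$ that witnesses softness.
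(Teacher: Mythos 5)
Your setup is correct and matches the paper: from $x'\ll x=\sup_N\sum_{j=1}^N x_j$ you get $x'\leq\sum_{j=1}^N x_j$, and iterating $x_j\propto x_{j+1}$ gives a single $m$ with $\sum_{j=1}^N x_j\leq m\,x_{N+1}$. But your decisive estimate rests on a false claim. You assert that ``for any $\ell$ there is $N'$ with $\ell\,x_{N+1}\leq\sum_{j=N+1}^{N'}x_j$'' and, equivalently, that a single $x$ dominates arbitrarily many copies of $x_{N+1}$. The hypothesis $x_j\propto x_{j+1}$ points the wrong way for this: it says each term is dominated by a \emph{multiple} of the next, not that later terms (or tails) are large compared to earlier ones. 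Concretely, take $S=[0,\infty]$ and $x_j=2^{-j}$; then $x_j\leq 2x_{j+1}$, so the hypothesis holds, yet $x=\sum_j x_j=1$ and $\ell\,x_{N+1}\leq x$ only for $\ell\leq 2^{N+1}$, while the full tail $\sum_{j>N}x_j$ equals $2^{-N}$ and cannot absorb even $3$ copies of $x_{N+1}$. In this example your bound $(m+1)m\,x_{N+1}\leq x$ fails (with $m=2^{N+1}$ one gets $(m+1)m\,x_{N+1}>1=x$), so the argument breaks down exactly at the step you flagged as delicate.

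The repair is to not convert all $(m+1)$ copies of $x'$ into copies of $x_{N+1}$, but only the single extra one. Write $(m+1)x'=mx'+x'$, bound the lone extra copy by $x'\leq\sum_{j=1}^N x_j\leq m\,x_{N+1}$, and bound the remaining $mx'$ by $m\sum_{j=1}^N x_j$; together this gives
\[
(m+1)x'\;\leq\; m x' + \sum_{j=1}^N x_j \;\leq\; m\sum_{j=1}^N x_j + m\,x_{N+1} \;=\; m\sum_{j=1}^{N+1}x_j\;\leq\; m x,
\]
so $k=m$ witnesses softness. This is the paper's proof; the extra room comes from the one additional summand $x_{N+1}$ appearing in $m$ copies of the partial sum up to $N+1$, not from the tail absorbing many copies of $x_{N+1}$.
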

\begin{proof}
	Let $x'\in S$ satisfy $x'\ll \sum_{j=1}^\infty x_j$.
	Then there exists $n$ such that $x'\leq \sum_{j=1}^n x_j$.
	We can now find $k\in\NN$ such that $\sum_{j=1}^n x_j\leq kx_{n+1}$ and hence
	\[
	(k+1) x'
	\leq kx'+\sum_{j=1}^n x_j
	\leq kx'+kx_{n+1}
	\leq k\sum_{j=1}^{n+1}x_j
	\leq k\sum_{j=1}^\infty x_j
	= kx.\qedhere
	\]
\end{proof}	
\end{pgr}

\begin{lma}
\label{prp:pre-smallsoft}
Let $S$ be a \CuSgp{} satisfying \axiomO{5} and weak cancellation, let $y\in S$ be full and $(3,\omega)$-divisible, and let $c_1,c_2\in S$ satisfy $c_1,c_2\ll\infty$.
(Here, $\infty=\infty y$ is the largest element in $S$.) Then there exist $z,w\in S$ such that $w$ is full, and 
\[
2z + w \leq y, \quad c_1,c_2\propto z \ll \infty.
\]
\end{lma}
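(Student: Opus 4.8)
The plan is to reduce the statement to a single cleaner assertion and then to build the required full element by an iterative use of $(3,\omega)$-divisibility.

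\textbf{Reduction.} I claim it suffices to produce a full element $v\in S$ with $3v\leq y$. Granting such a $v$, choose a $\ll$-increasing sequence $(v_j)_j$ with $v=\sup_j v_j$. Since $v$ is full, $\infty v=\infty$, and by \axiomO{4} the increasing sequence $(j\,v_j)_j$ is cofinal in $\{m v_j : m,j\}$, so $\sup_j j\,v_j=\infty v=\infty$. As $c_1,c_2\ll\infty=\sup_j j\,v_j$, there is an index $J$ with $c_1,c_2\leq J\,v_J$; in particular $c_1,c_2\propto v_J$. Now set $z:=v_J$ and $w:=v$. Then $z=v_J\ll v\leq\infty$ gives $z\ll\infty$, we have $c_1,c_2\propto z$, the element $w=v$ is full, and
\[
2z+w=2v_J+v\leq 2v+v=3v\leq y .
\]
Thus $z,w$ satisfy all the required conclusions, and the whole lemma follows once the full $v$ with $3v\leq y$ is constructed.

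\textbf{Construction of $v$.} The plan is to build $v$ as the supremum of a $\ll$-increasing sequence, using that $y$ is full and $(3,\omega)$-divisible. Fix a $\ll$-increasing sequence $(y_j)_j$ with $y=\sup_j y_j$. I would construct inductively an increasing sequence $(v_j)_j$ in $S$ with
\[
3v_j\ll y, \qquad y_j\propto v_j ,
\]
for every $j$. The base case is a direct application of $(3,\omega)$-divisibility to $y_1\ll y$ (in the refined form of \autoref{pgrweakdiv}, arranging $3v_1\ll y$). Having $v_j$, I would apply $(3,\omega)$-divisibility to a fresh approximant $y_{j+1}'$ with $y_{j+1}\ll y_{j+1}'\ll y$, obtaining an element that thirds into $y$ and whose ideal contains $y_{j+1}$, and then amalgamate it with $v_j$ to produce $v_{j+1}\geq v_j$ still satisfying $3v_{j+1}\ll y$ and now covering $y_{j+1}$. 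Setting $v:=\sup_j v_j$, \axiomO{4} yields $3v=\sup_j 3v_j\leq y$, while the coverage $y_j\propto v_j\leq v$ for all $j$ forces $y=\sup_j y_j\leq\infty v$; since $y$ is full this makes $v$ full, as desired.

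\textbf{Main obstacle.} The genuinely delicate point is the inductive amalgamation step: I must enlarge $v_j$ to some $v_{j+1}\geq v_j$ that simultaneously thirds into $y$ (so that $3v_{j+1}\leq y$ is preserved) and absorbs enough of the freshly divided piece to cover $y_{j+1}$. Here I expect to use \axiomO{5} to pass to a complement of $3v_j$ inside $y$ and to realize the new piece inside that complement, and to use weak cancellation (\autoref{pgr:weakcancellation}, in particular cancellation against the fixed part $3v_j$) to convert the available domination relations $\propto$ into the order inequalities needed to keep $3v_{j+1}\leq y$ along an increasing sequence. Controlling the mismatch inherent in \axiomO{5} (where the upper estimate involves a strictly larger element than the lower one) without destroying either the increasingness of $(v_j)_j$ or the bound $3v_j\leq y$ is the technical heart of the argument, and it is precisely where the hypotheses \axiomO{5} and weak cancellation, together with the freedom to arrange $3v_j\ll y$ in the divisibility, are used.
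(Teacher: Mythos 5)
Your reduction is sound: if a full $v$ with $3v\leq y$ exists, then taking $w=v$ and $z=v_J$ for a suitable member of a $\ll$-increasing sequence does yield all the required conclusions. The problem is that the construction of such a $v$ is the entire content of the lemma, and it is precisely the step you leave open. The ``amalgamation'' you describe does not go through with the stated hypotheses: to enlarge $v_j$ to some $v_{j+1}\geq v_j$ with $3v_{j+1}\leq y$ that also covers $y_{j+1}$, the natural move is $v_{j+1}=v_j+u'$ where $3u'$ fits under an \axiomO{5}-complement $r$ of $3v_j$ in $y$; but that requires $(3,\omega)$-divisibility of $r$ (or at least of full elements other than $y$), which is not assumed --- only $y$ itself is hypothesized to be $(3,\omega)$-divisible, and neither sums nor any available join operation preserve the property ``thirds into $y$''. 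In fact your target statement is essentially \autoref{smallsoft} of the paper (a full, soft $z$ with $nz\leq x$), which is proved there by iterating the present lemma infinitely often under the additional hypothesis that \emph{every} full element is $(3,\omega)$-divisible (secured via the absence of elementary quotients). So the reduction runs in the wrong direction: you are reducing the lemma to a strictly stronger result that, in this paper, is a consequence of it.

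The intended argument is much shorter and avoids making $w$ a third of $y$ altogether. One chooses $y'\ll y''\ll y$ with $c_1,c_2\propto y'$, applies $(3,\omega)$-divisibility once to get $\tilde z$ with $3\tilde z\ll y$ and $y''\propto\tilde z$, picks $z\ll\tilde z$ with $y'\propto z$, and applies \axiomO{5} to $2z\ll 2\tilde z\leq y$ to get $w$ with $2z+w\leq y\leq 2\tilde z+w$. Fullness of $w$ then comes from weak cancellation rather than from any divisibility of $w$: from $3y\leq 6\tilde z+3w$ and $6\tilde z\ll 2y$ one cancels to get $y\leq 3w$, and fullness of $y$ gives fullness of $w$. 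This cancellation trick is the idea your sketch is missing.
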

\begin{proof}
Choose $y'\in S$ such that $y'\ll y$ and $c_1,c_2\propto y'$. Then choose $y''\in S$ such that $y'\ll y''\ll y$.
Since $y$ is $(3,\omega)$-divisible, we obtain $\tilde{z}\in S$ such that $3\tilde{z}\ll y$ and $y''\propto\tilde{z}$, as noted at the end of \autoref{pgrweakdiv}. Choose $z\in S$ such that $z\ll\tilde{z}$ and $y'\propto z$. Applying \axiomO{5} to $2z\ll 2\tilde{z}\leq y$, we obtain $w\in S$ such that 
\[
2z + w \leq y \leq 2\tilde{z} + w.
\]
Then $c_1,c_2\propto y'\propto z \ll \tilde{z}\leq\infty$.
Further, we have
\[
y+2y = 3y \leq 6\tilde{z} + 3w, \andSep 6\tilde{z}\ll 2y,
\]
whence we get $y\leq 3w$ by weak cancellation (see \autoref{pgr:weakcancellation}).
Thus, $w$ is full.
\end{proof}

\begin{thm}
\label{smallsoft}
Let $A$ be a separable C*-algebra of stable rank one that has no elementary quotients.
Then for every full element $x\in\Cu(A)$ and every $n\in\NN$ there exists a soft full element $z\in\Cu(A)$ such that $nz\leq x$.
\end{thm}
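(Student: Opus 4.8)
The goal is to produce, for a full element $x \in \Cu(A)$ and $n \in \NN$, a soft full element $z$ with $nz \leq x$. The plan is to first reduce to showing that $x$ dominates a long orthogonal-type sum of successively dominating full pieces, and then invoke \autoref{dfn:soft}'s lemma (that a sum $\sum_j x_j$ with $x_j \propto x_{j+1}$ is soft) to conclude softness automatically. The key engine is \autoref{prp:pre-smallsoft}, which, applied to a full $(3,\omega)$-divisible element $y$, splits off a full remainder $w$ together with a piece $z$ satisfying $2z + w \leq y$ and prescribed elements dominated by $z$. Iterating this should peel off full summands while keeping control of the dominating relation between consecutive pieces.

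First I would verify that the hypotheses of \autoref{prp:pre-smallsoft} are met. Since $A$ is separable of stable rank one, $\Cu(A)$ is an inf-semilattice ordered \CuSgp{} satisfying \axiomO{5} and weak cancellation (by \autoref{Cstarinf-semilattice} and \autoref{pgr:weakcancellation}). The crucial missing ingredient is $(3,\omega)$-divisibility of full elements: here I expect to use the hypothesis that $A$ has no elementary quotients, which should guarantee (via \autoref{CuGG} and the divisibility results of \cite{RobRor13Divisibility}, or the sharper \autoref{CstarGG}-style argument) that every full element is $(3,\omega)$-divisible. The absence of elementary ideal-quotients is precisely what rules out obstructions to weak divisibility coming from low-dimensional irreducible representations, so this is where that assumption does its work.

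The main construction is an induction producing a sequence of full elements. Starting from the full element $x$, apply \autoref{prp:pre-smallsoft} repeatedly: at each stage, feed in the current full remainder $y_k$ (which is full and $(3,\omega)$-divisible) to obtain $z_k$ and a new full remainder $w_k =: y_{k+1}$ with $2z_k + y_{k+1} \leq y_k$, arranging by the $c_1,c_2$ input slots that the previously produced piece is dominated by the new $z_k$, i.e.\ $z_{k-1} \propto z_k$. Telescoping the inequalities $2z_k + y_{k+1} \leq y_k$ gives $\sum_{k} 2z_k \leq x$; in particular $\sum_k z_k \leq x$. Set $z = \sum_k z_k$. By \autoref{dfn:soft}'s lemma (with the $z_k$ in place of the $x_j$, since $z_k \propto z_{k+1}$), $z$ is soft. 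To get $nz \leq x$ for a prescribed $n$, I would run the construction so that the first $n$ pieces $z_1,\ldots,z_n$ are arranged to be comparable — e.g.\ taking infima or choosing them mutually dominating — so that $nz' \leq \sum_{k=1}^n 2z_k \leq x$ for a single soft full $z' = \bigwedge_{k\leq n} z_k$, or more directly by feeding the running partial infimum into the $c_i$ slots.

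The hardest part will be bookkeeping the two competing requirements simultaneously: making $z$ \emph{soft} demands the pieces grow (each dominating the previous, so that the tail sum is genuinely infinite and softness kicks in), while making $nz \leq x$ demands enough pieces fit \emph{below} $x$ with a definite multiplicity $n$. Balancing these — ensuring the extracted $z$ is full (using that each $w_k$ is full, so fullness propagates) \emph{and} soft \emph{and} satisfies the multiplicity bound — is the delicate point. I would resolve it by fixing $n$ first, using the $c_1,c_2$ slots of \autoref{prp:pre-smallsoft} to force $z_{k} \propto z_{k+1}$ and to keep all $z_k$ above a common full element, then defining $z$ as an infimum or a suitably weighted sum so that softness (from \autoref{dfn:soft}) and the inequality $nz \leq x$ hold together, with fullness inherited from the construction.
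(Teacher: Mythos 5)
Your core construction is the one the paper uses: first show that every full element of $\Cu(A)$ is $(3,\omega)$-divisible (this is indeed where the ``no elementary quotients'' hypothesis enters, though the paper has to pass to the full hereditary subalgebra generated by a representative and invoke Brown's stabilization theorem before applying \cite{RobRor13Divisibility} and the upgrade from weak divisibility in \autoref{CuGG}); then iterate \autoref{prp:pre-smallsoft}, feeding $x_j$ and $z_{j-1}$ into the $c_1,c_2$ slots so that $x_j,z_{j-1}\propto z_j$ and the remainders $w_j$ stay full; telescope $2z_j+w_j\leq w_{j-1}$ to get $2\sum_j z_j\leq x$; and read off softness of $z=\sum_j z_j$ from the lemma in \autoref{dfn:soft} and fullness from $x_j\propto z_{j+1}\leq z$. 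Up to the case $n=2$, this is essentially the paper's proof.

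The gap is in your passage from $n=2$ to general $n$. Setting $z'=\bigwedge_{k\leq n} z_k$ does give $nz'\leq\sum_{k=1}^n z_k\leq x$, but there is no reason for $z'$ to be soft: softness of $z$ in this construction comes entirely from the infinite tail structure $z_j\propto z_{j+1}$ of the sum (via the lemma in \autoref{dfn:soft}), and a finite infimum of the pieces inherits neither that structure nor softness. Nor is $z'$ full, since the individual $z_k$ are not full --- only the whole infinite sum is, through $x_j\propto z_{j+1}$. ``Feeding the running partial infimum into the $c_i$ slots'' does not repair either defect. The correct reduction, which is what the paper's remark ``it suffices to consider the case $n=2$'' amounts to, is simply to iterate the $n=2$ case: find a soft full $z_1$ with $2z_1\leq x$, then a soft full $z_2$ with $2z_2\leq z_1$ (legitimate since $z_1$ is full), and so on, so that $2^kz_k\leq x$; choosing $k$ with $n\leq 2^k$ gives $nz_k\leq x$.
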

\begin{proof}
We first establish the following claim: \emph{Every full element in $\Cu(A)$ is $(3,\omega)$-divisible.}
To prove the claim, let $w\in\Cu(A)$ be full.	Choose $a\in (A\otimes\mathcal{K})_+$ such that $w=[a]$.
Set $B=\overline{aAa}$, which is a full hereditary sub-\ca{} of $A\otimes\mathcal{K}$ (see \autoref{pgr:full}).
Since $A$ has no elementary quotients, neither does $A\otimes\mathcal{K}$.
By Brown's stabilization theorem, we have $B\otimes\mathcal{K}\cong A\otimes\mathcal{K}$.
It follows that $B$ has no elementary quotients, and in particular no finite dimensional representations.
Then $w$ is weakly $(3,\omega)$-divisible by \cite[Theorem~5.3 (iii)]{RobRor13Divisibility}.
Applying \autoref{CuGG}, this implies that $w$ is $(3,\omega)$-divisible.

Now, to prove the theorem, it suffices to consider the case $n=2$.
Let $x\in\Cu(A)$ be full. Choose a $\ll$-increasing sequence $(x_j)_j$ with supremum $x$.
Set $w_0=x$ and $z_0=0$. We inductively find $z_j,w_j\in\Cu(A)$ such that
\[
2z_j + w_j \leq w_{j-1},\quad x_j,z_{j-1}\propto z_j\ll\infty
\]
for $j\geq 1$, and such that $w_j$ is full for $j\geq 0$.

To find $z_j,w_j$ for $j\geq 1$, assume that $z_{j-1}$ and $w_{j-1}$ have been chosen.
Since $w_{j-1}$ is full, it is $(3,\omega)$-divisible by the above claim.
Applying \autoref{prp:pre-smallsoft} (with $y=w_{j-1}$, $c_1=x_j$ and $c_2=z_{j-1}$), we obtain $z_j,w_j\in S$ with the claimed properties.

Set $z=\sum_{j=1}^\infty z_j$.
For each $k\geq 1$, we have
\[
2(z_1+\ldots+z_k) 
\leq 2(z_1+\ldots+z_{k-1})+w_{k-1}
\leq 2(z_1+\ldots+z_{k-2})+w_{k-2}
\leq \ldots \leq w_0,
\]
and thus $2z\leq w_0=x$.
Further, we deduce from $z_j\propto z_{j+1}$ for all $j$ and the lemma in \autoref{lma:soft} that $z$ is soft.
For each $j$, we have $x_j\propto z_{j+1}\leq z$ and thus $x_j\leq \infty z$.
Hence, $x\leq \infty z$, and so $z$ is full.
\end{proof}

\begin{thm}
\label{fullscattered}
Let $A$ be a separable C*-algebra of stable rank one that has no elementary quotients.
Then for each $k\in\NN$ there exists a *-homomorphism $\varphi\colon M_k(C_0((0,1]))\to A$ with full range. 
\end{thm}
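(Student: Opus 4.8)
The plan is to combine the divisibility result just established with the realization lemma \autoref{prp:mapFromCone}. Concretely, fix $k\in\NN$. I want to produce a full element $x\in\Cu(A)$ with $kx\leq[b]$ for a suitable full positive element $b$, and then invoke \autoref{prp:mapFromCone} to obtain a $*$-homomorphism $\varphi\colon M_k(C_0((0,1]))\to\overline{bAb}\subseteq A$ with $[\varphi(e_{11}\otimes\iota)]=x$; since $x$ is full, so is $\varphi(e_{11}\otimes\iota)$, which forces $\varphi$ to have full range (see \autoref{pgr:full}).

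The mechanism for producing such an $x$ is \autoref{smallsoft}. First I would reduce to the stable case: since $\Cu(A)\cong\Cu(A\otimes\mathcal K)$ and $A\otimes\mathcal K$ again has stable rank one and no elementary quotients, I may assume $A$ is stable. The subtlety is that \autoref{prp:mapFromCone} needs a \emph{positive element} $b$ with $kx\leq[b]$, not merely an abstract Cuntz element; this is why working in the stable case is convenient, as every element of $\Cu(A)$ is then represented by a positive element of $A$ itself. So I would pick any full element $x_0=[b]\in\Cu(A)$ with $b\in A_+$ full (for instance $b$ any strictly positive element of $A$, so that $[b]$ is the largest element $\infty$, which is certainly full). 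Applying \autoref{smallsoft} with $n=k$, I obtain a soft full element $z\in\Cu(A)$ with $kz\leq[b]$.

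Finally I would feed this into \autoref{prp:mapFromCone}: with $x=z$ and $kx=kz\leq[b]$, the lemma yields $\varphi\colon M_k(C_0((0,1]))\to\overline{bAb}$ with $[\varphi(e_{11}\otimes\iota)]=z$. As $z$ is full in $\Cu(A)$, the element $\varphi(e_{11}\otimes\iota)$ is full in $A$, so the closed two-sided ideal generated by $\mathrm{ran}(\varphi)$ contains a full element and hence equals $A$; that is, $\varphi$ has full range. I expect the only genuine point requiring care to be the bookkeeping around stability and the passage from an abstract full Cuntz element to an honest positive element $b$ realizing the inequality $kz\leq[b]$ — everything substantive is already packaged in \autoref{smallsoft} and \autoref{prp:mapFromCone}, so the softness of $z$ (though not logically needed for fullness of $\varphi$) comes for free and the argument is short.
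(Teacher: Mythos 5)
Your argument is, in substance, exactly the paper's proof: apply \autoref{smallsoft} with $n=k$ to a full element $[a]$ to get a full (soft) $z$ with $kz\leq[a]$, feed this into \autoref{prp:mapFromCone}, and conclude fullness of the range from fullness of $z$ via \autoref{pgr:full}. The softness of $z$ is indeed not needed, as you say.

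The one step that does not work as written is the ``reduction to the stable case.'' The conclusion of the theorem is a $*$-homomorphism \emph{into $A$}, so replacing $A$ by $A\otimes\mathcal K$ is not a harmless normalization: with your choice of $b$ a strictly positive element of the stabilized algebra, $\overline{b(A\otimes\mathcal K)b}=A\otimes\mathcal K$, and \autoref{prp:mapFromCone} then produces $\varphi$ with range in $A\otimes\mathcal K$ rather than in $A$; there is no automatic way to push such a map back into $A$. Moreover, the stabilization is unnecessary, because you have misidentified where the hypothesis of \autoref{prp:mapFromCone} bites: that lemma requires only that $b$ be a positive element of $A$, while $x$ may be an arbitrary element of $\Cu(A)$ (i.e.\ the class of a positive element of $A\otimes\mathcal K$). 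Since $A$ is separable it has a strictly positive element $a\in A_+$, which is full, and applying \autoref{smallsoft} to $x=[a]$ and then \autoref{prp:mapFromCone} with $b=a$ lands $\varphi$ in $\overline{aAa}\subseteq A$, as in the paper. (If you insist on stabilizing, you must take $b$ inside the corner $A\otimes e_{11}$ so that $\overline{b(A\otimes\mathcal K)b}\subseteq A$; your parenthetical remark that $[b]$ should be the largest element $\infty$ is precisely what forces the range out of $A$.)
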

\begin{proof}
Let $a\in A_+$ be full, and let $k\in\NN$.
Then $x=[a]$ is full in $\Cu(A)$ (see \autoref{pgr:full}).
Using \autoref{smallsoft}, we obtain a full element $z\in\Cu(A)$ with $kz\leq x$.
By \autoref{prp:mapFromCone}, there exists a *-homomorphism $\varphi\colon M_k(C_0((0,1]))\to \overline{aAa}\subseteq A$ such that $[\varphi(e_{11}\otimes \iota)]=z$.
This *-homomorphism has full range (see \autoref{pgr:full}).
\end{proof}

\section{The cone of functionals and its dual}
\label{backgroundF}

In this section we provide basic results on the cone $F(S)$ of functionals on a \CuSgp{} $S$ and its dual $L(F(S))$.
We formulate the problem of realizing functions in $L(F(S))$ as ranks of elements in $S$, which will be tackled in \autoref{sec:realizeRank}.
The main result of this section is \autoref{hat-inf-preserving}, which shows that the natural map $S\to L(F(S))$ preserves infima.
This is used repeatedly in the following sections.

\begin{pgr}
\emph{Functionals.}
\label{pgr:functionals}
Let $S$ be a \CuSgp. 
A map $\lambda\colon S\to [0,\infty]$ is called a \emph{functional} if $\lambda$ is additive, order-preserving, $\lambda (0)=0$, and it also preserves the suprema of increasing sequences.
Let us denote as customary the set of all functionals on $S$ by $F(S)$.

A functional $\lambda$ in $F(S)$ is said to be \emph{densely finite} if every element of $S$ can be written as a supremum of an increasing sequence in $\{x\in S : \lambda(x)<\infty\}$.
This is equivalent to saying that $\lambda(x)<\infty$ whenever there exists $\tilde x\in S$ with $x\ll \tilde x$.
We denote by $F_0(S)$ the set of densely finite functionals.

The set $F(S)$ is endowed with operations of addition and scalar multiplication by nonzero, positive real numbers (both defined pointwise).
Further, $F(S)$ is equipped with a topology that, in terms of convergence, is described as follows:
Given $\lambda\in F(S)$ and a net $(\lambda_i)_{i\in I}$ in $F(S)$, we have $\lambda_i\to \lambda$ if
\[
\limsup \lambda_i(x')\leq \lambda(x)\leq \liminf \lambda_i(x)\hbox{ for all }x',x\in S \text{ such that }x'\ll x.
\]
With this topology, $F(S)$ is a compact Hausdorff space; see \cite[Theorem~4.8]{EllRobSan11Cone}.

Given a C*-algebra $A$, there is a natural bijection between $F(\Cu(A))$ and the set $\QT(A)$
of $[0,\infty]$-valued, lower semicontinuous $2$-quasitraces on $A$;
see \cite[Theorem~4.4]{EllRobSan11Cone}.
This bijection sends $\lambda\in F(\Cu(A))$ to $\tau_\lambda\colon A_+\to[0,\infty]$, given by 
\[
\tau_\lambda(a)=\displaystyle{\int_0^{\|a\|} \lambda([(a-t)_+])dt},
\]
for $a\in A_+$. Given $\tau\in\QT(A)$, the corresponding functional $\lambda_\tau\in F(\Cu(A))$ is given by $\lambda_\tau([a])=\lim_{n} \tau(a^{1/n})$, for $a\in A_+$.

The following statements are equivalent:
\begin{enumerate}
	\item
	$\lambda_\tau\in F_0(\Cu(A))$, that is, $\lambda_\tau$ is densely finite;
	\item
	$\tau$ is densely finite;
	\item
	$\tau$ is finite on $\Ped(A)_+$, the positive part of the Pedersen ideal of $A$.
\end{enumerate}
To prove this, set $D_\tau=\{a\in A_+ : \tau(a)<\infty \}$.
Since $\tau$ is order-preserving and satisfies $\tau(a+b)\leq 2\tau(a)+2\tau(b)$ for all $a,b\in A_+$ (\cite[Section~2.9]{BlaKir04PureInf}; see also \cite[Corollary~II.1.11]{BlaHan82DimFct}), we deduce that $D_\tau$ is a unitarily invariant, hereditary cone.
It follows that $\mathrm{span}(D_\tau)$ is an ideal of $A$ with $D_\tau=\mathrm{span}(D_\tau)\cap A_+$.
Using that the Pedersen ideal $\Ped(A)$ is the smallest dense ideal of $A$, we deduce that~(2) and~(3) are equivalent.

To show that~(1) implies~(3), let $a\in\Ped(A)_+$.
By properties of the Pedersen ideal, it follows that $a\leq (a_1-\varepsilon)_+ + \ldots + (a_n-\varepsilon)_+$ for some $a_1,\ldots,a_n\in A_+$ and $\varepsilon>0$.
Then
\[
\tau(a) \leq \lambda_\tau([a]) \leq \lambda_\tau([(a_1-\varepsilon)_+]) + \ldots + \lambda_\tau([(a_n-\varepsilon)_+]) < \infty.
\]

To show that~(3) implies~(1), let $x,\tilde{x}\in\Cu(A)$ with $x\ll\tilde{x}$.
Choose $b_1,\ldots,b_m\in A_+$ such that $x\ll[b_1]+\ldots+[b_m]$.
Then choose $\varepsilon>0$ such that $x\leq \sum_{j=1}^m[(b_j-\varepsilon)_+]$. 
Now it follows that $\lambda_\tau(x)<\infty$, since for every $b\in A_+$ and $\varepsilon>0$, we have $(b-\varepsilon)_+^{1/n}\leq\tfrac{2}{\varepsilon}(b-\tfrac{\varepsilon}{2})_+$ and thus
\[
\lambda_\tau( [(b-\varepsilon)_+] ) = \lim_n \tau( (b-\varepsilon)_+^{1/n} ) \leq \tau( \tfrac{2}{\varepsilon}(b-\tfrac{\varepsilon}{2})_+ ) < \infty.
\]
\end{pgr}

\begin{pgr}\emph{Extreme functionals and chisels.}
\label{extremechisel}
Let $S$ be a \CuSgp.
A densely finite functional $\lambda\in F_0(S)$ is said to be \emph{extreme} if whenever $\mu\in F(S)$ and $C\in (0,\infty)$ satisfy $\mu\leq C\lambda$, then $\mu=0$ (the zero functional) or there exists $c\in(0,\infty)$ such that $\mu=c\lambda$. 
Notice that the zero functional is extreme.

Let $\lambda\in F_0(S)$ be an extreme functional.
If $\lambda$ is not the zero functional, we define the \emph{chisel} $\sigma_\lambda$ at $\lambda$ as the function $\sigma_\lambda\colon F(S)\to [0,\infty]$ such that
\[
\sigma_\lambda(\mu)
=\begin{cases}
0, &\text{ if } \mu=0; \\
c, &\text{ if } \mu=c\lambda\hbox{ and }c\in (0,\infty); \\
\infty, &\text{ otherwise,}
\end{cases}
\]
for $\mu\in F(S)$. 
We define $\sigma_0$---the chisel at the zero functional---as the function that is zero at $0$
and $\infty$ otherwise. 
It is straightforward to check that $\sigma_\lambda$ is both linear (with respect to the cone structure in $F(S)$) and lower semicontinuous.
The notion of chisel was first introduced in \cite{Thi17arX:RksOps}. Note that we are using a slight generalization of that definition.
\end{pgr}	

\begin{pgr}\emph{Edwards' condition.}
\label{pgr:Edward}
Let $S$ be a \CuSgp{} and let $\lambda\in F(S)$.
We say that~$S$ satisfies \emph{Edwards' condition} for $\lambda$ if
\begin{align}
\label{dfn:Edwards:eq}
\inf\big\{ \lambda_1(x)+\lambda_2(y) : \lambda=\lambda_1+\lambda_2 \big\}
= \sup \big\{ \lambda(z) : z\leq x,y \big\},
\end{align}
for all $x,y\in S$;
see \cite[Definition~4.1]{AntPerRobThi19arX:Edwards}. 
By \cite[Theorem~5.3]{AntPerRobThi19arX:Edwards}, if $A$ is a C*-algebra, then $\Cu(A)$ satisfies Edwards' condition for all functionals on $\Cu(A)$.

If $\lambda$ is extreme and densely finite, and $\lambda=\lambda_1+\lambda_2$, then each of $\lambda_1$ and $\lambda_2$ is the zero functional or a scalar multiple of $\lambda$.
Using this, one can show that the left hand side of \eqref{dfn:Edwards:eq} is $\min \{ \lambda(x),\lambda(y) \}$.
Thus, $S$ satisfies Edward's condition for an extreme $\lambda\in F_0(S)$ if and only if
\[
\min \big\{ \lambda(x),\lambda(y) \big\}=\sup \big\{ \lambda(z) : z\leq x, y \big\}
\]
for all $x,y\in S$.
This form of Edwards' condition appears in \cite[Definition~4.1]{Thi17arX:RksOps}.

If $S$ is an inf-semilattice, then the right hand side of \eqref{dfn:Edwards:eq} is $\lambda(x\wedge y)$.
Hence, in this case, $S$ satisfies Edward's condition for an extreme, densely finite $\lambda$ if and only if $\min \big\{ \lambda(x),\lambda(y) \big\}=\lambda(x\wedge y)$ for all $x,y\in S$.
\end{pgr}	

\begin{pgr}\emph{Dual of $F(S)$.}
\label{dualofcone}
Let $S$ be a \CuSgp{} satisfying \axiomO{5}. We now describe the appropriate notion of dual for the cone $F(S)$.
Denote by $\mathrm{Lsc}(F(S))$ the set of functions $f\colon F(S)\to[0,\infty]$ that are additive, order-preserving, homogeneous (with respect to nonzero, positive scalars), lower semicontinuous, and satisfy $f(0)=0$. We endow $\mathrm{Lsc}(F(S))$ with pointwise order, pointwise addition, and pointwise scalar multiplication by nonzero positive scalars. Given $x\in S$, we define the function $\widehat{x}\colon F(S)\to [0,\infty]$ by evaluation, namely:
\[
\widehat{x}(\lambda)=\lambda(x),\quad \text{for } \lambda\in F(S).
\]
Then $\widehat{x}$ belongs to $\mathrm{Lsc}(F(S))$. We call $\widehat{x}$ the \emph{rank} of $x$. Further, the map $S\to\mathrm{Lsc}(F(S))$ defined by $x\mapsto\widehat{x}$ preserves addition, order, and suprema of increasing sequences.

The \emph{realification} of $S$, denoted by $S_R$, was introduced in \cite{Rob13Cone} as the smallest subsemigroup of $\mathrm{Lsc}(F(S))$ that is closed under suprema of increasing sequences and contains all elements of the form $\tfrac{1}{n}\widehat{x}$ for $x\in S$ and $n\geq 1$.
It was proved in \cite[Proposition~3.1.1]{Rob13Cone} that $S_R$ is a \CuSgp{} satisfying \axiomO{5};
see also \cite[Proposition~7.5.6]{AntPerThi18TensorProdCu}.
We remark that $S_R$ can be identified with the tensor product of \CuSgp{s} $S\otimes [0,\infty]$ as defined and studied in \cite{AntPerThi18TensorProdCu}.

Given $f,g\in\mathrm{Lsc}(F(S))$, we write $f\lhd g$ if $f\leq(1-\varepsilon)g$ for some $\varepsilon>0$ and if $f$ is continuous at each $\lambda\in F(S)$ satisfying $g(\lambda)<\infty$.
We denote by $L(F(S))$ the subsemigroup of $\mathrm{Lsc}(F(S))$ consisting of those $f\in\mathrm{Lsc}(F(S))$ that can be written as the pointwise supremum of a sequence $(f_n)_{n\in\NN}$ in $\mathrm{Lsc}(F(S))$ such that $f_n\lhd f_{n+1}$ for all $n\in\NN$.
\end{pgr}

\begin{lma}
	\label{prp:extFctl}
	Let $S$ be a \CuSgp, let $I\subseteq S$ be an ideal, and let $\lambda\colon I\to[0,\infty]$ be a functional.
	Define $\tilde{\lambda}\colon S\to[0,\infty]$ by
	\[
	\tilde{\lambda}(x)=
	\begin{cases}
	\lambda(x), &\text{ if } x\in I; \\
	\infty, &\text{ otherwise}.
	\end{cases}
	\]
	Then $\tilde{\lambda}$ is a functional on $S$.
\end{lma}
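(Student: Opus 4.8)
The plan is to verify directly the four defining properties of a functional (see \autoref{pgr:functionals}) for $\tilde\lambda$: that $\tilde\lambda(0)=0$, that it is order-preserving, that it is additive, and that it preserves suprema of increasing sequences. Throughout I would exploit the three structural features of an ideal recalled in \autoref{pgr:ideal}: that $I$ is a submonoid (so $0\in I$ and $I$ is closed under addition), that $I$ is order-hereditary, and that $I$ is closed under suprema of increasing sequences. The base point is immediate, since $0\in I$ gives $\tilde\lambda(0)=\lambda(0)=0$.

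For order-preservation, given $x\leq y$ I would split into cases according to whether $y\in I$. If $y\in I$, then the order-hereditary property forces $x\in I$, and the inequality $\tilde\lambda(x)=\lambda(x)\leq\lambda(y)=\tilde\lambda(y)$ follows from order-preservation of $\lambda$ on $I$; if $y\notin I$ the inequality is trivial because $\tilde\lambda(y)=\infty$. For additivity, the case $x,y\in I$ is handled by additivity of $\lambda$ together with closure of $I$ under addition, which gives $x+y\in I$. If instead (say) $x\notin I$, then the right-hand side $\tilde\lambda(x)+\tilde\lambda(y)$ equals $\infty$, and I would observe that $x\leq x+y$ together with the order-hereditary property precludes $x+y\in I$; hence $\tilde\lambda(x+y)=\infty$ as well, matching the right-hand side.

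The only slightly delicate point is preservation of suprema, which again I would treat by cases according to whether $x:=\sup_n x_n$ lies in $I$. If $x\in I$, then each $x_n\in I$ by the order-hereditary property, and since $I$ is closed under suprema of increasing sequences the supremum computed in $I$ agrees with the one in $S$; thus $\tilde\lambda(x)=\lambda(x)=\sup_n\lambda(x_n)=\sup_n\tilde\lambda(x_n)$, using that $\lambda$ preserves suprema on $I$. If $x\notin I$, then not all of the $x_n$ can lie in $I$, for otherwise closure of $I$ under suprema would force $x\in I$; so some $x_{n_0}\notin I$, whence $\tilde\lambda(x_{n_0})=\infty$ and consequently $\sup_n\tilde\lambda(x_n)=\infty=\tilde\lambda(x)$. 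This last case, where one must keep the bookkeeping between membership in $I$ and the value $\infty$ consistent, is the main (though mild) obstacle; everything else reduces to routine invocation of the three ideal axioms.
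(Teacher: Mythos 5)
Your proof is correct and follows essentially the same case analysis as the paper's own argument (splitting on membership in $I$ and using that $I$ is order-hereditary, closed under addition, and closed under suprema); you simply write out the suprema case explicitly where the paper says ``similarly''. No issues.
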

\begin{proof}
	To show that $\tilde{\lambda}$ is order-preserving, let $x\leq y$ in $S$. If $y\notin I$, then $\tilde{\lambda}(y)=\infty$, and clearly $\tilde{\lambda}(x)\leq\tilde{\lambda}(y)$. If on the other hand $y\in I$, then $x\in I$ as well, since $I$ is an ideal of $S$, and thus $\tilde{\lambda}(x)=\lambda(x)\leq\lambda(y)=\tilde{\lambda}(y)$.
	
	To prove additivity, let $x,y\in S$. Observe that $x+y\in I$ if and only if both $x,y\in I$. If $x,y\in I$, then 
	\[
	\tilde{\lambda}(x+y)
	=\lambda(x+y)
	=\lambda(x)+\lambda(y)
	=\tilde{\lambda}(x)+\tilde{\lambda}(y).
	\] 
	On the other hand, if either $x\notin I$ or $y\notin I$, then $x+y\notin I$, and so $\tilde{\lambda}(x+y)=\infty=\tilde{\lambda}(x)+\tilde{\lambda}(y)$.
	Similarly, one proves that $\tilde{\lambda}$ preserves suprema of increasing sequences.
\end{proof}

The result below is known. We highlight it here for future reference as it will be used frequently. Recall from \autoref{pgr:full} that an element $x$ in a $\CatCu$-semigroup $S$ is said to be \emph{full} if $y\leq \infty x$ for all $y\in S$.
\begin{lma}
\label{lma:basic_cones} Let $S$ be a $\CatCu$-semigroup satisfying \axiomO{5}.
\begin{enumerate}[{\rm (i)}]
	\item We have $S_R=L(F(S))$. Thus $L(F(S))$ is a $\CatCu$-semigroup and $\widehat{x}\in L(F(S))$ for every $x\in S$.
	\item If $x,y\in S$ and $s,t\in (0,\infty]$ satisfy $x\ll y$ and $s<t$, then $s\widehat{x}\ll t\widehat{y}$. 
	\item For $x,y\in S$, we have that $x\leq\infty y$ if and only if $\widehat{x}\leq\infty\widehat{y}$. In particular, $x$ is full in $S$ if and only if $\widehat{x}$ is full in $L(F(S))$.
\end{enumerate}
\end{lma}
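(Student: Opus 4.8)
The plan is to prove the three parts separately, reserving a citation for the structural identification in~(i) and giving short direct arguments for~(ii) and~(iii). For~(i), the equality $S_R = L(F(S))$ is precisely the concrete description of the realification established by Robert, so I would cite \cite[Section~3]{Rob13Cone} (see also \cite[Section~7.5]{AntPerThi18TensorProdCu}). The two subsequent assertions are then immediate: $L(F(S))$ is a \CuSgp{} because $S_R$ is one by \cite[Proposition~3.1.1]{Rob13Cone}, and $\widehat{x}=\tfrac{1}{1}\widehat{x}$ is one of the generators of $S_R$, hence lies in $L(F(S))$.

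For~(ii), the strategy is to show $s\widehat{x}\lhd t\widehat{y}$ and then invoke that $\lhd$ implies the way-below relation $\ll$ in $L(F(S))$ (this is built into the construction of $L(F(S))$ through $\lhd$-increasing sequences; cite \cite{Rob13Cone}). Since $x\ll y$ gives $x\leq y$ and hence $\widehat{x}\leq\widehat{y}$ pointwise, the domination $s\widehat{x}\leq (1-\varepsilon)t\widehat{y}$ for suitable $\varepsilon>0$ follows at once from $s<t$; the case $t=\infty$ is handled separately, where $(1-\varepsilon)t\widehat{y}=\infty\widehat{y}$ still dominates $s\widehat{x}$ because $\lambda(x)=0$ whenever $\lambda(y)=0$. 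The remaining requirement for $\lhd$ is that $s\widehat{x}$ be continuous at every $\lambda\in F(S)$ with $t\widehat{y}(\lambda)<\infty$, i.e.\ the continuity of the rank $\widehat{x}$ on the locus $\{\lambda(y)<\infty\}$ afforded by $x\ll y$. This continuity is the technical heart of the statement; I would cite the corresponding result from the theory of the cone of functionals (\cite{EllRobSan11Cone,Rob13Cone}), although it can also be extracted from the description of the topology on $F(S)$ together with \axiomO{5}.

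For~(iii), the forward implication is a direct computation: since $\infty y=\sup_n ny$ and $x\mapsto\widehat{x}$ preserves addition and suprema of increasing sequences, $\widehat{\infty y}=\sup_n\widehat{ny}=\sup_n n\widehat{y}=\infty\widehat{y}$, so $x\leq\infty y$ yields $\widehat{x}\leq\infty\widehat{y}$. For the converse I would argue by contraposition using a separating functional. If $x\not\leq\infty y$, then $x$ lies outside the ideal $I=\{z\in S:z\leq\infty y\}$ generated by $y$, so its image in the quotient \CuSgp{} $S/I$ is nonzero; composing the quotient \CuMor{} $S\to S/I$ with the functional on $S/I$ that sends every nonzero element to $\infty$ (which is a functional on any \CuSgp) produces $\lambda\in F(S)$ with $\lambda(y)=0$ and $\lambda(x)=\infty$, whence $\widehat{x}\not\leq\infty\widehat{y}$. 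The fullness assertion then follows by applying this equivalence to all $y\in S$: $x$ is full exactly when $y\leq\infty x$ for every $y$, equivalently (by the main equivalence) when $\widehat{y}\leq\infty\widehat{x}$ for every $y$; since $\tfrac{1}{n}\widehat{y}\leq\infty\widehat{x}\Leftrightarrow\widehat{y}\leq\infty\widehat{x}$ and the set $\{g\in L(F(S)):g\leq\infty\widehat{x}\}$ is closed under addition and suprema of increasing sequences, this holds for all generators $\tfrac{1}{n}\widehat{y}$ of $S_R=L(F(S))$ if and only if $\widehat{x}$ is full.

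I expect the main obstacle to be the continuity input underlying~(ii)---namely that $\widehat{x}$ is continuous at functionals finite on $y$ whenever $x\ll y$---together with pinning down the precise form of the identification $S_R=L(F(S))$ used in~(i); the remaining implications are routine once these two ingredients are in place.
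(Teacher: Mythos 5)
Your proposal is correct and follows essentially the same route as the paper: parts (i) and (ii) are handled by citation to Robert's results on the realification (the paper quotes \cite[Theorem~3.2.1]{Rob13Cone} and \cite[Lemma~2.2.5]{Rob13Cone} directly, where you sketch (ii) but still defer the continuity input to the literature), and for (iii) you build exactly the same separating functional $\lambda$ that is $0$ on the ideal $\{z : z\leq\infty y\}$ and $\infty$ elsewhere, merely packaged via the quotient $S/I$ instead of via Lemma~\ref{prp:extFctl}. No gaps.
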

\begin{proof}
(i) is a consequence of \cite[Theorem~3.2.1]{Rob13Cone}, and (ii) is exactly \cite[Lemma~2.2.5]{Rob13Cone}.

Let us prove (iii). Suppose that $\widehat{x}\leq\infty\widehat{y}$. Define $\lambda\colon S\to [0,\infty]$
by $\lambda(z)=0$ if $z\leq \infty y$ and $\lambda(z)=\infty$ otherwise. Then $\lambda\in F(S)$ (by \autoref{prp:extFctl}),
and $\widehat x(\lambda)\leq \infty\widehat y(\lambda)=0$. Hence, $x\leq \infty y$.
\end{proof}

The remarks above apply to Cuntz semigroups of C*-algebras.
Given a C*-algebra~$A$, we have a natural map
\begin{align*}
\Cu(A) &\to \Cu(A)_R=L(F(\Cu(A))),
\end{align*}
given by $[a] \mapsto \widehat{[a]}$, for $a\in (A\otimes\mathcal{K})_+$, where $\widehat{[a]}(\lambda)=\lambda([a])$ for all $\lambda\in F(\Cu(A))$.

\begin{pgr}\emph{The problem of realizing functions as ranks.}
\label{probrealizing} 
Let $S$ be a \CuSgp{} satisfying \axiomO{5}.
Recall that the function $\widehat{x}\in L(F(S))$ is called the rank of $x\in S$.
The problem of realizing functions on $F(S)$ as ranks of elements in $S$ consists of finding necessary and sufficient conditions for the map $x\mapsto \widehat{x}$ to be a surjection from $S$ to $L(F(S))$.
In \autoref{thm:realizingprob1} we solve this problem when $S$ is the Cuntz semigroup of a separable C*-algebra of stable rank one.  
\end{pgr}

\begin{pgr}\emph{The problem of realizing full functions as ranks.}
\label{probrealizingfull}
Let $S$ be a \CuSgp{} satisfying \axiomO{5} and let $f\in L(F(S))$.

Let us see that $f$ is full if and only if $f(\lambda)=0$ implies $\lambda=0$ for $\lambda\in F(S)$, that is, if $f$ is strictly positive on the nonzero functionals.
Indeed, if $f$ is strictly positive on the nonzero functionals, then for any $g\in L(F(S))$ we have $g\leq \infty f$, and thus $f$ is full. Conversely, suppose that $f$ is full and let $\lambda\in F(S)$ be nonzero.
Then there are $x,y\in S$ with $x\ll y$ and such that $\widehat{x}(\lambda)\neq 0$.
Since $f$ is full and $\widehat{x}\ll 2\widehat{y}$ (see \autoref{lma:basic_cones} (ii)) we have $\widehat{x}\leq nf$ for some $n\in\NN$ and thus $f(\lambda)\neq 0$.

A variation on the problem of realizing functions on $F(S)$ as ranks is as follows: Under what conditions is the map $x\mapsto \widehat{x}$ a surjection from the subsemigroup of full elements of $S$ to the subsemigroup of full elements  of $L(F(S))$? 
In Theorems~\ref{mainrealizing} and~\ref{thm:realizingprob2} we address this problem when $S$ is the Cuntz semigroup of a separable C*-algebra of stable rank one.

Assume that $S$ contains a full, compact element $u$.
In this case, the subsemigroup of full elements of $L(F(S))$ admits a somewhat more concrete description, which we now give. 
Let $F_u(S)$ denote the set of functionals normalized at $u$, that is, the set of $\lambda\in F(S)$ such that $\lambda(u)=1$.
Then $F_u(S)$ is a compact, convex set.
Let $\LAff(F_u(S))_{++}^\sigma$ denote the set of affine functions $f\colon F_u(S)\to (0,\infty]$ such that 
$f^{-1}((t,\infty])$ is open and $\sigma$-compact for all $t\in\mathbb{R}$.  
\end{pgr}

\begin{prp}
\label{surjectiveres}
Let $S$ be a \CuSgp{} satisfying \axiomO{5} and let $u\in S$ be a full, compact element.
Then the restriction map $f\mapsto f|_{F_u(S)}$ is a bijection from the set of full functions in $L(F(S))$ to $\LAff(F_u(S))_{++}^\sigma$.
\end{prp}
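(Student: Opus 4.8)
The plan is to exploit the homogeneity of functions in $L(F(S))$ in order to reduce everything to the compact convex base $F_u(S)$. First I would record the elementary cone--base dictionary: since $u$ is full, \autoref{probrealizingfull} together with \autoref{lma:basic_cones}(iii) shows $\lambda(u)>0$ for every nonzero $\lambda\in F(S)$, so each ray $\{c\lambda:c>0\}$ with $0<\lambda(u)<\infty$ meets $F_u(S)$ in exactly one point, namely $\lambda/\lambda(u)$. A full function $f\in L(F(S))$ is additive, order-preserving, homogeneous and lower semicontinuous, so $g:=f|_{F_u(S)}$ is affine, lower semicontinuous, and, by the fullness criterion of \autoref{probrealizingfull}, strictly positive, hence $(0,\infty]$-valued. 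For $t\ge 0$ the set $\{g>t\}$ is open by lower semicontinuity, and $\{g>t\}=\bigcup_{m}\{g\ge t+\tfrac1m\}$ is a countable union of closed subsets of the compact space $F_u(S)$, hence $\sigma$-compact; thus $g\in\LAff(F_u(S))_{++}^\sigma$ and the restriction map is well defined. (Note that $\sigma$-compactness is essentially automatic here, the genuine content being openness of the superlevel sets.)

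For injectivity, suppose $f_1,f_2\in L(F(S))$ are full with $f_1|_{F_u(S)}=f_2|_{F_u(S)}$. By homogeneity $f_i(\lambda)=\lambda(u)\,f_i(\lambda/\lambda(u))$, so $f_1$ and $f_2$ agree whenever $0<\lambda(u)<\infty$, and both vanish at $0$. The only remaining functionals are those with $\lambda(u)=\infty$, and here I would force both values to be $\infty$: applying \autoref{lma:basic_cones}(ii) to $u\ll u$ gives $\tfrac12\widehat{u}\ll\widehat{u}$, while fullness of $f_i$ yields $\widehat{u}\le\infty f_i=\sup_n nf_i$; hence $\tfrac12\widehat{u}\le n_0 f_i$ for some $n_0\in\NN$, and evaluating at $\lambda$ gives $\infty=\tfrac12\lambda(u)\le n_0 f_i(\lambda)$, so $f_i(\lambda)=\infty$. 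Thus $f_1=f_2$ on all of $F(S)$.

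For surjectivity, given $g\in\LAff(F_u(S))_{++}^\sigma$ I would define its homogeneous extension $\widetilde{g}\colon F(S)\to[0,\infty]$ by $\widetilde g(0)=0$, by $\widetilde g(\lambda)=\lambda(u)\,g(\lambda/\lambda(u))$ when $0<\lambda(u)<\infty$, and by $\widetilde g(\lambda)=\infty$ when $\lambda(u)=\infty$. Affineness of $g$ translates directly into additivity of $\widetilde g$, writing $\lambda_1+\lambda_2$ as the scalar multiple $(\lambda_1(u)+\lambda_2(u))\mu$ of the convex combination $\mu=\tfrac{\lambda_1(u)}{\lambda_1(u)+\lambda_2(u)}\tfrac{\lambda_1}{\lambda_1(u)}+\tfrac{\lambda_2(u)}{\lambda_1(u)+\lambda_2(u)}\tfrac{\lambda_2}{\lambda_2(u)}\in F_u(S)$; homogeneity and $\widetilde g(0)=0$ are built in. Order-preservation and lower semicontinuity require a short argument, which I would carry out using that the order and addition on $F(S)$ are compatible with the base structure, as part of checking $\widetilde g\in\mathrm{Lsc}(F(S))$. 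Strict positivity of $g$ makes $\widetilde g$ strictly positive on nonzero functionals, so $\widetilde g$ is full by \autoref{probrealizingfull}, and $\widetilde g|_{F_u(S)}=g$ by construction; it remains only to prove $\widetilde g\in L(F(S))$.

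This last point is the crux, and the main obstacle. Using $S_R=L(F(S))$ from \autoref{lma:basic_cones}(i), I would realize $\widetilde g$ as the supremum of a $\lhd$-increasing sequence. Lower semicontinuity and affineness of $g$ on the compact convex set $F_u(S)$ permit approximating $g$ from below by continuous affine functions, and the $\sigma$-compactness of the superlevel sets $\{g>t\}$ is precisely what upgrades such an approximating net to a \emph{sequence} $(g_n)_n$ satisfying $g_n\le(1-\varepsilon_n)g_{n+1}$ with $g_n$ continuous on $\{g_{n+1}<\infty\}$ and $\sup_n g_n=g$. Extending each $g_n$ homogeneously to $h_n\in L(F(S))$ and verifying $h_n\lhd h_{n+1}$ in $\mathrm{Lsc}(F(S))$ would then exhibit $\widetilde g=\sup_n h_n$ as an element of $L(F(S))$. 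The delicate part is producing this countable $\lhd$-increasing family of continuous-affine minorants and checking that the continuity-where-finite condition defining $\lhd$ survives the homogeneous extension; once that is in place, the remainder is the routine cone--base bookkeeping outlined above.
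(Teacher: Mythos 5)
Your injectivity argument is correct and essentially identical to the paper's, and your surjectivity outline follows the paper's route (homogeneous extension of a $\lhd$-increasing sequence of continuous affine minorants, with the $\sigma$-compactness of the superlevel sets of $g$ used to pass from a net to a sequence). But there is a genuine gap in the well-definedness step, and it is not a cosmetic one.

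You claim that $\sigma$-compactness of $g^{-1}((t,\infty])$ for $g=f|_{F_u(S)}$ is ``essentially automatic'' because $\{g>t\}=\bigcup_m\{g\ge t+\tfrac1m\}$ is a countable union of closed subsets of the compact space $F_u(S)$. This is false: for a function that is merely \emph{lower} semicontinuous, the sets $\{g\ge s\}$ need not be closed (it is the sublevel sets $\{g\le s\}$ that are closed; already $g=\chi_{(0,1]}$ on $[0,1]$ has $\{g\ge 1\}=(0,1]$). Since the proposition makes no countability assumption on $S$, the base $F_u(S)$ need not be metrizable, and an open subset of a non-metrizable compact convex set need not be $\sigma$-compact --- which is precisely why the target space carries the superscript $\sigma$ in its definition; were your claim correct, that condition would be vacuous. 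The paper closes this gap by using the membership $f\in L(F(S))$ in an essential way: it writes $f=\sup_n f_n$ with $f_n\lhd f_{n+1}$, observes that $f_n\ll f_{n+1}$ forces $f_n\propto\widehat{u}$, hence $f_n$ is finite and therefore \emph{continuous} on $F_u(S)$, and only then expresses $f^{-1}((t,\infty])\cap F_u(S)=\bigcup_{n,m}\bigl(f_n^{-1}([t+\tfrac1m,\infty])\cap F_u(S)\bigr)$ as a countable union of closed, hence compact, sets. So the $\lhd$-approximation is needed on \emph{both} sides of the bijection, not only for surjectivity as your write-up suggests. Two smaller points: in the surjectivity step it is cleaner (and is what the paper does) to define $\tilde g$ directly as $\sup_n\tilde g_n$ rather than to extend $g$ itself homogeneously and then verify lower semicontinuity of that extension, which is delicate near functionals with $\lambda(u)=\infty$; and the approximants need only lie in $\mathrm{Lsc}(F(S))$, not in $L(F(S))$, for the definition of $L(F(S))$ to apply.
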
	
\begin{proof}
\emph{Well-definedness:}
Let $f\in L(F(S))$ be full. 
As pointed out in \autoref{probrealizingfull}, $f$ is nonzero on $F_u(S)$. 
Hence, the range of $f|_{F_u(S)}$ is contained in $(0,\infty]$. 
Let $t\in(0,\infty)$.
By the lower semicontinuity of $f$, we get that $f^{-1}((t,\infty])\cap F_u(S)$ is open in $F_u(S)$.
To see that this set is also $\sigma$-compact, choose a $\lhd$-increasing sequence $(f_n)_n$ in $L(F(S))$ with $f=\sup_n f_n$. 
Given $n$, we have $f_n\ll f_{n+1}$ in $L(F(S))$ by \cite[Proposition~5.1]{EllRobSan11Cone}, and since $\infty\widehat{u}$ is the largest element of $L(F(S))$, we conclude that $f_n\propto\widehat{u}$.
As $\widehat{u}|_{F_u(S)}\equiv 1$, we get that $f_n$ is finite on $F_u(S)$.
Then $f_{n+1}$ is also finite on $F_u(S)$, and since $f_n$ is continuous at functionals where $f_{n+1}$ is finite, $f_n$ is continuous on $F_u(S)$.
Hence, the sets on the right hand side of
\[
f^{-1}((t,\infty])\cap F_u(S)=\bigcup_{n,m=1}^\infty (f^{-1}_n([t+\frac 1 m,\infty])\cap F_u(S)),
\] 
are compact. 
It follows that the restriction $f|_{F_u(S)}$ belongs to $\LAff(F_u(S))_{++}^\sigma$.

\emph{Injectivity:}
Let $f,g\in L(F(S))$ be full such that $f|_{F_u(S)}=g|_{F_u(S)}$.
Given $\lambda\in F(S)$, we need to verify $f(\lambda)=g(\lambda)$.
This is clear if $\lambda(u)=0$ (since then $\lambda$ is the zero functional) or $\lambda(u)\in(0,\infty)$ (since then $\tfrac{\lambda}{\lambda(u)}\in F_u(S)$).
So assume $\lambda(u)=\infty$.
The fullness of $f$ implies that $\infty f$ is the largest element in $L(F(S))$.
Since $u$ is compact, we have $\widehat{u}\ll 2\widehat{u}$ in $L(F(S))$ by \autoref{lma:basic_cones} (ii).
Hence, $\widehat{u}\leq Mf$ for some $M\in\NN$, and so $f(\lambda)=\infty$.
Analogously, we have $g(\lambda)=\infty$.

\emph{Surjectivity:} 
Let $g\in \LAff(F_u(S))_{++}^\sigma$.
By \cite[Corollary~I.1.4]{Alf71CpctCvxSets}, there exists an increasing net of affine, continuous functions $g_i\colon F_u(S)\to (0,\infty]$ with supremum~$g$.
Exploiting the $\sigma$-compactness of the sets $g^{-1}((t,\infty])$, we can choose from this net an increasing sequence $(g_n)_n$ with supremum $g$;
see \cite[Lemma~4.2]{TikTom15CuSgpNonunital}.
Next, multiplying if necessary the functions $g_n$ by scalars, we can arrange for $g_n\leq (1-\varepsilon_n)g_{n+1}$ for some $\varepsilon_n>0$ and all $n$, while maintaining that $g=\sup_n g_n$.
For each $n$ define $\tilde g_n\colon F(S)\to [0,\infty]$ by
\begin{equation*}
\label{extensionf} 
\tilde{g}_n(\lambda)
= \begin{cases} 
\lambda(u)g_n(\lambda(u)^{-1}\lambda),  &\text{if } 0<\lambda(u)<\infty \\
0, &\text{if } \lambda=0 \\
\infty, &\text{otherwise.} \end{cases} 
\end{equation*}
Then $\tilde g_n\lhd \tilde g_{n+1}$ for all $n$.
Hence, $\tilde g=\sup_n \tilde g_n$ belongs to $L(F(S))$.
We have $\tilde g|_{F_u(S)}=g$, proving the desired surjectivity. 
\end{proof}

\begin{lma}
\label{multipleInf}
Let $S$ be an inf-semilattice ordered \CuSgp, let $x,y\in S$, and let $n\in\NN$.
Then $(n(x\wedge y))^{\wedge} = (nx\wedge ny)^{\wedge}$ in $L(F(S))$.
\end{lma}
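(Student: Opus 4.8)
The plan is to prove the equality pointwise on $F(S)$: since the elements of $L(F(S))$ are honest functions on $F(S)$ and the order there is pointwise (see \autoref{dualofcone}), it suffices to show $\lambda(n(x\wedge y)) = \lambda(nx\wedge ny)$ for every functional $\lambda\in F(S)$. One inequality is immediate: from $x\wedge y\le x,y$ we get $n(x\wedge y)\le nx\wedge ny$, and since $\widehat{\,\cdot\,}$ preserves order this gives $(n(x\wedge y))^{\wedge}\le(nx\wedge ny)^{\wedge}$. The reverse inequality is the substantial part, and rather than comparing the two meets directly (as elements, $nx\wedge ny$ and $n(x\wedge y)$ need not coincide), I would first establish an auxiliary identity in $S$ and only afterwards pass to the functionals.

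The key step is the element identity
\[
(nx\wedge ny) + (n-1)(x\wedge y) = (2n-1)(x\wedge y).
\]
To prove it, I would use \autoref{general} to write
\[
(n-1)(x\wedge y) = \bigwedge_{i=0}^{n-1}\big(ix + (n-1-i)y\big), \qquad nx\wedge ny = \bigwedge_{k\in\{0,n\}}\big(kx+(n-k)y\big),
\]
and then expand the sum of these two infima by distributivity of addition over $\wedge$ (again \autoref{general}), obtaining
\[
(nx\wedge ny)+(n-1)(x\wedge y) = \bigwedge_{\substack{k\in\{0,n\}\\ 0\le i\le n-1}}\big((k+i)x + (2n-1-k-i)y\big).
\]
As $k$ runs through $\{0,n\}$ and $i$ through $\{0,\dots,n-1\}$, the quantity $k+i$ takes each value of $\{0,1,\dots,2n-1\}$ exactly once, so the right-hand side equals $\bigwedge_{l=0}^{2n-1}\big(lx+(2n-1-l)y\big) = (2n-1)(x\wedge y)$, once more by \autoref{general}.

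With the identity established, I would apply an arbitrary $\lambda\in F(S)$. By additivity and homogeneity of $\lambda$ this yields
\[
\lambda(nx\wedge ny) + (n-1)\lambda(x\wedge y) = (2n-1)\lambda(x\wedge y).
\]
If $\lambda(x\wedge y)<\infty$, then $(n-1)\lambda(x\wedge y)$ is finite and may be cancelled, giving $\lambda(nx\wedge ny) = n\lambda(x\wedge y) = \lambda(n(x\wedge y))$. If $\lambda(x\wedge y)=\infty$, then $\lambda(n(x\wedge y))=\infty$, and since $n(x\wedge y)\le nx\wedge ny$ we also get $\lambda(nx\wedge ny)=\infty$, so equality persists. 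As $\lambda$ was arbitrary, $(nx\wedge ny)^{\wedge}$ and $(n(x\wedge y))^{\wedge}$ agree at every functional, which is the asserted equality in $L(F(S))$.

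I expect the main obstacle to be the combinatorial bookkeeping: verifying that the index $k+i$ sweeps out all of $\{0,\dots,2n-1\}$ so that the combined infimum collapses to $(2n-1)(x\wedge y)$, together with the care required in the cancellation step, where the possibly infinite value $\lambda(x\wedge y)$ must be treated in a separate case. It is worth noting that this route deliberately avoids Edwards' condition: the equality of ranks is forced purely by the distributive lattice identity and the cancellability of finite functional values, so no hypothesis on $S$ beyond being inf-semilattice ordered is needed.
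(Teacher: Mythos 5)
Your proof is correct, and its key identity is a direct generalization of the one the paper uses. The paper proves only the $n=2$ instance of your identity, namely $(2x\wedge 2y)+(x\wedge y)=3(x\wedge y)$, cancels $(x\wedge y)^{\wedge}$ to settle $n=2$, and then reaches general $n$ by a two-stage induction: iterating the $n=2$ case to cover all powers $2^k$, and then descending from $n+1$ to $n$ via the inequality $(nx\wedge ny)+(x\wedge y)\leq (n+1)x\wedge(n+1)y$ and a further cancellation. You observe instead that the same computation with \autoref{general} yields $(nx\wedge ny)+(n-1)(x\wedge y)=(2n-1)(x\wedge y)$ for every $n$ in one stroke --- your bookkeeping is right, since $k+i$ with $k\in\{0,n\}$ and $0\leq i\leq n-1$ sweeps out exactly $\{0,\dots,2n-1\}$ --- after which a single cancellation of the finite quantity $(n-1)\lambda(x\wedge y)$, with the case $\lambda(x\wedge y)=\infty$ handled by monotonicity, finishes the proof. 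This eliminates the induction entirely; moreover, your pointwise cancellation is precisely what underlies the paper's cancellation of $(x\wedge y)^{\wedge}$ in $L(F(S))$, so both arguments use nothing beyond the inf-semilattice ordered hypothesis, and yours is the cleaner route.
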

\begin{proof}
We first establish the case $n=2$.
Using \autoref{general} we have
\[
3(x\wedge y)=(3x)\wedge (2x+y)\wedge (2y+x)\wedge (3y).
\]
Similarly,
\[
(2x\wedge 2y)+(x\wedge y)=(3x)\wedge (2x+y)\wedge (2y+x)\wedge (3y).
\]
This proves that $3(x\wedge y)=(2x\wedge 2y)+(x\wedge y)$. 
Hence,
\[
(2(x\wedge y))^{\wedge} +(x\wedge y)^{\wedge} = (2x\wedge 2y)^{\wedge} + (x\wedge y)^{\wedge}.
\]
Since $(x\wedge y)^\wedge$ is dominated both by $(2(x\wedge y))^{\wedge}$ and by $(2x\wedge 2y)^{\wedge}$, we can cancel it in the equality above to obtain $(2(x\wedge y))^{\wedge}=(2x\wedge 2y)^{\wedge}$. 

Applying the case $n=2$ repeatedly, we arrive at $(2^k(x\wedge y))^{\wedge}=(2^kx\wedge 2^ky)^\wedge$ for all $k\in \NN$. To complete the proof of the lemma, it now suffices to show that if the desired equality is true for some $n+1\geq 2$, then it is also true for $n$. 
We have
\begin{align*}
(nx\wedge ny) + (x\wedge y)  &= (n+1)x\wedge(nx+y)\wedge(ny+x)\wedge(n+1)y\\
&\leq (n+1)x\wedge (n+1)y.
\end{align*}
Hence, using that the desired equality is true for $n+1$, we get
\[
(nx\wedge ny)^{\wedge} + (x\wedge y)^{\wedge} \leq ((n+1)x\wedge (n+1)y)^\wedge=(n+1)(x\wedge y)^{\wedge}.
\]
As before, we can cancel $(x\wedge y)^{\wedge}$ to conclude that $(nx\wedge ny)^{\wedge} \leq (n(x\wedge y))^{\wedge}$.
The converse inequality is clear.
\end{proof}

\begin{pgr}
It is not always the case that $(2x)\wedge (2y)=2(x\wedge y)$ for all $x,y$ in the Cuntz semigroup of a separable C*-algebra of stable rank one.
Take for example a separable, unital C*-algebra $A$ of stable rank one and with 2-torsion in $K_0(A)$, that is, such that $2g=0$ for some nonzero $g\in K_0(A)$.
Say $g=[p]-[q]$ for projections $p,q\in M_\infty(A)$.
Then $e=[p]$ and $f=[g]$ are compact elements in $\Cu(A)$ such that $2e=2f$ but $e\neq f$. 
We have $(2e)\wedge (2f)\neq 2(e\wedge f)$.
Indeed, suppose for a contradiction that $(2e)\wedge (2f)=2(e\wedge f)$.
Then 
\[
2e
= (2e)\wedge (2f)
= 2(e\wedge f)
= (e\wedge f)+(e\wedge f)
\leq e+f. 
\]
By cancellation of compact elements, we obtain $e\leq f$, and a symmetrical argument proves $f\leq e$, which is impossible.
\end{pgr}	

Let $S$ be a countably based, inf-semilattice ordered \CuSgp{} satisfying \axiomO{5}.
By \autoref{automaticO6}, $S$ satisfies \axiomO{6+} and hence the weaker axiom \axiomO{6} introduced in \cite{Rob13Cone}.
Thus $L(F(S))$ is an inf-semilattice ordered \CuSgp, by \cite[Theorem~4.2.2]{Rob13Cone}.
In the proof below we  use \cite[Proposition~2.2.6]{Rob13Cone}, which asserts that $x,y\in S$ satisfy $\widehat{x}\leq\widehat{y}$ if and only if for every $x'\in S$ with $x'\ll x$ and every $\varepsilon>0$ there exist $M,N\in\NN$ such that $\tfrac{M}{N}>1-\varepsilon$ and $Mx'\leq Ny$.

\begin{thm}
\label{hat-inf-preserving}
Let $S$ be a countably based, inf-semilattice ordered \CuSgp{} satisfying \axiomO{5}.
Then the map $S\to L(F(S))$, given by $x\mapsto \widehat{x}$, preserves infima.
\end{thm}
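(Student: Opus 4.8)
The plan is to prove the two inequalities $\widehat{x\wedge y}\le\widehat{x}\wedge\widehat{y}$ and $\widehat{x}\wedge\widehat{y}\le\widehat{x\wedge y}$ in $L(F(S))$; here the infimum $\widehat{x}\wedge\widehat{y}$ exists because $L(F(S))$ is inf-semilattice ordered (by \cite[Theorem~4.2.2]{Rob13Cone}, as noted just before the statement). The first inequality is immediate: since $x\wedge y\le x,y$ and $x\mapsto\widehat{x}$ is order preserving, $\widehat{x\wedge y}$ is a lower bound of $\{\widehat{x},\widehat{y}\}$ and hence is dominated by their infimum. All the work is in the reverse inequality.

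The heart of the argument is the following claim: \emph{if $w,a,b\in S$ satisfy $\widehat{w}\le\widehat{a}$ and $\widehat{w}\le\widehat{b}$, then $\widehat{w}\le\widehat{a\wedge b}$.} To prove it I would fix $w'\ll w$ and $\varepsilon>0$ and feed the hypotheses into the comparison criterion \cite[Proposition~2.2.6]{Rob13Cone}: this yields $M,P,Q\in\NN$ with $Mw'\le Pa$, $Mw'\le Qb$, and $M/P,M/Q>1-\varepsilon$. Setting $K=\max(P,Q)$ gives $Mw'\le Ka$ and $Mw'\le Kb$, whence $Mw'\le Ka\wedge Kb$ in $S$. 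Applying the rank map and invoking \autoref{multipleInf} (with $n=K$), I obtain
\[
M\widehat{w'}=\widehat{Mw'}\le\widehat{Ka\wedge Kb}=K\widehat{a\wedge b},
\]
so that $\widehat{w'}\le\tfrac{K}{M}\widehat{a\wedge b}$ with $\tfrac{K}{M}<\tfrac{1}{1-\varepsilon}$. Letting $\varepsilon\to 0$ and using that the order on $L(F(S))$ is pointwise, this forces $\widehat{w'}\le\widehat{a\wedge b}$; taking the supremum over a $\ll$-increasing sequence with supremum $w$ (which exists by \axiomO{2}) and using that $x\mapsto\widehat{x}$ preserves such suprema yields $\widehat{w}\le\widehat{a\wedge b}$, proving the claim.

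The main obstacle is precisely the step hidden in this claim: at the level of $S$ one only has $K(a\wedge b)\le Ka\wedge Kb$, and this inequality is in general strict (as the $2$-torsion example discussed before the theorem shows), so one cannot conclude $Mw'\le K(a\wedge b)$ inside $S$. The whole point of passing to $L(F(S))$ is that \autoref{multipleInf} collapses this discrepancy, turning $\widehat{Ka\wedge Kb}$ into the honest multiple $K\widehat{a\wedge b}$; the harmless loss of the factor $1-\varepsilon$ coming from \cite[Proposition~2.2.6]{Rob13Cone} is then absorbed in the limit.

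Finally, I would deduce the reverse inequality from the claim. Given $f'\ll\widehat{x}\wedge\widehat{y}$ in $L(F(S))$, the description of $S_R=L(F(S))$ (see \autoref{dualofcone} and \autoref{lma:basic_cones}(i)) lets me write $\widehat{x}\wedge\widehat{y}$ as the supremum of an increasing sequence of elements of the form $\tfrac1N\widehat{w}$ with $w\in S$ and $N\ge1$ (finite sums $\sum_i\tfrac1{n_i}\widehat{x_i}$ collapse to this form by additivity of the rank map over a common denominator, and the resulting class is closed under addition by \axiomO{4}). Hence $f'\le\tfrac1N\widehat{w}\le\widehat{x}\wedge\widehat{y}$ for some such term, giving $\widehat{w}\le N\widehat{x}=\widehat{Nx}$ and $\widehat{w}\le\widehat{Ny}$. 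The claim (with $a=Nx$ and $b=Ny$), together with \autoref{multipleInf}, yields $\widehat{w}\le\widehat{Nx\wedge Ny}=N\widehat{x\wedge y}$, so $f'\le\tfrac1N\widehat{w}\le\widehat{x\wedge y}$. Since $f'\ll\widehat{x}\wedge\widehat{y}$ was arbitrary, \axiomO{2} gives $\widehat{x}\wedge\widehat{y}\le\widehat{x\wedge y}$, completing the proof.
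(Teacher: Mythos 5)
Your proposal is correct and follows essentially the same route as the paper: both reduce the nontrivial inequality to a claim about elements $\tfrac{1}{N}\widehat{w}\leq\widehat{x},\widehat{y}$ coming from the description $L(F(S))=S_R$, and both prove that claim by combining \cite[Proposition~2.2.6]{Rob13Cone} with \autoref{multipleInf} to transfer the inequality $Mw'\leq Ka\wedge Kb$ from $S$ to $K\widehat{a\wedge b}$ in $L(F(S))$ up to a factor $1-\varepsilon$. The only cosmetic difference is that you arrange a common numerator $M$ and take $K=\max(P,Q)$ where the paper cross-multiplies to $M_1N_2,\,M_2N_1$ and takes a minimum; both reductions are valid.
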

\begin{proof}
Let $x,y\in S$.
The inequality $ \widehat{x}\wedge\widehat{y}\geq\widehat{x\wedge y}$ is straightforward.

\emph{Claim: Let $z\in S$ and $n\geq 1$ satisfy $\tfrac{1}{n}\widehat{z}\leq\widehat{x},\widehat{y}$.
	Then $\tfrac{1}{n}\widehat{z}\leq\widehat{x\wedge y}$.}
To prove the claim, let $z'\in S$ satisfy $z'\ll z$, and let $\varepsilon>0$.
Since $\widehat{z}\leq \widehat{nx}$, we can apply \cite[Proposition~2.2.6]{Rob13Cone} to obtain $M_1, N_1\in\NN$ such that $\tfrac{M_1}{N_1}>1-\varepsilon$ and $M_1z'\leq N_1 nx$.
Similarly, there exist $M_2, N_2\in\NN$ such that $\tfrac{M_2}{N_2}>1-\varepsilon$ and $M_2z'\leq N_2ny$.
Then $M_1N_2z'\leq N_1N_2 nx$ and $M_2N_1 z'\leq N_1N_2ny$, and we get
\[
\min\{M_1N_2, M_2N_1\}z' \leq (N_1N_2 nx)\wedge (N_1N_2 ny).
\]
Passing to $L(F(S))$ and using \autoref{multipleInf}, we obtain that
\[
\min\{M_1N_2,M_2N_1\}\widehat{z'}
\leq ((N_1N_2 nx)\wedge (N_1N_2 ny))^{\wedge}
= N_1N_2n(\widehat{x\wedge y}),
\]
and since 
\[
(1-\varepsilon)\leq \frac{\min\{M_1N_2,M_2N_1\}}{N_1N_2},
\]
we get $(1-\varepsilon)\frac{\widehat{z'}}{n}\leq \widehat{x\wedge y}$.
The claim follows using that $\widehat{z}=\sup_{z'\ll z}\sup_{\varepsilon>0} (1-\varepsilon)\widehat{z'}$.

To prove the theorem, we use that $L(F(S))=S_R$ as noted in \autoref{lma:basic_cones} (i), which allows us to choose a sequence $(z_k)_k$ in $S$ and a sequence $(n_k)_k$ of positive integers such that $(\tfrac{\widehat{z_k}}{n_k})_k$ is increasing with supremum $\widehat{x}\wedge\widehat{y}$.
For each $k$, we have $\tfrac{\widehat{z_k}}{n_k}\leq\widehat{x},\widehat{y}$ and thus $\tfrac{\widehat{z_k}}{n_k}\leq\widehat{x\wedge y}$ by the claim.
Hence, $\widehat{x}\wedge\widehat{y}=\sup_k \tfrac{\widehat{z_k}}{n_k}\leq\widehat{x\wedge y}$.
\end{proof}

\section{Realizing functions as ranks}
\label{sec:realizeRank}

In this section we solve the problems of realizing (full) functions on the cone $F(\Cu(A))$ as ranks of Cuntz semigroup elements when $A$ is a C*-algebra of stable rank one. These results are inspired by the ideas in \cite[Section 8]{Thi17arX:RksOps}, and in particular, some of the sets and maps defined here generalize similar ones in \cite{Thi17arX:RksOps} to the non simple and non unital case.

By an \emph {ideal-quotient} of a C*-algebra $A$ we mean a quotient of the form $I/J$, where $J\subseteq I$ are closed-two sided ideals of $A$.
Ideal-quotients thus arise as ideals of the quotients of $A$ or as quotients of its ideals.

\begin{prp}
Let $A$ be a C*-algebra.
Then the following statements hold:
\begin{enumerate}[{\rm (i)}]
	\item
	If $A$ has a nonzero, elementary ideal-quotient, then there exists $\lambda \in F(\Cu(A))$ with
	\[
	\big\{ \widehat{x}(\lambda) : x\in\Cu(A)\}=\{0,1,\dots,\infty \big\}.
	\]	
	\item
	If $A$ is separable and has a nonzero, elementary quotient, then there exists a densely finite $\lambda\in F(\Cu(A))$ such that 
	\[
	\big\{ \widehat{x}(\lambda) : x\in\Cu(A) \hbox{ and  $x$ is full}\}=\{1,\dots,\infty \big\}.
	\]	
\end{enumerate}
\end{prp}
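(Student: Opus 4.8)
The plan is to obtain $\lambda$ by transporting the rank functional from the Cuntz semigroup of the elementary ideal-quotient. Recall from \autoref{pgr:Cuntz} that if $E$ is a nonzero elementary C*-algebra then $\Cu(E)\cong\NNbar$ via the rank, and that under this identification the rank is the functional $\iota\colon\NNbar\to[0,\infty]$ given by the inclusion, whose set of values is precisely $\{0,1,\dots,\infty\}$. Throughout I will use that a $\CuMor$ preserves $\ll$, addition, order and suprema, together with the elementary facts that in $\NNbar$ the relation $a\ll b$ forces $a$ to be finite, and that $\infty\cdot a=\infty$ for $a\geq 1$ while $\infty\cdot 0=0$.

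For~(i), fix closed, two-sided ideals $J\subseteq I\subseteq A$ with $I/J$ nonzero elementary, and let $q\colon I\to I/J$ be the quotient map. The induced map $\Cu(q)\colon\Cu(I)\to\Cu(I/J)\cong\NNbar$ is surjective, since it realizes the isomorphism $\Cu(I)/\Cu(J)\cong\Cu(I/J)$ of \autoref{pgr:ideal}; hence, setting $\lambda_0=\iota\circ\Cu(q)$, I obtain a functional on $\Cu(I)$ whose set of values is exactly $\{0,1,\dots,\infty\}$. Regarding $\Cu(I)$ as an ideal of $\Cu(A)$ (again \autoref{pgr:ideal}), I would then let $\lambda=\widetilde{\lambda_0}$ be the extension provided by \autoref{prp:extFctl}. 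Since $\lambda$ takes the value $\infty$ off $\Cu(I)$ and $\infty\in\{0,1,\dots,\infty\}$ already, the set $\{\widehat{x}(\lambda):x\in\Cu(A)\}$ equals $\{0,1,\dots,\infty\}$, as required.

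For~(ii) we are in the case $I=A$: let $\pi\colon A\to A/J$ be the quotient onto the nonzero elementary algebra, so that $\lambda=\iota\circ\Cu(\pi)$ is directly a functional on all of $\Cu(A)$ and no extension is needed. To check that $\lambda$ is densely finite I would use the criterion of \autoref{pgr:functionals}: if $x\ll\tilde{x}$ in $\Cu(A)$, then $\Cu(\pi)(x)\ll\Cu(\pi)(\tilde{x})$ in $\NNbar$, forcing $\Cu(\pi)(x)$ to be finite, so $\lambda(x)<\infty$. For the set of values over full elements, one inclusion is easy: if $x$ is full then $\Cu(\pi)(\infty x)=\infty\,\Cu(\pi)(x)$ dominates $\Cu(\pi)(z)$ for every $z$, and choosing $z$ with $\Cu(\pi)(z)=1$ (surjectivity) forces $\Cu(\pi)(x)\geq 1$, whence $\lambda(x)\in\{1,\dots,\infty\}$. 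For the reverse inclusion, given $n\in\{1,\dots,\infty\}$ I would pick $x_n\in\Cu(A)$ with $\Cu(\pi)(x_n)=n$, and, using that the separable ideal $J$ is $\sigma$-unital, a strictly positive $b\in J$, so that $y=[b]$ is full in $\Cu(J)$ and $\Cu(\pi)(y)=0$. Then $\lambda(x_n+y)=n$, and it remains to see that $x_n+y$ is full (see \autoref{pgr:full}): the ideal $\{z:z\leq\infty(x_n+y)\}$ generated by $x_n+y$ contains $\Cu(J)$, because it contains the full element $y$ of $\Cu(J)$, and its image in $\Cu(A)/\Cu(J)\cong\NNbar$ is the ideal generated by $n\geq 1$, namely all of $\NNbar$; by the ideal correspondence of \autoref{pgr:ideal} this forces the ideal generated by $x_n+y$ to be all of $\Cu(A)$.

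I expect the main obstacle to be exactly this last step of~(ii): producing full elements of every finite value $n\geq 1$. Fullness cannot be witnessed in the ``kernel direction'' alone, since full elements necessarily have $\lambda\geq 1$, so the trick is to add a full element $y$ of $\Cu(J)$—which contributes nothing to $\lambda$—to a chosen preimage $x_n$ of $n$, and then argue through the ideal lattice that the sum remains full while its value stays equal to $n$. The remaining points (surjectivity of the induced maps, that $\iota\circ\Cu(\pi)$ is indeed a functional, and dense finiteness) are routine once the behaviour of $\ll$ and of $\infty\cdot(-)$ in $\NNbar$ has been recorded.
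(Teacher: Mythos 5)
Your proof is correct and follows essentially the same route as the paper: pull back the rank functional on $\Cu$ of the elementary ideal-quotient, extend by $\infty$ off $\Cu(I)$ via \autoref{prp:extFctl} for (i), and in (ii) restore fullness by adding an element of $\Cu(J)$ on which $\lambda$ vanishes (the paper uses the largest element $\omega_J$ and realizes only the value $1$, relying on multiples; you use $[b]$ for $b$ strictly positive and hit each $n$ directly, which is an immaterial difference).
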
	
\begin{proof}
(i):
Assume that $I$ and $J$ are closed, two-sided ideals such that $J\subseteq I$ and $I/J$ is elementary.
Then $\Cu(I/J)\cong\NNbar$ and thus the quotient map $I\stackrel{\pi}{\to} I/J$ induces a surjective \CuMor{} $\Cu(\pi)\colon\Cu(I)\to \Cu(I/J)\cong\NNbar$. Now let $\lambda\colon \Cu(A)\to [0,\infty]$ be given by $\lambda(x)=\Cu(\pi)(x)$ if $x\in\Cu(I)$ and $\lambda(x)=\infty$ otherwise.
Since the range of $\Cu(\pi)$ is $\NNbar$, it is clear from the definition of $\lambda$ that $\{\widehat{x}(\lambda)\colon x\in\Cu(A)\}=\lambda(\Cu(I))=\NNbar$. Further, $\lambda$ is a functional by \autoref{prp:extFctl}.

(ii):
Let $I$ be a closed, two-sided ideal such that $A/I$ is elementary.
Let $\lambda\in F(\Cu(A))$ be the functional obtained in (i), that is, $\lambda=\Cu(\pi)$, where $\pi\colon A\to A/I$.
If $x\in \Cu(A)$ is full then $\widehat x(\lambda)\neq 0$, so that $\widehat x(\lambda)\in \{1,2,\ldots,\infty\}$.
To complete the proof it suffices to show that there exists a full $x$ such that $\lambda(x)=1$.  
Since $\lambda$ is onto, there exists $x_0\in \Cu(A)$ such that $\lambda(x_0)=1$.
Let $\omega_I\in\Cu(I)$ be the largest element of $\Cu(I)$, which exists since $I$ is separable.
Set $x=x_0+\omega_I$.
Clearly, $\lambda(x)=\lambda(x_0)=1$.
Moreover, $x$ is full, for if $y\in \Cu(A)$, then $\lambda(y)\leq \infty\in\NNbar$, from which we deduce that $y\leq \infty x_0+\omega_I=\infty x$.
\end{proof}

In view of the previous proposition, it is clear that in order to realize every element of $L(F(\Cu(A)))$ in the form $\widehat{x}$, with $x\in \Cu(A)$, we must assume that $A$ has no nonzero, elementary ideal-quotients.
Similarly, if $A$ is unital, and $F_u(\Cu(A))$ is the set of functionals normalized at $[1_A]$, then in order to realize elements of $\LAff(F_u(\Cu(A)))_{++}$  in the form $\widehat{x}|_{F_u(\Cu(A))}$ with $x\in\Cu(A)$ full, we must assume that $A$ has no nonzero, finite dimensional representations.
As we show below, if $A$ has stable rank one, then these are the only obstructions.

In the proof of the following theorem we borrow ideas from the closely related \cite[Lemma~8.3]{Thi17arX:RksOps}.

\begin{thm}
\label{thm:upwardalpha}
Let $A$ be a separable C*-algebra of stable rank one, and let $f\in L(F(\Cu(A)))$. 
Then the set
\[
I_f = \big\{ x\in \Cu(A) : \widehat{x'} \ll f \hbox{ for all } x'\ll x\big\}
\]
has a supremum. 
\end{thm}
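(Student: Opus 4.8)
The plan is to prove that $I_f$ is upward directed and then invoke the lemma in \autoref{countable} (applicable since $\Cu(A)$ is countably based, $A$ being separable) to obtain $\sup I_f$. We may assume $A$ is stable. First I would record the two closure properties of $I_f$: it is order-hereditary (if $x\in I_f$ and $y\le x$, then any $y'\ll y$ satisfies $y'\ll x$, hence $\widehat{y'}\ll f$), and it is closed under suprema of increasing sequences (given $x'\ll\sup_n x_n$, interpolate $x'\ll x''\ll\sup_n x_n$ by \axiomO{2}, so $x''\le x_n$ for some $n$ and thus $x'\ll x_n\in I_f$). By \autoref{upwardlemma} it then suffices to show that $(I_f)_\ll$ is upward directed. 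The point of this reduction is that every $x'\in(I_f)_\ll$ satisfies the \emph{strict} estimate $\widehat{x'}\ll f$ (immediate from the definition of $I_f$), which supplies the room needed below; conversely $\widehat{x'}\ll f$ already forces $x'\in I_f$.

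The heart of the matter is: given $x_1,x_2\in(I_f)_\ll$, so that $\widehat{x_1},\widehat{x_2}\ll f$, to build a common upper bound $z$ with $\widehat z\ll f$; a short interpolation then places an element of $(I_f)_\ll$ above $x_1$ and $x_2$. Using that $\Cu(A)$ is inf-semilattice ordered (\autoref{Cstarinf-semilattice}), set $w=x_1\wedge x_2$ and use \axiomO{5} to split $w$ off from $x_2$, producing $u$ with $x_2\le w+u$ and $\widehat{x_2}=\widehat w+\widehat u$. Put $z=x_1+u$; then $z\ge x_1$, and since $x_1\ge w$ we have $z=x_1+u\ge w+u\ge x_2$, so $z$ dominates both. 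Writing $a=\widehat{x_1}$, $b=\widehat{x_2}$ and using that the rank map preserves infima (\autoref{hat-inf-preserving}), the identity $\widehat z+\widehat w=a+b$ reads $\widehat z+(a\wedge b)=a+b$ in $L(F(\Cu(A)))$. Now choose $c\ll f$ with $a,b\le c$; exploiting the distributivity of $+$ over $\wedge$ in the inf-semilattice ordered semigroup $L(F(\Cu(A)))$, namely $(c+a)\wedge(c+b)=c+(a\wedge b)$, together with $c\ge a,b$, one gets $a+b\le c+(a\wedge b)$, and hence $\widehat z+(a\wedge b)\le c+(a\wedge b)$.

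The hard part is to cancel the term $a\wedge b=\widehat w$ from this last inequality so as to conclude $\widehat z\le c\ll f$. This is delicate because ranks of compact elements need not be compact in $L(F(\Cu(A)))$, so one cannot cancel naively there. This is precisely where \autoref{prp:unitization} is meant to enter, in the same spirit as in the proof of \autoref{thm:CuARiesz}: representing $w=[c_0]$ with $c_0\in A_+$ and passing to the stable rank one extension $B\supseteq A$ with its compact projection $p_{c_0}$ (so that $w$ is captured by the compact element $[p_{c_0}]$ via \autoref{prp:infSupportProj}), one transfers the relevant estimate to $\Cu(B)$, where weak cancellation gives cancellation of compact elements (see the remarks after \autoref{lma:wcancellation}). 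I expect that reconciling this $\Cu$-level cancellation with the rank inequality $\widehat z\ll f$ will be the main obstacle. It is worth stressing that the additivity of $f$—its membership in $L(F(\Cu(A)))$—is exactly what furnishes enough room for the bound, as is already transparent in the elementary case $\Cu(A)=\NNbar^{\,k}$, where $\widehat z=a\vee b$ and $\widehat{x_1},\widehat{x_2}\ll f$ force $a\vee b\ll f$.
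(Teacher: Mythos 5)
Your reduction to the upward directedness of $(I_f)_\ll$, and the observation that membership there is equivalent to the single estimate $\widehat{x'}\ll f$, match the paper exactly. The gap is in the construction of the common upper bound. You apply \axiomO{5} to ``split $w=x_1\wedge x_2$ off from $x_2$'' and assert $\widehat{x_2}=\widehat w+\widehat u$; but \axiomO{5} only produces, for a prescribed $w'\ll w$, an element $u$ with $w'+u\leq x_2\leq w+u$, hence $\widehat{w'}+\widehat u\leq\widehat{x_2}\leq\widehat w+\widehat u$. The two bounds involve the \emph{different} elements $w'\ll w$, and the discrepancy between $\widehat{w'}$ and $\widehat w$ cannot be made uniformly small over $F(\Cu(A))$ (already in $\NNbar$ no $w'\ll w=\infty$ satisfies $\widehat{w'}\geq(1-\delta)\widehat w$). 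So your displayed inequality degrades to $\widehat z+\widehat{w'}\leq c+\widehat w$, which cannot be cancelled to $\widehat z\leq c$. Ironically, the step you flag as the hard part --- cancelling $\widehat w$ from $\widehat z+\widehat w\leq c+\widehat w$ --- would be unproblematic: the order in $L(F(\Cu(A)))$ is pointwise, and since $\widehat w\leq a\leq c$ the inequality is trivial at every $\lambda$ with $\widehat w(\lambda)=\infty$ and cancels numerically elsewhere. The failure is upstream, in the identity you feed into it.

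The paper's proof supplies exactly the mechanism you gesture at but do not carry out. It chooses $d\in\Cu(A)$ and $k$ with $\widehat{y_1},\widehat{y_2}\ll\tfrac{1}{k}\widehat d\ll f$, passes to the algebra $B$ of \autoref{prp:unitization} with its compact element $e=[p_d]$, and takes \axiomO{5}-complements $z_1,z_2$ of $x_1,x_2$ inside the compact element $ne$. Setting $z=z_1\wedge z_2$, the compactness of $ne$ yields two things at once: weak cancellation in $\Cu(B)$ gives $x_1,x_2\leq w'$ for a complement $w'$ of $z'\ll z$ in $ne$, and the relation $n\widehat e\ll(1+\varepsilon)(g+\widehat z)$ allows one to pass to the \emph{same} $z'$ on both sides of $\widehat{z'}+\widehat{w'}\leq n\widehat e\leq(1+\varepsilon)g+(1+\varepsilon)\widehat{z'}$, with the extra summand $\varepsilon\widehat{z'}$ absorbing the cancellation at functionals where $\widehat{z'}$ is infinite. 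One then returns to $\Cu(A)$ via $w=w'\wedge\omega_A$ and \autoref{prp:infSupportProj}. Without this device, or an equivalent replacement for your exact-complement identity, the argument does not close.
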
	
\begin{proof}
Since $A$ is separable, $\Cu(A)$ is countably based.
Thus, as noted in \autoref{countable}, it suffices to show that $I_f$ is upward directed.
Clearly $I_f$ is order-hereditary.
It is also closed under the suprema of increasing sequences.
For suppose that $x=\sup_n x_n$, where $(x_n)_n$ is an increasing sequence in $I_f$.
Let $x'\ll x$.
Then $x'\ll x_n$ for some $n$, and so $\widehat{x'}\ll f$ by the definition of $I_f$.
This shows that $x\in I_f$. By \autoref{upwardlemma}, in order to show that $I_f$ is upward directed it suffices to show that the set $G_f=\{x'\in\Cu(A)\colon \text{ there is }x\in I_f\text{ with } x'\ll x\}$ is upward directed. 
We prove this first below. 
We remark that $G_f$ can be alternatively described as follows:
\[
G_f = \big\{ x\in \Cu(A) : \text{there exists }y\in \Cu(A)\text{ such that }x\ll y\text{ and }\widehat{y}\ll f \big\}.
\]
In order to see this, let $x\in G_f$.
Then there exist $y',y$ such that $x\ll y'\ll y$ and $y\in I_f$.
Then $\widehat{y'}\ll f$, and thus $x$ belongs to the right hand side of the equality above. 
Conversely, if $x$ is such that $x\ll y$ and $\widehat{y}\ll f$ for some $y$, then clearly
$y\in I_f$ and therefore $x\in G_f$. 

We now prove that $G_f$ is upward directed.
Let $x_1,x_2\in G_f$.
Choose elements $y_1,y_1',y_2,y_2'\in \Cu(A)$ such that
\[
x_1\ll y_1'\ll y_1,\quad
x_2\ll y_2'\ll y_2,\andSep \widehat{y_1},\widehat{y_2}\ll f.
\]
Choose $f'',f'\in L(F(\Cu(A)))$ with $\widehat{y_1},\widehat{y_2}\ll f''\ll f'\ll f$.
Since $L(F(\Cu(A)))=\Cu(A)_R$ by \autoref{lma:basic_cones} (i), we can choose a sequence $(d_n)_n$ in $\Cu(A)$ and a sequence $(k_n)_n$ of positive integers such that $(\tfrac{\widehat{d_n}}{k_n})_n$ is increasing with supremum $f'$.
Then there is $n_0$ such that $f''\leq \tfrac{\widehat{d_{n_0}}}{k_{n_0}}$.
Set $d=d_{n_0}$ and $k=k_{n_0}$.
Then
\[
\widehat{y_1},\widehat{y_2}\ll \frac{\widehat{d}}{k}\ll f.
\]

Let us construct $w\in\Cu(A)$ such that $x_1,x_2\leq w$ and $\widehat{w}\leq \frac{\widehat{d}}{k}$. 
(We will afterwards arrange for a $w\in G_f$.) 
Observe that  $y_1,y_2\leq \infty d$, by \autoref{lma:basic_cones} (iii). 
Hence, there exists $n\in\NN$ such that $y_1',y_2'\leq nd$.
We apply the construction from \autoref{prp:unitization} to $A$ and $d\in\Cu(A)$ to obtain a C*-algebra $B$ of stable rank one and a projection $p_d\in B$ such that $A$ is an ideal of $B$, and such that for any $x\in\Cu(A)$ we have $x\leq d$ precisely when $x\leq [p_d]$ in $\Cu(B)$.
Set $e=[p_d]$, which is a compact element in $\Cu(B)$. 

Then $y_1',y_2'\leq nd \leq n[p_d]=ne$.
By \axiomO{5} applied to $x_i\ll y_i'\leq ne$ for $i=1,2$, we obtain $z_1,z_2\in\Cu(B)$ such that
\begin{align*}
x_1+z_1 &\leq ne\leq y_1'+z_1,\\
x_2+z_2 &\leq ne\leq y_2'+z_2.
\end{align*}
Set $z=z_1\wedge z_2$. 
Note that $z\leq ne$.
Let $\varepsilon_0>0$ be such that $\widehat{y_1}, \widehat{y_2}\leq \frac{1-\varepsilon_0}{k}\widehat{d}$ and set	$g=\frac{1-\varepsilon_0}{k}\widehat{d}$.
Next, choose $0<\varepsilon<\varepsilon_0$ such that $\varepsilon \widehat{z}\leq (\varepsilon_0-\varepsilon)\frac{\widehat{e}}{k}$.
(Such an $\varepsilon$ exists since $z\leq ne$.) 
Then
\[
(1+\varepsilon)g+\varepsilon \widehat{z} 
\leq (1+\varepsilon)(1-\varepsilon_0)\frac{\widehat{e}}{k}+ (\varepsilon_0-\varepsilon)\frac{\widehat{e}}{k}
\leq \frac{\widehat{e}}{k}.
\]

We have $n\widehat{e}\leq \widehat{y_1'}+\widehat{z_1}\leq g+\widehat{z_1}$ and similarly $n\widehat{e}\leq g+\widehat{z_2}$.
Using at the first step that $L(F(\Cu(B)))$ is an inf-semilattice ordered \CuSgp{} (by \cite[Theorem~4.2.2]{Rob13Cone}) and using \autoref{hat-inf-preserving} at the second step, we obtain
\[
n\widehat{e}
\leq g+(\widehat{z_1}\wedge \widehat{z_2})
=g+\widehat{z}.
\]
Next, since $e\ll e$, it follows from \autoref{lma:basic_cones} (ii) that
\[
n\widehat{e}\ll (1+\varepsilon)g+(1+\varepsilon)\widehat{z}.
\]
Choose $z'\in\Cu(B)$ with $z'\ll z$ and $n\widehat{e}\ll (1+\varepsilon)g+(1+\varepsilon)\widehat{z'}$.
Applying \axiomO{5} to $z'\ll z\leq ne$, find $w'\in\Cu(B)$ such that $z'+w'\leq ne\leq z+w'$.
Then
\[
x_1+z\leq x_1+z_1\leq n e\ll ne\leq z+w'.
\]
Recall that $B$ has stable rank one, and thus $\Cu(B)$ has weak cancellation.
Therefore, we have $x_1\leq w'$, and similarly, $x_2\leq w'$.
On the other hand, 
\[
\widehat{z'}+\widehat{w'}\leq n\widehat{e}\ll (1+\varepsilon)g+(1+\varepsilon)\widehat{z'}.
\] 
Therefore $\widehat{w'}\leq (1+\varepsilon)g+\varepsilon \widehat{z}\leq \tfrac{1}{k}\widehat{e}$.

Let $\omega_A\in\Cu(A)$ be the largest element of $\Cu(A)$.
Set $w=w'\wedge\omega_A$, which belongs to $\Cu(A)$ since the inclusion $A\to B$ identifies $\Cu(A)$ with an ideal in $\Cu(B)$.
Using that $x_1,x_2\leq w'$, we get $x_1,x_2\leq w$.
Applying \autoref{hat-inf-preserving} at the first step and last step, and using that $\tfrac{1}{k}\widehat{\omega_A}=\widehat{\omega_A}$ at the third step, we obtain
\[
\widehat{w}=\widehat{w'} \wedge \widehat{\omega_A}
\leq \frac{\widehat{e}}{k} \wedge \widehat{\omega_A}
=\frac{1}{k}(\widehat{e}\wedge \widehat{\omega_A}) 
=\frac{1}{k}(\widehat{e\wedge \omega_A}).
\]
But $e\wedge \omega_A=d$, by \autoref{prp:infSupportProj}.
Thus, $w$ is as desired.

Finally, let us explain how to arrange for a $w\in G_f$: Start by choosing $x_1\ll x_1'\ll y_1'$ and   $x_2\ll x_2'\ll y_1'$. Apply the argument above to obtain $w_1$ such that $x_1',x_2'\leq w_1$ and $\widehat{w_1}\leq \frac{\widehat{d}}{k}$.
Next, choose $w\ll w_1$ such that $x_1,x_2\leq w$.
Then $w\in G_f$ and $x_1,x_2\leq w$, thus showing that $G_f$ is upward directed.
\end{proof}

\begin{dfn}
\label{dfn:mapAlpha}
Let $A$ be a separable C*-algebra of stable rank one.
In view of \autoref{thm:upwardalpha}, we define $\alpha\colon L(F(\Cu(A)))\to\Cu(A)$ by $\alpha (f)=\sup I_f$, where
\[
I_f = \big\{ x\in\Cu(A) : \widehat{x'}\ll f\text{ for all }x'\ll x \big\},
\]
for $f\in L(F(\Cu(A)))$.
\end{dfn}

\begin{prp}
\label{prp:mapAlpha}
Let $A$ be a separable C*-algebra of stable rank one. The following hold:
\begin{enumerate}[{\rm (i)}] 
	\item
	For all $f\in L(F(\Cu(A)))$ we have $\widehat{\alpha(f)}\leq f$.
	\item
	For all $f\in L(F(\Cu(A)))$ we have
	\begin{align*}
	\alpha(f) 
	&=\sup \big\{ x\in\Cu(A) : x\ll y \hbox{ and }\widehat{y}\ll f\hbox{ for some }y \big\} \\
	&=\sup \big\{ x\in\Cu(A) : \widehat{x}\leq (1-\varepsilon)f\hbox{ for some }\varepsilon>0 \big\} \\
	&=\sup \big\{ x\in\Cu(A) : \widehat{x}\ll f \big\}.
	\end{align*}
	\item
	The map $\alpha$ preserves the order, the suprema of increasing sequences, and the infima of pairs of elements.	
	\item
	The map $\alpha$ is superadditive, that is, $\alpha(f)+\alpha(g)\leq\alpha(f+g)$ for all $f,g\in L(F(\Cu(A)))$.
\end{enumerate} 
\end{prp}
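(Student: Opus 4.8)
The plan is to prove part (ii) first, since the three supremum descriptions it provides are what drive the remaining parts. Keep $I_f$ as in \autoref{dfn:mapAlpha}, let $G_f=\{x:x\ll y\text{ and }\widehat{y}\ll f\text{ for some }y\}$ be the set appearing in the proof of \autoref{thm:upwardalpha}, and set $P_f=\{x:\widehat{x}\le(1-\varepsilon)f\text{ for some }\varepsilon>0\}$ and $Q_f=\{x:\widehat{x}\ll f\}$. From \autoref{thm:upwardalpha} we already have $\alpha(f)=\sup I_f=\sup G_f$, the second equality because $G_f=(I_f)_\ll$ and so the two sets share a supremum by \axiomO{2}. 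I would then establish the chain $G_f\subseteq Q_f\subseteq P_f\subseteq I_f$ using the two composition rules $a\le b\ll c\Rightarrow a\ll c$ and $a\ll b\le c\Rightarrow a\ll c$, together with \autoref{lma:basic_cones}(ii) and the identity $f=\sup_n(1-\tfrac1n)f$ in $L(F(\Cu(A)))$. For instance, $G_f\subseteq Q_f$ since $x\ll y$ and $\widehat{y}\ll f$ give $\widehat{x}\le\widehat{y}\ll f$; $Q_f\subseteq P_f$ since $\widehat{x}\ll f=\sup_n(1-\tfrac1n)f$ forces $\widehat{x}\le(1-\tfrac1{n_0})f$; and $P_f\subseteq I_f$ since $\widehat{x}\le(1-\varepsilon)f$ gives, for every $x'\ll x$, $\widehat{x'}\ll\tfrac{1}{1-\varepsilon}\widehat{x}\le f$ by \autoref{lma:basic_cones}(ii). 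As all four sets lie between $G_f$ and $I_f$, they share the common supremum $\alpha(f)$, proving (ii).

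Two workhorse observations come next. Since $\Cu(A)$ is countably based and $G_f$ is upward directed with $\sup G_f=\alpha(f)$, I extract an increasing sequence $(a_n)_n$ in $G_f$ with $\sup_n a_n=\alpha(f)$ (cf.\ \autoref{countable}); as $G_f\subseteq Q_f$, each $\widehat{a_n}\ll f$. Part (i) is then immediate, since $x\mapsto\widehat{x}$ preserves suprema of increasing sequences: $\widehat{\alpha(f)}=\sup_n\widehat{a_n}\le f$. The same extraction yields the key lemma used repeatedly below, call it Claim~A: if $w\ll\alpha(f)=\sup_n a_n$, then $w\le a_{n_0}$ for some $n_0$, so $\widehat{w}\le\widehat{a_{n_0}}\ll f$ and hence $\widehat{w}\ll f$.

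For (iii), monotonicity is clear from $f\le g\Rightarrow I_f\subseteq I_g$. For an increasing sequence $(f_n)_n$ with $f=\sup_n f_n$, the nontrivial inequality $\alpha(f)\le\sup_n\alpha(f_n)$ follows by taking $w\ll\alpha(f)$, applying Claim~A to get $\widehat{w}\ll f$, interpolating $\widehat{w}\ll h\ll f$ in the \CuSgp{} $L(F(\Cu(A)))$, and using $h\le f_{n_0}$ to get $\widehat{w}\ll f_{n_0}$, i.e.\ $w\in Q_{f_{n_0}}$ and so $w\le\alpha(f_{n_0})$. The delicate point is preservation of infima. Since $\alpha(f\wedge g)\le\alpha(f)\wedge\alpha(g)$ is mere monotonicity, the content is the reverse inequality. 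Given $w\ll\alpha(f)\wedge\alpha(g)$, I interpolate $w\ll w_1\ll\alpha(f)\wedge\alpha(g)\le\alpha(f),\alpha(g)$ and apply Claim~A twice to obtain $\widehat{w_1}\ll f$ and $\widehat{w_1}\ll g$. Here lies the main obstacle: these two relations do \emph{not} formally give $\widehat{w_1}\ll f\wedge g$, because the way-below relation need not be compatible with infima.

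I resolve this by extracting quantitative room: in $L(F(\Cu(A)))$ the relation $\widehat{w_1}\ll f$ yields $\widehat{w_1}\le(1-\varepsilon)f$ for some $\varepsilon>0$ (a consequence of the description of $L(F(\Cu(A)))$ via $\lhd$-increasing sequences in \autoref{dualofcone}), and similarly for $g$; passing to a common $\varepsilon$ gives $\widehat{w_1}\le(1-\varepsilon)(f\wedge g)$. Then $w\ll w_1$ together with \autoref{lma:basic_cones}(ii) (with $t=\tfrac{1}{1-\varepsilon}$) gives $\widehat{w}\ll\tfrac{1}{1-\varepsilon}\widehat{w_1}\le f\wedge g$, whence $w\in Q_{f\wedge g}$ and $w\le\alpha(f\wedge g)$; taking the supremum over all such $w$ proves $\alpha(f)\wedge\alpha(g)\le\alpha(f\wedge g)$. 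Finally, for (iv), choose increasing sequences $(a_n)_n\subseteq G_f$ and $(b_n)_n\subseteq G_g$ with suprema $\alpha(f),\alpha(g)$ as above, so that $\widehat{a_n}\ll f$ and $\widehat{b_n}\ll g$. By \axiomO{3} in $L(F(\Cu(A)))$ we get $\widehat{a_n+b_n}=\widehat{a_n}+\widehat{b_n}\ll f+g$, hence $a_n+b_n\in Q_{f+g}$ and $a_n+b_n\le\alpha(f+g)$; using \axiomO{4} we conclude $\alpha(f)+\alpha(g)=\sup_n(a_n+b_n)\le\alpha(f+g)$.
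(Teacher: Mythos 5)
Your proposal is correct and follows essentially the same route as the paper: both arguments rest on the chain of implications $x\in G_f\Rightarrow\widehat{x}\ll f\Rightarrow\widehat{x}\leq(1-\varepsilon)f\Rightarrow x\in I_f$ (justified via \autoref{lma:basic_cones}(ii) and $f=\sup_n(1-\tfrac1n)f$) together with $\sup I_f=\sup G_f$ from \autoref{thm:upwardalpha}, and then read off (i), (iii) and (iv) from the resulting descriptions of $\alpha(f)$. The only organizational difference is in the infima step, where you extract the $(1-\varepsilon)$ room at the level of individual elements $w_1\ll\alpha(f)\wedge\alpha(g)$, whereas the paper scales the functions to $(1-\varepsilon)f$ and $(1-\varepsilon)g$ and then lets $\varepsilon\to0$ using preservation of suprema; both versions hinge on the same mechanism and yours is equally valid.
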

\begin{proof}
(i): We have $\widehat{\alpha(f)}\leq f$, since $\alpha(f)=\sup I_f$ and each element $x$ in $I_f$ satisfies $\widehat{x}\leq f$. 

(ii): Set
\[
G_f = \big\{ x\in \Cu(A) : \text{there exists }y\in \Cu(A)\text{ such that }x\ll y\text{ and }\widehat{y}\ll f \big\}.
\]
In the course of the proof of \autoref{thm:upwardalpha} we have shown that $\sup I_f = \sup G_f$, which proves the first displayed equality.
The other displayed equalities follow from the following implications, which we show to hold for every $x\in\Cu(A)$:
\[
x\in G_f 
\quad\Rightarrow\quad \widehat{x}\ll f 
\quad\Rightarrow\quad \widehat{x}\leq(1-\varepsilon)f\hbox{ for some }\varepsilon>0
\quad\Rightarrow\quad x\in I_f.
\]
The first and second implications are clear.
To see the third implication, suppose that $x$ satisfies $\widehat{x}\leq (1-\varepsilon)f$ for some $\varepsilon>0$.
Then $\widehat{x'}\ll f$ for every $x'\ll x$, by \autoref{lma:basic_cones} (ii), and thus $x\in I_f$.

(iii):	\emph{Preserving the order:}
Let $f,g\in L(F(\Cu(A)))$ satisfy $f\leq g$.
Then $I_f\subseteq I_g$, and thus $\alpha(f)\leq \alpha(g)$.

\emph{Preserving suprema of increasing sequences:}
Let $(f_n)_n$ be an increasing sequence of elements in $L(F(\Cu(A)))$, and set $f =\sup_n f_n$.
Since $\alpha$ is order-preserving, the sequence $(\alpha(f_n))_n$ is increasing in $\Cu(A)$.
Set $x =\sup_n \alpha(f_n)$.
Since $\alpha(f_n)\leq \alpha(f)$ for all $n$, we have $x\leq \alpha(f)$.
To prove the converse inequality, let $z\in\Cu(A)$ satisfy $\widehat{z}\ll f$.
Since $\alpha(f)$ is the supremum of all such $z$, it suffices to show that $z\leq x$.
Since $\widehat{z}\ll f$, there is $n\in\NN$ with $\widehat{z}\ll f_n$.
This means that $z\in I_{f_n}$ and thus $z\leq \alpha(f_n)\leq x$.

\emph{Preserving infima:}
Given $f,g\in L(F(\Cu(A)))$, let us show that $\alpha(f\wedge g)=\alpha(f)\wedge \alpha(g)$.
From the fact that $\alpha$ is order preserving we deduce at once that $\alpha(f\wedge g)\leq \alpha(f)\wedge \alpha(g)$.
Let $0<\varepsilon<1$ and suppose that $z\leq \alpha((1-\varepsilon)f)\wedge \alpha((1-\varepsilon)g)$.
Then 
\[
\widehat{z}\leq (1-\varepsilon)f\wedge (1-\varepsilon)g=(1-\varepsilon)(f\wedge g).
\] 
Hence, $z\leq \alpha(f\wedge g)$. 
It follows that $\alpha((1-\varepsilon)f)\wedge \alpha((1-\varepsilon)g)\leq\alpha(f\wedge g)$. 
Letting $\varepsilon\to 0$ and using that $\alpha$ preserves suprema of increasing sequences, we get that $\alpha(f)\wedge\alpha(g)\leq \alpha(f\wedge g)$.

(iv): Let $f,g\in L(F(\Cu(A)))$.
If $x,y\in\Cu(A)$ and $\varepsilon>0$ satisfy $\widehat{x}\leq (1-\varepsilon)f$ and $\widehat{y}\leq (1-\varepsilon) g$, then $\widehat{x+y}=\widehat{x}+\widehat{y}\leq (1-\varepsilon) (f+g)$, which implies that $x+y\leq \alpha(f+g)$.
Passing to the supremum of all such $x$ and $y$, the desired inequality follows.
\end{proof}

In \autoref{SecSupersoft}, we will study the question of when $\alpha$ is additive.

We use the map $\alpha$ to solve the problem of realizing elements of $L(F(\Cu(A)))$ as ranks of Cuntz semigroup elements when $A$ is separable and of stable rank one.
We show that, under suitable hypotheses, $f=\widehat{z}$ for $z=\alpha(f)$.
We first prove that this is the case when $f$ is the chisel of an extreme densely finite functional (see \autoref{extremechisel} and \autoref{lma:chisel_realization} below), and then extend this to either arbitrary or full functions, depending on the hypotheses assumed.

\begin{prp}
Let $A$ be a separable C*-algebra of stable rank one. Let $\lambda\in F_0(\Cu(A))$ be an extreme densely finite functional.
Then the chisel $\sigma_\lambda$ at $\lambda$ belongs to $L(F(\Cu(A)))$. 
\end{prp}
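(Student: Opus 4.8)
The plan is to dispatch the zero functional first and then, for $\lambda\neq 0$, to produce a $\lhd$-increasing sequence in $L(F(\Cu(A)))$ converging pointwise to $\sigma_\lambda$. If $\lambda=0$, then $\sigma_0$ takes the value $0$ at the zero functional and $\infty$ elsewhere; since $A$ is separable, $\Cu(A)$ is countably based and hence has a largest element $\omega_A$ (apply the lemma in \autoref{countable} to $\Cu(A)$ itself), and one checks that $\mu(\omega_A)=\infty$ for every nonzero $\mu$, so that $\sigma_0=\widehat{\omega_A}\in L(F(\Cu(A)))$. So assume $\lambda\neq 0$. I would first record that $\sigma_\lambda$ is strictly positive on every nonzero functional, hence full, and that it is linear and lower semicontinuous, so it already lies in $\mathrm{Lsc}(F(\Cu(A)))$; the whole content is to upgrade this to membership in $L(F(\Cu(A)))$.

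The cleanest situation is when $\Cu(A)$ contains a full compact element $u$ (for instance when $A$ is unital). Then $0<\lambda(u)<\infty$: finiteness holds because $u\ll u$ and $\lambda$ is densely finite, and positivity because $u$ is full and $\lambda\neq 0$. I would then invoke \autoref{surjectiveres}, which identifies the full functions of $L(F(\Cu(A)))$ with $\LAff(F_u(\Cu(A)))_{++}^\sigma$ via restriction, so that it suffices to check $\sigma_\lambda|_{F_u(\Cu(A))}\in\LAff(F_u(\Cu(A)))_{++}^\sigma$. Affineness, the range in $(0,\infty]$, and lower semicontinuity are immediate from the definition of the chisel together with fullness. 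For the remaining condition, note that on $F_u(\Cu(A))$ the sublevel set $\{\sigma_\lambda\leq t\}$ is either empty or the single point $\tfrac{1}{\lambda(u)}\lambda$; hence $\{\sigma_\lambda>t\}$ is either all of $F_u(\Cu(A))$ or its complement of a point, which is open and, since $F_u(\Cu(A))$ is compact and (because $\Cu(A)$ is countably based) metrizable, $\sigma$-compact. This gives $\sigma_\lambda\in L(F(\Cu(A)))$ in this case.

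For general, possibly non-unital $A$ there need not be a full element on which $\lambda$ is finite, and here I would build the approximants directly. Using $L(F(\Cu(A)))=\Cu(A)_R$ from \autoref{lma:basic_cones}(i), I would construct elements $x_n\in\Cu(A)$ and integers $k_n$ so that the ranks $f_n=\tfrac1{k_n}\widehat{x_n}$ satisfy $f_n\lhd f_{n+1}$ and $\sup_n f_n=\sigma_\lambda$. On the ray $\mathbb{R}_+\lambda$ this forces $\tfrac1{k_n}\lambda(x_n)\nearrow 1$ from below, arranged via dense finiteness of $\lambda$ (to keep values finite) together with continuity of $\widehat{x_n}$ along the ray, the latter being what makes $f_n\lhd f_{n+1}$ genuinely hold on the finite locus. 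Off the ray one needs $\tfrac1{k_n}\mu(x_n)\to\infty$, and this is exactly where extremality of $\lambda$ enters: by definition any $\mu$ with $\mu\leq C\lambda$ is a multiple of $\lambda$, so for each off-ray $\mu$ and each bound $N$ there is an element on which $\mu$ is large compared with $\lambda$. To upgrade this pointwise statement to the uniform blow-up on larger and larger compact subsets of the off-ray region required by a single $\lhd$-chain, I would use compactness of $F(\Cu(A))$ and lower semicontinuity, along with the inf-semilattice structure and the identity $\lambda(x\wedge y)=\min\{\lambda(x),\lambda(y)\}$ furnished by Edwards' condition for the extreme functional $\lambda$ (see \autoref{pgr:Edward}), which permits combining finitely many blow-up elements without breaking the bound on the ray.

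The main obstacle is precisely this last construction: producing a genuine $\lhd$-increasing sequence, not merely a family whose pointwise supremum is $\sigma_\lambda$. The family of ranks lying $\lhd$-below $\sigma_\lambda$ is \emph{not} upward directed—its members are pinned below $1$ on the ray yet must dominate one another off the ray—so one cannot simply take suprema of basis elements. The inductive step, enlarging a given approximant to one still bounded (and continuous) on the finite ray but strictly larger on a prescribed compact off-ray set, is the crux, and it is there that extremality, compactness, and lower semicontinuity must be combined.
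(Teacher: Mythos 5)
Your handling of the zero functional is fine, and your reduction in the unital case (via \autoref{surjectiveres}, checking that $\sigma_\lambda|_{F_u(\Cu(A))}$ lies in $\LAff(F_u(\Cu(A)))_{++}^\sigma$) is a workable alternative for that special case, though even there you would still need to check that $\sigma_\lambda$ actually coincides with the canonical extension of its restriction constructed in the surjectivity part of \autoref{surjectiveres}: membership of the restriction in $\LAff(F_u(\Cu(A)))_{++}^\sigma$ only produces \emph{some} full element of $L(F(\Cu(A)))$ with that restriction, and the injectivity statement there is about elements of $L(F(\Cu(A)))$, which $\sigma_\lambda$ is not yet known to be. The real problem, however, is that the proposition concerns arbitrary separable $A$ of stable rank one, and for the general case your proposal stops exactly where the work begins: you name the construction of a $\lhd$-increasing sequence with supremum $\sigma_\lambda$ as ``the crux'' and ``the main obstacle'' and do not carry it out. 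That is a genuine gap, not a routine verification.

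Your diagnosis of the obstacle also points in the wrong direction. The paper does not build a $\lhd$-chain by hand; it proves that the set $\{f\in L(F(\Cu(A))) : f(\lambda)<1\}$ \emph{is} upward directed --- contrary to your assertion that the relevant family of approximants fails to be directed --- and then uses countable basedness of $L(F(\Cu(A)))$ to obtain its supremum, which is identified with $\sigma_\lambda$ by extremality (for nonzero $\mu\notin(0,\infty)\lambda$ and $C>0$ one exhibits $f$ with $f(\lambda)<1$ and $f(\mu)>C$; your extremality observation is the right ingredient for this last step). The directedness is the delicate point, and the tool you propose (taking infima, together with $\lambda(x\wedge y)=\min\{\lambda(x),\lambda(y)\}$) cannot produce a common \emph{upper} bound: the infimum of two approximants keeps the bound on the ray but destroys the blow-up off the ray. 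The paper's device is a complementation argument: given $f_1(\lambda)\leq f_2(\lambda)<1$, choose $g\lhd\tilde g\ll f_1\wedge f_2$ with $g(\lambda)>f_1(\lambda)-\varepsilon$, apply \cite[Lemma~3.3.2]{Rob13Cone} to $g\lhd\tilde g\ll f_1+f_2$ to obtain $h$ with $g+h=f_1+f_2$ and $f_1+f_2\propto h$; cancellation then gives $h\geq f_1,f_2$, while $h(\lambda)\leq f_1(\lambda)+f_2(\lambda)-g(\lambda)<f_2(\lambda)+\varepsilon<1$. Edwards' condition enters only to compute $(f_1\wedge f_2)(\lambda)=\min\{f_1(\lambda),f_2(\lambda)\}$, which is what makes the choice of $g$ possible. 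Without this (or an equivalent) device your outline does not close.
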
	
\begin{proof}
If $\lambda=0$ the proposition holds trivially, as $\sigma_0=\widehat{\infty}$, with $\infty$ denoting the largest element of $\Cu(A)$.
Assume thus that $\lambda\neq 0$.

\emph{Claim: Let $f_1,f_2\in L(F(\Cu(A)))$. Then $(f_1\wedge f_2)(\lambda)=\min(f_1(\lambda),f_2(\lambda))$.}
To prove the claim, let $x,y\in\Cu(A)$ and $m,n\in\NN$ satisfy $\tfrac{\widehat{x}}{m}\leq f_1$ and $\tfrac{\widehat{y}}{n}\leq f_2$.  Using \autoref{hat-inf-preserving} at the first step, and using at the second step that that $\Cu(A)$ satisfies Edwards' condition for $\lambda$ (see \cite[Theorem~5.3]{AntPerRobThi19arX:Edwards}, and the characterization of the Edwards' condition given at the end of \autoref{pgr:Edward}), we get
\[
(n\widehat{x}\wedge m\widehat{y})(\lambda)
= (nx\wedge my)^{\wedge}(\lambda)
= \min \{ n\widehat{x}(\lambda), m\widehat{y}(\lambda) \}.
\] 
Then
\[
(f_1\wedge f_2)(\lambda)
= \frac{1}{mn} (mnf_1\wedge mnf_2)(\lambda)
\geq \frac{1}{mn} (n\widehat{x}\wedge m\widehat{y})(\lambda)
= \min \{ \frac{\widehat{x}(\lambda)}{m},\frac{\widehat{y}(\lambda)}{n} \}.
\]

Since $L(F(\Cu(A)))=\Cu(A)_R$ by \autoref{lma:basic_cones} (i), both $f_1$ and $f_2$ are suprema of increasing sequences whose terms take the form $\tfrac{\widehat{x}}{n}$ and $\tfrac{\widehat{y}}{m}$. 
Therefore, 
\[
(f_1\wedge f_2)(\lambda)\geq \min\{ f_1(\lambda),f_2(\lambda) \}.
\] 
The opposite inequality is straightforward.  This proves the claim.

Let us now show that the set 
\begin{equation}\label{flambda1}
\big\{ f\in L(F(\Cu(A))): f(\lambda)<1 \big\} 
\end{equation}
is upward directed.  Let $f_1,f_2\in L(F(\Cu(A)))$ satisfy $f_1(\lambda),f_2(\lambda)<1$.  Assume without loss of generality that $f_1(\lambda)\leq f_2(\lambda)$. By the claim established above, $(f_1\wedge f_2)(\lambda)=f_1(\lambda)$.  Choose $\varepsilon>0$ such that $f_2(\lambda)+\varepsilon<1$.
Next, choose $g,\tilde g\in L(F(\Cu(A)))$ such that $g\lhd \tilde{g}\ll f_1\wedge f_2$ and $g(\lambda)>f_1(\lambda)-\varepsilon$.

By \cite[Lemma~3.3.2]{Rob13Cone}, if $c\lhd d'\ll d$ in $L(F(\Cu(A)))$, then there exists $e$ such that $c+e=d$ and $c\propto e$, and thus also $d\propto e$.
Applied to $g\lhd\tilde{g}\ll f_1+f_2$, we obtain $h$ such that $g+h=f_1+f_2$ and $f_1+f_2\propto h$.
We have
\[
f_1+h\geq g+h=f_1+f_2.
\]
Hence, since $f_1\propto h$, we may use cancellation in $L(F(\Cu(A)))$ to conclude that $h\geq f_2$.
Symmetrically, $h\geq f_1$.
On the other hand, 
\[
f_1(\lambda)-\varepsilon+h(\lambda)\leq g(\lambda)+h(\lambda)=f_1(\lambda)+f_2(\lambda),
\]
from which we deduce that $h(\lambda)\leq f_2(\lambda)+\varepsilon<1$.

Since $L(F(\Cu(A)))$ is a countably based Cu-semigroup, the upward directed set in \eqref{flambda1}
has a supremum, which we now proceed to prove is precisely $\sigma_\lambda$. 
To this end, it suffices to show that for any nonzero $\mu\in F(\Cu(A))$ such that $\mu\notin (0,\infty)\lambda$ and any $C>0$ there exists $f\in L(F(\Cu(A)))$ such that $f(\lambda)<1$ and $f(\mu)>C$.
To show this, choose $x\in \Cu(A)$ such that $0<\lambda(x)<\infty$, which is possible as $\lambda\neq 0$ by assumption.	Since $\mu$ is not a scalar multiple of $\lambda$ and the latter is extreme by assumption, we have $\mu\not \leq 4C\lambda$. Let $y\in \Cu(A)$ be such that $4C\lambda(y)<\mu(y)$. If $\lambda(y)=0$, then  $f=\frac{2C}{\mu(y)}\widehat{y}$ is as desired. Suppose that $\lambda(y)>0$. Set
\[
f=\frac{1}{4\lambda(x)}\widehat{x} + \frac{1}{4\lambda(y)}\widehat{y}.
\]
Clearly then $f(\lambda)=1/4+1/4<1$. 
Also, 
\[
f(\mu)\geq \frac{1}{4}\cdot \frac{\mu(y)}{\lambda(y)}>C.
\]
Hence, $f$ is as desired.	
\end{proof}	

\begin{lma}[{Cf.\ \cite[Lemma~5.2]{Thi17arX:RksOps}}]
\label{lma:lambda_mu}
Let $A$ be a C*-algebra. 
Let $\lambda,\mu\in F_0(\Cu(A))$ be densely finite functionals, with $\lambda$ extreme, $\mu$ nonzero, and $\mu\notin (0,\infty)\lambda$.
Then for every $\varepsilon>0$ there exists $w\in \Cu(A)$ such that $\lambda(w)<\varepsilon$ and $\mu(w)>\tfrac{1}{\varepsilon}$.	
\end{lma}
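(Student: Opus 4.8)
The plan is to produce the desired $w$ by cutting a single positive element down along its spectrum and then passing to a suitable multiple; the whole difficulty is in making the $\lambda$-value small while forcing the $\mu$-value large. First I would extract the one consequence of the hypotheses that I need: since $\lambda$ is extreme and $\mu\notin(0,\infty)\lambda$, we have $\mu\not\leq C\lambda$ for every $C\in(0,\infty)$ --- this is precisely the inequality ``$\mu\not\leq 4C\lambda$'' used in the proof of the preceding proposition. Hence for each $n\in\NN$ I can choose $y\in\Cu(A)$ with $\mu(y)>n\lambda(y)$, and then $\lambda(y)<\infty$ automatically. If some such $y$ has $\lambda(y)=0$ (so $\mu(y)>0$), then $w=my$ already works for large $m$, since $\lambda(w)=0$ and $\mu(w)=m\mu(y)$. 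So from now on I assume $0<\lambda(y)<\infty$.

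Fix $a\in(A\otimes\mathcal K)_+$ with $[a]=y$ and consider the decreasing functions $t\mapsto\lambda([(a-t)_+])$ and $t\mapsto\mu([(a-t)_+])$ on $(0,\|a\|]$. Because $[(a-t)_+]\ll y$ for $t>0$ and $\lambda,\mu$ are densely finite, these functions are finite for every $t>0$; they are the tails of Radon measures $\nu_\lambda,\nu_\mu$ on $(0,\|a\|]$ (finite on each $[t,\|a\|]$) with total masses $\lambda(y)$ and $\mu(y)$, and for continuous $g\geq0$ with $\{g>0\}=U$ open the element $[g(a)]$ satisfies $\lambda([g(a)])=\nu_\lambda(U)$ and $\mu([g(a)])=\nu_\mu(U)$. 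Since $\nu_\mu$ has total mass exceeding $n$ times that of $\nu_\lambda$, averaging over a finite partition of $(0,\|a\|]$ into spectral bands (with endpoints chosen off the countably many atoms of $\nu_\lambda$) produces a band $U$ with $\nu_\mu(U)>n\,\nu_\lambda(U)$, and refining the partition makes $\nu_\lambda(U)$ as small as desired. This gives $w_0=[g(a)]$ with $\mu(w_0)>n\lambda(w_0)$ and $\lambda(w_0)<\delta$. Writing $p=\lambda(w_0)$, $q=\mu(w_0)$ and $m=\lfloor\varepsilon^{-1}/q\rfloor+1$, the multiple $w=mw_0$ satisfies $\mu(w)=mq>1/\varepsilon$ and $\lambda(w)=mp\leq\varepsilon^{-1}(p/q)+p<\varepsilon^{-1}/n+\delta$; choosing $n>2\varepsilon^{-2}$ and $\delta<\varepsilon/2$ at the outset yields $\lambda(w)<\varepsilon$. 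The point making the arithmetic close is that $p/q<1/n$ is small \emph{independently} of how small $q$ is.

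The main obstacle is the refinement step: it can fail only when the high-ratio band is forced onto an atom $t_0$ of $\nu_\lambda$ of mass $\geq\delta$ that cannot be subdivided inside $\Cu(A)$. Such an indivisible $\lambda$-atom is governed by a compact element generating an elementary ideal-quotient of $A$, and on the Cuntz semigroup of an elementary algebra every functional is a multiple of the rank, so there $\lambda$ and $\mu$ are proportional and have bounded ratio --- incompatible with ratio $>n$ for large $n$. Moreover, extremality of $\lambda$ forbids $\lambda$ from being nonzero on two orthogonal elementary ideals, so a band of ratio $>n$ concentrated on such an atom forces $\lambda=0<\mu$ there, returning us to the easy case of the first paragraph. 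Thus extremality of $\lambda$ is exactly what dispatches the obstruction, and carrying out this exclusion cleanly (so that one may always subdivide, or else fall back to an element with $\lambda=0<\mu$) is where the real work lies; the hypothesis that both functionals are densely finite is used throughout to keep the spectral tails, and hence the measures $\nu_\lambda,\nu_\mu$, finite.
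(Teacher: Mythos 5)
There is a genuine gap, and it sits exactly where you locate ``the real work'': the refinement step and your proposed way around its failure. If the distribution function $t\mapsto\lambda([(a-t)_+])$ has an atom at a point where $\nu_\mu$ is concentrated --- e.g.\ if both $\nu_\lambda$ and $\nu_\mu$ are point masses at the same $t_0$ with $\nu_\mu(\{t_0\})>n\,\nu_\lambda(\{t_0\})>0$ --- then every band containing $t_0$ has $\nu_\lambda$-mass bounded below and every band avoiding $t_0$ carries no $\mu$-mass, so no refinement produces the element $w_0$ you need. Your claimed resolution, that such an atom ``is governed by a compact element generating an elementary ideal-quotient'' which extremality of $\lambda$ then excludes, is not correct: an atom of $\nu_\lambda$ records concentration of the functional on the spectrum of the particular representative $a$, not compactness of any Cuntz class. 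For instance, in a simple, non-elementary, stably finite algebra one can choose $a$ with spectrum $[0,1]$ whose tracial spectral measure has an atom at $1/2$; then $[(a-1/2)_+]$ is not compact and there is no elementary ideal-quotient anywhere, yet the atom is there. (Note also that the lemma is stated for an \emph{arbitrary} C*-algebra, so even a correct dichotomy involving elementary ideal-quotients could not simply be dismissed.) The net effect is that your argument never actually uses the extremality of $\lambda$ in a way that controls $\lambda(w)$.

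For comparison, the paper's proof accesses extremality through Edwards' condition (\autoref{pgr:Edward}): for an extreme, densely finite $\lambda$ one has $\sup\{\lambda(z):z\leq x,y\}=\min\{\lambda(x),\lambda(y)\}$ for all $x,y$. After normalizing $\lambda$ and $\mu$ at a common element $x$ and (in the case $\lambda\nleq\mu$) choosing $y$ with $\mu(y)<\lambda(y)<\infty$, it applies this to $nx$ and $my$ to find $z\leq nx,my$ carrying almost all of the $\lambda$-mass $n$ of $nx$, and then \axiomO{5} to $z'\ll z\leq nx$ to produce a complement $w$ with $\lambda(w)<\varepsilon$; the inequality $nx\leq z+w$ together with $z\leq my$ and $\mu(y)<\lambda(y)$ forces $\mu(w)$ to be large. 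The remaining case $\lambda\leq\mu$ is reduced to the first by writing $\mu=\lambda+\mu'$ using that $F(\Cu(A))$ is algebraically ordered. If you want to rescue a spectral-cutting argument, you would at minimum need to replace the single element $a$ by a pair of comparable elements and invoke Edwards' condition (or an equivalent Riesz-type decomposition for $\lambda$) to split off the small-$\lambda$ piece; cutting down one element along its own spectrum cannot see the relation between $\lambda$ and $\mu$ on the rest of $\Cu(A)$.
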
	
\begin{proof}
If $\lambda=0$ the lemma follows easily.  Let us thus assume that $\lambda\neq 0$.  Observe that $\mu\neq 0$ by assumption.  We claim that there exists $x\in\Cu(A)$ such that $0<\lambda(x)<\infty$ and $0<\mu(x)<\infty$.
Indeed, since $\lambda$ and $\mu$ are nonzero, there exist $x_1,x_2\in\Cu(A)$ with $0<\lambda(x_1)$ and $0<\mu(x_2)$. 
Choose $x_1',x_2'\in\Cu(A)$ with $x_1'\ll x_1$ and $x_2'\ll x_2$ such that $0<\lambda(x_1')$ and $0<\mu(x_2')$. 
Since $\lambda$ and $\mu$ are densely finite, they are finite on $x_1'$ and $x_2'$, and so $x=x_1'+x_2'$ is as desired.

Let us now normalize $\lambda$ and $\mu$ so that $\lambda(x)=\mu(x)=1$. The normalized functionals are multiples of the original functionals by fixed scalars not depending on $\varepsilon$. Thus, the proof of the lemma may be reduced to the normalized functionals.

\emph{Case~1: Suppose that $\lambda\nleq \mu$.}
Let $y\in\Cu(A)$ be such that $\mu(y)<\lambda(y)<\infty$. Set $\delta=\lambda(y)-\mu(y)$. 
Choose numbers $m,n\in\NN$ such that
\[
\frac{1}{\varepsilon}+\frac{\varepsilon}{2}<m\delta \andSep
|\lambda(nx)-\lambda(my)|=|n-m\lambda(y)|<\frac{\varepsilon}{2}\,.
\]

Since $\Cu(A)$ satisfies Edwards' condition for $\lambda$, and since $\lambda$ is extreme and densely finite, there exists $z\leq nx, my$ such that $\min\{n,m\lambda(y)\}-\frac{\varepsilon}{2}<\lambda(z)$ (see \autoref{pgr:Edward}).
Since also $|n-m\lambda(y)|<\frac{\varepsilon}{2}$, we have that $n-\lambda(z)<\varepsilon$. 
Choose $z'\in \Cu(A)$ such that $z'\ll z$ and $n-\lambda(z')<\varepsilon$.
Now, by \axiomO{5} applied to $z'\ll z\leq nx$, there is $w\in S$ such that $z'+w\leq nx\leq z+w$. 
Then $\lambda(w)\leq \lambda(nx)-\lambda(z')<\varepsilon$.
Also,
\begin{align*}
\mu(z)+\mu(w)
\geq \mu(nx)
=n
&=n-\lambda(my)+\lambda(my)\\
&\geq -\frac{\varepsilon}{2}+m\delta+\mu(my)\\
&\geq -\frac{\varepsilon}{2}+m\delta+\mu(z).
\end{align*}
Therefore $\mu(w)\geq -\frac{\varepsilon}{2}+ m\delta\geq \frac{1}{\varepsilon}$, as desired.

\emph{Case~2: Suppose that $\lambda\leq \mu$.}
Since $F(\Cu(A))$ is algebraically ordered (\cite[Proposition~2.2.3]{Rob13Cone}), we can choose a functional $\mu'$ such that $\lambda+\mu'=\mu$. 
Observe that $\mu'(x)=0$.
We cannot have that $\mu'=0$, since $\lambda\neq \mu$.
Thus, as argued at the beginning of the proof, there exists $x_1\in \Cu(A)$ such
$0<\lambda(x_1)<\infty$ and $0<\mu'(x_1)<\infty$.
Let $\lambda'$ and $\mu''$ be the normalizations of $\lambda$ and $\mu'$, respectively, such that $\lambda'(x_1)=\mu''(x_1)=1$.
Observe that $\lambda'\nleq \mu''$, as $\mu''(x)=0$ while $\lambda'(x)\neq 0$.
Thus, we can apply Case~1 to the functionals $\lambda'$ and $\mu''$, normalized at $x_1$, to find the desired $w$.
Since the normalizing factors do not depend on $\varepsilon$, we can arrange for $\lambda(w)<\varepsilon$ and $\mu'(w)>\tfrac{1}{\varepsilon}$, which in turn implies that $\mu(w)>\tfrac{1}{\varepsilon}$.
\end{proof}	

\begin{lma}
\label{lambdafullrange}
Let $A$ be a separable C*-algebra of stable rank one that has no nonzero type~I quotients, and let $\lambda\in F_0(\Cu(A))$ be a nonzero, densely finite functional.
\begin{enumerate}[\rm (i)]
	\item
	For each $\varepsilon>0$ the set $\{x\in \Cu(A):\lambda(x)<\varepsilon\}$ is a full subset of $\Cu(A)$.
	\item
	The range of $\lambda$ is $[0,\infty]$.
\end{enumerate}
\end{lma}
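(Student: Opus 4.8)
The plan is to prove (i) by contradiction, working entirely inside $\Cu(A)$ and exploiting divisibility, and then to derive (ii) from (i). For (i), suppose the order-hereditary set $B_\varepsilon=\{x\in\Cu(A):\lambda(x)<\varepsilon\}$ is not full, and let $J\subsetneq A$ be the proper closed two-sided ideal with $\Cu(J)$ equal to the ideal of $\Cu(A)$ generated by $B_\varepsilon$ (via \autoref{pgr:ideal}). Since $\lambda$ is densely finite and nonzero while $J\neq A$, I would first note that there is $x_0\in\Cu(A)$ with $\lambda(x_0)<\infty$ and $x_0\notin\Cu(J)$: otherwise every element of finite $\lambda$-value would lie in $\Cu(J)$, and dense finiteness together with closedness of $\Cu(J)$ under suprema of increasing sequences would force $\Cu(J)=\Cu(A)$. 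Because $B_\varepsilon\subseteq\Cu(J)$, such an $x_0$ automatically has $\lambda(x_0)\ge\varepsilon$. I would then pick $x_0'\ll x_0$ with $x_0'\notin\Cu(J)$, which exists since $x_0$ is the supremum of a $\ll$-increasing sequence and $\Cu(J)$ is order-hereditary and closed under such suprema.

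The contradiction comes from divisibility. The key input is that $x_0$ is weakly $(k,\omega)$-divisible in $\Cu(A)$ for every $k$. Granting this, applying weak divisibility to $x_0'\ll x_0$ yields $y_1,\dots,y_m$ with $ky_j\le x_0$ for all $j$ and $x_0'\le\sum_j y_j$. As $x_0'\notin\Cu(J)$ and $\Cu(J)$ is an order-hereditary submonoid, some $y_j\notin\Cu(J)$, so $\lambda(y_j)\ge\varepsilon$, and hence $k\varepsilon\le\lambda(ky_j)\le\lambda(x_0)<\infty$ for every $k$, which is absurd. Therefore $B_\varepsilon$ is full. To secure the weak divisibility I would mimic the proof of \autoref{smallsoft}: writing $x_0=[b_0]$ and $B_0=\overline{b_0(A\otimes\mathcal K)b_0}$, weak $(k,\omega)$-divisibility of $[b_0]$ for all $k$ follows from the representation-theoretic criterion of \cite[Theorem~5.3]{RobRor13Divisibility} as soon as $B_0$ has \emph{no} finite-dimensional representations.

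The main obstacle is exactly this last point, namely that $B_0$ has no finite-dimensional representations. By Brown's stabilization theorem this is equivalent to the ideal of $A$ generated by $x_0$ having no elementary ideal-quotients, so the crux is the implication that a separable C*-algebra of stable rank one with no nonzero type~I quotients has no nonzero elementary ideal-quotients. A finite-dimensional elementary ideal-quotient would exhibit some $M_n$ as a unital ideal of a quotient of $A$, hence as a direct summand and so as an $M_n$-quotient of $A$, which is type~I and is therefore excluded; the genuinely delicate case is an infinite-dimensional elementary ideal-quotient $\mathcal K$, which I expect to rule out using that stable rank one forbids the essential extensions by $\mathcal K$ that would be needed to produce such a subquotient without simultaneously producing a type~I quotient. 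Feeding this back, every relevant corner $B_0$ is free of finite-dimensional representations and the argument of (i) closes.

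For (ii), I would first deduce from (i) that $\lambda$ takes arbitrarily small positive values: fixing $x_\ast$ with $\lambda(x_\ast)>0$ and applying (i) with $\delta$ in place of $\varepsilon$, fullness of $\{\lambda<\delta\}$ provides finitely many $w_1,\dots,w_r$ with $\lambda(w_i)<\delta$ and $\lambda(x_\ast)\le\infty\sum_i\lambda(w_i)$, forcing $0<\lambda(w_i)<\delta$ for some $i$. Consequently the range of $\lambda$ is an additive subsemigroup of $[0,\infty]$ containing arbitrarily small positive numbers, hence dense in $[0,\infty)$. Given a target $t\in(0,\infty)$ I would build an increasing sequence $x_1\le x_2\le\cdots$ by successively adding elements $w_n$ with $\tfrac12\big(t-\lambda(x_n)\big)<\lambda(w_n)<t-\lambda(x_n)$, which forces $\lambda(x_n)\uparrow t$; then $x=\sup_n x_n$ exists by \axiomO{1} and satisfies $\lambda(x)=t$ because $\lambda$ preserves suprema of increasing sequences. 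Finally $t=0$ is realized by $x=0$, and $t=\infty$ by $x=\infty x_\ast$, since $\lambda(\infty x_\ast)=\sup_n n\lambda(x_\ast)=\infty$. This yields that the range of $\lambda$ is all of $[0,\infty]$.
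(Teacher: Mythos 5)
Your part (ii) is correct and is essentially the paper's own argument, and the contradiction you set up for part (i) (some $y_j$ must escape the ideal generated by $B_\varepsilon$, forcing $k\varepsilon\leq\lambda(x_0)<\infty$ for all $k$) would indeed close the proof \emph{if} you could supply weak $(k,\omega)$-divisibility of your chosen $x_0$ for every $k$. That is exactly where the proof has a genuine gap, and the route you propose for filling it cannot work: the implication that a separable \ca{} of stable rank one with no nonzero type~I quotients has no nonzero elementary ideal-quotients is \emph{false}. Take a UHF algebra $B$, an irreducible representation $\pi$ of $B$ on $H$ (so $\pi(B)\cap\mathcal{K}(H)=0$, since $B$ is simple and non-elementary), and set $A=\pi(B)+\mathcal{K}(H)$. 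This is a separable essential extension of an AF algebra by an AF algebra, hence AF by a theorem of L.~G.~Brown, and in particular of stable rank one; its only nonzero quotients are $A$ and $B$, neither of which is type~I; yet $\mathcal{K}=\mathcal{K}/\{0\}$ is a nonzero elementary ideal-quotient. So stable rank one does not forbid the essential extensions by $\mathcal{K}$ that you hoped to exclude. What your argument literally needs is slightly weaker, namely that the ideal generated by the particular $x_0$ you pick has no nonzero elementary quotients (equivalently, that the hereditary subalgebra $B_0$ has no finite-dimensional representations), but you give no mechanism for choosing such an $x_0$ among the elements of finite $\lambda$-value outside $\Cu(J)$, and nothing in the hypotheses guarantees one exists.

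The paper's proof of (i) avoids divisibility entirely and uses the hypotheses in a different place. Assuming the ideal $I$ generated by $\{x:\lambda(x)<\varepsilon\}$ is proper, \axiomO{5} shows that every $x$ with $\lambda(x)<\infty$ has \emph{compact} image in $\Cu(A/I)$; since $A/I$ is a nonzero quotient of $A$ it is not type~I, so Glimm's theorem yields $b\in(A/I)_+$ with spectrum $[0,1]$, and the Brown--Ciuperca characterization of compact classes in stably finite \ca{s} (this is where stable rank one enters, via stable finiteness of $A/I$) shows that $[(b-1/2)_+]$ is not compact even though it lifts to an element of finite $\lambda$-value --- a contradiction. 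Unless you can show how to select $x_0$ generating an ideal with no elementary quotients, you should adopt an argument of this kind rather than the divisibility reduction.
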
	
\begin{proof}
(i):
Let $W\subseteq \Cu(A)$ be the ideal generated by $\{x\in \Cu(A):\lambda(x)<\varepsilon\}$.
Using the natural correspondence between closed, two-sided ideals of $A$ and ideals of $\Cu(A)$ (see \autoref{pgr:ideal}), we let $I\subseteq A$ be the closed, two-sided ideal of $A$ such that $\Cu(I)=W$.
Suppose for the sake of contradiction that $I$ is proper.
Let $x\in\Cu(A)$ with $\lambda(x)<\infty$.
Find $x''\ll x'\ll x$ such that $\lambda(x)-\lambda(x'')<\varepsilon$.
By \axiomO{5}, there exists $w\in\Cu(A)$ such that $x''+w\leq x\leq x'+w$.
Evaluating on $\lambda$ we get $\lambda(w)<\varepsilon$, whence $w\in W$.
Thus, the images of $x$ and $x'$ in $\Cu(A/I)$ agree.
It follows that the image of $x$ in $\Cu(A/I)$ is compact. 

Next, we show that $A/I$ contains a positive element with spectrum $[0,1]$.
Since $A/I$ is not type~I, it follows from Glimm's theorem that there exists a sub-C*-algebra $B\subseteq A/I$ that has a UHF-algebra as a quotient.
In a UHF-algebra it is easy to find a positive element $\tilde{b}$ with spectrum $[0,1]$.
Lift $\tilde{b}$ to a positive, contractive element $b$ in $B$.
Then $b$ has spectrum $[0,1]$ in $B$, and consequently also in $A/I$.
By \cite[Theorem~3.5]{BroCiu09IsoHilbModSF}, if $C$ is a stably finite C*-algebra and $c\in C_+$, then $[c]$ is compact if and only if $0$ is an isolated point of the spectrum of $c$.
Since $A/I$ has stable rank one, it is stably finite, and it follows that $[(b-t)_+]\in\Cu(A/I)$ is not compact for every $t\in(0,1)$.
Let $a\in A_+$ be a lift of $b$.
Then $\lambda([(a-1/2)_+])<\infty$ and $[(a-1/2)_+]$ is mapped to $[(b-1/2)_+]$ in $\Cu(A/I)$, which is not compact. This contradicts what was proved in the previous paragraph.	

(ii): 
Let us first show that $\lambda$ attains arbitrarily small nonzero values.
Fix $\varepsilon>0$.
By part (i), $\{x\in \Cu(A):\lambda(x)<\varepsilon\}$ is a full subset of $\Cu(A)$. So if  $\lambda$ does not attain nonzero values less than $\varepsilon$ then  it is the zero functional, contradicting our assumption. 
Thus, there exists $x\in \Cu(A)$ such that $0<\lambda(x)<\varepsilon$.

It is now clear that the range of $\lambda$ is dense in $[0,\infty]$. Both 0 and $\infty$ are in the range of $\lambda$. Let $t\in (0,\infty)$. Choose $x_0\in \Cu(A)$ such that $\lambda(x_0)<t$. Having chosen $x_0\leq x_1\leq \cdots \leq x_{n-1}$ such that $\lambda(x_{n-1})<t$, choose $z$ such that 
\[
\frac{t-\lambda(x_{n-1})}{2}<\lambda(z)<t-\lambda(x_{n-1}),\] 
and set $x_n=x_{n-1}+z$. Then $\lambda$ attains the value $t$ at $x=\sup_n x_n$. 
\end{proof}

\begin{lma}
\label{lma:chisel_realization}
Let $A$ be a separable C*-algebra of stable rank one that has no nonzero type~I quotients. 
Let $\lambda\in F_0(\Cu(A))$ be an extreme, densely finite functional, and let $\sigma_{\lambda}$ denote its chisel.
Then $\sigma_{\lambda}=\widehat{z}$ for $z=\alpha(\sigma_{\lambda})$, where $\alpha$ is the map from \autoref{dfn:mapAlpha}.
\end{lma}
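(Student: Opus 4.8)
The plan is to prove the two inequalities $\widehat{z}\le\sigma_\lambda$ and $\sigma_\lambda\le\widehat{z}$ pointwise on $F(\Cu(A))$. The first is free: since $z=\alpha(\sigma_\lambda)$, \autoref{prp:mapAlpha}(i) gives $\widehat{z}\le\sigma_\lambda$. Before attacking the reverse inequality I would first obtain a concrete description of $z$. The case $\lambda=0$ is trivial, since then $\sigma_\lambda=\widehat{\infty}$ for the largest element $\infty\in\Cu(A)$ and $\alpha(\widehat{\infty})=\infty$. So assume $\lambda\neq 0$. Using the second displayed formula in \autoref{prp:mapAlpha}(ii), together with the elementary observation that $\widehat{x}\le(1-\varepsilon)\sigma_\lambda$ holds if and only if $\lambda(x)\le 1-\varepsilon$ (evaluate at $\lambda$ for one direction; for the other note that $\sigma_\lambda(\mu)=\infty$ whenever $\mu\notin(0,\infty)\lambda$), I would identify
\[
z=\alpha(\sigma_\lambda)=\sup\big\{x\in\Cu(A):\lambda(x)<1\big\}.
\]
It then remains to verify $\sigma_\lambda(\mu)\le\mu(z)$ for every $\mu\in F(\Cu(A))$, which I would split according to the three values taken by the chisel.

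The cases $\mu=0$ and $\mu=c\lambda$ with $c\in(0,\infty)$ are handled quickly. For $\mu=0$ there is nothing to prove. For $\mu=c\lambda$ we need $\lambda(z)\ge 1$; since the range of $\lambda$ is all of $[0,\infty]$ by \autoref{lambdafullrange}(ii), for each $n$ there is $x_n$ with $\lambda(x_n)=1-\tfrac1n<1$, so $x_n\le z$ and hence $\lambda(z)\ge 1-\tfrac1n$ for all $n$. (Combined with $\widehat{z}\le\sigma_\lambda$ this in fact pins down $\lambda(z)=1$.) The substantive case is $\mu$ nonzero with $\mu\notin(0,\infty)\lambda$, where the chisel equals $\infty$ and we must show $\mu(z)=\infty$.

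For this I would first record that $z$ is full. Indeed, \autoref{lambdafullrange}(i) (with $\varepsilon=1$) says the set $\{x:\lambda(x)<1\}$ is full, while $\{y:y\le\infty z\}$ is an ideal of $\Cu(A)$ containing this set: it is order-hereditary, and closed under addition and suprema of increasing sequences because $\infty z+\infty z=\infty z$. Hence $\{y:y\le\infty z\}=\Cu(A)$, i.e.\ $z$ is full. Now I would argue by contradiction: suppose $\mu(z)<\infty$. Fullness of $z$ forces $\mu$ to be densely finite, for if $x\ll\tilde x$ then $\tilde x\le\infty z=\sup_n nz$ yields $x\le nz$ for some $n$ by the definition of $\ll$, whence $\mu(x)\le n\mu(z)<\infty$. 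With $\mu$ now densely finite, nonzero, and not a multiple of the extreme functional $\lambda$, \autoref{lma:lambda_mu} supplies, for every $\varepsilon>0$, an element $w$ with $\lambda(w)<\varepsilon$ and $\mu(w)>\tfrac1\varepsilon$. Taking $\varepsilon<1$ we get $w\le z$, so $\mu(z)\ge\mu(w)>\tfrac1\varepsilon$; letting $\varepsilon\to 0$ contradicts $\mu(z)<\infty$. Therefore $\mu(z)=\infty=\sigma_\lambda(\mu)$, completing the reverse inequality and hence the proof.

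The step I expect to be the main obstacle is precisely this last case for functionals $\mu$ that are not densely finite, since \autoref{lma:lambda_mu} is available only for densely finite $\mu$. The device that unlocks it is the dichotomy above: either $\mu(z)=\infty$ outright, or $\mu(z)<\infty$, in which case the fullness of $z$ upgrades $\mu$ to a densely finite functional and brings \autoref{lma:lambda_mu} back into play. Establishing fullness of $z$ is the small auxiliary observation on which this hinges.
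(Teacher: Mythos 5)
Your proposal is correct and follows essentially the same route as the paper's proof: the easy inequality from \autoref{prp:mapAlpha}(i), the identification of $z$ as the supremum of $\{x:\lambda(x)<1\}$, fullness of $z$ via \autoref{lambdafullrange}(i), the computation $\lambda(z)=1$ via \autoref{lambdafullrange}(ii), and \autoref{lma:lambda_mu} for nonzero densely finite $\mu\notin(0,\infty)\lambda$. The only cosmetic difference is that you package the non-densely-finite case as a contradiction (fullness of $z$ upgrades any $\mu$ with $\mu(z)<\infty$ to a densely finite functional), whereas the paper observes directly that fullness forces $\mu(z)=\infty=\sigma_\lambda(\mu)$ there; the content is identical.
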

\begin{proof}
If $\lambda=0$, then $\sigma_{\lambda}$ is the function that is $0$ at $0$ (the zero functional) and $\infty$ otherwise.
In this case, $z=\infty$ (the largest element of $\Cu(A)$), and the lemma holds trivially.
Assume thus that $\lambda\neq 0$.

By \autoref{prp:mapAlpha}~(i), we have $\widehat{z}\leq\sigma_\lambda$. 
We first show that $z$ is full.
Let $x\in \Cu(A)$ satisfy $\lambda(x)<1$.
Then $\widehat{x}\leq (1-\varepsilon) \sigma_{\lambda}$ for a sufficiently small $\varepsilon$, and thus $x\leq \alpha(\sigma_\lambda)=z$ by \autoref{prp:mapAlpha}~(ii).
Hence, by \autoref{lambdafullrange}~(i), $z$ is full. 

Let $0<\varepsilon<1$.
By \autoref{lambdafullrange} (ii), there exists $x\in \Cu(A)$ such that $\lambda(x)=1-\varepsilon$. 
Then $\widehat{x}\leq (1-\varepsilon)\sigma_\lambda$, whence $x\leq z$ by \autoref{prp:mapAlpha}~(ii).
Evaluating at $\lambda$ we get $1-\varepsilon\leq \lambda(z)$.
Since $\varepsilon$ can be arbitrarily small, we obtain $\lambda(z)=1$, that is, $\widehat{z}(\lambda)=1$. 

Let $\mu$ be a nonzero, densely finite functional that is not a scalar multiple of $\lambda$, and let $\varepsilon>0$.
By \autoref{lma:lambda_mu}, there exists $w\in \Cu(A)$ such that $\lambda(w)<1$ and $\mu(w)>\tfrac{1}{\varepsilon}$.
As in the second paragraph of the proof, we get $w\leq z$, from which we obtain that $\mu(z)>\tfrac{1}{\varepsilon}$.
Since $\varepsilon$ can be arbitrarily small, we deduce $\mu(z)=\infty$, that is, $\widehat{z}(\mu)=\infty$. 

We have shown that $\widehat{z}(\mu)=\sigma_\lambda(\mu)$ for all $\mu$ densely finite. Further, since $z$ is full, this equality holds also  for all functionals that are not densely finite, as in this case both sides equal $\infty$.
The lemma is thus proved. 
\end{proof}	

\begin{lma}\label{oldclaim2}
Let $A$ be a separable C*-algebra of stable rank one. Let $f,g\in L(F(\Cu(A)))$ satisfy $g\ll f$.
Then there exists $h\in L(F(\Cu(A)))$ such that $g\leq h\leq f$ and $h$ is continuous on $F_0(\Cu(A))$.	
\end{lma}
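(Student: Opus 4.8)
The plan is to realize $h$ as a term of a $\lhd$-increasing sequence approximating $f$, and to exploit that the \emph{next} term of such a sequence is automatically finite at every densely finite functional. First I would use that $f\in L(F(\Cu(A)))$ to choose, exactly as in the proof of \autoref{surjectiveres}, a $\lhd$-increasing sequence $(f_n)_n$ in $L(F(\Cu(A)))$ with $f=\sup_n f_n$. Since $g\ll f$ and $f=\sup_n f_n$, there is $n_0$ with $g\le f_{n_0}$. I set $h:=f_{n_0}$, so that automatically $h\in L(F(\Cu(A)))$ and $g\le h\le f$. By the definition of the relation $\lhd$, the function $h=f_{n_0}$ is continuous at every $\lambda\in F(\Cu(A))$ for which $f_{n_0+1}(\lambda)<\infty$. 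Hence the whole lemma reduces to showing that $f_{n_0+1}(\lambda)<\infty$ for every densely finite functional $\lambda$, since then $F_0(\Cu(A))\subseteq\{f_{n_0+1}<\infty\}$ and continuity of $h$ on $F_0(\Cu(A))$ follows.

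This finiteness statement is the crux, and it is where I expect the real work to lie. By \cite[Proposition~5.1]{EllRobSan11Cone} we have $f_{n_0+1}\ll f_{n_0+2}$ in $L(F(\Cu(A)))$. Using $L(F(\Cu(A)))=\Cu(A)_R$ (\autoref{lma:basic_cones}(i)), I would write $f_{n_0+2}$ as the supremum of an increasing sequence of functions of the form $\tfrac1l\widehat{y}$; refining each $y$ by \axiomO{2} into a supremum of a $\ll$-increasing sequence and diagonalizing, I may arrange that these generators have the form $\tfrac1l\widehat{z}$ with $z$ compactly contained, that is, $z\ll\tilde z$ for some $\tilde z$. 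As $f_{n_0+1}\ll f_{n_0+2}$, it then lies below one such generator, say $f_{n_0+1}\le\tfrac1l\widehat{z}$ with $z\ll\tilde z$. Now if $\lambda\in F_0(\Cu(A))$ is densely finite, then $\lambda(z)<\infty$ by the very definition of dense finiteness recalled in \autoref{pgr:functionals}, whence $f_{n_0+1}(\lambda)\le\tfrac1l\lambda(z)<\infty$, as required.

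The point worth stressing is that functions in $L(F(\Cu(A)))$ need not be continuous even on the locus where they are finite (already $\widehat{x}$ is in general only lower semicontinuous), so merely sandwiching $g$ and $f$ by some element of $L(F(\Cu(A)))$ does not by itself produce continuity on $F_0(\Cu(A))$. What rescues the argument is the asymmetry built into $\lhd$: the continuity of $f_{n_0}$ is governed by the \emph{finiteness locus of the larger term} $f_{n_0+1}$, and compactly contained elements of $L(F(\Cu(A)))$ are finite on all of $F_0(\Cu(A))$. Consequently the main technical obstacle is precisely the last step, namely the reduction of $f_{n_0+1}$ below a generator $\tfrac1l\widehat{z}$ with $z$ compactly contained, so that dense finiteness of $\lambda$ becomes applicable; this is where care with \axiomO{2} and with the description of $\Cu(A)_R$ via the generators $\tfrac1n\widehat{x}$ is needed.
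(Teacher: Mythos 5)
Your proposal is correct and follows essentially the same route as the paper: the paper also takes $h$ from a $\lhd$-chain below $f$ and deduces continuity of $h$ on $F_0(\Cu(A))$ from finiteness of the next term, which it establishes by squeezing that term under a generator $\tfrac{1}{k}\widehat{y'}$ with $y'\ll y$. The only soft spot is your ``refine and diagonalize'' step, which is cleaner if you instead interpose one more term of the $\lhd$-sequence (so $f_{n_0+1}\ll f_{n_0+2}\leq\tfrac{1}{l}\widehat{y}$ for a generator of $f_{n_0+3}$, and then $f_{n_0+1}\leq\tfrac{1}{l}\widehat{y'}$ for some $y'\ll y$), exactly as in the paper's argument.
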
	
\begin{proof}
Since $f$ is the supremum of a $\lhd$-increasing sequence in $L(F(\Cu(A)))$ (see \autoref{dualofcone}), we
can choose $h,h'\in L(F(\Cu(A)))$ such that $g\leq h\lhd h'\lhd f$. 
Since $h'\lhd f$ implies $h'\ll f$, and since $L(F(\Cu(A)))$ agrees with $\Cu(A)_R$ (by \autoref{lma:basic_cones}~(i)), there exist $y\in \Cu(A)$ and $k\in\NN$ such that $h'\ll \tfrac{\widehat{y}}{k}\leq f$. 
Now choose $y'\ll y$ such that $h'\ll \tfrac{\widehat{y'}}{k}$. 
Then $\widehat{y'}$ is finite on densely finite functionals. 
Hence, $h'$ is finite on densely finite functionals, from which we get that $h$ is continuous on every densely finite functional, as desired.  
\end{proof}	

\begin{thm}
\label{mainrealizing}
Let $A$ be a separable C*-algebra of stable rank one that has no nonzero type~I quotients, and let $f\in L(F(\Cu(A)))$ be a full function.
Then $f=\widehat{z}$ for $z=\alpha(f)$, where $\alpha$ is the map from \autoref{dfn:mapAlpha}.
\end{thm}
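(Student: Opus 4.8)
The plan is to prove the nontrivial inequality $f\le\widehat{z}$, since $\widehat{z}\le f$ is immediate from \autoref{prp:mapAlpha}~(i). First I would record that $z=\alpha(f)$ is full: because $f$ is full, the set $\{x\in\Cu(A):\widehat{x}\ll f\}$, whose supremum is $z$ by \autoref{prp:mapAlpha}~(ii), contains full elements (as in \autoref{lambdafullrange}~(i) and the fullness argument in \autoref{lma:chisel_realization}). With $z$ full, $\widehat{z}$ is full by \autoref{lma:basic_cones}~(iii), and for every functional $\mu$ that is \emph{not} densely finite one checks, exactly as at the end of \autoref{lma:chisel_realization}, that $f(\mu)=\widehat{z}(\mu)=\infty$: choosing $x_0\ll\tilde{x}_0$ with $\mu(x_0)=\infty$, fullness of $f$ and of $\widehat{z}$ forces $\widehat{x_0}\le nf$ and $\widehat{x_0}\le n\widehat{z}$ for some $n$, whence both values are infinite. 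This reduces the problem to verifying $f(\mu)\le\mu(z)$ for densely finite $\mu\in F_0(\Cu(A))$.

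Next I would reduce to \emph{continuous} functions. Since $L(F(\Cu(A)))$ is a $\CatCu$-semigroup (\autoref{lma:basic_cones}~(i)), axiom \axiomO{2} gives $f=\sup\{g:g\ll f\}$, so it suffices to show $g\le\widehat{z}$ for each $g\ll f$. Given such $g$, \autoref{oldclaim2} produces $h\in L(F(\Cu(A)))$ with $g\le h\le f$ and $h$ continuous on $F_0(\Cu(A))$. Because $\alpha$ is order preserving, $\alpha(h)\le\alpha(f)=z$, hence $\widehat{\alpha(h)}\le\widehat{z}$; so it is enough to prove that every such continuous $h$ is \emph{realized}, i.e.\ $h=\widehat{\alpha(h)}$ (the inequality $\widehat{\alpha(h)}\le h$ being automatic from \autoref{prp:mapAlpha}~(i)). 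Then $g\le h=\widehat{\alpha(h)}\le\widehat{z}$, and passing to the supremum over $g\ll f$ yields $f\le\widehat{z}$.

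It thus remains to realize a continuous $h$. By \autoref{prp:mapAlpha}~(ii) we have $\mu(\alpha(h))=\sup\{\mu(x):\widehat{x}\ll h\}$, so I must show that for each densely finite $\mu$ and each $t<h(\mu)$ there is $x\in\Cu(A)$ with $\widehat{x}\ll h$ and $\mu(x)>t$. Here the chisel realization \autoref{lma:chisel_realization} supplies the building blocks: for an extreme densely finite $\lambda$ and $s\in(0,\infty)$, the scaled chisel $s\sigma_\lambda$ is again the chisel of the extreme densely finite functional $\tfrac{1}{s}\lambda$, hence $s\sigma_\lambda=\widehat{\alpha(s\sigma_\lambda)}$. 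Because $\alpha$ preserves infima (\autoref{prp:mapAlpha}~(iii)) and $x\mapsto\widehat{x}$ preserves infima (\autoref{hat-inf-preserving}), any finite meet of such chisel elements, capped against a continuous majorant of $h$, is again realized; and using that $\wedge$ in $L(F(\Cu(A)))$ is computed pointwise at an extreme densely finite $\lambda$, namely $(f_1\wedge f_2)(\lambda)=\min\{f_1(\lambda),f_2(\lambda)\}$, one can control the value of the resulting function along each extreme ray. The separation \autoref{lma:lambda_mu} is used to ensure these chisel pieces do not force the rank above $h$ at functionals off the relevant ray, while \autoref{lambdafullrange} (using the absence of type~I quotients) guarantees that the prescribed values are actually attainable because each such $\lambda$ realizes the full range $[0,\infty]$. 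Finally, the passage from the values of $h$ at extreme functionals to its value at a general densely finite $\mu$ is carried out through a barycentric (Choquet) representation of $\mu$ over the extreme boundary, available since the functional cone of a stable rank one algebra is lattice ordered (the source of the Riesz/simplex structure used in \autoref{thm:BlacHanconjecture}), with separability allowing one to work with an increasing \emph{sequence} of continuous affine minorants.

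The main obstacle is exactly this last step—realizing a continuous affine function as a single rank. Chisels pin down values only along individual extreme rays (their infima being $+\infty$ off those rays), so one cannot reconstruct $h$ at non-extreme functionals from chisels alone; the genuine difficulty is to assemble the chisel data, the separation lemma, and the simplex structure of $F(\Cu(A))$ into one element of $\Cu(A)$ whose rank is globally dominated by $h$ yet arbitrarily close to $h(\mu)$ at a prescribed densely finite $\mu$. Everything else—the two reductions and the verification on non-densely-finite functionals—is formal once the properties of $\alpha$ and the chisel lemma are in hand.
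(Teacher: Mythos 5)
There is a genuine gap, and you in fact flag it yourself: the entire content of the theorem lies in the ``assembly'' step that your proposal leaves open. Everything before that point is either formal or slightly off-target. In particular, your reduction to exactly realizing a continuous $h$ with $g\leq h\leq f$ (i.e.\ proving $h=\widehat{\alpha(h)}$) asks for more than is needed and is not obviously easier than the original statement: such an $h$ need not be full, so exact realization of $h$ is essentially the content of the later \autoref{thm:realizingprob1}, which is deduced \emph{from} \autoref{mainrealizing}. The paper never realizes any intermediate function exactly; it only produces a single $x\in\Cu(A)$ sandwiched as $g\leq\widehat{x}\leq(1-\varepsilon)f$, which already forces $x\leq\alpha(f)=z$ by \autoref{prp:mapAlpha}~(ii) and hence $g\leq\widehat{z}$. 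Likewise, your proposed passage from extreme functionals to a general densely finite $\mu$ via a barycentric (Choquet) representation is not available in this generality and is not what is used; the correct tool is the cap $K=\{\lambda: f(\lambda)\leq 1\}$ of $F_0(\Cu(A))$ together with Bauer's maximum principle applied to an upper semicontinuous affine function on $K$.

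Concretely, the missing mechanism is this: one fixes $g\ll g_0\ll f$ with $g_0\ll(1-3\varepsilon)f$, interpolates a chain $g\ll\cdots\ll g_1\ll g_0$, and forms the family $D$ of continuous affine functions $h$ on $K$ admitting some $x\in\Cu(A)$ and $n$ with $g_n\leq\widehat{x}$ and $(1+\tfrac1n)\widehat{x}|_K\leq h$. Downward directedness of $D$ is where the inf-semilattice structure enters (via $x_1\wedge x_2$, \autoref{hat-inf-preserving}, and \autoref{oldclaim2}); the chisel realization \autoref{lma:chisel_realization} is used only to bound the pointwise infimum $h_0$ of $D$ by $(1-2\varepsilon)f$ \emph{at the extreme points} of $K$, and Bauer's maximum principle extends this bound to all of $K$ because $h_0-(1-2\varepsilon)f|_K$ is upper semicontinuous and affine. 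Finally, compactness of $K$ and directedness of $D$ yield one $h\in D$ with $K=U_h\cup V_h$, and the associated $x$ satisfies $g\leq\widehat{x}\leq(1-\varepsilon)f$ after rescaling arbitrary functionals into $K$. Without this construction the chisel data at individual extreme rays cannot be glued into a single element of $\Cu(A)$, which is precisely the difficulty your proposal acknowledges but does not overcome.
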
	
\begin{proof}
By \autoref{prp:mapAlpha} (i), we have $\widehat{z}\leq f$. 
Our goal is then to prove that $f\leq\widehat{z}$. 
Let $g\in L(F(\Cu(A)))$ satisfy $g\ll f$.
Since $f$ is the supremum of all such $g$, it suffices to show that $g\leq\widehat{z}$.

Choose $g_0$ such that $g\ll g_0\ll f$, and then choose $\varepsilon>0$ such that $g_0\ll (1-3\varepsilon)f$.
Below we will find $x\in\Cu(A)$ such that $g\leq \widehat{x} \leq (1-\varepsilon)f$.
By \autoref{prp:mapAlpha} (ii), such $x$ satisfies $x\leq \alpha(f)=z$, which then shows that $g\leq \widehat{z}$.

To find $x\in\Cu(A)$ such that $g\leq \widehat{x} \leq (1-\varepsilon)f$, we set 
\[
K = \big\{ \lambda \in F(\Cu(A)) : f(\lambda)\leq 1 \big\}.
\]

\emph{Claim~1: We have $K\subseteq F_0(\Cu(A))$.}
To prove the claim, let $\lambda\in K$, and let $y',y\in \Cu(A)$ be such that $y'\ll y$. 
Then $\widehat{y'}\ll 2\widehat{y}$ in $L(F(\Cu(A)))$ by \autoref{lma:basic_cones} (ii). Since $2\widehat{y}\leq \infty f$ (by the fullness of $f$), $\widehat{y'}\propto f$.
It follows that $\lambda(y')<\infty$. 
This holds for all $y',y$ with $y'\ll y$, thus showing that $\lambda$ is densely finite, completing the proof of the claim.

It now follows from \cite[Proposition~13.2]{Phe01LNMChoquet} that $K$ is a cap of $F_0(\Cu(A))$, that is, a compact convex set whose complement is also convex. Since $K$ is a cap of $F_0(\Cu(A))$, every extreme point of $K$ is also an extreme functional in $F_0(\Cu(A))$, by \cite[Proposition~13.1]{Phe01LNMChoquet}.

Using that $g\ll g_0$, choose $g_1\in L(F(\Cu(A)))$ with $g\ll g_1\ll g_0$.
Then inductively choose $g_2,g_3,\ldots$ in $L(F(\Cu(A)))$ such that
\[
g \ll \ldots \ll g_3\ll g_2\ll g_1\ll g_0.
\]
Let $\Aff(K)$ denote the set of continuous, affine functions $K\to\RR$, and set
\[
D = \big\{ h\in\Aff(K) : g_n\leq\widehat{x} \text{ and } (1+\frac{1}{n})\widehat{x}|_{K} \leq h \text{ for some } x\in\Cu(A), n\geq 1 \big\}.
\]

\emph{Claim~2: $D$ is downward directed.}
To prove the claim, let $h_1,h_2\in D$.
Choose $x_1,x_2\in\Cu(A)$ and $n_1,n_2\in\NN$ such that
\[
g_{n_1}\leq\widehat{x_1},\quad 
(1+\frac{1}{n_1})\widehat{x_1}|_{K} \leq h_1,\quad
g_{n_2}\leq\widehat{x_2}, \andSep 
(1+\frac{1}{n_2})\widehat{x_2}|_{K} \leq h_2.
\]
Set $n=\max\{n_1,n_2\}$ and $x=x_1\wedge x_2\in\Cu(A)$.
Using \autoref{hat-inf-preserving}, we get
\[
g_n \leq \widehat{x_1}\wedge\widehat{x_2}=\widehat{x}, \andSep
(1+\frac{1}{n})\widehat{x}|_{K} \leq h_1,h_2.
\]
Using that $g_{n+1}\ll g_n$, choose $x'\in\Cu(A)$ such that $x'\ll x$ and $g_{n+1}\leq\widehat{x'}$.
Set $\beta=\tfrac{(n+2)n}{(n+1)^2}$.
Since $\beta<1$, we have $\beta\widehat{x'}\ll\widehat{x}$ by \autoref{lma:basic_cones} (ii).
Using \autoref{oldclaim2}, we obtain $h\in L(F(\Cu(A)))$ with $\beta\widehat{x'} \leq h \leq \widehat{x}$ and such that $h|_K\in\Aff(K)$.
Then $(1+\tfrac{1}{n})h|_{K}$ is a lower bound for $h_1$ and $h_2$ in $D$, since $g_{n+1}\leq\widehat{x'}$ and 
\[
(1+\frac{1}{n+1})\widehat{x'}
\leq (1+\frac{1}{n+1})\frac{1}{\beta}h
= (1+\frac{1}{n})h, \andSep
(1+\frac{1}{n})h|_K 
\leq (1+\frac{1}{n})\widehat{x}|_K 
\leq h_1,h_2,
\]
which proves the claim.

Define $h_0\colon K\to\RR$ as the pointwise infimum of the functions in $D$.
Then $h_0$ is upper semicontinuous and affine.

\emph{Claim~3: We have $g|_{K}\leq h_0 \leq (1-2\varepsilon)f|_{K}$.}
The first inequality follows since $g|_K\leq h$ for every $h\in D$.
Let $\partial_e K$ denote the set of extreme points of $K$.
We first show that $h_0|_{\partial_e K}\leq(1-2\varepsilon)f|_{\partial_e K}$.
Let $\lambda\in\partial_e K$.
If $\lambda=0$ (the zero functional), then $h_0(\lambda)=0=(1-2\varepsilon)f(\lambda)$.
We may thus assume that $\lambda\neq 0$.
Since $\lambda$ is densely finite and $f$ is full, we obtain $f(\lambda)>0$.
Set $x_\lambda =\alpha((1-3\varepsilon)f(\lambda)\sigma_\lambda)$. 
We know by \autoref{lma:chisel_realization} that $\widehat{x_\lambda}=(1-3\varepsilon)f(\lambda)\sigma_\lambda$.
Hence
\[
g_0\ll (1-3\varepsilon)f 
\leq (1-3\varepsilon)f(\lambda)\sigma_\lambda
=\widehat{x_\lambda}.
\]
Choose $x'_\lambda\in\Cu(A)$ such that $x'_\lambda\ll x_\lambda$ and $g_0\leq \widehat{x'_\lambda}$.
By \autoref{prp:mapAlpha} (ii) and the fact that $x_\lambda=\alpha(\widehat{x_\lambda})$, we have $\widehat{x'_\lambda} \ll \widehat{x_\lambda}$.
Using \autoref{oldclaim2}, we obtain $h_\lambda\in L(F(\Cu(A)))$ with $\widehat{x'_\lambda} \leq  h_\lambda \leq \widehat{x_\lambda}$ and such that $h_\lambda$ is continuous on $K$.
Let $n\geq 1$ such that $(1+\tfrac{1}{n})(1-3\varepsilon)\leq(1-2\varepsilon)$.
Then 
\[
g_n\leq g_0\leq\widehat{x'_\lambda}, \andSep
(1+\frac{1}{n})\widehat{x'_\lambda} \leq (1+\frac{1}{n})h_\lambda,
\]
which shows that $(1+\tfrac{1}{n})h_\lambda|_K$ belongs to $D$.
It follows that
\[
h_0(\lambda)\leq (1+\frac{1}{n})h_\lambda(\lambda)
\leq (1+\frac{1}{n})\widehat{x_\lambda}(\lambda)
= (1+\frac{1}{n})(1-3\varepsilon)f(\lambda)
\leq (1-2\varepsilon)f(\lambda).
\]

We have shown that $h_0(\lambda)\leq(1-2\varepsilon)f(\lambda)$ for every $\lambda\in\partial_e K$.
To extend this inequality to all of $K$, note that $h_0$ takes values in $[0,\infty)$, which allows us to consider $d=h_0-(1-2\varepsilon)f|_K$.
Then $d\colon K\to[-\infty,\infty)$ is upper semicontinuous and affine with $d(\lambda)\leq 0$ for every $\lambda\in\partial_e K$.
By Bauer's maximum principle (\cite[Proposition~16.6]{Phe01LNMChoquet}) we get $d\leq 0$ and thus $h_0\leq(1-2\varepsilon)f|_K$, which proves the claim.

For each $h\in D$, define
\[
U_h = \big\{ \lambda\in K : h(\lambda)<(1-\varepsilon)f(\lambda) \big\} \andSep
V_h = \big\{ \lambda\in K : h(\lambda)<1-\varepsilon \big\},
\]
which are open subsets of $K$ as $h$ is continuous and $f|_{K}-h$ is lower semicontinuous.
Using the inequality $h_0\leq (1-2\varepsilon)f|_K$ we see that
\[
K = \bigcup_{h\in D}(U_h\cup V_h).
\]
By compactness of $K$, there is a finite subset $F\subseteq D$ such that $K=\bigcup_{h\in F}(U_h\cup V_h)$.
Since $D$ is downward directed, we can choose $h\in D$ that is a lower bound for $F$.
Then $K=U_h\cup V_h$.
By definition of $D$, we obtain $x\in\Cu(A)$ such that $g\leq \widehat{x}$ and $\widehat{x}|_{K} \leq h$.
The verification of the following claim finishes the proof.

\emph{Claim~4: We have $\widehat x \leq (1-\varepsilon)f$.}
Let $\lambda\in F(\Cu(A))$. If $f(\lambda)=\infty$ the claim holds trivially, so assume that $f(\lambda)<\infty$.
If $f(\lambda)=0$, then as argued in \autoref{probrealizingfull} the fullness of $f$ implies that $\lambda=0$,
and again the claim holds trivially. Suppose that $0<f(\lambda)<\infty$ and let $t=\tfrac{1}{f(\lambda)}$.
Clearly, $t\lambda$ is an element of $K$.
If $t\lambda\in U_h$, then $\widehat{x}(t\lambda)\leq h(t\lambda)<(1-\varepsilon)f(t\lambda)$, and hence $\widehat{x}(\lambda)\leq (1-\varepsilon)f(\lambda)$.
If, on the other hand, $t\lambda\in V_h$, then 
\[
\widehat{x}(t\lambda)\leq h(t\lambda)<1-\varepsilon=(1-\varepsilon)f(t\lambda),
\] 
and again we obtain that $\widehat{x}(\lambda)\leq (1-\varepsilon)f(\lambda)$.
\end{proof}

\begin{rmk}
We use this opportunity to amend the proof of \cite[Lemma~8.1]{Thi17arX:RksOps}.
Given a compact, convex set $K$, it was claimed there that the infimum of finitely many continuous, affine functions on $K$ is again continuous, but in general it is only lower semicontinuous.
The argument in \cite[Lemma~8.1]{Thi17arX:RksOps} can be fixed along the lines of the proof of \autoref{mainrealizing}, by considering a downward directed family of continuous, affine functions analogous to the set $D$ defined there.
\end{rmk}

\begin{pgr}
\label{pgr:observation}
Let $A$ be a C*-algebra and let $I\subseteq A$ be a closed, two-sided ideal of $A$.
Recall that we may regard $\Cu(I)$ as an ideal of $\Cu(A)$ (see \autoref{pgr:ideal}). 
Given $\lambda\in F(\Cu(I))$, define $\tilde{\lambda}\in F(\Cu(A))$ as in \autoref{prp:extFctl}, that is,
\[
\tilde\lambda(x)=\begin{cases}
\lambda(x), &\hbox{if } x\in \Cu(I)\\
\infty, &\hbox{otherwise.}
\end{cases}
\]
The assignment $\lambda\mapsto\tilde{\lambda}$ defines an order-embedding $F(\Cu(I))\to F(\Cu(A))$ which is a right inverse to the restriction map $\lambda\mapsto \lambda|_{\Cu(I)}$ from $F(\Cu(A))$ to $F(\Cu(I))$.
Thus, the restriction map is surjective. It follows that
given $x,y\in\Cu(I)$, we have  $\widehat{x}\leq\widehat{y}$ in $L(F(\Cu(I)))$ if and only if $\widehat{x}\leq\widehat{y}$ in $L(F(\Cu(A)))$.

Consider now the natural map $\iota\colon L(F(\Cu(I)))\to \mathrm{Lsc}(F(\Cu(A)))$ such that $\iota(f)(\lambda)=f(\lambda|_{\Cu(I)})$ for $f\in L(F(\Cu(I)))$ and $\lambda\in F(\Cu(A))$.
Using the description of $L(F(\Cu(I)))$ and $L(F(\Cu(A)))$ as suprema of increasing sequences with elements of the form $\tfrac{\widehat{x}}{n}$ (see \autoref{lma:basic_cones} (i)) we obtain that $\iota$  in fact ranges in $L(F(\Cu(A)))$.
\end{pgr}

\begin{thm}
\label{thm:realizingprob1}
Let $A$ be a separable C*-algebra of stable rank one that has no nonzero, elementary ideal-quotients, and let $f\in L(F(\Cu(A)))$.
Then $f=\widehat{z}$ for $z =\alpha(f)$, where $\alpha$ is the map from \autoref{dfn:mapAlpha}.
\end{thm}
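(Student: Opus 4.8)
The plan is to reduce to the full case already settled in \autoref{mainrealizing} by localizing $f$ to the ideal of $A$ that supports it. Throughout, write $z=\alpha(f)$, so that $\widehat{z}\le f$ by \autoref{prp:mapAlpha}(i); the goal is the reverse inequality $f\le\widehat{z}$, and we may assume $f\neq 0$. The decisive step, and the only point where the absence of elementary ideal-quotients is used, is the following claim: $f\le\infty\widehat{z}$, equivalently $f(\lambda)=0$ whenever $\lambda(z)=0$.

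To prove the claim I would argue by contradiction, assuming $\lambda(z)=0$ but $f(\lambda)>0$ for some $\lambda\in F(\Cu(A))$. Using $L(F(\Cu(A)))=\Cu(A)_R$ (\autoref{lma:basic_cones}(i)), write $f$ as the supremum of a $\lhd$-increasing sequence of terms $\tfrac{\widehat{x_m}}{k_m}$ with $x_m\in\Cu(A)$ and $k_m\in\NN$; each satisfies $\tfrac{\widehat{x_m}}{k_m}\ll f$, hence $\tfrac{\widehat{x_m}}{k_m}\le(1-\varepsilon)f$ for some $\varepsilon>0$. Since $f(\lambda)>0$, pick $m$ with $\lambda(x_m)>0$ and set $x=x_m$, $k=k_m$. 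As in the proof of \autoref{smallsoft}, the hereditary subalgebra generated by a representative of $x$ has no elementary quotients (such a quotient would yield an elementary ideal-quotient of $A$), so $x$ is weakly $(k,\omega)$-divisible by \cite[Theorem~5.3]{RobRor13Divisibility}, and therefore $(k,\omega)$-divisible by \autoref{CuGG}. Choosing $x'\ll x$ with $\lambda(x')>0$ (possible since $\lambda$ preserves suprema of $\ll$-increasing sequences), divisibility yields $y\in\Cu(A)$ and $n\in\NN$ with $ky\le x$ and $x'\le ny$. Then $k\widehat{y}\le\widehat{x}\le k(1-\varepsilon)f$ gives $\widehat{y}\le(1-\varepsilon)f$, whence $y\le z$ by \autoref{prp:mapAlpha}(ii); but $\lambda(y)\ge\tfrac{1}{n}\lambda(x')>0$ contradicts $\lambda(y)\le\lambda(z)=0$. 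This proves the claim.

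With the claim available I would carry out the ideal reduction. Let $I\subseteq A$ be the closed two-sided ideal with $\Cu(I)$ the ideal of $\Cu(A)$ generated by $z$ (\autoref{pgr:ideal}); then $z$ is full in $\Cu(I)$, and the claim together with $\widehat{z}\le f$ gives $\infty\widehat{z}=\infty f$ and $\Cu(I)=\{x\in\Cu(A):\widehat{x}\le\infty f\}$. The algebra $I$ is separable and of stable rank one, and has no type~I quotients, since a nonzero type~I quotient of $I$ would contain an elementary ideal-quotient, which would be an elementary ideal-quotient of $A$. Each $\tfrac{\widehat{x_m}}{k_m}\le f\le\infty\widehat{z}$ forces $\widehat{x_m}\le\infty\widehat{z}$, so $x_m\le\infty z$ by \autoref{lma:basic_cones}(iii), i.e.\ $x_m\in\Cu(I)$. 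Computing ranks in $\Cu(I)$ and using \autoref{pgr:observation} (which matches the order on ranks of $\Cu(I)$-elements across the two cones), the function $\bar f:=\sup_m\tfrac{1}{k_m}\widehat{x_m}\in L(F(\Cu(I)))$ is well defined and satisfies $\iota(\bar f)=f$ for the map $\iota$ of \autoref{pgr:observation}. Moreover $\bar f$ is full: if $\nu\in F(\Cu(I))$ is nonzero then $\nu(z)>0$ by fullness of $z$, and $\bar f(\nu)\ge\widehat{z}(\nu)=\nu(z)>0$. Applying \autoref{mainrealizing} to $I$ and $\bar f$ produces $\bar z=\alpha(\bar f)\in\Cu(I)$ (the map of \autoref{dfn:mapAlpha} for $I$) with $\widehat{\bar z}=\bar f$ in $L(F(\Cu(I)))$; applying $\iota$ and using $\iota(\widehat{w})=\widehat{w}$ for $w\in\Cu(I)$ gives $f=\iota(\bar f)=\widehat{\bar z}$ in $L(F(\Cu(A)))$. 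Finally, to identify $\bar z$ with $z=\alpha(f)$, note that by \autoref{prp:mapAlpha}(ii) in $\Cu(I)$ the element $\bar z$ is the supremum of those $w\in\Cu(I)$ with $\widehat{w}\le(1-\varepsilon)\bar f$ for some $\varepsilon>0$; applying the order-preserving $\iota$ turns each inequality into $\widehat{w}\le(1-\varepsilon)f$, so $w\le\alpha(f)=z$ by \autoref{prp:mapAlpha}(ii). Hence $\bar z\le z$, and $f=\widehat{\bar z}\le\widehat{z}\le f$, giving $f=\widehat{z}=\widehat{\alpha(f)}$.

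I expect the divisibility claim to be the main obstacle, as it is the sole step that genuinely exploits the absence of elementary ideal-quotients (through \autoref{CuGG} and the weak divisibility of \cite{RobRor13Divisibility}) and is precisely what confines the support of $f$ to the ideal generated by $\alpha(f)$. Once this localization is in place, the remainder is the formal transfer of \autoref{mainrealizing} along the ideal $I$, where the only mild care needed is to move the scalar inequalities $\le(1-\varepsilon)f$ back and forth through $\iota$ rather than trying to transport the relation $\ll$ directly.
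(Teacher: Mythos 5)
Your proof is correct, and its skeleton --- localize $f$ to an ideal $I$ on which it becomes full, apply \autoref{mainrealizing} there, and transport the answer back --- is the same as the paper's. The genuine difference is how the ideal is identified. The paper defines $I$ directly by $\Cu(I)=\{x\in\Cu(A):\widehat{x}\leq\infty f\}$; with that definition the fullness of the restricted function in $L(F(\Cu(I)))$ is essentially formal (every $g\in L(F(\Cu(I)))$ is a supremum of terms $\tfrac{1}{l_n}\widehat{y_n}$ with $y_n\in\Cu(I)$, hence $g\leq\infty f_0$), and the hypothesis on elementary ideal-quotients enters only to guarantee that $I$ has no nonzero type~I quotients so that \autoref{mainrealizing} applies. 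You instead take $I$ to be the ideal generated by $z=\alpha(f)$ and must then prove the nontrivial fact that $f\leq\infty\widehat{z}$, equivalently $\infty\widehat{\alpha(f)}=\infty f$, which you do correctly via weak $(k,\omega)$-divisibility from \cite{RobRor13Divisibility} upgraded through \autoref{CuGG}. This is a valid argument, and the identity $\infty\widehat{\alpha(f)}=\infty f$ is of some independent interest, but it is an avoidable detour: defining $I$ from $f$ rather than from $\alpha(f)$ makes the divisibility machinery unnecessary, and the paper's proof never invokes it. One small presentational slip: the absence of elementary ideal-quotients is not used \emph{only} in your divisibility claim --- you also need it, as you in fact note later, to rule out type~I quotients of $I$ before applying \autoref{mainrealizing}. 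The remaining steps (well-definedness of $\bar f$ via \autoref{pgr:observation}, its fullness, and the closing identification $f=\widehat{\bar z}\leq\widehat{z}\leq f$) all check out and mirror the paper's Claims~1 and~2.
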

\begin{proof}
The set $\{x\in\Cu(A) : \widehat{x}\leq\infty f\}$ is an ideal of $\Cu(A)$.
Using the bijection between closed, two-sided ideals of $A$ and ideals of $\Cu(A)$ (\autoref{pgr:ideal}), we let $I\subseteq A$ be the closed, two-sided ideal of $A$ such that $\Cu(I)=\{x\in\Cu(A) : \widehat{x}\leq\infty f\}$.
(Here we identify $\Cu(I)$ with its image in $\Cu(A)$ induced by the inclusion of $I$ in $A$.)
Note that $I$ is a separable C*-algebra of stable rank one that has no nonzero, elementary ideal-quotients, and in particular no nonzero type~I quotients.

Since $L(F(\Cu(A)))=\Cu(A)_R$ by \autoref{lma:basic_cones} (i), we can choose a sequence $(x_n)_n$ in $\Cu(A)$ and a sequence $(k_n)_n$ of positive integers such that $(\tfrac{\widehat{x_n}}{k_n})_n$ is increasing with supremum $f$ in $L(F(\Cu(A)))$.
For each $n$, we have $\widehat{x_n}\leq \infty f$ and thus $x_n\in\Cu(I)$.
As noted in \autoref{pgr:observation}, it follows that $(\tfrac{\widehat{x_n}}{k_n})_n$ is an increasing sequence in $L(F(\Cu(I)))$, and we let $f_0$ denote its supremum in $L(F(\Cu(I)))$.
Given $\lambda\in F(\Cu(A))$, we have $f(\lambda)=f_0(\lambda|_{\Cu(I)})$.

\emph{Claim~1: $f_0$ is full in $L(F(\Cu(I)))$.}
To prove the claim, let $g\in L(F(\Cu(I)))$.
Choose a sequence $(y_n)_n$ in $\Cu(I)$ and a sequence $(l_n)_n$ of positive integers such that $(\tfrac{\widehat{y_n}}{l_n})_n$ is increasing in $L(F(\Cu(I)))$ with supremum $g$.
For each $n$, since $y_n$ belongs to $\Cu(I)$, we have $\widehat{y_n}\leq\infty f$ in $L(F(\Cu(A)))$, and it follows that $\widehat{y_n}\leq\infty f_0$ in $L(F(\Cu(I)))$.
Thus, $g=\sup_n \tfrac{\widehat{y_n}}{l_n} \leq \infty f_0$, which proves the claim.

Let $\alpha_I\colon L(F(\Cu(I)))\to\Cu(I)$ be the map from \autoref{dfn:mapAlpha} for $I$.
Set $z_0 =\alpha_I(f_0)$. 
By \autoref{mainrealizing}, we have $f_0=\widehat{z_0}$ in $L(F(\Cu(I)))$.

\emph{Claim~2: We have $z=z_0$.}
Set
\[
L= \big\{ x\in\Cu(A) : \widehat{x}\leq (1-\varepsilon)f\text{ in } L(F(\Cu(A))) \hbox{ for some }\varepsilon>0 \big\}.
\]
By \autoref{prp:mapAlpha}~(ii), $z$ is the supremum of $L$ in $\Cu(A)$.
If $x\in\Cu(A)$ and $\varepsilon>0$ satisfy $\widehat{x}\leq(1-\varepsilon)f$ in $L(F(\Cu(A)))$, then $x$ belongs to $\Cu(I)$ and we have $\widehat{x}\leq(1-\varepsilon)f_0$ in $L(F(\Cu(I)))$.
It follows that $L\subseteq\Cu(I)$ and that $z_0$ is the supremum of $L$ in $\Cu(I)$.
Since $\Cu(I)\subseteq\Cu(A)$ is downward hereditary, the supremum of $L$ in $\Cu(I)$ and in $\Cu(A)$ agree, and thus $z=z_0$, which proves the claim.

Given $\lambda\in F(\Cu(A))$, we deduce that
\[
\widehat{z}(\lambda)
= \lambda(z)
= \lambda|_{\Cu(I)}(z_0)
= f_0(\lambda|_{\Cu(I)})
= f(\lambda),
\]
and thus $\widehat{z}=f$ in $L(F(\Cu(A)))$.
\end{proof}

\begin{thm}
\label{thm:realizingprob2}
Let $A$ be a separable, unital C*-algebra of stable rank one that has no nonzero, finite dimensional quotients.
Set $u=[1_A]$.
Let $F_u(\Cu(A))$ denote the set of functionals $\lambda\in F(\Cu(A))$ normalized at $u$.
Then for each $f\in\LAff(F_u(\Cu(A)))_{++}^\sigma$ there exists $z\in \Cu(A)$ such that $\widehat{z}|_{F_u(\Cu(A))}=f$.

Further, if for some $n\in \NN$ we have that $f(\lambda)\leq n$ for all  $\lambda\in F_u(\Cu(A))$, then $z$ may be chosen such that $z\leq nu$, and there is $a\in (A\otimes M_n)_+$ such that $f(\lambda)=\lambda([a])$ for every $\lambda\in F_u(\Cu(A))$.
\end{thm}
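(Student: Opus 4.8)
The plan is to realize $f$ by first extending it to a full function on the whole cone $F(\Cu(A))$ and then appealing to the realization theorem \autoref{mainrealizing}. First I would observe that, since $A$ is unital, $u=[1_A]$ is a full, compact element of $\Cu(A)$: it is full because $1_A$ generates $A$ as an ideal (see \autoref{pgr:full}), and compact because $1_A$ is a projection. As $\Cu(A)$ satisfies \axiomO{5}, \autoref{surjectiveres} applies, so the restriction map is a bijection from the full functions in $L(F(\Cu(A)))$ onto $\LAff(F_u(\Cu(A)))_{++}^\sigma$. Hence there is a unique full $g\in L(F(\Cu(A)))$ with $g|_{F_u(\Cu(A))}=f$, and it suffices to produce $z\in\Cu(A)$ with $\widehat{z}=g$.

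To invoke \autoref{mainrealizing} I must reconcile the hypotheses, since that theorem is stated for algebras with no nonzero type~I quotients whereas here we only assume no nonzero finite dimensional quotients. For a unital algebra these conditions coincide: a finite dimensional quotient is type~I, and conversely, if $A$ had a nonzero type~I quotient $C=A/I$, then $C$ is unital and so has a maximal proper ideal $M$; the quotient $C/M$ is simple, unital, and type~I, hence isomorphic to some $M_k(\CC)$, yielding a nonzero finite dimensional quotient of $A$. Thus $A$ has no nonzero type~I quotients, and \autoref{mainrealizing} gives $z=\alpha(g)$ with $\widehat{z}=g$, whence $\widehat{z}|_{F_u(\Cu(A))}=f$; this proves the first assertion (and $z$ is full by \autoref{lma:basic_cones}(iii)).

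For the quantitative statement, assume $f(\lambda)\le n$ for all $\lambda\in F_u(\Cu(A))$. The element $nu$ is full, so $\widehat{nu}$ is a full function with $\widehat{nu}|_{F_u(\Cu(A))}\equiv n\ge f$. Because a full function in $L(F(\Cu(A)))$ is positively homogeneous and equals $\infty$ wherever $\lambda(u)=\infty$, the comparison of two full functions reduces to comparing their restrictions to $F_u(\Cu(A))$; as $f\le n$ there, we conclude $g\le\widehat{nu}$, that is $\widehat{z}\le\widehat{nu}$ in $L(F(\Cu(A)))$. I would then set $z_1=z\wedge nu$, which exists by \autoref{Cstarinf-semilattice}. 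Using that $x\mapsto\widehat{x}$ preserves infima (\autoref{hat-inf-preserving}) together with $\widehat{z}\le\widehat{nu}$,
\[
\widehat{z_1}=\widehat{z\wedge nu}=\widehat{z}\wedge\widehat{nu}=\widehat{z},
\]
so $\widehat{z_1}|_{F_u(\Cu(A))}=f$ while now $z_1\le nu$; this $z_1$ is the desired element. Finally $nu=[1_{M_n(A)}]$ is the class of the projection $1_{M_n(A)}$ in $A\otimes M_n\subseteq A\otimes\mathcal{K}$, so $z_1\le[1_{M_n(A)}]$ forces $z_1=[a]$ for some $a\in(A\otimes M_n)_+$ (using stable rank one to move a representative into the corner $M_n(A)$), and $f(\lambda)=\widehat{z_1}(\lambda)=\lambda([a])$ for all $\lambda\in F_u(\Cu(A))$.

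The substantive difficulty is carried entirely by \autoref{mainrealizing}, so the main obstacle is already cleared before this argument starts; the only genuinely new ingredient is the elementary remark that, for unital algebras, absence of finite dimensional quotients upgrades to absence of type~I quotients. The single point that demands care is the bounded case: one is tempted to force $z=\alpha(g)$ itself below $nu$, but rank domination $\widehat{z}\le\widehat{nu}$ does \emph{not} imply $z\le nu$. The remedy of passing to the infimum $z\wedge nu$ — whose rank is unchanged precisely because $\widehat{z}\le\widehat{nu}$ — is exactly what the semilattice structure of \autoref{Cstarinf-semilattice} and the infimum preservation of \autoref{hat-inf-preserving} are designed to deliver.
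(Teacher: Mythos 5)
Your proposal is correct and follows essentially the same route as the paper's proof: the same reduction from ``no finite dimensional quotients'' to ``no type~I quotients'' via a maximal ideal, the same use of \autoref{surjectiveres} to extend $f$ to a full function and of \autoref{mainrealizing} to realize it, and the same trick of replacing $z$ by $z\wedge nu$ (justified by \autoref{hat-inf-preserving}) followed by the stable-rank-one argument to move the representative into $A\otimes M_n$. No gaps.
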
	
\begin{proof}
Let $I$ be a closed, two-sided ideal of $A$ such that $A/I$ has type~I.
Choose a maximal ideal $J$ containing $I$.
Then $A/J$ is simple, unital and has type~I, whence it is finite dimensional.
It follows that $A$ has no nonzero type~I quotients.

We can thus apply \autoref{mainrealizing} to realize full functions in $L(F(\Cu(A)))$.
Moreover, by \autoref{surjectiveres}, given a function $f\in\LAff(F_u(\Cu(A)))_{++}^\sigma$, there exists a full function $\tilde f\in L(F(\Cu(A)))$ whose restriction to $F_u(\Cu(A))$ is $f$.
Then $\tilde f=\widehat{z}$ for $z=\alpha(\tilde f)$, and so $\widehat{z}|_{F_u(\Cu(A))}=f$.

Let us address the last assertion of the theorem.
Suppose that $f(\lambda)\leq n$ for all  $\lambda\in F_u(\Cu(A))$. Then $\tilde f\leq n\widehat{u}$.
Choose $y\in\Cu(A)$ such that $\tilde f=\widehat{y}$. 
Set $z=y\wedge nu$. 
Then $z\leq nu$, and using \autoref{hat-inf-preserving}, we have $\widehat{z}=\tilde f\wedge n\widehat{u}=\tilde f$.
Thus $z$ is as desired.
Choose $b\in(A\otimes\mathcal{K})_+$ with $z=[b]$.
Let $1\otimes 1_n$ denote the unit in $A\otimes M_n$.
Then $[b]=z\leq nu=[1\otimes 1_n]$.
Since $A$ has stable rank one, there exists a positive element $a$ in the hereditary sub-\ca{} generated by $1\otimes 1_n$ (that is, $a\in(A\otimes M_n)_+$) with $[a]=[b]=z$;
see \cite[Theorem~4.29]{AraPerTom11Cu} or \cite[Paragraph~6.2]{OrtRorThi11CuOpenProj}.
\end{proof}

\section{Supersoft elements and comparability}
\label{SecSupersoft}

In this section we introduce the notion of supersoft elements in Cuntz semigroups of separable C*-algebra of stable rank one.
We use these elements to advance further the study of comparability properties in the Cuntz semigroups of these C*-algebras.

Recall from \autoref{dfn:soft} that an element $x$ in a \CuSgp{} is \emph{soft} if for every $x'\ll x$ there is $k\in\NN$ with $(k+1)x'\leq kx$.

\begin{dfn}
\label{dfn:supersoft}
Let $A$ be a separable C*-algebra of stable rank one, and let $z\in\Cu(A)$.
We call $z$ \emph{supersoft} if $\alpha(\widehat{z})=z$, where $\alpha$ is the map from \autoref{dfn:mapAlpha}.
Thus, $z$ is supersoft precisely when $z=\sup\{x\in\Cu(A)\colon\widehat{x}\leq (1-\varepsilon)\widehat{z}\text{ for some }\varepsilon>0\}$.
\end{dfn}

\begin{prp}
\label{basicsupersoft}
Let $A$ be a separable C*-algebra of stable rank one.  
\begin{enumerate}[\rm(i)]
	\item
	If $z\in \Cu(A)$ is supersoft, then $z$ is soft.
	\item
	If $x\in \Cu(A)$ is soft, $z\in \Cu(A)$ is supersoft, and $\widehat{x}\leq\widehat{z}$, then $x\leq z$.
	\item
	If $x$ is soft, then $x\leq \alpha(\widehat{x})$ and $\alpha(\widehat{x})$ is supersoft.
\end{enumerate}
\end{prp}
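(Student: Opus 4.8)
The plan is to extract from \autoref{prp:mapAlpha} the following \emph{lifting principle} and use it throughout: for $f\in L(F(\Cu(A)))$ and $x\in\Cu(A)$, if $\widehat{x}\le(1-\varepsilon)f$ for some $\varepsilon>0$ then $x\le\alpha(f)$; conversely $\alpha(f)=\sup\{x\in\Cu(A):\widehat{x}\le(1-\varepsilon)f\text{ for some }\varepsilon>0\}$, and $\widehat{\alpha(f)}\le f$. The proposition is driven by this principle together with the definition of softness, which at the level of ranks reads: if $(k+1)x'\le kx$ then $\widehat{x'}\le\tfrac{k}{k+1}\widehat{x}=(1-\tfrac{1}{k+1})\widehat{x}$. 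I would prove (iii) and (ii) first, because they only ever lift against the target $\alpha(\widehat{x})$ (respectively $\alpha(\widehat{z})=z$) and hence involve no multiples; part (i) is the delicate one and I treat it last.

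For (iii), given $x$ soft and $x'\ll x$, I choose $k$ with $(k+1)x'\le kx$, so $\widehat{x'}\le(1-\tfrac{1}{k+1})\widehat{x}$, and the lifting principle gives $x'\le\alpha(\widehat{x})$. Passing to the supremum over $x'\ll x$ via \axiomO{2} yields $x\le\alpha(\widehat{x})$. Applying $\widehat{\,\cdot\,}$ and combining with $\widehat{\alpha(\widehat{x})}\le\widehat{x}$ from \autoref{prp:mapAlpha}(i) gives $\widehat{\alpha(\widehat{x})}=\widehat{x}$, whence $\alpha(\widehat{\alpha(\widehat{x})})=\alpha(\widehat{x})$, which is precisely the assertion that $\alpha(\widehat{x})$ is supersoft. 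For (ii), with $x$ soft, $z$ supersoft, and $\widehat{x}\le\widehat{z}$, the same computation gives for each $x'\ll x$ that $\widehat{x'}\le(1-\tfrac{1}{k+1})\widehat{x}\le(1-\tfrac{1}{k+1})\widehat{z}$; the lifting principle applied with $f=\widehat{z}$ together with $\alpha(\widehat{z})=z$ then gives $x'\le z$, and taking the supremum yields $x\le z$.

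For (i), let $z$ be supersoft and $z'\ll z$. Writing $z=\alpha(\widehat{z})$ as the supremum of the upward directed set $G_{\widehat{z}}$ from the proof of \autoref{thm:upwardalpha}, and fixing a $\ll$-increasing sequence $z=\sup_n x_n$ with $\widehat{x_n}\le(1-\delta_n)\widehat{z}$, I obtain some $x$ with $z'\ll x$ and $\widehat{x}\ll\widehat{z}$, so $\widehat{z'}\le\widehat{x}\le(1-\varepsilon)\widehat{z}$ for some $\varepsilon>0$. Choosing $k$ with $(k+1)(1-\varepsilon)<k$ produces $\widehat{(k+1)z'}\le(1-\varepsilon')\widehat{kz}$ for some $\varepsilon'>0$. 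Softness demands that I upgrade this rank inequality to the honest inequality $(k+1)z'\le kz$. Here the lifting principle only delivers $(k+1)z'\le\alpha(\widehat{kz})$, and since $\alpha$ is merely superadditive (\autoref{prp:mapAlpha}(iv)) one has $kz\le\alpha(\widehat{kz})$ but not obviously equality. Thus the crux—and the step I expect to be the main obstacle—is to show that finite multiples of a supersoft element are again supersoft, i.e.\ $\alpha(k\widehat{z})=kz$. I would attempt this by induction on $k$, lifting $\widehat{w}\le(1-\varepsilon)k\widehat{z}$ to $w\le kz$ by intersecting $w$ with the approximants $kx_n$ and using the inf-semilattice distributivity (\autoref{general}, \autoref{multipleInf}), the identity $\widehat{a\wedge b}=\widehat{a}\wedge\widehat{b}$ (\autoref{hat-inf-preserving}), the commutation of $\wedge$ with suprema of increasing sequences, and weak cancellation. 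This restricted additivity of $\alpha$ is exactly the subtle ingredient left open by the general additivity question deferred in the paper, and securing it for multiples of a single supersoft element is where the real work lies.
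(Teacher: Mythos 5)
Your arguments for (ii) and (iii) are correct and essentially identical to the paper's: softness gives $\widehat{x'}\leq(1-\tfrac{1}{k+1})\widehat{x}$ for $x'\ll x$, and \autoref{prp:mapAlpha}(ii) lifts this to $x'\leq\alpha(\widehat{x})$ (resp.\ $x'\leq\alpha(\widehat{z})=z$); combined with $\widehat{\alpha(f)}\leq f$ this yields (iii) and (ii). The problem is part (i), where you have correctly located the difficulty but not resolved it. You explicitly defer the crux --- upgrading $\widehat{(k+1)z'}\leq(1-\varepsilon')\widehat{kz}$ to $(k+1)z'\leq kz$ --- to a claimed identity $\alpha(k\widehat{z})=kz$ that you only sketch. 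That sketch does not go through as described: intersecting $w$ with the approximants $kx_n$ and using that $\wedge$ commutes with suprema of increasing sequences gives $\sup_n(w\wedge kx_n)=w\wedge kz$, but concluding $w\leq kz$ from this requires $w=w\wedge kz$, which is the statement you are trying to prove; and \autoref{multipleInf} only identifies \emph{ranks} of infima, so it cannot convert a rank inequality into an order inequality either. Moreover, the additivity of $\alpha$ that would make your plan work is only established in the paper (\autoref{alphadditive}) under extra hypotheses on $A$ (no elementary ideal-quotients, or unital with no finite-dimensional quotients and fullness), none of which are available in \autoref{basicsupersoft}, which assumes only separability and stable rank one.

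The paper's proof of (i) avoids this entirely and is much shorter: from $z=\alpha(\widehat{z})$ and \autoref{prp:mapAlpha}(ii) one gets that $z'\ll z$ implies $\widehat{z'}\ll\widehat{z}$ (exactly the step you also carry out), and then one invokes the characterization of softness in \cite[Proposition~5.3.3]{AntPerThi18TensorProdCu}, which converts precisely this condition on ranks of the way-below predecessors into softness, with no comparison hypothesis and no additivity of $\alpha$ needed. So the missing ingredient in your argument is not a computation but a citation: without that external characterization (or a proof of it), part (i) remains open in your write-up, as you yourself acknowledge.
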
	
\begin{proof}
(i):
Let $z=\alpha(\widehat{z})$ be supersoft.
Let $z'\ll z$.
Then $\widehat{z'}\ll \widehat{z}$, by \autoref{prp:mapAlpha}~(ii).
This in turn implies that $z$ is soft (see \cite[Proposition~5.3.3]{AntPerThi18TensorProdCu}).

(ii):
Let $x'\in \Cu(A)$ satisfy $x'\ll x$.
Since $x$ is soft, $\widehat{x'}\ll \widehat{x}\leq \widehat{z}$ (see \autoref{lma:basic_cones} (ii)).
Thus, $\widehat{x'}\ll \widehat{z}$ for every $x'\ll x$, and hence $x\in I_{\widehat{z}}$ (see \autoref{dfn:mapAlpha}). This implies that $x\leq \alpha(\widehat{z})=z$.

(iii):
Let $x'\in \Cu(A)$ satisfy $x'\ll x$.
Since $x$ is soft, we have as in (ii) that $\widehat{x'}\ll \widehat{x}$ and consequently $x\leq\alpha(\widehat{x})$. 
Hence, $\widehat{x}\leq \widehat{\alpha(\widehat{x})}$.
On the other hand, by \autoref{prp:mapAlpha}~(i) we have that $\widehat{\alpha(f)}\leq f$ for any $f$. 
Thus, $\widehat{\alpha(\widehat{x})}\leq \widehat{x}$.
It follows that $\widehat{x}=\widehat{\alpha(\widehat{x})}$.
Hence, $\alpha(\widehat{x})$ is supersoft.
\end{proof}

\begin{pgr}\label{ExistenceofSupersoft}
Let $A$ be a separable C*-algebra  of stable rank one.
Our results on realizing elements of $L(F(\Cu(A)))$ as ranks guarantee the existence of supersoft elements in $\Cu(A)$:
\begin{enumerate}
	\item
	By \autoref{mainrealizing}, if $f\in L(F(\Cu(A)))$ is a full function then $\alpha(f)$ is supersoft, provided that $A$ has no nonzero type~I quotients.
	In particular, this is true if $A$ is unital and has no nonzero, finite dimensional quotients.	
	\item	
	By \autoref{thm:realizingprob1}, the set of supersoft elements agrees with the range of $\alpha$, provided that $A$ has no nonzero, elementary ideal-quotients.
\end{enumerate}

Note that, if $A$ is a unital C*-algebra, the set of full elements in $\Cu(A)$ is in fact a \CuSgp. Indeed, if one shows that for each full element $x\in \Cu(A)$, there exists $x'\in\Cu(A)$ with $x'\ll x$ and such that $x'$ is also full, then suprema in $\Cu(A)$ and in the set of full elements will coincide and \axiomO{1}-\axiomO{4} are easily deduced. Let us find such $x'$. Since $x$ is full, and $A$ is unital, $[1]\ll [1]\leq \infty x$ implies that there exists $N\in \mathbb{N}$ such that $[1]\ll Nx$. Using this inequality, we can find $x'\ll x$ such that $[1]\leq Nx'$, and since $[1]$ is full, so is $x'$. This fact will be used in the proofs below.
\end{pgr}

\begin{thm}
\label{alphadditive}
Let $A$ be a separable C*-algebra of stable rank one, let $x\in\Cu(A)$, and let $f\in L(F(\Cu(A)))$ satisfy $\widehat{x}\leq \infty f$. 
Suppose that we are in one of the following cases:
\begin{enumerate}[\rm(i)]
	\item	
	$A$ is unital, has no nonzero,  finite dimensional quotients, and $f$ is full; 
	\item
	$A$ has no nonzero, elementary ideal-quotients. 
\end{enumerate}
Then
\[
\alpha(f+\widehat{x})=\alpha(f)+x.
\]
\end{thm}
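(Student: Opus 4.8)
The plan is to reduce the identity to a statement about supersoft elements by means of the realization theorems, and then to prove the two inequalities separately, the inequality $\alpha(f+\widehat{x})\le\alpha(f)+x$ being the hard one.

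\emph{Reduction.} In either case the relevant realization theorem (\autoref{mainrealizing} in case~(i), since $f$ is full and, as in the proof of \autoref{thm:realizingprob2}, a unital C*-algebra without nonzero finite dimensional quotients has no nonzero type~I quotients; \autoref{thm:realizingprob1} in case~(ii)) gives $f=\widehat{z}$ for $z:=\alpha(f)$. In particular $\alpha(\widehat{z})=\alpha(f)=z$, so $z$ is supersoft (\autoref{dfn:supersoft}). Consequently $f+\widehat{x}=\widehat{z}+\widehat{x}=\widehat{z+x}$ and $\alpha(f)+x=z+x$, while $\widehat{x}\le\infty f=\infty\widehat{z}$ yields $x\le\infty z$ by \autoref{lma:basic_cones}(iii). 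Thus the theorem becomes the assertion that $z+x$ is supersoft, i.e. $\alpha(\widehat{z+x})=z+x$, whenever $z$ is supersoft and $x\le\infty z$.

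\emph{The inequality $z+x\le\alpha(\widehat{z+x})$.} Since $z$ is supersoft, \autoref{prp:mapAlpha}(ii) lets me write $z=\sup_m y_m$ for an increasing sequence with $\widehat{y_m}\le(1-\varepsilon_m)\widehat{z}$ and $\varepsilon_m>0$; choose also $x_m\nearrow x$ with $x_m\ll x$. The key observation is that $x_m\ll x\le\infty z=\sup_N Nz$ forces $x_m\le k_m z$ for some $k_m\in\NN$, hence $\widehat{x_m}\le k_m\widehat{z}$, while trivially $\widehat{x_m}\le\widehat{x}$. Setting $\delta_m=\varepsilon_m/(k_m+1)$ one computes $\widehat{y_m+x_m}\le(1-\varepsilon_m)\widehat{z}+(1-\delta_m)\widehat{x}+\delta_m k_m\widehat{z}\le(1-\delta_m)\widehat{z+x}$, so $y_m+x_m\le\alpha(\widehat{z+x})$ again by \autoref{prp:mapAlpha}(ii). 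Passing to the supremum via \axiomO{4} gives $z+x\le\alpha(\widehat{z+x})$.

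\emph{The inequality $\alpha(\widehat{z+x})\le z+x$.} By \autoref{prp:mapAlpha}(ii) it suffices to show $v\le z+x$ for every $v\in\Cu(A)$ with $\widehat{v}\le(1-\varepsilon)\widehat{z+x}$, and by \axiomO{2} I may replace $v$ by any $v'\ll v$. The rank bound $\widehat{v'}\le(1-\varepsilon)(\widehat{z}+\widehat{x})\le\infty\widehat{z}$ first yields the honest relation $v'\le Nz$ for some $N$. I would then split $v'$ into a part lying below $z$ and a part lying below $x$. The $z$-part causes no trouble: by supersoftness, any sub-element of rank $\le(1-\delta)\widehat{z}$ lies below $z=\alpha(\widehat{z})$.

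\emph{Main obstacle.} The genuine difficulty is the $x$-part. Because $x$ need not be soft, a rank estimate $\le\widehat{x}$ does \emph{not} by itself place an element below $x$ (this would amount to strict comparison, which fails in general). To convert rank information into an honest sub-element of $x$, the plan is to invoke the compactification of \autoref{prp:unitization}: applied to a positive element representing $x$, it produces a C*-algebra $B\supseteq A$ of stable rank one with $A$ an ideal, and a \emph{compact} element $e_x=[p_x]\in\Cu(B)$ such that, for $y\in\Cu(A)$, one has $y\le x$ precisely when $y\le e_x$. Working in $\Cu(B)$, the compactness of $e_x$ makes weak cancellation available, and the argument then follows the pattern of the proof of \autoref{thm:upwardalpha}: one uses \axiomO{5}, \axiomO{6+}, the inf-semilattice identities of \autoref{general}, and the infimum-preservation of \autoref{hat-inf-preserving} to perform the splitting and the cancellation that certify $v'\le z+x$. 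Making the rank bookkeeping in this decomposition close up is the crux of the whole proof.
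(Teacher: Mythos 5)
Your reduction and your proof of the inequality $z+x\leq\alpha(\widehat{z+x})$ are correct (the paper gets this direction differently, via softness of $\alpha(f)$, the absorbing property of the soft subsemigroup, and \autoref{basicsupersoft}(iii), but your explicit rank estimate with $\delta_m=\varepsilon_m/(k_m+1)$ checks out). The problem is the converse inequality, which you do not actually prove: you correctly identify the obstacle --- a rank bound $\widehat{v}\leq\widehat{x}$ does not place $v$ below $x$ --- but the plan you sketch (pass to $\Cu(B)$ via \autoref{prp:unitization} and ``split $v'$ into a part below $z$ and a part below $x$'') is not carried out, and it is not the idea that makes the argument work. Indeed, even with the compact element $e_x=[p_x]$ available in $\Cu(B)$, producing the ``$x$-part'' of $v'$ as an honest element below $e_x$ from rank data is exactly the comparison you just observed fails in general; moreover $B$ has $B/A\cong\CC$ as a quotient, so the realization machinery is not available over $B$. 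Your own closing sentence concedes that the crux is unresolved, and it is.

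The paper's resolution avoids any decomposition of $v'$. Take $h\lhd f+\widehat{x}$ (full in case (i)), so that $h=\widehat{\alpha(h)}$ by the realization theorems, and choose $\varepsilon>0$ with $h\leq(1-\varepsilon)f+\widehat{x}$; one checks $h\lhd h+\tfrac{\varepsilon}{2}f$ (here one first reduces to $\widehat{x}\propto f$, treating general $x$ by $x'\ll x$ at the end). The key object is
\[
y=\bigl(\alpha((1-\varepsilon)f)+x\bigr)\wedge\alpha(h),
\]
which sits below $\alpha((1-\varepsilon)f)+x$ \emph{by construction as an infimum}, while \autoref{hat-inf-preserving} plus the realizations force $\widehat{y}=\bigl((1-\varepsilon)f+\widehat{x}\bigr)\wedge h=h$. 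Then $h\ll\widehat{y}+\tfrac{\varepsilon}{2}f$, and applying \axiomO{5} to $y'\ll y\leq\alpha(h)$ yields a complement $w$ with $\widehat{y'}+\widehat{w}\leq h\leq\widehat{y'}+\tfrac{\varepsilon}{2}f$, hence $\widehat{w}\leq\tfrac{\varepsilon}{2}f$ and $w\leq\alpha(\varepsilon f)$. Superadditivity of $\alpha$ then gives $\alpha(h)\leq y+w\leq\alpha((1-\varepsilon)f)+x+\alpha(\varepsilon f)\leq\alpha(f)+x$, and one passes to the supremum over $h$. This infimum-plus-\axiomO{5} device is the missing ingredient in your proposal.
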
	
\begin{proof}
In both cases, we have that $\alpha (f)$ is supersoft. In case (i), this follows using \autoref{mainrealizing}, and in case (ii) using \autoref{thm:realizingprob1}. Thus $\alpha(f)$ is soft (see \autoref{basicsupersoft} (i)).
We have $\widehat{x}\leq \infty f= \infty \widehat{\alpha(f)}$, and thus $x\leq \infty\alpha(f)$ by \autoref{lma:basic_cones} (iii).
Since the subsemigroup of soft elements is absorbing (see \autoref{dfn:soft}), it follows that $\alpha(f)+x$ is soft. 
Using this and the fact that $\widehat{\alpha(f)}=f$, we obtain from \autoref{basicsupersoft} (iii) that
\[
\alpha(f)+x\leq\alpha(\widehat{\alpha(f)}+\widehat{x})=\alpha(f+\widehat{x}).
\]
Let us prove the opposite inequality. 
Assume first that $\widehat{x}\propto f$ (that is, $\widehat{x}\leq Cf$ for some constant $C>0$).
Let $h\in L(F(\Cu(A)))$ be any function such that  $h\lhd f+\widehat{x}$.
In the case (i), assume also that $h$ is full (see \autoref{ExistenceofSupersoft}), and thus in either case we have $\widehat{\alpha(h)}=h$.
Choose  $\varepsilon>0$ such that $h\leq(1-\varepsilon)f+\widehat{x}$. 
We claim that $h\lhd h+\frac{\varepsilon}{2} f$. 
Indeed, notice first that $h\propto f$, since $\widehat{x}\propto f$. It is then clear that for small enough $\delta>0$ we have $h\leq (1-\delta)(h+\frac{\varepsilon}{2} f)$. Further, if $f(\lambda)<\infty$ then $f(\lambda)+\widehat{x}(\lambda)<\infty$ and therefore $h$ is continuous at $\lambda$. 

Consider the element
\[
y=(\alpha((1-\varepsilon)f)+ x)\wedge \alpha(h).
\]
Then 
\[
\widehat{y}=(((1-\varepsilon)f)+ \widehat{x})\wedge h=h.
\]
Hence, $h\ll \widehat{y}+\frac{\varepsilon}{2} f$ (since $h\lhd h+\frac{\varepsilon}{2} f$). 
Choose $y'\in\Cu(A)$ such that $y'\ll y$ and $h\ll \widehat{y'}+\frac{\varepsilon}{2} f$. 
Then $y'\ll y\leq \alpha(h)$, and thus there exists by (O5) a $z\in\Cu(A)$ such that 
\[
y'+z\leq \alpha(h)\leq y+z.
\]
Observe then that
\[
\widehat{y'}+\widehat{z}\leq h\leq \widehat{y'}+\frac{\varepsilon}{2} f.
\]	
It follows that $\widehat{z}\leq \frac{\varepsilon}{2} f$, and so $z\leq \alpha(\varepsilon f)$. 

Using that $\alpha$ is superadditive (\autoref{prp:mapAlpha} (iv)) at the last step, we obtain
\[
\alpha(h)\leq y+z\leq \alpha((1-\varepsilon)f)+ x+\alpha(\varepsilon f)\leq \alpha(f)+x.
\]
Passing to the supremum over all $h\lhd f+\widehat{x}$, and using that $\alpha$ is supremum preserving (\autoref{prp:mapAlpha} (iii)), we get that $\alpha(f+\widehat{x})\leq \alpha(f)+x$, as desired.

Let us finally deal with the case that $\widehat{x}\leq \infty f$. If $x'\ll x$ then $\widehat{x'}\propto f$. 
Hence $\alpha(f+\widehat{x'})=\alpha(f)+x'$. 
Passing to the supremum over all $x'\ll x$ the theorem follows.
\end{proof}

\begin{cor}
Let $A$ be a separable C*-algebra of stable rank one.
\begin{enumerate}[\rm (i)]
	\item
	If $A$ is unital and has no nonzero, finite dimensional quotients, then $\alpha$ is additive on the set of full elements of $L(F(\Cu(A)))$ and its range is an absorbing subsemigroup (\autoref{dfn:soft}) of $\Cu(A)$.
	\item
	If $A$ has no nonzero, elementary ideal-quotients, then $\alpha$ is additive and its range is an absorbing subsemigroup of $\Cu(A)$.  
\end{enumerate}	
\end{cor}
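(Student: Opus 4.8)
The two assertions in each case are the additivity of $\alpha$ (on the relevant set) and that the corresponding range is an absorbing subsemigroup of $\Cu(A)$. The plan is to deduce everything from \autoref{alphadditive}, together with the realization results \autoref{mainrealizing} and \autoref{thm:realizingprob1} and the identification of the range of $\alpha$ with the supersoft elements (\autoref{ExistenceofSupersoft}). The governing principle is that \autoref{alphadditive} computes $\alpha(f+\widehat{x})=\alpha(f)+x$ exactly when the added element is \emph{dominated}, i.e.\ $\widehat{x}\le\infty f$; the entire difficulty is to arrange this domination, since superadditivity, the easy inequality, is always available by \autoref{prp:mapAlpha}(iv).

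For part (i), let $f,g\in L(F(\Cu(A)))$ be full. As $A$ is unital with no nonzero finite dimensional quotients, it has no nonzero type~I quotients (as in the proof of \autoref{thm:realizingprob2}), so \autoref{mainrealizing} applies and gives $\widehat{\alpha(g)}=g$. Because $f$ is full, $\infty f$ is the largest element of $L(F(\Cu(A)))$, whence $\widehat{\alpha(g)}=g\le\infty f$. Applying \autoref{alphadditive}(i) with this $f$ and $x=\alpha(g)$ yields $\alpha(f+g)=\alpha(f)+\alpha(g)$, so $\alpha$ is additive on full functions. Its range there is the set of full supersoft elements; this is closed under addition because $f+g$ is again full and additivity applies, and it is absorbing because if $t=\alpha(f)$ with $f$ full and $s\in\Cu(A)$ satisfies $s\le\infty t$, then $\widehat{s}\le\infty\widehat{t}=\infty f$ (\autoref{lma:basic_cones}(iii)), and \autoref{alphadditive}(i) gives $t+s=\alpha(f+\widehat{s})$ with $f+\widehat{s}$ full, so $t+s$ lies in the range.

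For part (ii), the range of $\alpha$ is the set of supersoft elements (\autoref{ExistenceofSupersoft}(2)), and the absorbing property is immediate and proved directly: if $t$ is supersoft, say $t=\alpha(f)$ with $\widehat{t}=f$ by \autoref{thm:realizingprob1}, and $s\le\infty t$, then $\widehat{s}\le\infty f$ and \autoref{alphadditive}(ii) gives $t+s=\alpha(f+\widehat{s})$, again supersoft. Being a subsemigroup, however, is \emph{equivalent} to the additivity of $\alpha$: since $\widehat{\phantom{x}}$ and $\alpha$ are mutually inverse bijections between the supersoft elements and $L(F(\Cu(A)))$ (by \autoref{thm:realizingprob1} and \autoref{dfn:supersoft}) and $\widehat{\phantom{x}}$ is additive, additivity of $\alpha$ is equivalent to the statement that the sum of two supersoft elements is supersoft. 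When at least one of $f,g$ is full this follows as in part (i): taking $f$ full and $x=\alpha(g)$ with $\widehat{x}=g$ (by \autoref{thm:realizingprob1}, with no fullness of $g$ needed), the domination $g\le\infty f$ is automatic and \autoref{alphadditive}(ii) gives $\alpha(f+g)=\alpha(f)+\alpha(g)$.

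The main obstacle is the general additivity in part (ii), where neither $f$ nor $g$ need be full, so the domination $\widehat{x}\le\infty f$ fails and \autoref{alphadditive}(ii) cannot be applied directly; attempts to cancel in the identity $2\alpha(f+g)=\alpha(f+g)+\alpha(f)+\alpha(g)$ break down precisely on the locus where $f+g=\infty$, so a support analysis seems unavoidable. My plan is to reduce to the dominated case by localizing along the support ideals $I=\langle\alpha(f)\rangle$ and $J=\langle\alpha(g)\rangle$, namely the closed two-sided ideals with $\Cu(I)=\{x:\widehat{x}\le\infty f\}$ and likewise for $J$: on $I$ the function $f$ is full and on $J$ the function $g$ is full, while every ideal and quotient of $A$ again has no nonzero elementary ideal-quotients, so \autoref{mainrealizing} and hence \autoref{alphadditive} stay available on each piece. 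One then splits $f+g$ into the overlapping part (supported on $I\cap J$, handled by the one-full argument after passing to $I$ or $J$) and the transverse part (supported off the overlap, where the relevant ideals are orthogonal and $\Cu$ splits as a direct sum, cf.\ the pullback description in \autoref{pullback}), and reassembles $\alpha(f+g)=\alpha(f)+\alpha(g)$ from the naturality of $\alpha$ with respect to the inclusions $I,J\hookrightarrow A$ and the relevant quotient maps. Verifying this gluing — essentially that $\alpha$ respects the Mayer--Vietoris decomposition of \autoref{pullback} — is the technical heart. Once additivity is in hand, the range in (ii) is a subsemigroup, and combined with the absorbing property proved above it is an absorbing subsemigroup, completing the proof.
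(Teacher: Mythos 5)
Your part (i), the absorbing statements in both parts, and the additivity in (ii) when one of the two functions is full are all correct, and they follow the same route as the paper: each is a direct application of \autoref{alphadditive} once the domination $\widehat{x}\leq\infty f$ is arranged. The genuine gap is the general additivity in part (ii), which you correctly single out as the main obstacle but then leave as an unverified plan. The proposed Mayer--Vietoris gluing along the support ideals of $f$ and $g$ is not carried out, and as sketched it would not go through: \autoref{pullback} describes $\Cu$ of the quotients $A/(I\cap J)$, not a splitting of $\Cu(A)$ itself; a function $g\in L(F(\Cu(A)))$ admits no canonical decomposition into a piece ``supported on $I\cap J$'' and a transverse piece; and no compatibility of $\alpha$ with quotient maps is established anywhere in the paper. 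Any naive decomposition over the two support ideals double-counts the overlap, which is exactly the difficulty you would have to resolve.

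The paper's actual argument for (ii) is quite different and avoids gluing altogether. It first notes that idempotents of $\Cu(A)$ are supersoft and that the dominated case $\alpha(f+g)=\alpha(f)+\alpha(g)$ holds whenever $f\leq\infty g$ (your one-full argument, via \autoref{alphadditive}). Setting $w_f=\alpha(\infty f)$ and $w_g=\alpha(\infty g)$, it then combines the infimum-preservation of $\alpha$ (\autoref{prp:mapAlpha}) with distributivity of addition over $\wedge$ (\autoref{Cstarinf-semilattice}, \autoref{general}) to get $\alpha(f+g)\wedge w_f=\alpha\big(f+(g\wedge\infty f)\big)=\alpha(f)+\alpha(g\wedge\infty f)\leq\alpha(f)+\alpha(g)$, likewise with $w_g$, and hence
\[
\alpha(f+g)=\alpha(f+g)\wedge(w_f+w_g)\leq\big(\alpha(f+g)\wedge w_f\big)+\big(\alpha(f+g)\wedge w_g\big)\leq 2\big(\alpha(f)+\alpha(g)\big).
\]
The spurious factor $2$ is then removed by a scaling trick: for $0<\varepsilon<1/2$, writing $(1-\varepsilon)(f+g)=(1-2\varepsilon)f+(1-2\varepsilon)g+\varepsilon(f+g)$ and applying the dominated case to each summand gives
\[
\alpha\big((1-\varepsilon)(f+g)\big)\leq\alpha\big((1-2\varepsilon)f\big)+\alpha\big((1-2\varepsilon)g\big)+2\alpha(\varepsilon f)+2\alpha(\varepsilon g)\leq\alpha(f)+\alpha(g),
\]
the last step by superadditivity; letting $\varepsilon\to 0$ and using that $\alpha$ preserves suprema of increasing sequences yields $\alpha(f+g)\leq\alpha(f)+\alpha(g)$. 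You would need to supply an argument of this kind, or complete your gluing in full detail, for the proof of (ii) to stand.
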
	
\begin{proof}
(i): This is a straightforward consequence of the previous theorem.	

(ii):
We will use the following claims:

\emph{Claim~1: Idempotent elements are supersoft.}
To prove the claim, let $w\in\Cu(A)$ satisfy $2w=w$.
Then $\widehat{w}\leq\widehat{w}=(1-\varepsilon)\widehat{w}$ for every $\varepsilon\in(0,1)$, and thus $w\leq\alpha(\widehat{w})$ by \autoref{prp:mapAlpha}~(ii).
For the converse inequality, note that $\widehat{\alpha(\widehat{\omega})}\leq\widehat{\omega}=\infty\widehat{\omega}$, and thus $\alpha(\widehat{\omega})\leq\infty\omega=\omega$, by \autoref{lma:basic_cones}~(iii).
This proves the claim.

\emph{Claim~2: Let $f,g\in L(F(\Cu(A)))$ with $f\leq\infty g$.
	Then $\alpha(f+g)=\alpha(f)+\alpha(g)$.}
To prove the claim, set $x=\alpha(f)$.
Then $\widehat{x}=f\leq\infty g$, and we may apply \autoref{alphadditive} at the second step to obtain
\[
\alpha(f+g)
= \alpha(\widehat{x}+g)
= x + \alpha(g)
= \alpha(f)+\alpha(g),
\]
which proves the claim.

Let us now show that $\alpha$ is additive. Let $f,g\in L(F(\Cu(A)))$. 
Since $\alpha$ is superadditive by \autoref{prp:mapAlpha}~(iv), it remains to show that $\alpha(f+g)\leq\alpha(f)+\alpha(g)$.
Set $w_f=\alpha(\infty f)$ and $w_g=\alpha(\infty g)$.
Then $w_f+w_g$ is idempotent and thus supersoft by Claim~1.
Hence,
\[
\alpha(\infty f)+\alpha(\infty g)
= w_f+w_g=\alpha(\widehat{w_f}+\widehat{w_g})
= \alpha(\infty f+\infty g).
\]

Using at the first and last step that $\alpha$ preserves infima (\autoref{prp:mapAlpha}~(iii)), and using Claim~2 at the third step, we get
\begin{align*}
\alpha(f+g)\wedge w_f
&= \alpha\big( (f+g)\wedge (\infty f) \big)
= \alpha\big( f+(g\wedge(\infty f)) \big)
= \alpha(f) + \alpha( g\wedge(\infty f) ) \\
&\leq \alpha(f)+\alpha(g).
\end{align*}
Similarly,
\[
\alpha(f+g)\wedge w_g\leq \alpha(f)+\alpha(g).
\]
Using the distributivity of addition over infima (\autoref{general}) at the third step, we obtain that
\begin{align*}
\alpha(f+g) 
&= \alpha(f+g)\wedge(w_f + w_g) \\
&\leq (2\alpha(f+g))\wedge(\alpha(f+g)+w_f)\wedge(\alpha(f+g)+w_g)\wedge(w_f + w_g) \\
&= (\alpha(f+g)\wedge w_f)+(\alpha(f+g)\wedge w_g)
\leq 2(\alpha(f)+\alpha(g)).
\end{align*}

Let $0<\varepsilon<1/2$.
Then, using Claim~2 twice at the first step, the inequality just established at the second step, and the fact that $\alpha$ is superadditive at the last step, we obtain
\begin{align*}
\alpha\big( (1-\varepsilon)(f+g) \big) 
&= \alpha\big( (1-2\varepsilon)f \big) + \alpha\big( (1-2\varepsilon)g \big) + \alpha\big( \varepsilon(f+g) \big) \\
&\leq \alpha\big( (1-2\varepsilon)f \big) + \alpha\big( (1-2\varepsilon)g \big) + 2\alpha(\varepsilon f)+2\alpha(\varepsilon g) \\
&\leq \alpha(f)+\alpha(g).
\end{align*}
Letting $\varepsilon\to 0$ we obtain that $\alpha(f+g)\leq \alpha(f)+\alpha(g)$.
\end{proof}	

Below, we repeatedly use that an element $x\in\Cu(A)$ is full if and only if $\widehat{x}\in L(F(\Cu(A)))$ is full (see \autoref{lma:basic_cones}~(iii)).

\begin{pgr}\emph{Radius of comparison}.
Let $A$ be a unital C*-algebra.
Set $u =[1_A]$ and recall that we use $F_u(\Cu(A))$ to denote the set of all $\lambda\in F(\Cu(A))$
such that $\lambda(u)=1$.
Recall from \cite[Definition~6.1]{Tom06FlatDimGrowth} that the \emph{radius of comparison} of $A$, denoted $\textrm{rc}(A)$, is the infimum of the set of $r\in (0,\infty]$ such that
\[ 
\lambda(x)+r\leq \lambda(y)\hbox{ for all }\lambda\in F_u(\Cu(A))\implies x\leq y
\]
for all $x,y\in \Cu(A)$. 
In the case of unital, stable rank one \ca{s}, there is a more convenient restatement of the definition of $\textrm{rc}(A)$ as the infimum of the set of $r\in (0,\infty]$ such that
\[ 
\widehat{x}+r\widehat{u}\leq \widehat{y}\implies x\leq y
\]
for all $x,y\in \Cu(A)$ (see \cite[Proposition~3.2.3]{BlaRobTikTomWin12AlgRC}).
Observe that in this reformulation, the element $y$ is automatically full, since $r\widehat{u}\leq \widehat{y}$ and $r>0$ (see \autoref{lma:basic_cones}~(iii)).
\end{pgr}

Recall that $W(A)$ denotes the set of Cuntz classes of positive elements in $M_\infty(A)$.

\begin{thm}
\label{thm:equivalences_supersoft}
Let $A$ be a separable, unital C*-algebra of stable rank one that has no nonzero, finite dimensional quotients.
Set $u=[1_A]$.
Then the following are equivalent:
\begin{enumerate}[{\rm (i)}]
	\item
	$W(A)=\{x\in \Cu(A): \widehat{x}\leq n\widehat{u}\hbox{ for some }n\in \NN\}$.
	\item
	$W(A)$ contains at least one full supersoft element.	
	\item
	There exists $N\in \NN$ such that  $\widehat{x}\leq\widehat{u}$
	implies $x\leq Nu$ for all $x\in \Cu(A)$.
	\item
	$A$ has finite radius of comparison.
\end{enumerate}		
\end{thm}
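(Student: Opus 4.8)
The plan is to establish the four conditions are equivalent by running the cycle $(\mathrm{i})\Rightarrow(\mathrm{ii})\Rightarrow(\mathrm{iii})\Rightarrow(\mathrm{iv})\Rightarrow(\mathrm{i})$. Throughout I use that, since $A$ is unital, $W(A)=\{x\in\Cu(A):x\le nu\text{ for some }n\in\NN\}$, and that $A$ has no nonzero type~I quotients (a maximal ideal over a type~I quotient would yield a simple, unital, type~I, hence finite dimensional, quotient), so that \autoref{mainrealizing}, \autoref{ExistenceofSupersoft}, and the additivity of $\alpha$ on full elements all apply. Two of the steps are immediate. For $(\mathrm{iv})\Rightarrow(\mathrm{i})$: if $\rc(A)=\rho<\infty$, fix an integer $r>\rho$; given $\widehat{x}\le n\widehat{u}$ one has $\widehat{x}+r\widehat{u}\le (n+r)\widehat{u}=\widehat{(n+r)u}$, so the stable-rank-one reformulation of the radius of comparison gives $x\le(n+r)u\in W(A)$, while the reverse inclusion is trivial. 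For $(\mathrm{i})\Rightarrow(\mathrm{ii})$: the element $z_0=\alpha(\widehat{u})$ is full and supersoft by \autoref{ExistenceofSupersoft} (as $\widehat u$ is full), and $\widehat{z_0}=\widehat u\le \widehat u$ by \autoref{mainrealizing}, so $(\mathrm{i})$ places $z_0$ in $W(A)$.

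The first substantial step is $(\mathrm{ii})\Rightarrow(\mathrm{iii})$, which I would prove by a \emph{soft absorption} argument. Let $z\in W(A)$ be full and supersoft, so $z\le N_0u$ for some $N_0$. Fullness means $\widehat z$ vanishes only at the zero functional; being lower semicontinuous on the compact set $F_u(\Cu(A))$ it attains a strictly positive minimum $c$ there, so $c\widehat u\le \widehat z\le N_0\widehat u$ on all of $F(\Cu(A))$. Now take any $x$ with $\widehat x\le \widehat u$. Since $z$ is soft (\autoref{basicsupersoft}) and full, and the soft elements form an absorbing subsemigroup (\autoref{dfn:soft}), the element $x+z$ is soft, and $\widehat{x+z}\le (1+N_0)\widehat u\le \tfrac{1+N_0}{c}\widehat z\le q\widehat z=\widehat{qz}$ with $q=\lceil(1+N_0)/c\rceil$. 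As $\alpha$ is additive on full elements, $\alpha(\widehat{qz})=q\alpha(\widehat z)=qz$, so $qz$ is supersoft; then \autoref{basicsupersoft} yields $x+z\le qz\le qN_0u$, whence $x\le qN_0u$. Setting $N=qN_0$ (independent of $x$) proves $(\mathrm{iii})$. The same computation with $n\widehat u$ in place of $\widehat u$ reproves $(\mathrm{ii})\Rightarrow(\mathrm{i})$ directly.

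For $(\mathrm{iii})\Rightarrow(\mathrm{iv})$ the natural starting point is the identity from \autoref{alphadditive}: if $\widehat x+r\widehat u\le \widehat y$ with $r\ge 1$, then applying $\alpha$ and using $\alpha(r\widehat u+\widehat x)=rz_0+x$ gives
\[
x+rz_0\le \alpha(\widehat y),
\]
and in particular $x\le \alpha(\widehat y)$, where $\widehat y$ is full (because $r\widehat u\le \widehat y$) and so $\widehat{\alpha(\widehat y)}=\widehat y$ by \autoref{mainrealizing}. Thus $y$ and its supersoft ``rank model'' $\alpha(\widehat y)$ have equal rank, and the entire content of $(\mathrm{iv})$ is the passage from $x\le\alpha(\widehat y)$ to $x\le y$. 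The plan is to show that the uniform bound $N$ of $(\mathrm{iii})$ forces the discrepancy between $\alpha(\widehat y)$ and $y$ to be a \emph{compact} element dominated by a fixed multiple $Mu$ of the unit, and then, choosing $r$ beyond this bound, to upgrade the rank inequality to $x+Mu\le y+Mu$ and cancel $Mu$ using cancellation of compact elements (\autoref{pgr:weakcancellation}).

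I expect $(\mathrm{iii})\Rightarrow(\mathrm{iv})$ to be the main obstacle. The difficulty is genuine and structural: one cannot hope for $\alpha(\widehat y)\le y$ in general, since this would give $\widehat x\ll\widehat y\Rightarrow x\le y$, i.e.\ strict comparison, whereas $(\mathrm{iii})$ only supplies a finite constant. Hence the room $r\widehat u$ must be spent absorbing the compact defect between $y$ and its supersoft model, and the crux is twofold: first, to bound that defect uniformly in $y$ by a multiple of $u$ governed by $N$; and second, to convert this into an honest Cuntz comparison across the soft/compact divide, since the soft element $z_0$ produced by $\alpha$ cannot be exchanged for the compact $u$. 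Resolving this is exactly where weak cancellation, \axiomO{5}, and the inf-semilattice structure of $\Cu(A)$ for stable rank one (\autoref{Cstarinf-semilattice}) must be deployed by hand.
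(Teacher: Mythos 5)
Your steps (i)$\implies$(ii) and (iv)$\implies$(i) coincide with the paper's, and your (ii)$\implies$(iii) is a correct variant: where the paper extracts $u\leq mz\leq nu$ from compactness of $u$ and computes $x\leq x+\alpha(m\widehat{z})=\alpha(\widehat{x}+m\widehat{z})\leq 2mz\leq 2nu$, you extract the constant $c$ from lower semicontinuity of $\widehat{z}$ on the compact set $F_u(\Cu(A))$ and compare $x+z$ with $qz$ via \autoref{basicsupersoft}(ii); both routes rest on the same ingredients (\autoref{alphadditive} and softness), though note that your global inequality $c\widehat{u}\leq\widehat{z}$ at functionals with $\lambda(u)=\infty$ still implicitly needs $u\leq mz$, so the detour through lower semicontinuity buys nothing over the paper's direct use of compactness of $u$.

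The genuine gap is (iii)$\implies$(iv), which you leave as a ``plan'' and correctly flag as unresolved. Moreover the plan is aimed at a harder target than necessary: you propose to bound the discrepancy between an arbitrary $y$ and its model $\alpha(\widehat{y})$ uniformly in $y$, but these two elements are in general incomparable and nothing in (iii) controls their difference. The missing idea is to apply (iii) to a \emph{single} element, namely $z=\alpha(\widehat{u})$: since $\widehat{z}=\widehat{u}$, hypothesis (iii) gives $z\leq Nu$ directly. One then avoids $\alpha(\widehat{y})$ altogether by using the additivity identity $\alpha(f+\widehat{w})=\alpha(f)+w$ of \autoref{alphadditive} with $f=\widehat{u}$ (which is full) playing the role of the ``soft part'' on \emph{both} sides: from $\widehat{x}+N\widehat{u}\leq\widehat{y}$ one gets
\[
x+Nu \leq x+Nu+z=\alpha\big(\widehat{u}+\widehat{x+Nu}\big)\leq\alpha\big(\widehat{u}+\widehat{y}\big)=z+y\leq Nu+y,
\]
and cancellation of the compact element $Nu$ yields $x\leq y$, i.e.\ $\rc(A)\leq N$. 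So the ``room'' $N\widehat{u}$ is not spent absorbing a defect between $y$ and $\alpha(\widehat{y})$; it is spent absorbing the single element $z$, whose bound $z\leq Nu$ is exactly what (iii) asserts. Without this step your argument does not close the cycle, and the theorem is not proved.
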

\begin{proof}
(i)$\implies$(ii):
Set $y =\alpha(\widehat{u})$, which is a supersoft element.
Since $\widehat{y}=\widehat{u}$, we have by (i) that $y$ is an element of $W(A)$.
It remains to see that $y$ is full, but this follows from the fact that $\widehat{y}=\widehat{u}$ and $u$ is full in $\Cu(A)$ (see \autoref{lma:basic_cones}~(iii)).

(ii)$\implies$(iii):
Let $z\in W(A)$ be a full supersoft element.
Thus, there exist $m,n\in\NN$ such that $u\leq mz\leq nu$.
Now let $x\in\Cu(A)$ be such that $\widehat{x}\leq \widehat{u}$. 
Then $\widehat{x}\leq m\widehat{z}$.
Using at the third step that $\alpha$ is order-preserving (\autoref{prp:mapAlpha}~(iii)), and using \autoref{alphadditive} at the second and fourth step, we get
\[
x \leq x+\alpha(m\widehat{z})=\alpha(\widehat{x}+m\widehat{z})\leq\alpha(2m\widehat{z})=2mz\leq 2nu.
\]

(iii)$\implies$(iv):
Let $N$ be as in (iii).
To show that $\rc(A)\leq N$, let $x,y\in\Cu(A)$ satisfy $\widehat{x}+N\widehat{u}\leq \widehat{y}$.
Set $z =\alpha(\widehat{u})$.
Applying \autoref{alphadditive}, we obtain
\[
x+Nu\leq x+Nu+z=\alpha(\widehat{x}+N\widehat{u}+\widehat{u})\leq \alpha(\widehat{y}+\widehat{z})=y+z.
\] 
By (iii), we have $z\leq Nu$, and therefore $x+Nu\leq y+Nu$.
Hence, $x\leq y$ by cancellation of compact elements.

(iv)$\implies$(i):
Clearly if $x\in W(A)$ then $\widehat{x}\leq n\widehat{u}$ for some $n\in\NN$. 
Suppose conversely that $x\in\Cu(A)$ and $n\in\NN$ satisfy $\widehat{x}\leq n\widehat{u}$.
Let $N\in\NN$ satisfy $N>\mathrm{rc}(A)$.
From $\widehat{x}+N\widehat{u} \leq (N+n)\widehat{u}$ we deduce that  $x\leq (N+n)u$.
Hence, $x\in W(A)$.  
\end{proof}

\begin{pgr}\emph{Strict comparison and local weak $(m,\gamma)$-comparison}.
\label{pgr:strictComparison}
A C*-algebra $A$ is said to have \emph{strict comparison} if whenever $x,y\in \Cu(A)$ satisfy $x\leq\infty y$ and $\lambda(x)<\lambda(y)$ for every $\lambda\in F(\Cu(A))$ with $\lambda(y)=1$, then $x\leq y$ (see \cite[Proposition~6.2]{EllRobSan11Cone} and \cite[Paragraph~7.6.4]{AntPerThi18TensorProdCu}). 
In general, if $S$ is a \CuSgp, then the following conditions are equivalent:
\begin{enumerate}
\item
$S$ has \emph{strict comparison}, that is, whenever $x,y\in S$ satisfy $x\leq\infty y$ and $\lambda(x)<\lambda(y)$ for every $\lambda\in F(S)$ with $\lambda(y)=1$, then $x\leq y$;
\item
Whenever $x,y\in S$ satisfy $\widehat{x}\leq (1-\varepsilon)\widehat{y}$ for some $\varepsilon>0$, then $x\leq y$;
\item
$S$ is \emph{almost unperforated}, that is, whenever $x,y\in S$ satisfy $(n+1)x\leq ny$ for some $n\in\NN$, then $x\leq y$.
\end{enumerate}
Indeed, (1) easily implies (2). That~(2) implies~(3) follows, for example,  from   \cite[Proposition 5.2.20]{AntPerThi18TensorProdCu}. The equivalence between~(1) and~(3) is proven in \cite[Proposition~6.2]{EllRobSan11Cone} (see also\cite[Corollary 4.7]{Ror04StableRealRankZ}).

Let us say that $A$ has \emph{strict comparison on full elements} if whenever $x,y\in \Cu(A)$, with $y$ full, satisfy $\widehat{x}\leq (1-\varepsilon)\widehat{y}$ for some $\varepsilon>0$, then $x\leq y$. Clearly, if $A$ is a  simple C*-algebra  this property agrees with strict comparison.	

	Following \cite[Definition 2.3]{RobTik17NucDim}, we say that $A$ satisfies  \emph{$m$-comparison} for some $m\in \mathbb{N}$ provided $\widehat{x}\leq (1-\varepsilon) \widehat{y_i}$ for some $x,y_0,y_1,\dots,y_m\in \Cu(A)$  and $\epsilon>0$, implies $x\leq \sum_{i=0}^m y_i$. Note that if $A$ is simple and unital, this  definition coincides with \cite[Definition~ 3.1]{Win12NuclDimZstable}. Observe also that $A$ has $0$-comparison precisely when $A$ has strict comparison.

Suppose now that $A$ is unital.
Set $u=[1_A]$ and recall that $F_{u}(\Cu(A))$ denotes the set of all $\lambda\in F(\Cu(A))$ such that $\lambda(u)=1$.
Suppose that there exist $m\in\NN$ and $\gamma\geq 1$ such that if $a,b\in A_+$, with $b$ full, satisfy
\[
\gamma\cdot \sup_{\lambda\in F_{u}(\Cu(A))}\lambda([a])\leq \inf_{\lambda\in F_{u}(\Cu(A))}\lambda([b])
\]
then $[a]\leq m[b]$.
We then say that $A$ has \emph{local weak $(m,\gamma)$-comparison}.
The word local here refers to the fact that we do not choose $a$ and $b$ in $A\otimes\mathcal{K}$ but in $A$.
The case when $A$ is simple and $m=1$ of this property appears in \cite[Definition~2.1]{KirRor14CentralSeq}, where it is called `local weak comparison'.
We show below that if $A$ is a separable, unital C*-algebra of stable rank one that has no nonzero, finite dimensional quotients, then local weak $(m,\gamma)$-comparison implies strict comparison on full elements.
\end{pgr}

\begin{lma}
\label{lma:first_lemma}
Let $A$ be a separable C*-algebra of stable rank one, let $x\in\Cu(A)$, and let $f\in L(F(\Cu(A)))$ satisfy $f\ll\widehat{x}$.
Then there exist $y,z\in \Cu(A)$ such that $f\leq \widehat{y}$, $y+z\leq x$, and $\infty z=\infty x$.
\end{lma}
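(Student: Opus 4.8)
The plan is to split $x$ as a sum $y+z$ in which $z$ is a full sub-element of $x$ of suitably small rank, so that the complement $y$ retains enough rank to dominate $f$. The first, easy step is to extract a scalar gap from the hypothesis: since $\widehat{x}=\sup_n(1-\tfrac1n)\widehat{x}$ is a supremum of an increasing sequence in $L(F(\Cu(A)))$ and $f\ll\widehat{x}$, there is $\varepsilon\in(0,1)$ with $f\le(1-\varepsilon)\widehat{x}$, which I fix.

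The heart of the argument is to produce $z\in\Cu(A)$ with
\[
z\le x,\quad \infty z=\infty x,\quad \widehat{z}\le\varepsilon\widehat{x},
\]
and, crucially, with $\widehat{z}(\lambda)<\infty$ at every $\lambda\in F(\Cu(A))$ for which $\widehat{x}(\lambda)=\infty$. Such a small, full sub-element would be built by a telescoping construction in the spirit of \autoref{smallsoft}: writing $x=\sup_n x_n$ for a $\ll$-increasing sequence $(x_n)_n$ and setting $w_0=x$, I would inductively peel off pieces $z_j$ from a decreasing sequence of full remainders $w_j$, arranging $z_j+w_j\le w_{j-1}$ and $x_j\propto z_j$ while keeping $\sum_j\widehat{z_j}\le\varepsilon\widehat{x}$. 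Each peeling step uses \axiomO{5} to produce $z_j$ and $w_j$, weak cancellation (see \autoref{pgr:weakcancellation}) to control the remainder, and the inf-semilattice structure (\autoref{Cstarinf-semilattice} and \autoref{general}) to keep the accumulated pieces small. The condition $x_j\propto z_j$ forces $\infty z=\infty x$ for $z=\sup_k\sum_{j\le k}z_j$, while the softness of $z$ (in the sense of \autoref{dfn:soft}) is what guarantees that $\widehat{z}$ stays finite at the functionals where $\widehat{x}$ diverges.

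With $z$ in hand, I would choose $z'\ll z$ (retaining $\infty z'=\infty x$) and apply \axiomO{5} to $z'\ll z\le x$ to obtain $y\in\Cu(A)$ with $z'+y\le x\le z+y$. Then $y+z'\le x$ and $\infty z'=\infty x$, so $(y,z')$ are the sought elements once $f\le\widehat{y}$ is verified. For the latter I would argue pointwise on $F(\Cu(A))$ from the inequality $\widehat{x}\le\widehat{z}+\widehat{y}$: at $\lambda$ with $\widehat{x}(\lambda)<\infty$ the bound $\widehat{z}(\lambda)\le\varepsilon\widehat{x}(\lambda)$ gives $\widehat{y}(\lambda)\ge(1-\varepsilon)\widehat{x}(\lambda)\ge f(\lambda)$, whereas at $\lambda$ with $\widehat{x}(\lambda)=\infty$ the finiteness of $\widehat{z}(\lambda)$ forces $\widehat{y}(\lambda)=\infty\ge f(\lambda)$.

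The main obstacle is the middle step: realizing a full sub-element of $x$ of arbitrarily small rank whose rank nevertheless remains \emph{finite} at the functionals at which $\widehat{x}$ is infinite. This is precisely where the stable rank one hypothesis must enter decisively, through the softness and divisibility phenomena underlying \autoref{smallsoft} and \autoref{CuGG}, rather than through purely order-theoretic manipulations; simultaneously controlling the rank of the accumulated sum $\sum_j z_j$ and its fullness in the ideal generated by $x$ is the delicate part of the construction.
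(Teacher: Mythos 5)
Your overall template (split $x$ as $y+z$ via \axiomO{5} with $z$ ``full relative to $x$'') matches the shape of the conclusion, but the route you take has a fatal flaw at the step you yourself single out as crucial. You require a nonzero $z\leq x$ with $\widehat{z}(\lambda)<\infty$ at every functional $\lambda$ where $\widehat{x}(\lambda)=\infty$. No such $z$ exists: applying \autoref{prp:extFctl} to the zero ideal produces the functional $\lambda_0$ with $\lambda_0(0)=0$ and $\lambda_0(s)=\infty$ for all $s\neq 0$, and then $\widehat{x}(\lambda_0)=\widehat{z}(\lambda_0)=\infty$ for any nonzero $x$ and $z$. Softness of $z$ is irrelevant here --- it controls the inequalities $(k+1)z'\leq kz$ for $z'\ll z$, not the finiteness of $\lambda(z)$ at functionals that are not densely finite. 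Without that finiteness your concluding pointwise estimate collapses exactly where it is needed: at a $\lambda$ with $\widehat{x}(\lambda)=\widehat{z}(\lambda)=\infty$, the inequality $\widehat{x}\leq\widehat{y}+\widehat{z}$ puts no lower bound whatsoever on $\widehat{y}(\lambda)$, so you cannot conclude $f(\lambda)\leq\widehat{y}(\lambda)$. A secondary problem is that your telescoping construction of a full $z\leq x$ of small rank rests on the divisibility machinery behind \autoref{smallsoft} and \autoref{CuGG} (weak $(k,\omega)$-divisibility of full elements), which the paper only establishes under the hypothesis of no elementary (ideal-)quotients; that hypothesis is not available in this lemma.

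The paper avoids all of this by constructing $y$ first rather than $z$. It interpolates $f\ll\widehat{w}\ll\widehat{x}$ with $w\in\Cu(A)$, sets $x_1=x\wedge w$, and uses \autoref{hat-inf-preserving} to get $\widehat{x_1}=\widehat{w}$, so that some $y\ll x_1$ satisfies $f\leq\widehat{y}$ \emph{by construction} --- no pointwise verification over $F(\Cu(A))$ is ever needed. Then \axiomO{5} applied to $y\ll x_1\leq x$ yields $z$ with $y+z\leq x\leq x_1+z$, and the fullness $\infty z=\infty x$ is proved by a mechanism absent from your argument: passing to the quotient by the ideal generated by $z$, the image of $x$ becomes a compact element on which every functional is $0$ or $\infty$ (because $\widehat{x_1}\leq(1-\varepsilon)\widehat{x}$ survives the quotient and the images of $x$ and $x_1$ coincide there), and stable finiteness of the quotient then forces that image to be zero. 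This appeal to stable finiteness is the decisive input from the stable rank one hypothesis, and it cannot be replaced by the rank estimates you propose.
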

\begin{proof}
Choose $w\in\Cu(A)$ satisfying $f\ll\widehat{w}\ll\widehat{x}$.
Set $x_1 =x\wedge w$.
By \autoref{hat-inf-preserving}, we have $\widehat{x_1}=\widehat{x}\wedge\widehat{w}=\widehat{w}$.
Therefore $f\ll\widehat{x_1}$ and $x_1\leq x$.
Choose $y\in\Cu(A)$ such that $y\ll x_1$ and $f\leq \widehat{y}$.
Finally, apply \axiomO{5} to $y\ll x_1\leq x$ to obtain $z\in\Cu(A)$ such that $y+z\leq x\leq x_1+z$.
It remains to show that $z$ satisfies $\infty z\geq \infty x$.

Denote by $W$ the ideal generated by $z$, that is, $W=\{z'\in \Cu(A):z'\leq \infty z\}$.
Using the natural correspondence between closed, two-sided ideals of $A$ and ideals of $\Cu(A)$ (see \autoref{pgr:ideal}), we let $I\subseteq A$ be the closed, two-sided ideal of $A$ such that $\Cu(I)=W$.
Passing to $\Cu(A/I)$ by the quotient map, let us denote the images of $x$, $x_1$, and $y$ by $\underline{x}$, $\underline{x}_1$, and $\underline{y}$.
We have $\underline{y}=\underline{x}_1=\underline{x}$, and this element is compact.
Since $\widehat{x_1}\ll\widehat{x}$, we can choose $\varepsilon>0$ with $\widehat{x_1}\leq (1-\varepsilon)\widehat{x}$.
Passing to $\Cu(A/I)$ we obtain $\widehat{\underline{x}}=(1-\varepsilon)\widehat{\underline{x}}$.
Thus, $\underline{x}$ is a compact element on which no functional is finite and nonzero. 
Since $A/I$ is stably finite, it is well known (for example combining \cite[Theorem~3.2]{GooHan76RankFct} and \cite[Theorem~3.3]{BlaRor92ExtendStates}) that this implies $\underline{x}=0$. 
Hence $x\leq\infty z$.
\end{proof}	

\begin{lma}
\label{lma:second_lemma}
Let $A$ be a unital, separable C*-algebra of stable rank one that has no nonzero finite dimensional quotients.
Set $u=[1_A]$.
Let $(z_i)_{i=1}^\infty$ be a sequence of full, supersoft elements in $\Cu(A)$ such that $\widehat{z_i}\leq\widehat{u}$ for all $i$.
Then
\[
\sum_{i=1}^\infty z_i=\sum_{i=1}^\infty (z_i\wedge u).
\] 	
\end{lma}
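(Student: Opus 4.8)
The inequality ``$\ge$'' is immediate from $z_i\wedge u\le z_i$, so the whole content lies in the reverse inequality. Write $x=\sum_{i=1}^\infty z_i$ and $w=\sum_{i=1}^\infty(z_i\wedge u)$. Since $\widehat{z_i}\le\widehat u$, \autoref{hat-inf-preserving} gives $\widehat{z_i\wedge u}=\widehat{z_i}\wedge\widehat u=\widehat{z_i}$, and hence $\widehat x=\widehat w$. Applying \autoref{alphadditive}(i) repeatedly to the full supersoft elements $z_i$ (peeling off one summand at a time), and using that both $\widehat{\,\cdot\,}$ and $\alpha$ preserve suprema of increasing sequences, I would first record that $\alpha(\widehat x)=x$ and $\alpha(\widehat w)=x$; in particular $x$ is supersoft. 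Thus the goal $x\le w$ is \emph{exactly} the assertion that $w$ is supersoft, and this has to be proved by hand: no combination of the structural results can give it formally, because $\alpha(\widehat w)=x$ already pins down the unique supersoft element of rank $\widehat w$.

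The engine of the argument is the observation that \emph{for every full supersoft $e\in\Cu(A)$ one has $z_i\le(z_i\wedge u)+e$.} Indeed, since $e$ is full one has $\widehat{z_i\wedge u}\le\infty\widehat e$, so \autoref{alphadditive}(i) yields $\alpha(\widehat e+\widehat{z_i\wedge u})=e+(z_i\wedge u)$, showing that $(z_i\wedge u)+e$ is supersoft; as $z_i$ is soft and $\widehat{z_i}=\widehat{z_i\wedge u}\le\widehat{(z_i\wedge u)+e}$, the inequality follows from \autoref{basicsupersoft}(ii). I also need to split a full supersoft element into small full supersoft summands: given full supersoft $h$, \autoref{smallsoft} (applicable because a unital algebra without finite dimensional quotients has no elementary quotients) produces a soft full $z'$ with $4z'\le h$, so that $e:=\alpha(\widehat{z'})$ is full supersoft with $\widehat e\le\tfrac14\widehat h$; softness of $h$, via \autoref{prp:mapAlpha}(ii), upgrades the rank bound $2\widehat e\le(1-\tfrac12)\widehat h$ to the genuine inequality $2e\le h$.

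With these tools I would fix a full supersoft ``seed'' $g_0$ and build full supersoft carries $g_i$ with $\widehat{g_i}\le\tfrac14\widehat{g_{i-1}}$ and
\[
z_i+g_i\le(z_i\wedge u)+g_{i-1}\qquad(i\ge 1),
\]
using the engine together with a splitting $e_i+g_i\le g_{i-1}$. A telescoping computation along the chain $P_k=\sum_{i\le k}z_i+g_k+\sum_{i>k}^{n}(z_i\wedge u)$, which satisfies $P_k\le P_{k-1}$, then gives $\sum_{i=1}^n z_i+g_n\le\sum_{i=1}^n(z_i\wedge u)+g_0\le w+g_0$ for every $n$, and therefore $x\le w+g_0$ for \emph{every} full supersoft $g_0$. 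Moreover $w+g_0$ is itself supersoft: applying \autoref{alphadditive}(i) with $f=\widehat{g_0}$ and the element $w$ (note $\widehat w\le\infty\widehat{g_0}$) gives $\alpha(\widehat{g_0}+\widehat w)=g_0+w$.

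The final, and by far the hardest, step is to remove the residual seed. Iterating the splitting I would produce a \emph{decreasing} sequence of full supersoft elements with $2g_{n+1}\le g_n$, each a valid seed, so that $x\le w+g_n$ for all $n$ while $\widehat{g_n}\le 2^{-n}\widehat{g_0}\to0$. It then remains to conclude $x\le w$ by passing to the limit: using that $\Cu(A)$ is inf-semilattice ordered (\autoref{Cstarinf-semilattice}) with addition distributing over finite infima (\autoref{general}), one is reduced to showing that a decreasing sequence of full supersoft elements whose ranks tend to $0$ has infimum $0$, which should follow from the existence of enough densely finite quasitraces on the stably finite algebra $A$ (which, being unital without finite dimensional quotients, has no type~I quotients). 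This passage to the limit — equivalently, the proof that the soft element $w$ is supersoft — is the main obstacle, precisely because $\Cu(A)$ affords neither cancellation of soft elements nor continuity from above; everything preceding it is a direct assembly of \autoref{alphadditive}, \autoref{basicsupersoft}, \autoref{prp:mapAlpha}, and \autoref{smallsoft}.
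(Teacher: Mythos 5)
Your reduction of the lemma to the supersoftness of $w=\sum_i(z_i\wedge u)$ is correct, and your ``engine'' ($z_i\le(z_i\wedge u)+e$ for any full supersoft $e$, via \autoref{alphadditive} and \autoref{basicsupersoft}(ii)) together with the telescoping chain does validly yield $x\le w+g_0$ for every full supersoft seed $g_0$. But the proof then stops exactly where the difficulty lies: the removal of the residual seed is not carried out. The step you defer --- that a decreasing sequence of full supersoft elements $g_n$ with $\widehat{g_n}\to 0$ has infimum $0$ --- is neither proved nor, on its own, sufficient: to pass from $x\le w+g_n$ for all $n$ to $x\le w$ you would additionally need addition to distribute over the \emph{countable} infimum $\bigwedge_n g_n$, whereas \autoref{Cstarinf-semilattice} and \autoref{general} only give distributivity over finite infima (and it is not even clear that the decreasing sequence $(w+g_n)_n$ has an infimum, let alone the right one). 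So as written there is a genuine gap at the decisive point.

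Your meta-claim that ``no combination of the structural results can give it formally'' is also refuted by the paper's own argument, which is purely such a combination and avoids the limiting problem entirely. The paper proves by induction the \emph{finite-stage identity} $\sum_{i=1}^n(z_i\wedge u)=\bigl(\sum_{i=1}^n z_i\bigr)\wedge(nu)$: the inductive step uses \autoref{alphadditive} to get $\sum_{i=1}^{n+1}z_i\le z_{n+1}+nu$ and $\sum_{i=1}^{n+1}z_i\le\bigl(\sum_{i=1}^n z_i\bigr)+u$, and then expands $\bigl(\bigl(\sum_{i=1}^n z_i\bigr)\wedge nu\bigr)+(z_{n+1}\wedge u)$ as a fourfold infimum via \autoref{general}. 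Passing to the supremum over $n$ is then harmless, because $\sup_n\bigl(\sum_{i=1}^m z_i\bigr)\wedge(nu)=\sum_{i=1}^m z_i$ (as $u$ is full and $\wedge$ commutes with suprema of increasing sequences). If you want to salvage your route, you should look for an argument of this finite, algebraic kind rather than a limiting one; as it stands your proposal does not establish the lemma.
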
	
\begin{proof}
Using induction over $n$, let us verify that $\sum_{i=1}^n (z_i\wedge u) = (\sum_{i=1}^n z_i)\wedge (nu)$ for each $n\geq 1$.
This is clear for $n=1$. Assume that the equality holds for some $n\geq 1$. Since $z_{n+1}$ is full and supersoft, and since $n\widehat{u} \geq \sum_{i=1}^n\widehat{z_i}$, we may apply \autoref{alphadditive} at the first and last step to conclude that
\[
z_{n+1} + nu 
= \alpha( \widehat{z_{n+1}}+n\widehat{u} )
\geq \alpha( \sum_{i=1}^{n+1}\widehat{z_i} )
= \sum_{i=1}^{n+1} z_i.
\]
Similarly, we get $(\sum_{i=1}^{n} z_i)+u \geq \sum_{i=1}^{n+1} z_i$.
Then, applying the distributivity of addition over~$\wedge$, we get
\begin{align*}
\sum_{i=1}^{n+1} (z_i\wedge u)
&= \left( \big( \sum_{i=1}^n z_i \big)\wedge nu \right)+(z_{n+1}\wedge u) \\
&= \left( \big( \sum_{i=1}^n z_i \big)+z_{n+1} \right) \wedge \left( \big( \sum_{i=1}^n z_i \big)+u \right) \wedge (z_{n+1} + nu) \wedge ((n+1)u) \\
&\geq \big( \sum_{i=1}^{n+1} z_i \big) \wedge ((n+1)u).
\end{align*}
The converse inequality is clear.

Passing to the supremum over all $n$ we get the desired equality.
\end{proof}	

\begin{thm}
\label{thm:locweak}
Let $A$ be a unital, separable C*-algebra of stable rank one that has no nonzero finite dimensional quotients. 
Then the following conditions are equivalent:
\begin{enumerate}[\rm(i)]
	\item
	$A$ has local weak $(m,\gamma)$-comparison for some $m\in\NN$ and $\gamma\geq 1$.
	\item
	For each full element $x\in\Cu(A)$ there exists a full, supersoft element $z\in\Cu(A)$ such that $z\leq x$.
	\item
	$A$ has strict comparison on full elements.
	\item
	The restriction of $\alpha$ to $\{f\in L(F(\Cu(A))):f\hbox{ is full}\}$ is a $\Cu$-morphism into the subsemigroup
	of full elements of $\Cu(A)$.	
\end{enumerate}		
\end{thm}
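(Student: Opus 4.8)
The plan is to prove the four conditions equivalent by establishing the block $(\mathrm{i})\Leftrightarrow(\mathrm{ii})\Leftrightarrow(\mathrm{iii})$ and, separately, $(\mathrm{iii})\Leftrightarrow(\mathrm{iv})$, using two reformulations that organize everything. First, since $\alpha$ satisfies $x\le\alpha(\widehat y)$ whenever $\widehat x\le(1-\varepsilon)\widehat y$ (by \autoref{prp:mapAlpha}(ii)), condition (iii) is equivalent to the single statement that $\alpha(\widehat y)\le y$ for every full $y$. Second, because a unital algebra with no finite dimensional quotients has no type~I quotients, \autoref{mainrealizing} gives $\widehat{\alpha(f)}=f$ with $\alpha(f)$ full for every full $f$; together with \autoref{prp:mapAlpha}(iii) and the additivity of $\alpha$ on full functions (the Corollary following \autoref{alphadditive}), the restriction of $\alpha$ to full functions is automatically additive, order preserving, and supremum preserving with full values, so (iv) is equivalent to the single extra requirement that this restriction preserve $\ll$. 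Throughout I will use the following \emph{absorption remark}: the full supersoft elements form an absorbing subsemigroup (the Corollary after \autoref{alphadditive}), so if $w$ is full supersoft and $p\le\infty w$ then $p+w$ is full supersoft; hence, by \autoref{basicsupersoft}(ii), any soft $s$ with $\widehat s\le\widehat p$ satisfies $s\le p+w$.

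Several implications are then short. For $(\mathrm{iii})\Rightarrow(\mathrm{ii})$ take $z=\alpha(\widehat x)$, which is full supersoft by \autoref{mainrealizing} and lies below $x$ by the reformulation of (iii). For $(\mathrm{iii})\Rightarrow(\mathrm{i})$: if $b$ is full and $2\sup_{\lambda\in F_u(\Cu(A))}\lambda([a])\le\inf_{\lambda\in F_u(\Cu(A))}\lambda([b])$, then $\widehat{[a]}\le\tfrac12\widehat{[b]}$ on $F_u(\Cu(A))$ and, by homogeneity and fullness of $b$, on all of $F(\Cu(A))$, so (iii) yields $[a]\le[b]$; thus $A$ has local weak $(1,2)$-comparison. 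For $(\mathrm{iii})\Rightarrow(\mathrm{iv})$ I verify $\ll$-preservation: given full $f\ll g$, write $\alpha(g)=\sup_n v_n$ with $(v_n)_n$ rapidly increasing, so $g=\widehat{\alpha(g)}=\sup_n\widehat{v_n}$; choosing $f'$ with $f\ll f'\ll g$ gives $f'\le\widehat{v_{n_0}}$ for some $n_0$, and since $v_{n_0}$ is then full, the reformulation of (iii) gives $\alpha(f)\le\alpha(\widehat{v_{n_0}})\le v_{n_0}\ll\alpha(g)$. Finally $(\mathrm{ii})\Rightarrow(\mathrm{iii})$: for full $y$ and full $f\ll\widehat y$, \autoref{lma:first_lemma} produces $y_f,z_f$ with $f\le\widehat{y_f}$, $y_f+z_f\le y$ and $z_f$ full; by (ii) there is a full supersoft $w\le z_f$, and the absorption remark (with $p=y_f$ and $s=\alpha(f)$, so that $\widehat s\le f\le\widehat{y_f}$) gives $\alpha(f)\le y_f+w\le y$; passing to the supremum over such $f$ yields $\alpha(\widehat y)\le y$.

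The substantial step, and the one I expect to be the main obstacle, is $(\mathrm{i})\Rightarrow(\mathrm{ii})$: manufacturing a \emph{full supersoft} element below a given full $x$ from the purely local comparison hypothesis. Here I would first invoke \autoref{smallsoft} (applicable since a unital algebra without finite dimensional quotients has no elementary quotients) to obtain a soft full $z_0$ with, say, $2z_0\le x$, and then upgrade $z_0$ to the supersoft element $\alpha(\widehat{z_0})$ while keeping it below $x$. The difficulty is precisely that local weak $(m,\gamma)$-comparison is a statement about elements of $A$, that is, about classes dominated by $u=[1_A]$, whereas $\alpha(\widehat{z_0})$ and $x$ live in $\Cu(A)=\Cu(A\otimes\mathcal K)$. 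This is where \autoref{lma:second_lemma} enters: it replaces infinite sums of full supersoft elements of rank at most $\widehat u$ by sums of their truncations $z_i\wedge u$, which are classes of positive elements of $A$. The plan is to present $\alpha(\widehat{z_0})$ as such a sum, compare the truncated pieces against pieces of $x$ by the local hypothesis (with $\gamma$ absorbing the slack from $2z_0\le x$ and $m$ controlled by softness), and reassemble below $x$ using \autoref{lma:second_lemma} and \autoref{lma:first_lemma}. Making the truncations compatible with the order on $x$, while keeping the resulting element full, is the delicate bookkeeping.

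It remains to close the second equivalence with $(\mathrm{iv})\Rightarrow(\mathrm{iii})$. By the reformulations, this amounts to showing that $\ll$-preservation of $\alpha$ forces $\alpha(\widehat y)\le y$ for full $y$ or, equivalently via $(\mathrm{ii})\Leftrightarrow(\mathrm{iii})$, that it produces a full supersoft element below each full element. I would approach this as the reverse of $(\mathrm{iii})\Rightarrow(\mathrm{iv})$: fixing full $g\ll\widehat y$, apply \autoref{lma:first_lemma} to split off a full complement $z_g$ with $y_g+z_g\le y$ and $g\le\widehat{y_g}$, and then use the $\Cu$-morphism property of $\alpha$ together with the absorption remark to supply the full supersoft room inside $z_g$ needed to invoke the sub-claim and conclude $\alpha(g)\le y$. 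I expect this to be the subtler half of the second equivalence, since, unlike the soft reformulations above, it must convert a compact-containment property of $\alpha$ into an honest order comparison; should a direct argument prove elusive, an alternative is to route $(\mathrm{iv})\Rightarrow(\mathrm{i})$ and rely on the already established chain $(\mathrm{i})\Rightarrow(\mathrm{ii})\Rightarrow(\mathrm{iii})$.
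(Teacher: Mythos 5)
Your reformulations are sound and four of the six arrows you discuss are essentially complete and correct: the reformulation of (iii) as $\alpha(\widehat{y})\leq y$ for all full $y$, the implications (iii)$\Rightarrow$(i), (iii)$\Rightarrow$(ii), (iii)$\Rightarrow$(iv), and (ii)$\Rightarrow$(iii) all go through, and they track the paper's proof closely (your ``absorption remark'' is exactly the combination of \autoref{alphadditive} and \autoref{basicsupersoft}(ii) that the paper uses). However, the two implications you yourself flag as substantial are not proved, and one of them is approached in a way that cannot work as described.

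For (iv)$\Rightarrow$(iii) your plan is circular. You propose to ``use the $\Cu$-morphism property of $\alpha$ together with the absorption remark to supply the full supersoft room inside $z_g$'', but supplying a full supersoft element \emph{below} a given full element is precisely statement (ii), which at that point in your scheme is only known to follow from (i) or (iii) --- the very things you are trying to derive. The element $\alpha(\widehat{z_g})$ is full and supersoft, but knowing $\alpha(\widehat{z_g})\leq z_g$ is again the reformulated (iii). The fallback ``route (iv)$\Rightarrow$(i)'' is likewise unproved and no easier. The paper's argument is short and of a different nature: given $\widehat{x}\leq(1-\varepsilon)\widehat{y}$ with $y$ full and $x'\ll x$, one has $\widehat{x'}+\widehat{u}\ll\widehat{y}+\widehat{u}$ (both sides full), so by \autoref{alphadditive} and the $\ll$-preservation in (iv), $x'+\alpha(\widehat{u})=\alpha(\widehat{x'}+\widehat{u})\ll\alpha(\widehat{y}+\widehat{u})=y+\alpha(\widehat{u})$, and \emph{weak cancellation} gives $x'\leq y$. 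The use of weak cancellation is the idea your proposal is missing. For (i)$\Rightarrow$(ii) you have correctly identified the ingredients (\autoref{smallsoft}, \autoref{lma:second_lemma}, truncation by $u$), but the proof is left as a sketch and the quantitative heart is absent: the paper first arranges $\sum_{i=1}^\infty m x_i\leq x\leq u$ with each $x_i$ full and $u\leq n_i x_i$, then takes a full supersoft $z$ with $\widehat{z}\leq\big(\sum_i\tfrac{1}{\gamma n_i}\big)\widehat{u}$ and decomposes it as $z=\sum_i\alpha(t_i\widehat{z})$ with $t_i=\tfrac{1}{\varepsilon\gamma n_i}$, so that $\gamma n_i\,\widehat{z_i\wedge u}\leq\widehat{u}\leq n_i\widehat{x_i}$ puts each truncated piece within reach of the local hypothesis, giving $z_i\wedge u\leq m x_i$ and $z=\sum_i(z_i\wedge u)\leq x$. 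Your phrase ``$m$ controlled by softness'' does not correspond to how $m$ is actually absorbed (it is absorbed by the choice $\sum_i mx_i\leq x$), and without the explicit scalars $t_i$ the reassembly step cannot be checked. As it stands the proposal establishes the equivalence of (ii), (iii), (iv) only conditionally on (iii), and does not close the cycle through (i).
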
	
\begin{proof}
(i)$\implies$(ii):
Let $m\in\NN$ and $\gamma\geq 1$ such that $A$ has local weak $(m,\gamma)$-comparison.
Let $x\in\Cu(A)$ be full.
As above, set $u=[1_A]$.
Replacing $x$ by $x\wedge u$ if necessary, we may assume that $x\leq u$.
(Note that $x\wedge u$ remains full by \autoref{wedgefull}.)
Using \autoref{smallsoft}, we can choose a sequence $(x_i)_i$ of full elements in $\Cu(A)$ such that $\sum_{i=1}^\infty mx_i\leq x$. 
Indeed, we first find $x_1$ full such that $(m+1)x_1\leq x$, and then inductively we find $x_i$ full such that $(m+1)x_i\leq x_{i-1}$. Now 
\[
\sum_{i=1}^k mx_i\leq (\sum_{i=1}^{k-1}mx_i)+x_{k-1}\leq (\sum_{i=1}^{k-2}mx_i)+x_{k-2}\leq \dots \leq mx_1+x_1\leq x.\] 
Passing to the supremum over $k$ we get the result.

Since $x_i$ is full for each $i$, there exists $n_i\in\NN$ such that $u\leq n_ix_i$. 
Clearly, we may further assume that $\sum_{i=1}^\infty \tfrac{1}{n_i}\leq 1$.
Set
\[
\varepsilon=\sum_{i=1}^\infty \frac1{\gamma n_i}.
\]
Applying \autoref{smallsoft} again, let $z\in\Cu(A)$ be a full, supersoft element such that $\widehat{z}\leq \varepsilon\widehat{u}$.
We claim that $z\leq x$.
Set $t_i=\tfrac{1}{\varepsilon\gamma n_i}$ for each $i\in\NN$, and observe that $\sum_{i=1}^\infty t_i=1$.
Set $z_i=\alpha(t_i\widehat{z})$ for each $i$.
Since $z$ is full, we see that $z_i$ is full for each $i$. 
Using the additivity of $\alpha$ at the second step (\autoref{alphadditive}), that $\alpha$ is supremum preserving at the third step (\autoref{prp:mapAlpha} (iii)), and that $z$ is supersoft at the last step, we have
\[
\sum_{i=1}^{\infty}z_i=\sup_n\sum_{i=1}^n\alpha(t_i\widehat{z})=\sup_n\alpha(\sum_{i=1}^n t_i\widehat{z})=\alpha(\sup_n\sum_{i=1}^nt_i\widehat{z})=\alpha(\widehat{z})=z.
\]
Since $\widehat{z} \leq \widehat{u}$, \autoref{lma:second_lemma} implies
\[
z = \sum_{i=1}^\infty (z_i\wedge u).
\]
By the way we picked the sequence $(t_i)_i$, we have $\gamma n_i\widehat{z_i}\leq \widehat{u}\leq n_i\widehat{x_i}$. Since clearly $\widehat{z_i\wedge u} \leq \widehat{z_i}$, we also have $\gamma n_i\widehat{z_i\wedge u}\leq \widehat{u} \leq n_i\widehat{x_i}$. 
Since $A$ has local weak $(m,\gamma)$-comparison we conclude that $z_i\wedge u \leq  mx_i$ for all $i$. Therefore,
\[
z=\sum_{i=1}^\infty (z_i\wedge u)
\leq \sum_{i=1}^\infty mx_i\leq x,
\]
as desired.

(ii)$\implies$(iii):
Suppose that $x,y\in\Cu(A)$, with $y$ full, satisfy $\widehat{x}\leq (1-\varepsilon)\widehat{y}$ for some $\varepsilon>0$.
Let $x'\in\Cu(A)$ satisfy $x'\ll x$.
By \autoref{lma:first_lemma} there exist $y',w\in\Cu(A)$ such that $\widehat{x}\leq\widehat{y'}$, $y'+w\leq y$, and $w$ is full.
By assumption, there exists a full, supersoft element $z\in\Cu(A)$ such that $z\leq w$.
Then $\widehat{x}+\widehat{z}\leq\widehat{y'}+\widehat{z}$, whence $\alpha(\widehat{x}+\widehat{z})\leq\alpha(\widehat{y'}+\widehat{z})$.
Now, since $z$ is full we have $\widehat{x},\widehat{y'}\leq\infty\widehat{z}$.
Using \autoref{alphadditive} in the second and fourth step, and that $z$ is supersoft in the fifth step, we obtain
\[
x
\leq x+\alpha(\widehat{z})
=\alpha(\widehat{x}+\widehat{z})
\leq\alpha(\widehat{y'}+\widehat{z})
= y'+\alpha(\widehat{z})
= y'+z
\leq y'+w\leq y.
\]

(iii)$\implies$(iv)
We have already shown that $\alpha$ preserves order and suprema of increasing sequences (\autoref{prp:mapAlpha} (iii)), and that $\alpha$ is additive on full functions (\autoref{alphadditive}). It remains to show that it preserves the way below relation.
Let us show first that if $x,y\in\Cu(A)$ are such that  $\widehat{x}\leq \widehat{y}$, and  $x$ is full and soft, then $x\leq y$ (cf. \cite[Theorem~5.2.18]{AntPerThi18TensorProdCu}). Choose a full element $x'\in\Cu(A)$ such that $x'\ll x$ (see \autoref{ExistenceofSupersoft}). Since $x$ is soft, $\widehat{x'}\ll \widehat{x}\leq \widehat{y}$.
By strict comparison on full elements, $x'\leq y$. Passing to the supremum over all full $x'\ll x$, we get $x\leq y$.
Now let $f,g\in L(F(\Cu(A)))$ be full and such that $f\ll g$. Since $\alpha(g)=\sup_{x\ll \alpha(g)} x$, and $z\mapsto \widehat{z}$ is supremum preserving, we can choose $x\ll \alpha(g)$ such that $f\leq \widehat{x}$.  Since $\alpha(f)$ is soft and full, we deduce that $\alpha(f)\leq x\ll \alpha(g)$, as desired.

(iii)$\implies$(i):
This follows taking $m=1$ and any value $\gamma>1$.

(iv)$\implies$(iii):
Suppose that $x,y\in\Cu(A)$, with $y$ full, satisfy $\widehat{x}\leq (1-\varepsilon)\widehat{y}$ for some $\varepsilon>0$.
Let $x'\ll x$.
Then $\widehat{x'}\ll \widehat{y}$, and thus $\widehat{x'}+\widehat{u}\ll \widehat{y}+\widehat{u}$.
Using \autoref{alphadditive} at the first and last steps, and that $\alpha$ is $\ll$-preserving at the second step, we deduce that
\[
x'+\alpha(\widehat{u})
= \alpha(\widehat{x'}+\widehat{u})
\ll \alpha(\widehat{y}+\widehat{u})
= y+\alpha(\widehat{u}).
\]
By weak cancellation, we get $x'\leq y$.
Passing to the supremum over all $x'\ll x$, we obtain $x\leq y$.
\end{proof}

\begin{thm}
\label{thm:strict}
Let $A$ be a separable C*-algebra of stable rank one that has no nonzero elementary ideal-quotients.
Then the following conditions are equivalent:
\begin{enumerate}[\rm(i)]
    	\item $\Cu(A)$ has $m$-comparison for some $m\geq 0$.
	\item
	There exist  $N\in \NN$ and $\gamma\geq 1$ such that $\gamma \widehat{x}\leq \widehat{y}$ implies $x\leq Ny$ for all $x,y\in \Cu(A)$.
	\item
	For each $x\in \Cu(A)$ there exists $y\leq x$ that is supersoft and such that $\infty y=\infty x$.
	\item
	$\Cu(A)$ has strict comparison.

\end{enumerate}
Moreover, these conditions imply that $\alpha$ is a $\Cu$-morphism.
\end{thm}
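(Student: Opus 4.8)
The plan is to establish the cyclic chain $\text{(iv)}\Rightarrow\text{(i)}\Rightarrow\text{(ii)}\Rightarrow\text{(iii)}\Rightarrow\text{(iv)}$ and then deduce the moreover clause. Throughout I will use the standing facts available under our hypotheses: since $A$ has no nonzero elementary ideal-quotients, the map $\alpha$ is additive and its range is precisely the set of supersoft elements (the Corollary following \autoref{alphadditive} and \autoref{ExistenceofSupersoft}), and by \autoref{thm:realizingprob1} every $f\in L(F(\Cu(A)))$ satisfies $f=\widehat{\alpha(f)}$. The two easy implications are immediate. Strict comparison is exactly $0$-comparison (see \autoref{pgr:strictComparison}), which gives (iv)$\Rightarrow$(i). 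For (i)$\Rightarrow$(ii), applying $m$-comparison with $y_0=\cdots=y_m=y$ shows that $\widehat x\le(1-\varepsilon)\widehat y$ implies $x\le(m+1)y$; taking $\gamma=2$, the hypothesis $2\widehat x\le\widehat y$ forces $\widehat x\le\tfrac12\widehat y$, whence $x\le(m+1)y$, so (ii) holds with $N=m+1$.

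The heart of the argument is (ii)$\Rightarrow$(iii), and this is the step I expect to be the main obstacle: condition (ii) only controls elements up to the multiplicative constant $N$, so a naive application yields a supersoft element bounded by $Nx$ rather than by $x$, and a summation construction modelled on the proof of \autoref{thm:locweak} is needed to divide out this constant. Given $x\in\Cu(A)$, I first pass to the closed, two-sided ideal $I\subseteq A$ with $\Cu(I)=\{z:z\le\infty x\}$ (see \autoref{pgr:ideal}), in which $x$ is full; the ideal $I$ inherits separability, stable rank one, and the absence of elementary ideal-quotients. Using \autoref{smallsoft} together with a telescoping estimate (first pick $x_1$ full with $(N+1)x_1\le x$, then inductively $x_i$ full with $(N+1)x_i\le x_{i-1}$), I obtain full elements $(x_i)_{i\ge1}$ of $\Cu(I)$ with $\sum_{i=1}^\infty Nx_i\le x$. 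For each $i$, \autoref{smallsoft} supplies a soft full $w_i$ with $\lceil\gamma\rceil w_i\le x_i$, and setting $z_i=\alpha_I(\widehat{w_i})$ gives, by \autoref{basicsupersoft}(iii), a supersoft full element with $\widehat{z_i}=\widehat{w_i}$ and hence $\gamma\widehat{z_i}\le\widehat{x_i}$; condition (ii) then yields $z_i\le Nx_i$. Thus $y:=\sum_{i=1}^\infty z_i\le\sum_{i=1}^\infty Nx_i\le x$. Since $\alpha_I$ is additive and preserves suprema of increasing sequences, $\alpha_I(\widehat y)=\sup_n\sum_{i=1}^n\alpha_I(\widehat{z_i})=\sup_n\sum_{i=1}^n z_i=y$, so $y$ is supersoft in $\Cu(I)$, while $z_1\le y\le x$ forces $\infty y=\infty x$. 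Finally the identifications of \autoref{pgr:observation} (exactly as in Claim~2 of \autoref{thm:realizingprob1}) show that supersoftness computed in $\Cu(I)$ coincides with supersoftness in $\Cu(A)$, so $y$ is as required.

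For (iii)$\Rightarrow$(iv) I verify condition~(2) of \autoref{pgr:strictComparison}. Suppose $\widehat x\le(1-\varepsilon)\widehat y$ and fix $x'\ll x$. By \autoref{lma:basic_cones}(ii) (with $1<\tfrac{1}{1-\varepsilon}$) we get $\widehat{x'}\ll\tfrac{1}{1-\varepsilon}\widehat x\le\widehat y$, hence $\widehat{x'}\ll\widehat y$. Applying \autoref{lma:first_lemma} produces $y',w$ with $\widehat{x'}\le\widehat{y'}$, $y'+w\le y$, and $\infty w=\infty y$, and by (iii) there is a supersoft $z\le w$ with $\infty z=\infty y$. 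Since $x',y'\le\infty y=\infty z$, two applications of \autoref{alphadditive} (case (ii)) together with $\alpha(\widehat z)=z$ give
\[
x'+z=\alpha(\widehat z+\widehat{x'})\le\alpha(\widehat z+\widehat{y'})=y'+z,
\]
so that $x'\le x'+z\le y'+z\le y'+w\le y$. Taking the supremum over $x'\ll x$ yields $x\le y$, i.e.\ strict comparison.

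For the moreover clause it suffices to show that $\alpha$ preserves the way-below relation, the other $\Cu$-morphism properties (preservation of $0$, order, addition, and suprema of increasing sequences) being already in place from \autoref{prp:mapAlpha} and the additivity of $\alpha$. Under strict comparison I first record that if $x$ is soft and $\widehat x\le\widehat y$ then $x\le y$: for $x'\ll x$, softness gives $\widehat{x'}\ll\widehat x\le\widehat y$, hence $\widehat{x'}\le(1-\delta)\widehat y$ for some $\delta>0$, so strict comparison yields $x'\le y$, and one passes to the supremum. Now let $f\ll g$ in $L(F(\Cu(A)))$. Using $g=\widehat{\alpha(g)}$ from \autoref{thm:realizingprob1}, choose $x\ll\alpha(g)$ with $f\le\widehat x$; since $\alpha(f)$ is supersoft, hence soft, with $\widehat{\alpha(f)}=f\le\widehat x$, the recorded fact gives $\alpha(f)\le x\ll\alpha(g)$, so $\alpha(f)\ll\alpha(g)$. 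This completes the proof that $\alpha$ is a $\Cu$-morphism.
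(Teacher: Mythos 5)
Your proof is correct and, apart from one step, follows the paper's route: the implications (iv)$\Rightarrow$(i)$\Rightarrow$(ii) are handled identically; (iii)$\Rightarrow$(iv) uses \autoref{lma:first_lemma}, \autoref{alphadditive} and the supersoft element from (iii) exactly as the paper does (you phrase the key inequality via monotonicity of $\alpha$ where the paper notes that $x'+w$ and $y'+w$ are supersoft, but the computation is the same); and the moreover clause is the paper's argument, with the cited fact about soft elements proved inline rather than quoted. The one genuine divergence is (ii)$\Rightarrow$(iii). You anticipate that a ``naive'' use of (ii) only produces a supersoft element below $Nx$, and so you run an infinite summation modelled on \autoref{thm:locweak}: full $x_i$ with $\sum_i Nx_i\leq x$, supersoft full $z_i\leq Nx_i$, and $y=\sum_i z_i$, whose supersoftness you recover from the additivity and supremum-preservation of $\alpha_I$; the transfer between $\Cu(I)$ and $\Cu(A)$ via \autoref{pgr:observation} is handled correctly, so this is a valid argument. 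It is, however, more elaborate than necessary: \autoref{smallsoft} produces, for a full element and \emph{any} prescribed divisor $n$, a soft full element $x'$ with $nx'\leq x$, so the paper simply takes $n=MN$ with $M\geq\gamma$, sets $y=\alpha(M\widehat{x'})$, observes $\gamma\widehat{y}\leq\widehat{Mx'}$, and concludes $y\leq N(Mx')\leq x$ from (ii) in a single application --- the constant $N$ is absorbed by the choice of divisor up front rather than divided out by a telescoping sum. Both arguments establish the same implication; the paper's is shorter because the flexibility you need is already built into \autoref{smallsoft}.
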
	
\begin{proof}
(i)$\implies$(ii)  Taking $N=m+1$ and $\gamma>1$, this is the particular case of $y_0=\dots=y_m=y$ in the definition of $m$-comparison. 

(ii)$\implies$(iii):
Let $x\in\Cu(A)$ and let $W=\{y\in \Cu(A) \colon y\leq \infty x\}$ be the ideal generated by $x$.
Using the natural correspondence between closed, two-sided ideals of $A$ and ideals of $\Cu(A)$ (see \autoref{pgr:ideal}), we let $I\subseteq A$ be the closed, two-sided ideal of $A$ such that $\Cu(I)=W$.
Let $M\in\NN$ be such that $M\geq\gamma$.
Since $I$ satisfies the assumptions of \autoref{smallsoft}, we can find $x'\in \Cu(I)\subseteq \Cu(A)$ such that $x'$ is soft, $MNx'\leq x$ and $\infty x'=\infty x$.

Now set $y=\alpha(M\widehat{x'})$. 
Note that $y$ is supersoft by \autoref{basicsupersoft} (iii), since $x'$ (and thus also $Mx'$) is soft.
Then 
\[
\gamma\widehat{y}\leq M\widehat{y}=M\widehat{x'}=\widehat{Mx'}.
\]
Hence,  by assumption, $y\leq MNx'\leq x$. 
Since $\infty\widehat{y}=\infty\widehat{x'}=\infty\widehat{x}$, we also obtain that $\infty y=\infty x'=\infty x$ (by \autoref{lma:basic_cones} (iii)).

(iii)$\implies$(iv):
Let $x,y\in\Cu(A)$ and $\varepsilon>0$ satisfy $\widehat{x}\leq (1-\varepsilon)\widehat{y}$.
Let $x'\ll x$.
Then $\widehat{x'} \ll \widehat{y}$.
By \autoref{lma:first_lemma}, there exist $y',z\in \Cu(A)$ such that $\widehat{x'}\leq \widehat{y'}$, $y'+z\leq y$, and $y\leq \infty z$.
Let $w\in \Cu(A)$ be supersoft, such that $w\leq z$ and $\infty w=\infty z$.

We have by construction that $\widehat{x'}\leq \infty\widehat{w}$. Therefore, using \autoref{alphadditive} at the first step, and that $w$ is supersoft at the second step, we get that
\[
\alpha(\widehat{x'}+\widehat{w})=x'+\alpha(\widehat{w})=x'+w.
\]
Thus $x'+w$ is supersoft. Likewise, $y'+w$ is supersoft. Since $\widehat{x'+w}\leq\widehat{y'+w}$, it follows after applying $\alpha$ on both sides that $x'+w\leq y'+w$. Therefore $x'\leq x'+w\leq y'+w\leq y$.
Passing to the supremum over all $x'\ll x$, we get that $x\leq y$.

(iv)$\implies$(i): 	This follows taking $m=0$.

Lastly, let us show that (iv) implies that $\alpha$ is a $\Cu$-morphism.
As in the proof of \autoref{thm:locweak} (iii)$\implies$(iv), we only need to check preservation of the way below relation.
Let $f,g\in L(F(\Cu(A)))$ satisfy $f\ll g$.
As in the proof of \autoref{thm:locweak} (iii)$\implies$(iv), we obtain $x\in \Cu(A)$ such that $f\leq\widehat{x}$ and $x\ll\alpha(g)$.
By \cite[Theorem~5.2.18]{AntPerThi18TensorProdCu}, if elements $y,z$ in a \CuSgp{} with strict comparison satisfy that $\widehat{y}\leq \widehat{z}$, and if $y$ is soft, then $y\leq z$.
Observe now that $\alpha(f)$ is supersoft, and therefore soft. Indeed, we have from \autoref{thm:realizingprob1} that there exists $w\in \Cu(A)$ such that $\widehat{w}=f$ and $\alpha(f)=w$. Since $\widehat{\alpha(f)}=f\leq\widehat{x}$, we get that $\alpha(f)\leq x$, and so $\alpha(f)\ll\alpha(g)$.
\end{proof}

\section{Nonseparable C*-algebras}
\label{secnonsep}
Here we show that the hypothesis of separability can be dropped in some of the results from the previous sections.
To this end, we rely on the model theory of C*-algebras and in particular on the Downward L\"owenheim-Skolem Theorem for C*-algebras.
For the model theory of C*-algebras we refer the reader to \cite{FarHarLupRobTikVigWin06arX:ModelThy}. 

Given a C*-algebra $A$ and a C*-subalgebra $B$, we write $B\prec A$ if $B$ is an \emph{elementary submodel} of $A$.
This means that for every \emph{formula} $\varphi$ in the language of C*-algebras and every $n$-tuple $\overline{a}$ in $B$, we have $\varphi^B(\overline{a})=\varphi^A(\overline{a})$  (see \cite[Definition~2.3.3]{FarHarLupRobTikVigWin06arX:ModelThy}).
By the Downward L\"owenheim-Skolem Theorem (\cite[Theorem~2.6.2]{FarHarLupRobTikVigWin06arX:ModelThy}), every C*-algebra has a separable elementary submodel. Important to us in what follows is that if $B\prec A$ then the induced map $\Cu(B)\to \Cu(A)$ is an order-embedding (\cite[Lemma~8.1.3]{FarHarLupRobTikVigWin06arX:ModelThy}). Recall that an order-preserving map $\varphi\colon S\to T$ between partially ordered sets is an \emph{order-embedding} provided that  $\varphi(x)\leq \varphi(y)$ implies $x\leq y$, for $x,y\in S$.

The next result removes the separability assumption in \autoref{CstarGG}.

\begin{thm}
\label{nonsepGG}
Let $A$ be a unital C*-algebra of stable rank one, and let $k\in\NN$.
Then $A$ has no nonzero representations of dimension less than $k$ if and only if there exists a *-homomorphism $\varphi\colon M_k(C_0((0,1]))\to A$ with full range.
\end{thm}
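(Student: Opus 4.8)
The strategy is to reduce to the separable case \autoref{CstarGG} by passing to a separable elementary submodel $B\prec A$, transporting the divisibility hypothesis from $A$ to $B$ and the fullness of the conclusion from $B$ back to $A$. The backward implication needs no separability and is proved exactly as in \autoref{CstarGG}: given $\varphi$ with full range and a nonzero representation $\pi\colon A\to M_j(\CC)$ with $j<k$, the composite $\pi\circ\varphi$ vanishes, since there is no nonzero $*$-homomorphism $M_k(\CC)\to M_j(\CC)$ when $j<k$; hence $\ker\pi$ contains the full range of $\varphi$, forcing $\pi=0$. I therefore concentrate on the forward implication.

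Assume $A$ has no nonzero representations of dimension less than $k$. By \cite[Theorem~5.3]{RobRor13Divisibility}, $[1_A]$ is weakly $(k,n)$-divisible in $\Cu(A)$ for some $n\in\NN$. Since $1_A$ is a projection, $[1_A]$ is compact, so the definition of weak divisibility (\autoref{pgrweakdiv}) need only be tested at $x'=[1_A]$; thus there are $y_1,\dots,y_n\in\Cu(A)$ with $ky_j\le[1_A]$ for each $j$ and $[1_A]\le\sum_{j=1}^n y_j$. From $y_j\le ky_j\le[1_A]$ we get $y_j\in W(A)$, so I may fix $r\in\NN$ and positive elements $c_1,\dots,c_n\in M_r(A)$ with $y_j=[c_j]$. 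Crucially, only the finitely many matrix entries of $c_1,\dots,c_n$ enter into these two $\Cu$-inequalities.

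Next I would apply the Downward L\"owenheim--Skolem Theorem (\cite[Theorem~2.6.2]{FarHarLupRobTikVigWin06arX:ModelThy}) to choose a separable elementary submodel $B\prec A$ containing $1_A$ and all matrix entries of $c_1,\dots,c_n$, so that $c_1,\dots,c_n\in M_r(B)_+$. Since stable rank one is axiomatizable, $B$ is a separable, unital (with $1_B=1_A$) C*-algebra of stable rank one. Let $\kappa\colon\Cu(B)\to\Cu(A)$ be the order-embedding induced by $B\prec A$ (\cite[Lemma~8.1.3]{FarHarLupRobTikVigWin06arX:ModelThy}); it sends $[c_j]_B\mapsto y_j$ and $[1_B]\mapsto[1_A]$. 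The inequalities $ky_j\le[1_A]$ and $[1_A]\le\sum_j y_j$ are the images under $\kappa$ of the corresponding inequalities in $\Cu(B)$, and since $\kappa$ reflects the order, $k[c_j]_B\le[1_B]$ and $[1_B]\le\sum_j[c_j]_B$ hold in $\Cu(B)$. As $[1_B]$ is compact, this exhibits $[1_B]$ as weakly $(k,n)$-divisible in $\Cu(B)$. To see that $B$ has no nonzero representations of dimension less than $k$, note that otherwise $B$ would admit a nonzero irreducible representation $\rho\colon B\to M_j(\CC)$ with $j<k$, and $\Cu(\rho)\colon\Cu(B)\to\Cu(M_j(\CC))=\NNbar$ would send $[1_B]\mapsto j$ and $[c_i]_B\mapsto r_i$ with $kr_i\le j<k$; this forces $r_i=0$ for all $i$, contradicting $j\le\sum_i r_i$. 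Hence \autoref{CstarGG} applies to $B$ and yields $\varphi\colon M_k(C_0((0,1]))\to B$ with full range in $B$. Finally, the generator $c=\varphi(e_{11}\otimes\iota)$ generates the range of $\varphi$ as an ideal, and its fullness in the unital algebra $B$ provides $a_1,b_1,\dots,a_m,b_m\in B$ with $\|1_B-\sum_i a_i c\,b_i\|<1$; since $1_B=1_A$ and $a_i,b_i\in A$, the element $\sum_i a_i c\,b_i$ is invertible in $A$ as well, so $c$---and hence the range of $\varphi$---is full in $A$.

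I expect the crux to be the model-theoretic bookkeeping of the third paragraph, namely guaranteeing that the divisibility of $[1_A]$ survives the passage to $B$. What makes this possible is the compactness of $[1_A]$, which collapses the universally quantified ``for all $x'\ll x$'' in the definition of weak divisibility to the single instance $x'=[1_A]$; that instance is witnessed by finitely many elements of $A$, which the L\"owenheim--Skolem Theorem can absorb into $B$, after which the order-embedding $\kappa$ transports the inequalities faithfully. The remaining inputs---that stable rank one is preserved under elementary submodels, and that fullness in a unital algebra is detected by finitely many elements---are routine but should be recorded carefully.
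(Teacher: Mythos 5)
Your proof is correct and follows essentially the same route as the paper: reduce to the separable case via the Downward L\"owenheim--Skolem Theorem by absorbing the finitely many witnesses of weak $(k,n)$-divisibility of $[1_A]$ into a separable elementary submodel $B\prec A$ of stable rank one, use the order-embedding $\Cu(B)\to\Cu(A)$ to transport the inequalities, apply \autoref{CstarGG} in $B$, and observe that fullness of the range in $B$ (which contains $1_A$) gives fullness in $A$. The only differences are cosmetic: the paper re-cites \cite[Corollary~5.4]{RobRor13Divisibility} to conclude that $B$ has no small-dimensional representations, where you verify this directly from the divisibility inequalities, and it takes the witnesses in $A_+$ rather than $M_r(A)_+$.
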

\begin{proof}
The proof of the easy direction in \autoref{CstarGG} does not make use of the separability hypothesis.
Hence, it applies here. 

Suppose that $A$ is a unital C*-algebra of stable rank one without nonzero representations of dimension less than $k$.
By \cite[Corollary~5.4]{RobRor13Divisibility}, the element $[1]$ is weakly $(k,n)$-divisible for some $n$ (see 
\autoref{pgrweakdiv}).
Thus, there exist $a_1,\ldots,a_n\in A_+$ such that $k[a_i]\leq [1]$ for all $i$ and $[1]\leq \sum_{i=1}^n [a_i]$.  Apply the Downward L\"owenheim-Skolem Theorem to obtain a separable  C*-subalgebra $B\prec A$ that  contains $1,a_1,\ldots,a_n$.
Since the inclusion of $B$ in $A$ induces an order-embedding of $\Cu(B)$ in $\Cu(A)$ (\cite[Lemma~8.1.3]{FarHarLupRobTikVigWin06arX:ModelThy}), the inequalities 
$k[a_i]\leq [1]$ for all $i$ and $[1]\leq \sum_{i=1}^n [a_i]$ also hold in $\Cu(B)$.
By \cite[Corollary~5.4]{RobRor13Divisibility}, $B$ has no representations of dimension less than $k$.
On the other hand, by \cite[Lemma~3.8.2]{FarHarLupRobTikVigWin06arX:ModelThy}), the property of having stable rank one is elementary and therefore passes to elementary submodels.	
We can thus apply \autoref{CstarGG} in $B$ to obtain a *-homomorphism $\varphi\colon M_k(C_0((0,1]))\to B\subseteq A$ whose range is full in $B$. 
Since $1\in B$, the range of $\varphi$ is also full in $A$.
\end{proof}

Next we extend \autoref{thm:realizingprob2} to the nonseparable case.
We start with a preparatory result.

\begin{lma}
\label{lambdaAB}
Let $A$ be a C*-algebra, let $B\prec A$, and let $a,b\in B_+$.
Then $\lambda([a])\leq \lambda([b])$ for all $\lambda\in F(\Cu(A))$ if and only if $\lambda([a])\leq \lambda([b])$ for all $\lambda\in F(\Cu(B))$.	
\end{lma}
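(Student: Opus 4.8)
The statement asserts that comparison of ranks of positive elements of $B$ is the same whether tested against functionals on $\Cu(A)$ or against functionals on $\Cu(B)$. The natural strategy is to exploit the order-embedding $\iota\colon\Cu(B)\to\Cu(A)$ induced by the elementary inclusion $B\prec A$, together with the identification (from \autoref{backgroundF}) of $F(\Cu(A))$ with the lower semicontinuous $2$-quasitraces $\QT(A)$, and similarly for $B$. Restriction of quasitraces gives a natural map $\QT(A)\to\QT(B)$, equivalently a map $F(\Cu(A))\to F(\Cu(B))$ by precomposition with $\iota$; and \autoref{prp:extFctl} (applied to the realification, or more precisely the discussion in \autoref{pgr:observation}) gives a way to extend functionals. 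So the two sides of the claimed equivalence are related by pushing functionals back and forth along $\iota$.

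\textbf{Key steps.}
First I would reformulate both inequalities using \autoref{lma:basic_cones}. For elements $x,y$ of an \axiomO{5} \CuSgp{}, the condition $\lambda(x)\le\lambda(y)$ for all $\lambda$ is exactly $\widehat{x}\le\widehat{y}$ in $L(F(\cdot))$, and by \cite[Proposition~2.2.6]{Rob13Cone} (quoted just before \autoref{hat-inf-preserving}) this is equivalent to a purely algebraic comparison condition: for every $x'\ll x$ and every $\varepsilon>0$ there are $M,N\in\NN$ with $\tfrac{M}{N}>1-\varepsilon$ and $Mx'\le Ny$. The crucial point is that this reformulated condition is stated \emph{entirely} in terms of the order, addition, and the relation $\ll$ on the Cuntz semigroup, using only finitely many elements and the positive integers $M,N$. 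The forward direction ($A$ to $B$) is then essentially trivial: restricting functionals on $\Cu(A)$ along $\iota$ gives functionals on $\Cu(B)$, so fewer functionals are being tested. For the reverse direction ($B$ to $A$), I would show that each algebraic inequality $Mx'\le Ny$ valid in $\Cu(B)$ remains valid in $\Cu(A)$: this is immediate because $\iota$ preserves addition and order. The subtlety is that $\ll$ is not automatically preserved or reflected by order-embeddings, so I must be careful about the quantifier ``for all $x'\ll x$''. Here I would use that $\iota$ is an order-embedding reflecting order together with the fact that elements $x'\ll x$ in $\Cu(A)$ can be approximated from within the image of $\Cu(B)$; more directly, I would run the \cite[Proposition~2.2.6]{Rob13Cone} criterion in $\Cu(B)$, where it holds by hypothesis, and simply transport each finite algebraic inequality to $\Cu(A)$ via $\iota$, concluding $\widehat{[a]}\le\widehat{[b]}$ in $L(F(\Cu(A)))$.

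\textbf{The main obstacle.}
The genuinely delicate point is matching up the two directions of comparison without a clean statement that $F(\Cu(B))$ is the \emph{image} of $F(\Cu(A))$ under restriction. Unlike the ideal situation of \autoref{pgr:observation}, where restriction is surjective with an explicit right inverse, here I only have an order-embedding at the level of Cuntz semigroups, and it is not obvious that every functional on $\Cu(B)$ extends to one on $\Cu(A)$. The cleanest way around this is to avoid extending functionals altogether and instead argue entirely through the algebraic criterion of \cite[Proposition~2.2.6]{Rob13Cone}, as sketched above, so that the equivalence reduces to the tautology that an order-embedding reflects and (weakly) transports inequalities of the form $Mx'\le Ny$. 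Thus I expect the proof to be short, with the only real work being a careful verification that the $\ll$-quantifier behaves well under $\iota$; concretely, that for $x'\ll[a]$ in $\Cu(A)$ one may, after possibly shrinking, assume $x'$ lies in (the image of) $\Cu(B)$, which follows because $[a]\in\iota(\Cu(B))$ and the embedding respects \axiomO{2}-approximations.
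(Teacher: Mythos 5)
Your toolkit is the right one---for the substantive direction this is exactly the paper's proof: apply \cite[Proposition~2.2.6]{Rob13Cone} in $\Cu(A)$ to get, for each $\varepsilon,\delta>0$, integers $M,N$ with $M/N\geq 1-\delta$ and $M[(a-\varepsilon)_+]\leq N[b]$ in $\Cu(A)$, reflect this inequality into $\Cu(B)$ via the order-embedding, evaluate on an arbitrary $\lambda\in F(\Cu(B))$, and let $\delta,\varepsilon\to 0$. However, your assignment of arguments to directions is reversed, and one of your explicit claims is false as stated. Restriction of functionals, $\lambda\mapsto\lambda\circ\iota$, proves the implication from ``the inequality holds for all $\lambda\in F(\Cu(B))$'' to ``it holds for all $\lambda\in F(\Cu(A))$'': given $\lambda$ on $\Cu(A)$, its restriction is a functional on $\Cu(B)$, where the inequality is assumed. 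It does \emph{not} prove the implication from $A$ to $B$, which is the one you call ``essentially trivial\dots so fewer functionals are being tested''; that reasoning would require every functional on $\Cu(B)$ to arise as a restriction from $\Cu(A)$, which is precisely the extension problem you later (correctly) identify as unavailable. So the direction you dismiss as trivial is the one that carries all the content, and the direction you work hardest on (pushing inequalities $Mx'\leq Ny$ forward from $\Cu(B)$ to $\Cu(A)$, with the attendant fuss about the $\ll$-quantifier) is the one that restriction settles in a single line.

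Your fallback in the final paragraph---run both directions through the algebraic criterion, using that an order-embedding both transports and reflects inequalities of the form $Mx'\leq Ny$---does repair this, provided the $\ll$-quantifier is handled on the correct side. For $A\Rightarrow B$ the point is that the elements $[(a-\varepsilon)_+]$ are way below $[a]$ in \emph{both} semigroups and are cofinal under $[a]$ in $\Cu(B)$, so reflecting $M[(a-\varepsilon)_+]\leq N[b]$ from $\Cu(A)$ into $\Cu(B)$ verifies the criterion there; equivalently, and more simply, one can skip re-verifying the criterion in $\Cu(B)$ and just evaluate the reflected inequality on an arbitrary functional of $\Cu(B)$ before passing to the limit, which is what the paper does. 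With that repair your argument is correct and coincides with the paper's.
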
		
\begin{proof}
The backward implication follows directly using that every functional on $\Cu(A)$ restricts to a functional on $\Cu(B)$.
To show the converse, suppose that $\lambda([a])\leq \lambda([b])$ for all $\lambda\in F(\Cu(A))$.
Let $\varepsilon>0$ and $\delta>0$.
By \cite[Proposition~2.2.6]{Rob13Cone} there exist $M,N\in\NN$ such that $\tfrac{M}{N}\geq 1-\delta$ and $M[(a-\varepsilon)_+]\leq N[b]$ in $\Cu(A)$.
Since the inclusion $B\to A$ induces an order-embedding $\Cu(B)\to\Cu(A)$, this inequality also holds in $\Cu(B)$.
Fix $\lambda\in F(\Cu(B))$.
Evaluating both sides of $M[(a-\varepsilon)_+]\leq N[b]$ on $\lambda$ we get  
\[
(1-\delta)\lambda\big( [(a-\varepsilon)_+] \big) \leq \lambda([b]).
\] 
Since this holds for all $\delta,\varepsilon>0$ we conclude that $\lambda([a])\leq \lambda([b])$, as desired.	
\end{proof}

\begin{thm}
\label{prob2nonsep}
Let $A$ be a unital C*-algebra of stable rank one with no finite dimensional quotients.
Set $u=[1_A]$ and recall that $F_u(\Cu(A))\subseteq F(\Cu(A))$ denotes the set of functionals normalized at $u$.
Then for each $f\in \LAff(F_u(\Cu(A)))_{++}^\sigma$ there exists $z\in \Cu(A)$ such that $\widehat{z}|_{F_u(\Cu(A))}=f$.
\end{thm}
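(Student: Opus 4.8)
The plan is to transfer the separable result \autoref{thm:realizingprob2} down to a separable elementary submodel and then push the realizing element back up, exactly in the spirit of the proof of \autoref{nonsepGG}. As a preliminary reduction I would pass from $f$ to a full function on all of $F(\Cu(A))$: since $u=[1_A]$ is a full, compact element of $\Cu(A)$ and $\Cu(A)$ satisfies \axiomO{5}, \autoref{surjectiveres} yields a unique full $\tilde f\in L(F(\Cu(A)))$ with $\tilde f|_{F_u(\Cu(A))}=f$. Because $L(F(\Cu(A)))=\Cu(A)_R$ (\autoref{lma:basic_cones}~(i)), I may write $\tilde f=\sup_n\tfrac{\widehat{x_n}}{k_n}$ for an increasing sequence with $x_n\in\Cu(A)$ and $k_n\in\NN$.

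Next I would build the submodel. Each $x_n$ is the supremum of an increasing sequence $([c_{n,j}])_j$ with $c_{n,j}$ positive elements of matrix algebras $M_{m_{n,j}}(A)$ (approximate a representative of $x_n$ in $A\otimes\mathcal K$ by cut-downs of elements of $\bigcup_m M_m(A)$). Since $A$ has no finite dimensional representations, \cite[Corollary~5.4]{RobRor13Divisibility} provides, for each $k$, elements $a^{(k)}_1,\dots,a^{(k)}_{n_k}\in A_+$ witnessing weak $(k,n_k)$-divisibility of $[1_A]$. By the Downward L\"owenheim--Skolem theorem I would choose a separable $B\prec A$ containing $1_A$, all matrix entries of all $c_{n,j}$, and all the $a^{(k)}_i$. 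Then $B$ is separable, unital, and of stable rank one (this property is elementary, \cite[Lemma~3.8.2]{FarHarLupRobTikVigWin06arX:ModelThy}); transferring the divisibility witnesses through the order-embedding $\Cu(B)\hookrightarrow\Cu(A)$ (\cite[Lemma~8.1.3]{FarHarLupRobTikVigWin06arX:ModelThy}) shows $[1_B]$ is weakly $(k,n_k)$-divisible for every $k$, so by \cite[Corollary~5.4]{RobRor13Divisibility} $B$ has no nonzero finite dimensional quotients. Setting $\tilde x_n:=\sup_j[c_{n,j}]\in\Cu(B)$, the image of $\tilde x_n$ in $\Cu(A)$ is $x_n$.

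Then I would transport the function. The induced map $\Cu(B)\to\Cu(A)$ is a \CuMor{} (hence preserves $\ll$) and an order-embedding; combined with the criterion \cite[Proposition~2.2.6]{Rob13Cone}, any rank inequality $\widehat p\le\widehat q$ between elements $p,q$ coming from $\Cu(B)$ that holds in $L(F(\Cu(A)))$ automatically holds in $L(F(\Cu(B)))$. This is the forward direction of \autoref{lambdaAB}, whose proof applies verbatim to elements of $M_\infty(B)_+$. Transferring the inequalities $k_{n+1}\widehat{x_n}\le k_n\widehat{x_{n+1}}$ shows $(\tfrac{\widehat{\tilde x_n}}{k_n})_n$ is increasing in $L(F(\Cu(B)))$, so $f_0:=\sup_n\tfrac{\widehat{\tilde x_n}}{k_n}$ lies in $L(F(\Cu(B)))$. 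For fullness of $f_0$, I use that $\widehat{[1_A]}\ll 2\widehat{[1_A]}\le\infty\tilde f$ (by \autoref{lma:basic_cones}~(ii) and fullness of $\tilde f$) yields $k_{n_0}\widehat{[1_A]}\le m_0\widehat{x_{n_0}}$ in $L(F(\Cu(A)))$ for suitable $n_0,m_0$; transferring gives $\widehat{[1_B]}\le\infty f_0$, and since $[1_B]$ is full (\autoref{lma:basic_cones}~(iii)) the element $\infty f_0$ is the largest element of $L(F(\Cu(B)))$. Now $B$ satisfies the hypotheses of \autoref{thm:realizingprob2}, and $f_0|_{F_u(\Cu(B))}\in\LAff(F_u(\Cu(B)))_{++}^\sigma$ by fullness of $f_0$ together with \autoref{surjectiveres}; hence there is $z_0\in\Cu(B)$ with $\widehat{z_0}|_{F_u(\Cu(B))}=f_0|_{F_u(\Cu(B))}$.

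Finally, I would let $z\in\Cu(A)$ be the image of $z_0$. For $\lambda\in F_u(\Cu(A))$, writing $r(\lambda)=\lambda|_{\Cu(B)}\in F_u(\Cu(B))$, functoriality gives $\widehat z(\lambda)=\lambda(z)=r(\lambda)(z_0)=\widehat{z_0}(r(\lambda))=f_0(r(\lambda))$, while the sequence description yields $\tilde f(\lambda)=\sup_n\tfrac{\lambda(x_n)}{k_n}=\sup_n\tfrac{r(\lambda)(\tilde x_n)}{k_n}=f_0(r(\lambda))$; hence $\widehat z(\lambda)=\tilde f(\lambda)=f(\lambda)$, as required. The step I expect to be the main obstacle is the bookkeeping in the submodel construction: fitting the countably many matrix approximants of the $x_n$ and the divisibility witnesses into a single separable $B$, and checking that the rank inequalities governing $f_0$ (increasingness and fullness) genuinely pass through the order-embedding, so that the realification data of the nonseparable $A$ is already visible in $B$.
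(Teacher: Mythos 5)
Your proposal is correct and follows the same overall strategy as the paper's proof: extend $f$ to a full $\tilde f\in L(F(\Cu(A)))$ via \autoref{surjectiveres}, write $\tilde f=\sup_n\tfrac{\widehat{x_n}}{k_n}$, capture this data together with the weak-divisibility witnesses for $[1_A]$ in a separable elementary submodel, apply \autoref{thm:realizingprob2} there, and push the realizing element back up. Two execution choices differ, and both are defensible. First, the paper takes $B\prec A\otimes\mathcal{K}$ so that representatives $a_i$ of the $x_i$ lie in $B_+$; this forces a verification that $B$ is stable (via the Hjelmborg--R{\o}rdam criterion), which your choice $B\prec A$ sidesteps at the cost of approximating each $x_n$ by classes of matrix-algebra elements and working with the suprema $\tilde x_n\in\Cu(B)$. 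The one point to tighten is your claim that \autoref{lambdaAB} ``applies verbatim to elements of $M_\infty(B)_+$'': the elements you actually feed into it are the $\tilde x_n$, which are suprema and not classes of single positive elements, so you need the version of \autoref{lambdaAB} for arbitrary $p,q\in\Cu(B)$. This does follow by the same argument --- replace $(a-\varepsilon)_+$ by $p'\ll p$ and use that the order-embedding $\Cu(B)\to\Cu(A)$ is a \CuMor{} and hence preserves $\ll$, so that \cite[Proposition~2.2.6]{Rob13Cone} can be applied and the resulting integer inequalities pulled back --- but it should be stated rather than waved at. Second, your final transfer is actually lighter than the paper's: since the theorem only asks for agreement on $F_u(\Cu(A))$, and restrictions of normalized functionals of $\Cu(A)$ are normalized functionals of $\Cu(B)$, you evaluate $\widehat{z}(\lambda)=\widehat{z_0}(\lambda|_{\Cu(B)})=f_0(\lambda|_{\Cu(B)})=\tilde f(\lambda)$ directly and never need the global identity $\widehat{z}=\tilde f$ on all of $F(\Cu(A))$, which the paper establishes by running \autoref{lambdaAB} in both directions. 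Modulo the stated generalization of \autoref{lambdaAB}, your argument is complete.
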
	
\begin{proof}
Let us regard $A$ embedded in $A\otimes\mathcal{K}$ as the `upper left corner'.
Let $1_A\in A\otimes \mathcal{K}$ denote the unit of $A$.
Given $f\in \LAff(F_u(\Cu(A)))_{++}^\sigma$, apply \autoref{surjectiveres} to obtain $\tilde{f}\in L(F(\Cu(A)))$ that extends $f$. 
As noted in \autoref{lma:basic_cones} (i), we have $L(F(\Cu(A)))=\Cu(A)_R$, which allows us to choose a sequence $(x_i)_i$ in $\Cu(A)$ and a sequence of positive integers $(m_i)_i$ such that $(\tfrac{\widehat{x_i}}{m_i})_i$ is increasing and $\sup_i \tfrac{\widehat{x_i}}{m_i}=\tilde{f}$.
Choose $a_i\in (A\otimes\mathcal{K})_+$ such that $x_i=[a_i]$ for all $i$.  

Since $A$ has no finite dimensional representations, by \cite[Corollary~5.4]{RobRor13Divisibility} there exists for each $k$ an $n_k\in\NN$ such that $[1_A]$ is weakly $(k,n_k)$-divisible in $\Cu(A)$.
We thus find $b_{k,l}\in A_+$ for $k=1,2,\ldots$ and $l=1,\ldots,n_k$ such that $k[b_{k,l}]\leq [1_A]$ for all $k,l$ and $[1_A]\leq \sum_{l=1}^{n_k} [b_{k,l}]$ for all $k$.
Apply the Downward L\"owenheim-Skolem theorem to obtain a separable elementary submodel $B\prec A\otimes \mathcal K$ that contains all $a_i$, all $b_{k,l}$, and $1_A$. 

As argued in the proof of \autoref{nonsepGG}, $B$ has stable rank one.
Further, the inclusion of $B$ in $A\otimes\mathcal K$ induces a natural order-embedding $\Cu(B)\to\Cu(A)$.

We claim that $B$ is stable.
To prove this we use the Hjelmborg-R{\o}rdam criterion for stability established in \cite{HjeRor98Stability}, see also \cite[Proposition~2.7.7]{FarHarLupRobTikVigWin06arX:ModelThy}. 
By the stability of $A\otimes\mathcal K$, for each $b\in B_+$ we have
\[
\inf_{v\in A\otimes\mathcal K} \big( \|b-v^*v\| + \|bvv^*\| \big) =0.
\]
Since $B\prec A\otimes \mathcal K$, the displayed formula also evaluates to 0 in $B$. That is,
for every $\varepsilon>0$ there exists $w\in B$ such that $\|b-w^*w\|<\varepsilon$ and $\|bww^*\|<\varepsilon$.
Since $B$ is separable, \cite[Theorem 2.1 and Proposition 2.2]{HjeRor98Stability} implies that $B$ is stable.

Let us show that $1_A\in B$ is full in $B$.
For every $b\in B_+$ we have $[b]\leq \infty [1_A]$ in $\Cu(A)$, as $1_A$ is full in $A\otimes \mathcal K$ (see \autoref{pgr:full}).
Using that $\Cu(B)\to\Cu(A)$ is an order-embedding, we get $[b]\leq \infty [1_A]$ in $\Cu(B)$, which implies that $1_A$ is full in $B$. 

The inequalities $k[b_{k,l}]\leq [1_A]$ and $[1_A]\leq \sum_{l=1}^{n_k} [b_{k,l}]$ hold in $\Cu(B)$ for all $k,l$, using again that $\Cu(B)\to\Cu(A)$ is an order-embedding.
Therefore, the element $[1_A]$ is weakly $(k,n_k)$-divisible in $\Cu(B)$ for all $k$.
By \cite[Corollary~5.4]{RobRor13Divisibility}, the hereditary C*-subalgebra $1_AB1_A$ has no finite dimensional representations. 

By \autoref{lambdaAB}, the sequence $(\tfrac{\widehat{x_i}}{m_i})_i$ considered in the first paragraph of the proof is increasing when regarded as a sequence in $L(F(\Cu(B)))$. Let $h\in L(F(\Cu(B)))$ be its supremum.
The function $h$ is full, since $\tfrac{\widehat{x_i}}{m_i}$ is full for large enough $i$.
By \autoref{thm:realizingprob2} applied to the C*-algebra $B$, we have $h=\widehat{x}$ for $x=\alpha(h)$. 
Since $B$ is stable, there exists $c\in B_+$ such that $[c]=x$, and thus $\widehat{[c]}=h$. 

We claim that $[c]$, regarded as an element in $\Cu(A)$, satisfies $\widehat{[c]}=\tilde f$.
By \autoref{lambdaAB}, the inequalities $\tfrac{\widehat{x_i}}{m_i}\leq \widehat{[c]}$, which hold in $L(F(\Cu(B)))$, also hold in $L(F(\Cu(A)))$ for all $i$.
Passing to the supremum over $i$, we get that $\tilde f\leq \widehat{[c]}$.
Let $[c']\in \Cu(B)$ be such that $[c']\ll [c]$.
By the definition of $\alpha(h)$, we have that $\widehat{[c']}\ll h=\widehat {[c]}$ in $L(F(\Cu(B)))$.
Hence $\widehat{[c']}\leq\tfrac{\widehat{x_i}}{m_i}$ for some $i$.
By \autoref{lambdaAB}, this inequality holds also in $L(F(\Cu(A)))$.
Hence, $\widehat{[c']}\leq f$. This holds for $c'=(c-\varepsilon)_+$ and arbitrary $\varepsilon>0$.
Hence, $\widehat{[c]}\leq \tilde f$ in $L(F(\Cu(A)))$, as desired.
\end{proof}

\providecommand{\etalchar}[1]{$^{#1}$}
\providecommand{\href}[2]{#2}

\end{document}